\documentclass[11pt]{amsart}
\usepackage{amsmath,amsfonts}
\usepackage{amssymb}
\usepackage{amscd}
\usepackage{amsthm}
\usepackage{yhmath}
\usepackage{subfigure}

\usepackage[all]{xy}
\usepackage{color}



\setcounter{secnumdepth}{2}
\setcounter{tocdepth}{2}

\numberwithin{equation}{section}

\setlength{\parindent}{.15in}
\setlength{\textwidth}{6in}
\setlength{\oddsidemargin}{.15in}
\setlength{\evensidemargin}{.15in}
\setlength{\parskip}{\smallskipamount}

\newtheorem{theorem}[equation]{Theorem}

\newtheorem{proposition}[equation]{Proposition}

\newtheorem{lemma}[equation]{Lemma}

\newtheorem{corollary}[equation]{Corollary}

\theoremstyle{remark}
\newtheorem{remark}[equation]{Remark}

\theoremstyle{definition}
\newtheorem{definition}[equation]{Definition}

\newtheorem{defn}[equation]{Definition}

\newtheorem{example}[equation]{Example}



\def\Xint#1{\mathchoice
	{\XXint\displaystyle\textstyle{#1}}%
	{\XXint\textstyle\scriptstyle{#1}}%
	{\XXint\scriptstyle\scriptscriptstyle{#1}}%
	{\XXint\scriptscriptstyle\scriptscriptstyle{#1}}%
	\!\int}
\def\XXint#1#2#3{{\setbox0=\hbox{$#1{#2#3}{\int}$}
	\vcenter{\hbox{$#2#3$}}\kern-.5\wd0}}

\def\dashint{\Xint-}

\newcommand{\N}{\mathbb N}

\newcommand{\R}{\mathbb R}

\renewcommand{\L}{{\mathcal L}}

\newcommand{\id}{\operatorname{id}}

\newcommand{\supp}{\operatorname{Supp}}
\newcommand{\p}{\partial}
\newcommand{\e}{\epsilon}
\newcommand{\Lap}{\operatorname{I}_Q}

\newcommand{\vol}{\operatorname{vol}}
\renewcommand{\d}{\operatorname{d}}
\renewcommand{\L}{{\mathcal L}}

\newcommand{\norm}[1]{\left\Vert#1\right\Vert}

\newcommand{\al}{\alpha}

\def\eps{\epsilon}

\def\Om{\Omega}

\newcommand{\Lip}{\operatorname{L}}
\newcommand{\Jac}{\operatorname{J}}

\hyphenation{di-men-sio-nal}
\hyphenation{Lip-schitz}
\hyphenation{nei-ghbor-hood}

\begin{document}

\title[Conformality in sub-Riemannian manifolds] 
{Conformality and $Q$-harmonicity in sub-Riemannian manifolds \\
{\rm   \tiny (more detailed version of a submitted paper)}}

\author{Luca Capogna}
\address[Capogna]{Worcester Polytechnic Institute,
100 Institute Road,
Worcester, MA 01609, USA.}\email{lcapogna@wpi.edu}

\author{Giovanna Citti}
\address[Citti]{Dipartimento di Matematica, Universit\'a di Bologna, Piazza Porta S. Donato 5,
40126 Bologna, Italy}\email{giovanna.citti@unibo.it}

\author{Enrico Le Donne}
\address[Le Donne]{
Department of Mathematics and Statistics, University of Jyv\"askyl\"a, 40014 Jyv\"askyl\"a, Finland.}
\email{ledonne@msri.org}

\author{Alessandro Ottazzi}
\address[Ottazzi]{
School of Mathematics and Statistics, University of New South Wales, 
Sydney,
NSW 2052 
Australia.}
\email{alessandro.ottazzi@gmail.com}

\keywords{Conformal transformation, quasi-conformal maps, subelliptic PDE, harmonic coordinates, Liouville Theorem, Popp measure, morphism property, regularity for $p$-harmonic functions, sub-Riemannian geometry\\
L.C. was partially funded by NSF awards  DMS 1449143  and DMS 1503683.
G. C. was partially funded by  the People
Programme (Marie Curie Actions) of the European Union's Seventh
Framework Programme FP7/2007-2013/ under REA grant agreement n¡607643.
E.L.D. was supported by the Academy of Finland, project no. 288501.   
A.O. was partially supported by the Australian Research CouncilÕs Discovery Projects funding scheme, project no. DP140100531. 
}

\renewcommand{\subjclassname}{%
 \textup{2010} Mathematics Subject Classification}
\subjclass[]{ 
53C17, 
35H20,  
 58C25
}

\date{March 15, 2016}

\begin{abstract}  
We prove the equivalence of several natural notions of conformal maps between  sub-Riemannian manifolds. Our main contribution is in the setting of those manifolds that support a  suitable regularity theory for subelliptic $p$-Laplacian operators. For such manifolds we prove a Liouville-type theorem, i.e.,  $1$-quasiconformal maps are smooth.
In particular, we prove that contact manifolds support the suitable regularity.
 The main new technical tools are a sub-Riemannian version of $p$-harmonic coordinates and a  technique of propagation of regularity from horizontal layers.
   \end{abstract}

\maketitle


\newpage
\tableofcontents

\newpage

\section{Introduction}

In this paper we establish the equivalence of several notions of conformality in the setting of   sub-Riemannian manifolds.
  In particular,   we show  that $1$-quasiconformal homeomorphisms   (see below for the definition) 
are in fact  smooth conformal diffeomorphisms, 
provided that there exist certain regularity estimates for weak solutions of a class of quasilinear degenerate elliptic PDE, i.e., the subelliptic $p$-Laplacian, see \eqref{QLap}.  
 Moreover, we also show that such PDE regularity estimates hold in the important special case of sub-Riemannian contact manifolds, thus fully establishing the Liouville theorem in this setting. 
Up to now the connection between regularity of quasiconformal maps and the $p$-Laplacian, and  the equivalence of different  definitions  of conformality, were only well understood in the Euclidean, Riemannian, and Carnot-group settings. The general sub-Riemannian setting presents genuinely new difficulties, e.g., subRiemannian manifolds are not locally bi-Lipschitz equivalent to their tangent cones, Hausdorff measures are not smooth, there is a need to construct adequate coordinate charts that are compatible both with the nonlinear PDE and with the sub-Riemannian structure, and,  last but not least, the issue of optimal regularity for $p$-harmonic functions  is still an open problem in the general subRiemannian setting.

The motivations for our work stem from outside of the field of subRiemannian geometry: one of the main applications for Liouville-type theorems, like the one we establish in this paper, comes from a Mostow-type approach   to rigidity questions. Indeed, our main result, Theorem \ref{thm:contact}, implies smoothness of $1$-quasiconformal boundary extensions of mappings between strictly pseudoconvex smooth open subsets of $\mathbb C^n$.
Following an idea of Cowling, it seems plausible that this regularity may lead to a new approach to proving Fefferman's smooth extension theorem for bi-holomorphisms (see \cite{MR0350069}), in the same spirit as Mostow rigidiy.   From a related  point of view, our work also adds a new contribution to the study of quasiconformal transformations between CR manifolds. This line of research was initiated by Kor\'anyi and Reimann in 1985 (see \cite{Koranyi-Reimann85, MR1055843,Puqi:Tang}) to investigate problems such as the question of which compact strongly pseudoconvex 3-dimensional CR manifolds are embeddable (see for instance \cite{MR1157290} and references therein). For a further sample of possible applications of our result see the remark after Theorem \ref{thm:contact} .

The issue of  regularity of $1$-quasiconformal homeomorphisms in the  Euclidean case was first studied in 1850 in Liouville's work, where the initial regularity of the conformal homeomorphism was assumed to be $C^3$. In 1958, the regularity assumption was lowered to $C^1$ by Hartman \cite{Hartman58} and then, in conjunction with the proof of the De Giorgi-Nash-Moser Regularity Theorem, further decreased to the Sobolev spaces $W^{1,n}$, in the work of Gehring \cite{Gehring_rings} and Re{\v{s}}etnjak  \cite{Resetnjak}. The role of the De Giorgi-Nash-Moser Theorem in Gehring's proof consists in providing adequate $C^{1,\alpha}$ estimates for solutions of the Euclidean $n$-Laplacian, that are later bootstrapped to $C^{\infty}$ estimates by means of elliptic regularity theory. It is worthwhile to observe that the quest for the optimal regularity assumption in this problem is still open (see \cite{Iwaniec_Martin_book}). 

The regularity  of $1$-quasiconformal maps in the Riemannian case is considerably more difficult than the Euclidan case. It was finally settled in 1976 by Ferrand  \cite{Lelong-Ferrand76,Lelong-Ferrand77,Lelong-Ferrand79}, in occasion of her work on Lichnerowitz's conjecture and was modeled after Re{\v{s}}etnjak's original proof. See also related work of Bojarski, Iwaniec, Kopiecki \cite{BIK}. More recently, inspired by Taylor's regularity proof for isometries via harmonic coordinates, Liimatainen  and Salo \cite{Salo} provided a new proof for the regularity of $1$-quasiconformal maps between Riemannian manifolds. Their argument is based on the notion of $n$-harmonic coordinates, on the morphism property for $1$-quasiconformal maps, and on the $C^{1,\alpha}$ regularity estimates for the $n$-Laplacian on manifolds. The proofs in the present paper are modeled on Taylor's approach, as developed   in \cite{Salo}.

The introduction of conformal and quasiconformal maps in the sub-Riemannian setting goes back to the proof of Mostow's Rigidity Theorem \cite{Mostow}, where such maps arise as boundary limits of quasi-isometries between certain Gromov hyperbolic spaces. Because the class of spaces that arises as such boundaries in other geometric problems includes sub-Riemannian manifolds that are not Carnot groups, it becomes relevant to study conformality and quasiconformality in this more general environment.

In the sub-Riemannian setting the regularity is currently known only in the special case of  $1$-quasiconformal maps in Carnot groups, see \cite{Pansu,Koranyi-Reimann85,Puqi:Tang,Capogna-Cowling,Cowling-Ottazzi-Conformal}. Since such groups arise as tangent cones of sub-Riemannian manifolds then the regularity of $1$-quasiconformal maps in Carnot groups setting is an analogue of the Euclidean case as studied by Gehring and Re{\v{s}}etnjak.
To further describe the present work we begin with the introduction of the notions of conformal and quasiconformal maps between sub-Riemannian manifolds (see Section \ref{sec:SR:def} for the relevant definitions).

\begin{defn}[Conformal map]\label{conf}
A smooth diffeomorphism  between two sub-Riemannian manifolds is {\em conformal} if its differential maps horizontal vectors into horizontal vectors, and its restrictions to the horizontal spaces are  similarities\footnote{A map $F:X\to Y$ between metric spaces is called a {\em similarity} if there exists a constant $\lambda>0$ such that 
  $d(F(x),F(x'))=\lambda d(x,x')$, for all $x,x'\in X$. }.
\end{defn}

The notion of quasiconformality can be formulated with minimal regularity assumptions  in arbitrary metric spaces.

\begin{defn}[Quasiconformal map]\label{def:quasiconformal}
A  \emph{quasiconformal} map between two metric spaces $(X,d_X)$ and $(Y,d_Y)$ is a homeomorphism $f:X\to Y$ for which there exists a constant $K\geq 1$ such that for all $p\in X$
\begin{displaymath}
H_f(p):=\limsup_{r \to 0}\frac{
\sup \{d_Y(f(p),f(q)):  {d_X(p,q)\leq r}   \}}
{
\inf \{d_Y(f(p),f(q)) : {d_X(p,q)\geq r}\}}\leq K.
\end{displaymath}
\end{defn}

We want to clarify what is the correct definition of $1$-quasiconformality since in the literature there are several equivalent definitions of quasiconformality  
associated to possibly different bounds for different types of distortion (metric, geometric, or analytic).

In order to state our results we  need to recall a few basic notions and introduce some notation.
We consider the 
  following 
metric quantities 
$$
\Lip_f(p)  := \limsup_{q\to p} \dfrac{d(f(p), f(q))}{d(p,q)} \qquad\text{ and }\qquad
\ell_f(p):= \liminf_{q\to p} \dfrac{d(f(p), f(q))}{d(p,q)}.$$
The quantity $\Lip_f(p)$ is sometimes denoted by ${\rm Lip}_{ f}(p) $ and is called the pointwise Lipschitz constant.
Given an equiregular sub-Riemannian manifold $M$,
we denote by $Q$ its
  Hausdorff dimension with respect to the 
Carnot-Carath\'eodory    distance, and we write $\nabla_{\rm H}$ for the horizontal gradient, see Section~\ref{sec:SR:def} for these definitions. 
We denote by $\vol_M$  the Popp measure on $M$
and denote by $\Jac_f^{\rm Popp}$
the Jacobian    of a map $f$ between equiregular sub-Riemannian manifolds when these manifolds are equipped with their Popp measures (see Section~\ref{jacobians}).
By $W_{\rm H}^{1,Q}(M)$ we indicate the space of functions $u\in L^Q(\vol_M)$ such that $|   \nabla_{\rm H} u |\in L^Q(\vol_M)$.
We use the  standard notation ${\rm Cap}_Q$ and ${\rm Mod}_Q$
for  capacity and  modulus  (see Section~\ref{sec:geodef}).
We also consider the nonlinear pairing
$$
\Lap(u,\phi;U):=\int_{U} |\nabla_{\rm H} u|^{Q-2} \langle \nabla_{\rm H} u, \nabla_{\rm H} \phi\rangle \d\vol_M,
$$
with $u,\phi \in W_{\rm H}^{1,Q}(U)$ and $U\subset M$ an open subset. 
For short, we write $\Lap(u,\phi)$ for $\Lap(u,\phi;M)$ and denote by ${\rm E}_Q(u)=\Lap(u,u;M)$ the {\it $Q$-energy} of $u$.
The functional $\Lap(u,\,\cdot\,)$  defines the weak form of the $Q$-Laplacian when acting on the appropriate function space, see Section~\ref{sec:QLap}.
Given a quasiconformal  homeomorphism $f$  between two equiregular sub-Riemannian manifolds,
we denote by $\mathcal N_p(f)$ the   Margulis-Mostow differential of $f$ and by $(\d_{\rm H} f)_p$
its horizontal differential (see Section~\ref{sec:tange}).

\begin{theorem}\label{theorem0}
Let $f$ be  a quasiconformal  map  between two equiregular sub-Riemannian manifolds of Hausdorff dimension $Q$. 
The following are equivalent:
\begin{equation}\label{H}
\text{
$H_f(p)=1$ for a.e. $p$;
}
 \end{equation}
\begin{equation}\label{H=}
\text{
$H_f^=(p):=\limsup_{r \to 0}\frac{
\sup \{d(f(p),f(q)):  {d(p,q)= r}   \}}
{
\inf \{d(f(p),f(q)) : {d(p,q)= r}\}}=1$ for a.e. $p$;
}
 \end{equation}
\begin{equation}\label{HS}
\text{
 $(\d_{\rm H} f)_p$ is a similarity for a.e. $p$;
 }
 \end{equation}
 \begin{equation}\label{S}
\text{
 $\mathcal N_p (f)$ is a similarity for a.e. $p$;
 }
 \end{equation}
\begin{equation}\label{L}
\text{
 $\ell_{   f}(p)=\Lip_{  f}(p)$ for a.e. $p$,
 i.e., the limit $\lim_{q\to p} \dfrac{d(f(p),f(q))}{d(p,q)}$ exists for a.e. $p$;
 }
 \end{equation}
\begin{equation}\label{LN}
\text{
$\ell_{\mathcal N_p (f)}(e)=\Lip_{\mathcal N_p (f)}(e)$ for a.e. $p$;
 }
 \end{equation}
\begin{equation}\label{JP}
\text{
$\Jac_f^{\rm Popp}(p)= \Lip_f(p)^Q$ for a.e. $p$;
}
 \end{equation}
\begin{equation}\label{MP}
\text{The $Q$-modulus  (w.r.t. Popp measure) is preserved:
  }
 \end{equation}
\hspace{3cm}
${\rm Mod}_Q(\Gamma) = {\rm Mod}_Q(f(\Gamma)),\qquad\forall \, \Gamma \text{ family   of curves in } M;$
 \begin{equation}\label{LP}
\text{
   \text{The operators $\Lap$  (w.r.t. Popp measure) are preserved:}
  }
 \end{equation}
\hspace{3cm}  
$\Lap(v, \phi; V)=\Lap(v\circ f , \phi \circ f;  f^{-1}(V))
,\quad\forall \, 
V\subset N \text{ open}, \forall\, v, \phi \in W_{\rm H}^{1,Q}(V)
.
$
\end{theorem}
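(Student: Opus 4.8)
The plan is to prove a cycle of implications among the nine conditions, choosing the cycle so that each individual step is as short as possible given the structure theory already developed. First I would establish the ``infinitesimal'' equivalences: \eqref{H} $\Leftrightarrow$ \eqref{H=} $\Leftrightarrow$ \eqref{HS} $\Leftrightarrow$ \eqref{S} $\Leftrightarrow$ \eqref{L} $\Leftrightarrow$ \eqref{LN}. The key point here is that by Pansu-type differentiability of quasiconformal maps (the existence of the Margulis--Mostow differential $\mathcal N_p(f)$ a.e., and the fact that it is a graded group isomorphism intertwining the dilations), all of the metric quantities $H_f(p)$, $H_f^=(p)$, $\Lip_f(p)$, $\ell_f(p)$ are computed by the corresponding quantities for the homogeneous map $\mathcal N_p(f)$ on the tangent Carnot group, and likewise $(\d_{\rm H}f)_p$ is the horizontal layer of $\mathcal N_p(f)$. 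So all six conditions reduce to the single assertion: for a.e.\ $p$, the Carnot-group automorphism $\mathcal N_p(f)$ is a similarity. The only genuine content is the purely algebraic/metric lemma that a graded isomorphism of a Carnot group which has $H=1$ (equivalently $\ell=\Lip$, equivalently distorts the unit sphere trivially) must be a dilation composed with an isometry, and the further fact that a similarity of the whole group is determined by, and determines, a conformal (similarity) map on the first layer — this last being where one uses that the horizontal layer generates.

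Next I would bring in the measure-theoretic condition \eqref{JP}. The implication \eqref{S} $\Rightarrow$ \eqref{JP} is the computation that when $\mathcal N_p(f)$ is a similarity with ratio $\lambda = \Lip_f(p)$, its Jacobian with respect to Popp measure — which is a Haar measure on the tangent group, hence scales by $\lambda^Q$ under the dilation $\delta_\lambda$ — equals $\lambda^Q = \Lip_f(p)^Q$; here one needs that the Margulis--Mostow differential computes the Radon--Nikodym derivative $\Jac_f^{\rm Popp}$ a.e., which is part of the differentiation theory. Conversely \eqref{JP} $\Rightarrow$ \eqref{S}: in general a quasiconformal map satisfies $\Jac_f^{\rm Popp}(p) \le C\,\ell_f(p)^{\alpha}\cdots$ — more precisely one always has pointwise bounds $\ell_f(p)^Q \le \Jac_f^{\rm Popp}(p) \le \Lip_f(p)^Q$ (the right inequality from the definition of Jacobian via volume ratios, the left by the same argument applied to $f^{-1}$), so equality $\Jac_f^{\rm Popp}(p) = \Lip_f(p)^Q$ forces $\ell_f(p) = \Lip_f(p)$, which is \eqref{L}. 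This closes the loop through \eqref{JP}.

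Finally I would connect the analytic conditions \eqref{MP} and \eqref{LP} to the rest. For \eqref{S} (or \eqref{JP}) $\Rightarrow$ \eqref{MP}: quasiconformal maps always satisfy a two-sided modulus inequality $K^{-1}{\rm Mod}_Q(\Gamma) \le {\rm Mod}_Q(f(\Gamma)) \le K\,{\rm Mod}_Q(\Gamma)$, and the sharp constant is governed by the essential supremum of the linear dilatation of $\mathcal N_p(f)$; when this is $1$ a.e.\ the change-of-variables for the $Q$-modulus (using $\Jac_f^{\rm Popp}(p) = \Lip_f(p)^Q$ a.e.\ together with the ACL$^Q$-property of $f$ and the transformation rule $|\nabla_{\rm H}(v\circ f)|(p) = \Lip_f(p)\,|\nabla_{\rm H}v|(f(p))$ a.e.) gives exact preservation. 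The identity \eqref{LP} then follows from \eqref{MP} plus the same change-of-variables applied to the integrand of $\Lap$: with $\Jac_f^{\rm Popp} = \Lip_f^Q$ and $|\nabla_{\rm H}(v\circ f)| = \Lip_f\cdot(|\nabla_{\rm H}v|\circ f)$, and, crucially, that $(\d_{\rm H}f)_p$ being a similarity makes the angle factor $\langle \nabla_{\rm H}u, \nabla_{\rm H}v\rangle$ transform by $\Lip_f^{2}$, one sees the Jacobian factor $\Lip_f^{Q}$ exactly cancels the $\Lip_f^{-(Q-2)}\cdot\Lip_f^{-2} = \Lip_f^{-Q}$ coming from the gradient terms — so the integrand is invariant. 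Conversely \eqref{LP} $\Rightarrow$ \eqref{MP} is immediate since $Q$-modulus (equivalently $Q$-capacity) is defined by extremizing the $Q$-energy ${\rm E}_Q = \Lap(u,u)$, and \eqref{MP} $\Rightarrow$ \eqref{JP} by testing the modulus identity against suitable curve families / condensers localized near a density point of $\Jac_f^{\rm Popp}$, a standard argument converting a global conformal-modulus equality into the pointwise Jacobian identity.

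The main obstacle is not the bookkeeping of implications but the two facts that feed every step: (i) that the Margulis--Mostow differential $\mathcal N_p(f)$ genuinely computes all the relevant infinitesimal quantities a.e.\ — the metric derivatives, the horizontal differential, and the Popp-Jacobian — which requires the a.e.\ differentiability theory for quasiconformal maps between equiregular sub-Riemannian manifolds together with the fact that the tangent cones are Carnot groups even though the manifold is not locally biLipschitz to them; and (ii) the change-of-variables formula $|\nabla_{\rm H}(v\circ f)|(p) = \Lip_f(p)\,|\nabla_{\rm H}v|(f(p))$ together with the ACL$^Q$/absolute-continuity-on-almost-every-curve property needed to make the modulus and $\Lap$ computations rigorous. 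Both of these are developed in the cited sections on tangents, Jacobians, and modulus; granting them, the equivalences above are a matter of assembling the algebraic similarity lemma with the sharp quasiconformal modulus inequality.
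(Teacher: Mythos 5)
Your overall architecture (collapse the six metric/infinitesimal conditions onto ``$\mathcal N_p(f)$ is a similarity a.e.'' via the Margulis--Mostow differential, then attach \eqref{JP}, \eqref{MP}, \eqref{LP}) matches the paper's road map, and the implications \eqref{H}$\Leftrightarrow$\eqref{H=}$\Leftrightarrow$\eqref{HS}$\Leftrightarrow$\eqref{S}$\Leftrightarrow$\eqref{L}$\Leftrightarrow$\eqref{LN}, as well as \eqref{S}$\Rightarrow$\eqref{JP} and \eqref{JP}$\Rightarrow$\eqref{LP}$\Rightarrow$\eqref{JP}, are argued essentially as in the text. But there is a genuine gap in your step \eqref{JP}$\Rightarrow$\eqref{S}. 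You assert that for an arbitrary quasiconformal map one ``always has'' $\ell_f(p)^Q\le \Jac_f^{\rm Popp}(p)\le \Lip_f(p)^Q$, the right inequality ``from the definition of Jacobian via volume ratios.'' This is exactly what fails for the Popp measure: the volume-ratio argument gives, via the expansion $\vol_M(B(p,r))=r^Q\vol_{\mathcal N_p(M)}(B(e,1))+o(r^Q)$, the bound
$$\Jac_f^{\rm Popp}(p)\le \Lip_f(p)^Q\cdot \frac{\vol_{\mathcal N_{f(p)}(N)}(B_{\mathcal N_{f(p)}(N)}(e,1))}{\vol_{\mathcal N_p(M)}(B_{\mathcal N_p(M)}(e,1))},$$
and the extra ratio equals $1$ only when the tangent Carnot groups at $p$ and $f(p)$ are isometric (equation \eqref{54} in the paper) — which is a \emph{consequence} of the similarity hypothesis, not something available when you are trying to \emph{prove} it. The paper explicitly flags that the two-sided inequality is only expected, not proved, in general. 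The correct route is different: one first shows $\Jac_{\mathcal N_p(f)}(e)=\Jac_f(p)$ (Proposition \ref{Jac=NJac}, a blow-up argument), reducing to a rigidity statement for Carnot group isomorphisms, and then proves (Lemma \ref{tangJP_S}) that an isomorphism $A$ with $\Jac_A(e)=\Lip_A(e)^Q$ must be a similarity, using the \emph{strict monotonicity of the Popp measure as a function of the Carnot distance} (Remark \ref{Popp_monotone}): if the $1$-Lipschitz normalization of $A$ were not an isometry, the pushed-forward Popp measure would be strictly smaller, contradicting $\Jac_A=1$. This monotonicity lemma is the missing idea in your proposal.

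A secondary, smaller issue: your claim that \eqref{LP}$\Rightarrow$\eqref{MP} is ``immediate since $Q$-modulus is defined by extremizing the $Q$-energy'' conflates modulus of an arbitrary curve family (an infimum over admissible Borel densities $\rho$) with capacity of a condenser (an infimum of energies of Sobolev competitors); these agree only for condenser families. The paper instead closes this part of the diagram by proving \eqref{LP}$\Rightarrow$\eqref{JP} directly (a test-function argument, Proposition \ref{LP_JP}) and \eqref{JP}$\Leftrightarrow$\eqref{MP} via Williams' theorem together with the identification of $\Lip_f$ as the minimal upper gradient.
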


\begin{defn}[1-quasiconformal map]\label{def:one-quasiconformal}
We say that a quasiconformal map between two equiregular sub-Riemannian manifolds is {\it 1-quasiconformal} if any of the conditions in Theorem \ref{theorem0} holds.
\end{defn}

The equivalence of the definitions in Theorem \ref{theorem0} have as consequences some invariance properties that are crucial in the proofs of this paper.

\begin{corollary}\label{EP-CP}
Let $f$ be a $1$-quasiconformal map between equiregular sub-Riemannian manifolds of Hausdorff dimension $Q$. Then
\begin{itemize}
\item[(i)]
the $Q$-energy (w.r.t. Popp measure) is preserved:  
\begin{equation}\label{EP}
{\rm E}_Q(v) = {\rm E}_Q(v \circ f)   , \qquad  \forall   \,v\in W^{1,Q}_{H}(N) ; 
 \end{equation}
\item[(ii)] the $Q$-capacity  (w.r.t. Popp measure) is preserved:
 \begin{equation}\label{CP}
 {\rm Cap}_Q(E,F)={\rm Cap}_Q(f(E),f(F))   , \qquad  \forall \,E,F\subset M \text{ compact}.
  \end{equation}
  \end{itemize}
\end{corollary}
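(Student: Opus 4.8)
The plan is to derive Corollary \ref{EP-CP} directly from the equivalences established in Theorem \ref{theorem0}, in particular from the invariance of the operator $\Lap$ (condition \eqref{LP}) and the invariance of the $Q$-modulus (condition \eqref{MP}). For part (i), I would simply specialize \eqref{LP} to the case $V=N$ and $\phi=v$. Then $\Lap(v,v;N)=\Lap(v\circ f,v\circ f;f^{-1}(N))=\Lap(v\circ f,v\circ f;M)$, which is exactly ${\rm E}_Q(v)={\rm E}_Q(v\circ f)$ by the definition of the $Q$-energy. The only point requiring a remark is that $v\circ f\in W^{1,Q}_{\rm H}(M)$ whenever $v\in W^{1,Q}_{\rm H}(N)$; this membership is already implicit in the statement of \eqref{LP}, since that identity presupposes $v\circ f,\phi\circ f$ lie in the appropriate Sobolev space, so no new work is needed.

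For part (ii), the cleanest route is to pass through the well-known identification of $Q$-capacity with $Q$-modulus of a connecting curve family (the capacity-modulus duality, valid under the doubling and Poincaré hypotheses available here, and recalled in Section~\ref{sec:geodef}). Concretely, for disjoint compact sets $E,F\subset M$ one has ${\rm Cap}_Q(E,F)={\rm Mod}_Q(\Gamma(E,F))$, where $\Gamma(E,F)$ is the family of curves joining $E$ to $F$. Since $f$ is a homeomorphism, $f$ maps $\Gamma(E,F)$ bijectively onto $\Gamma(f(E),f(F))$. Applying \eqref{MP} to $\Gamma=\Gamma(E,F)$ gives ${\rm Mod}_Q(\Gamma(E,F))={\rm Mod}_Q(f(\Gamma(E,F)))={\rm Mod}_Q(\Gamma(f(E),f(F)))$, and reapplying the capacity-modulus identity on $N$ yields ${\rm Cap}_Q(E,F)={\rm Cap}_Q(f(E),f(F))$. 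Alternatively, one can argue directly: a competitor $u$ for ${\rm Cap}_Q(f(E),f(F))$ pulls back to a competitor $u\circ f$ for ${\rm Cap}_Q(E,F)$ with the same energy by part (i), giving one inequality, and the reverse follows by applying the same argument to $f^{-1}$, which is again $1$-quasiconformal.

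The main (and essentially only) obstacle is bookkeeping rather than substance: one must make sure that the admissibility classes match up under composition with $f$ and $f^{-1}$. For the energy statement this means checking that $f$ induces a bijection $W^{1,Q}_{\rm H}(N)\to W^{1,Q}_{\rm H}(M)$, $v\mapsto v\circ f$; for the capacity statement it means checking that $u$ admissible for the pair $(f(E),f(F))$ — i.e. $u\ge 1$ near $f(E)$, $u\le 0$ near $f(F)$, $u\in W^{1,Q}_{\rm H}$ — corresponds under pullback to an admissible $u\circ f$ for $(E,F)$, which is immediate from continuity of $f$ and the homeomorphism property. Both of these are already built into the hypotheses of \eqref{LP} and \eqref{MP}, so the corollary follows with no further analytic input; it is genuinely a formal consequence of Theorem \ref{theorem0} together with the standard capacity-modulus identity.
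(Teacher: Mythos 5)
Your proposal is correct, and your ``alternative'' argument for part (ii) is precisely the paper's proof: part (i) is exactly Remark~\ref{rmk:E_I} (specializing $\Lap(v,\phi;V)$ to $\phi=v$), and the paper's Proposition~\ref{EP_CP} proves \eqref{EP}$\Rightarrow$\eqref{CP} by observing that $v\mapsto v\circ f$ is an energy-preserving bijection between the competitor classes $\mathcal S(f(E),f(F))$ and $\mathcal S(E,F)$, which is your two-inequality pullback argument in slightly different clothing (the paper packages both inequalities into the bijection statement rather than invoking separately that $f^{-1}$ is $1$-quasiconformal, though that fact is available from Corollary~\ref{analytic-definition}). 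Your preferred route through the identity ${\rm Cap}_Q(E,F)={\rm Mod}_Q(\Gamma(E,F))$ and condition \eqref{MP} is a genuinely different path: it is valid in the doubling-plus-Poincar\'e setting at hand, but it imports an external theorem (the capacity--modulus duality of Heinonen--Koskela type) that the paper never states --- Section~\ref{sec:geodef} only defines the modulus --- and it needs mild care with conventions (disjointness of $E,F$, matching the admissibility class for capacity used in the paper, namely $u|_E=1$, $u|_F=0$, $0\le u\le 1$, with the one used in the duality theorem). The direct variational argument buys self-containedness and avoids these hypotheses, which is presumably why the paper routes \eqref{CP} through \eqref{EP} rather than through \eqref{MP}; your version is nonetheless a legitimate alternative provided the duality is properly cited.
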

The proofs of Theorem~\ref{theorem0} and Corollary~\ref{EP-CP} are given in Section~\ref{defs1qc}. A detailed guide to such proofs in contained at the beginning of that section.

While the Hausdorff measure may seem to be the natural volume measure to use in this context, there is a subtle and  important reason for choosing  the Popp measure rather than the Hausdorff measure. Indeed, the latter may not be smooth, even in equiregular sub-Riemannian manifolds, see \cite{Agrachev_Barilari_Boscain:Hausdorff}. However, we show that for $1$-quasiconformal maps the corresponding Jacobians coincide.
The following statement is a consequence of Theorem~\ref{theorem0} and Proposition~\ref{equivalence_of_jacobians}.
\begin{corollary}\label{analytic-definition}
Let $f$ be a $1$-quasiconformal map between equiregular sub-Riemannian manifolds of Hausdorff dimension $Q$. Then for almost every $p$
$$
\ell_f(p)^Q=\Lip_f(p)^Q=\Jac_f^{{\rm Popp}}(p)=\Jac_f^{{\rm Haus}}(p).
$$
Moreover, the inverse map $f^{-1}$ is  $1$-quasiconformal. 
\end{corollary}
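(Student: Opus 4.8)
The plan is to assemble the chain of equalities from results already in hand and then deduce the symmetry statement about $f^{-1}$. First, since $f$ is $1$-quasiconformal, condition \eqref{L} of Theorem~\ref{theorem0} gives $\ell_f(p)=\Lip_f(p)$ for a.e.\ $p$, so the outer two exponents $\ell_f(p)^Q$ and $\Lip_f(p)^Q$ agree. Next, condition \eqref{JP} gives $\Jac_f^{\rm Popp}(p)=\Lip_f(p)^Q$ for a.e.\ $p$, which inserts the Popp Jacobian into the chain. The remaining equality $\Jac_f^{\rm Popp}(p)=\Jac_f^{\rm Haus}(p)$ is exactly the content of Proposition~\ref{equivalence_of_jacobians}: although the Hausdorff measure need not be smooth, for a $1$-quasiconformal map the metric differential $\mathcal N_p(f)$ is a similarity (condition \eqref{S}), hence it scales both the Popp density and the spherical Hausdorff density by the same factor $\Lip_f(p)^Q$ at a.e.\ point, so the two Jacobians coincide almost everywhere. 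Putting these together yields the displayed four-term identity.

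For the second assertion, I would argue that $1$-quasiconformality is symmetric in $f$ and $f^{-1}$. The cleanest route is via condition \eqref{H}: the metric distortion $H_f(p)$ is governed by the dilatation of $\mathcal N_p(f)$, and at points where $\mathcal N_p(f)$ exists and is a biLipschitz homogeneous homomorphism (which, by Pansu-type differentiability for quasiconformal maps, is a.e.\ the case), one has $\mathcal N_{f(p)}(f^{-1}) = \big(\mathcal N_p(f)\big)^{-1}$. A similarity has a similarity as its inverse, so condition \eqref{S} holds for $f^{-1}$ at a.e.\ point of $N$; since $f$ maps null sets to null sets and conversely (quasiconformal maps are absolutely continuous in the appropriate sense, being in $W^{1,Q}_{\rm loc}$ with the corresponding change-of-variables), "a.e.\ point of $N$" transfers to an a.e.\ statement, and Theorem~\ref{theorem0} then certifies $f^{-1}$ as $1$-quasiconformal. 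Alternatively, one can invoke the preservation of $Q$-modulus \eqref{MP}, which is manifestly symmetric: $\mathrm{Mod}_Q(\Gamma)=\mathrm{Mod}_Q(f(\Gamma))$ for all curve families $\Gamma$ in $M$ is equivalent to $\mathrm{Mod}_Q(\Gamma')=\mathrm{Mod}_Q(f^{-1}(\Gamma'))$ for all curve families $\Gamma'$ in $N$, so $f^{-1}$ satisfies \eqref{MP} and is therefore $1$-quasiconformal by Theorem~\ref{theorem0}.

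The main obstacle I anticipate is not in the formal bookkeeping but in making sure the a.e.\ statements are transported correctly between $M$ and $N$: one needs that $f$ and $f^{-1}$ both carry Popp-null sets to Popp-null sets, which requires knowing that a quasiconformal map between equiregular sub-Riemannian manifolds satisfies Luzin's condition (N) and its inverse, together with the a.e.\ differentiability in the Margulis-Mostow sense. These facts underpin Theorem~\ref{theorem0} itself (they are what make \eqref{JP} and \eqref{MP} meaningful), so once that theorem is granted, the transfer is legitimate; still, it is the step where care is needed rather than a one-line citation. The equality $\Jac_f^{\rm Popp}=\Jac_f^{\rm Haus}$, delegated to Proposition~\ref{equivalence_of_jacobians}, is the other substantive input, but by assumption it is available.
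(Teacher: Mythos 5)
Your proposal is correct and follows essentially the same route as the paper, which presents this corollary precisely as a consequence of Theorem~\ref{theorem0} (conditions \eqref{L} and \eqref{JP}) together with Proposition~\ref{equivalence_of_jacobians}; your derivation of the symmetry statement via $\mathcal N_{f(p)}(f^{-1})=\mathcal N_p(f)^{-1}$ (the paper's identity \eqref{Lvsl}) or, alternatively, via the manifestly symmetric modulus condition \eqref{MP}, matches the paper's toolkit. The only caveat worth registering is that the absolute continuity facts you flag (Luzin (N) for $f$ and $f^{-1}$, from \cite{Heinonen-Koskela} and \cite{Margulis-Mostow}) are indeed the inputs the paper uses to transfer a.e.\ statements between $M$ and $N$, so your argument is complete as written.
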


Since the Popp measure is smooth, the associated $Q$-Laplacian operator $L_Q $ will involve smooth coefficients
and consequently it is plausible to conjecture the existence of  a regularity theory of  $Q$-harmonic functions (see Section~\ref{sec:QLap} for the definitions).
In fact such a theory exists 
in the important subclass of contact manifolds (see Section~\ref{sec:contact}).
The following result is the {\it morphism property} for   $1$-quasiconformal maps, and it is proved in Section~\ref{morphism-prop}.  The $Q$-Laplacian operator $L_Q$ is defined in \eqref{QLap}.
\begin{corollary}[Morphism property]\label{morphism-property} 
Let $f:M\to N$ be a $1$-quasiconformal map between equiregular sub-Riemannian manifolds  of Hausdorff dimension $Q$ equipped with their Popp measures.  The following hold:
\begin{itemize}
\item[(i)] The $Q$-Laplacian is preserved:

 If $v\in    W_{\rm H}^{1,Q}(N)$,
then
$L_Q ( v\circ f) \circ f^*  =   L_Q v $, where $ f^*$ denotes the pull-back operator on functions.


\item[(ii)] The $Q$-harmonicity  is preserved:

 If $v$ is a $Q$-harmonic function on $N $,
then
$v\circ f $ is a $Q$-harmonic function on $M $.
\end{itemize}

 \end{corollary}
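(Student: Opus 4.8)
The plan is to obtain both assertions from the invariance \eqref{LP} of the nonlinear pairing $\Lap$ under precomposition by $f$, established in Theorem~\ref{theorem0}, together with the fact (Corollary~\ref{analytic-definition}) that $f^{-1}$ is again $1$-quasiconformal. Recall that for an open set $U$ and a function $u$ with $\nabla_{\rm H}u\in L^Q_{\rm loc}(U)$ the distribution $L_Q u$ on $U$ is the functional $\phi\mapsto\Lap(u,\phi;U)$ acting on compactly supported $\phi\in W_{\rm H}^{1,Q}(U)$, so that $u$ is $Q$-harmonic on $U$ exactly when $\Lap(u,\phi;U)=0$ for all such $\phi$. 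The one preliminary point to record is that precomposition by $f$, and symmetrically by $f^{-1}$, carries $W_{{\rm H},{\rm loc}}^{1,Q}$ into itself and preserves compact supports: the former is already implicit in the statement of \eqref{LP} (whose right-hand side must make sense), and the $L^Q$ part follows from the change of variables supplied by the Jacobian identity \eqref{JP} together with the local boundedness of $\Lip_f$. In particular $v\circ f\in W^{1,Q}_{{\rm H},{\rm loc}}(M)$ whenever $v\in W^{1,Q}_{{\rm H},{\rm loc}}(N)$, so that $L_Q(v\circ f)$ is a well-defined distribution on $M$ and $f^*$ in (i) is the induced map $\phi\mapsto\phi\circ f$ between the corresponding spaces of test functions.

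For (i), unwinding the notation, the identity $L_Q(v\circ f)\circ f^*=L_Q v$ asserts that for every compactly supported test function $\phi$ on $N$,
\[
\bigl\langle L_Q(v\circ f),\,\phi\circ f\bigr\rangle=\Lap\bigl(v\circ f,\,\phi\circ f;\,M\bigr)=\Lap(v,\phi;N)=\langle L_Q v,\phi\rangle .
\]
The middle equality is precisely \eqref{LP} with $V=N$, since then $f^{-1}(V)=M$; alternatively one applies \eqref{LP} on a relatively compact neighborhood of $\supp\phi$ and uses that $\Lap(v,\phi;\,\cdot\,)$ only depends on such a neighborhood. Hence (i) is immediate from Theorem~\ref{theorem0}.

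For (ii), let $v$ be $Q$-harmonic on $N$ and let $\psi\in W^{1,Q}_{\rm H}(M)$ have compact support, say $\supp\psi\subset U$ with $U\Subset M$ open. Set $V:=f(U)$, which is open and relatively compact in $N$ since $f$ is a homeomorphism, and $\phi:=\psi\circ f^{-1}$, which by the remark above lies in $W^{1,Q}_{\rm H}(V)$ with compact support. Then
\[
\Lap(v\circ f,\,\psi;\,M)=\Lap(v\circ f,\,\psi;\,U)=\Lap\bigl(v\circ f,\,\phi\circ f;\,f^{-1}(V)\bigr)=\Lap(v,\phi;V)=0,
\]
where the first equality holds because $\nabla_{\rm H}\psi$ is supported in $U$, the second because $\psi=\phi\circ f$ and $U=f^{-1}(V)$, the third by \eqref{LP} on $V$, and the last because $v$ is $Q$-harmonic on $N$ and $\phi$ is an admissible test function there. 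Since $\psi$ was arbitrary, $v\circ f$ is $Q$-harmonic on $M$.

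I expect the only real obstacle to be bookkeeping rather than analysis: one must be careful to invoke \eqref{LP} on matching open sets and with test functions that genuinely belong to the space $W^{1,Q}_{\rm H}$ on which it was proved, and one needs in hand that both $f$ and $f^{-1}$ preserve $W^{1,Q}_{{\rm H},{\rm loc}}$ and the compact-support condition --- precisely the place where the full force of Theorem~\ref{theorem0} and Corollary~\ref{analytic-definition} enters. No further regularity of $f$, and no PDE input such as regularity of $Q$-harmonic functions, is needed for this corollary.
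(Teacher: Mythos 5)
Your argument is correct and is essentially the paper's own proof: both parts follow by a direct application of the invariance \eqref{LP} of the pairing $\Lap$ under precomposition by $f$, reading $L_Q u$ as the distribution $\phi\mapsto\Lap(u,\phi)$ and pulling back test functions via $f^*$. The paper states this in one line; your version merely adds the (correct) bookkeeping about test-function spaces and compact supports.
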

 
 One of the main results of this paper is the following 
 Liouville-type theorem. Its proof is the content of Section~\ref{contactsmooth}.
 
 \begin{theorem}\label{thm:contact} 
Every $1$-quasiconformal map between sub-Riemannian  contact manifolds is conformal. 
 \end{theorem}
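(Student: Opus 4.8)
The plan is to prove the statement by combining the morphism property (Corollary~\ref{morphism-property}) with a $Q$-harmonic coordinate construction, in the spirit of Taylor's harmonic-coordinate proof of smoothness of isometries as adapted to the conformal setting in \cite{Salo}. First I would fix a point $p\in M$ and work locally near $p$ and its image $f(p)\in N$. On the target $N$, since $N$ is a contact manifold its Popp measure is smooth, so the $Q$-Laplacian $L_Q$ has smooth coefficients and one may invoke the regularity theory for $Q$-harmonic functions available in the contact setting (Section~\ref{sec:contact}): one constructs a system of $Q$-harmonic functions $v^1,\dots,v^n$ on a neighborhood of $f(p)$ in $N$ whose differentials are linearly independent at $f(p)$, hence form a coordinate chart $v=(v^1,\dots,v^n)$ near $f(p)$. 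The key regularity input is that such $Q$-harmonic coordinates are of optimal smoothness class (at least $C^{1,\alpha}$, and in fact $C^\infty$ once the coefficients are smooth and the chart is nondegenerate, via bootstrapping).

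Next I would pull these back through $f$. By the morphism property, each $u^i := v^i\circ f$ is $Q$-harmonic on a neighborhood of $p$ in $M$; and since $M$ is also a contact manifold, the same regularity theory applies on $M$, so each $u^i$ inherits the optimal regularity there — in particular $u=(u^1,\dots,u^n)$ is a $C^{1,\alpha}$ (indeed smooth) map near $p$. Because $f = v^{-1}\circ u$ locally, and $v^{-1}$ is smooth, this already upgrades $f$ from a mere homeomorphism to a map of class $C^{1,\alpha}$, hence (by iterating the argument, or by directly bootstrapping the $Q$-harmonic equation for $u$ with the now-improved regularity of $f$) to a $C^\infty$ diffeomorphism. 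The point is that $1$-quasiconformality is preserved under this process and supplies the nondegeneracy: by Corollary~\ref{analytic-definition}, $\ell_f(p)^Q = \Jac_f^{\rm Popp}(p)$ almost everywhere, so $f$ is nondegenerate a.e., which prevents the pulled-back coordinate system from collapsing and guarantees $u$ is a genuine chart.

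Once $f$ is known to be a smooth diffeomorphism, it remains to promote "smooth $1$-quasiconformal" to "conformal" in the sense of Definition~\ref{conf}, i.e.\ to show $\d f$ sends horizontal vectors to horizontal vectors and restricts to a similarity on each horizontal space. This is where the equivalent characterizations in Theorem~\ref{theorem0} do the work: for a smooth map, the condition \eqref{HS} that the horizontal differential $(\d_{\rm H}f)_p$ be a similarity holds for a.e.\ $p$, but smoothness and continuity of $\d f$ force it to hold \emph{everywhere}; and a smooth diffeomorphism whose horizontal differential is everywhere a similarity (in particular maps the horizontal distribution to itself) is precisely a conformal map. One must also check that horizontality is genuinely preserved — that $\d f$ does not merely conformally distort the horizontal bundle but actually carries it into the horizontal bundle of $N$ — which follows from the Margulis--Mostow differential $\mathcal N_p(f)$ being a well-defined homogeneous group isomorphism that is a similarity (condition \eqref{S}), together with the fact that for a $C^1$ map the Pansu-type differential respects the horizontal layer of the tangent cone.

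The main obstacle I expect is the coordinate construction on contact manifolds: one must produce enough $Q$-harmonic functions with independent differentials \emph{and} control their regularity sharply enough to run the bootstrap. The subtlety, flagged in the introduction, is that optimal regularity for $p$-harmonic functions is open in the general sub-Riemannian setting, so the argument genuinely relies on the special structure of contact manifolds (Section~\ref{sec:contact}) to get past the $C^{1,\alpha}$ barrier — this is the technical heart, and is presumably where "propagation of regularity from horizontal layers" enters. A secondary difficulty is verifying that the $Q$-harmonic equation for the pulled-back functions $u^i$ on $M$ genuinely has the structure (smooth coefficients, correct ellipticity away from critical points) needed to apply the same regularity theory; this is exactly the content of the morphism property combined with the smoothness of the Popp measure, so it should go through, but care is needed near points where $\nabla_{\rm H}u^i$ vanishes.
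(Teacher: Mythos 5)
Your overall strategy --- $Q$-harmonic coordinates on the target, pullback via the morphism property, the contact-case regularity theory for the $Q$-Laplacian, then upgrading the a.e.\ similarity condition to an everywhere statement by continuity --- is the same as the paper's. But there is a genuine gap at the very first step: you assume one can construct a \emph{full} system of $n$ $Q$-harmonic functions $v^1,\dots,v^n$ near $f(p)$ with linearly independent differentials, forming a chart. In the sub-Riemannian setting this is not available, and the paper does not do it. The regularity theory (Definition~\ref{support}, Theorem~\ref{zhong-main}) only controls \emph{horizontal} derivatives, i.e.\ it gives $C^{1,\alpha}_{\rm H}$ bounds on $X_1v^i,\dots,X_rv^i$; consequently the only nondegeneracy one can propagate from an initial smooth chart is the invertibility of the $r\times r$ matrix $(X_iv^j)_{i,j\le r}$. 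This is exactly why Theorem~\ref{hQhc} produces a chart of the form $(v^1,\dots,v^r,x^{r+1},\dots,x^n)$ in which only the first $r$ (horizontal) coordinates are $Q$-harmonic, while the remaining $n-r$ are ordinary smooth coordinates. Your step ``$u=v\circ f$ is regular, hence $f=v^{-1}\circ u$ is regular'' therefore only yields regularity of the $r$ horizontal components $u^1,\dots,u^r$ of $f$, and says nothing about the other $n-r$ components.

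The missing ingredient is precisely Proposition~\ref{smoothfromhorizontal}: for an ACC$_Q$ map whose horizontal components are $C^{k,\alpha}_{\rm H,loc}$ (resp.\ $W^{k,p}_{\rm H,loc}$), the remaining components inherit the same regularity, because along flows of horizontal vector fields the vertical derivatives are algebraically determined by the horizontal ones (equation \eqref{vertical-derivatives}). You gesture at ``propagation of regularity from horizontal layers'' at the end, but it must be inserted \emph{before} you can write $f=v^{-1}\circ u$ and conclude anything about $f$ as a map between manifolds. Two smaller points: (a) a.e.\ nondegeneracy from Corollary~\ref{analytic-definition} does not by itself make $u$ a chart; the paper instead runs the $C^{1,\alpha}_{\rm H}$ argument for both $f$ and $f^{-1}$ to conclude that $f$ is locally bi-Lipschitz, and only then is $|\nabla_{\rm H}u^i|$ bounded away from zero so that the nondegenerate bootstrap (Proposition~\ref{smoothness} together with the Schauder estimates) applies; (b) the $C^\infty$ upgrade also requires the $W^{2,2}_{\rm H}$ estimate of part (2) of Definition~\ref{support}, not only the $C^{1,\alpha}_{\rm H}$ estimate, since Lemma~\ref{ADN} is applied to a $W^{2,2}_{\rm H,loc}$ solution.
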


For some related results in the setting of CR 3-manifolds see \cite{Puqi:Tang}.
As a sample application of Theorem \ref{thm:contact}  we recall 
the following two examples of contact sub-Riemannian manifolds.
Consider the roto-translation group (i.e., the isometry group of the Euclidean plane) and the Heisenberg group, both equipped with their standard 
sub-Riemannian structures.
They are locally quasiconformal (in fact they admit a global map that is locally bi-Lipschitz, with an explicit correspondence that can be found in \cite{Faessler_Koskela_LeDonne}).  Theorem \ref{thm:contact} shows that such local equivalence cannot be extended to $1$-quasiconformality. In fact, if this was the case, then the two spaces would be locally conformal,
but they are not, in view of a recent result of Boarotto \cite{MR3459235}.

Theorem \ref{thm:contact}   follows from a more general theorem.
 In fact, it continues to hold 
  in the class of sub-Riemannian  manifolds that 
 support a regularity theory for  $Q$-harmonic functions.
 This class includes every sub-Riemannian manifold that is locally contactomorphic to a Carnot group of step $2$ or, equivalently, every 
 Carnot group of step $2$ with a sub-Riemannian metric that is not necessarily left-invariant.
 We remark that there are examples of step-2 sub-Riemannian manifolds that are not  contactomorphic to any Carnot group, see \cite{LOW}. 
 In order to describe in detail the more general result we introduce  the following definition.

\begin{definition}\label{support} Consider an equiregular  sub-Riemannian  manifold $M$ of Hausdorff dimension $Q$, with horizontal bundle of dimension $r$,
endowed with a smooth volume form. 
We say that $M$ {\em supports regularity for  $Q$-harmonic functions} if 
the following two properties hold:
\begin{enumerate} \item  
For every $g=(g^1,...,g^r)\in C^\infty(M,\R^r)$, $U\subset\subset M$ and for every $\ell >0$, there exist constants $\alpha\in (0,1), C>0$ 
such that
for each weak
 solution $u$ of  the equation $L_Q u= X_i^* g^i$ on $M$
with
$||u||_{W^{1,Q}_{\rm H}( U)} <\ell$, one has
\begin{equation*}
 ||u||_{C^{1,\alpha}_{\rm H}( U)} \le C.
\end{equation*}
\item
For every $g=(g^1,...,g^r)\in C^\infty(M,\R^r)$, $U\subset \subset M$
 and for every $\ell , \ell' >0$,
there exists a constant $C>0$ 
such that
for each  weak
 solution $u$ of  the equation $L_Q u= X_i^* g^i$ on $M$
with
$||u||_{W^{1,Q}_{\rm H}( U)} <\ell$ 
and
$\tfrac{1}{\ell'}<|\nabla_{\rm H} u|<\ell'$ on $U$,
one has
\begin{equation*} 
||u||_{W^{2,2}_{\rm H}(U)} 
\le C.
\end{equation*}
\end{enumerate}
\end{definition}
In a similar fashion one can reformulate the definition to the case of $p-$harmonic functions.
%

 In view of the work of Uraltseva \cite{MR0244628} (but see also\cite{MR0474389,Tolksdorf,DiBenedetto}) every Riemannian manifold supports regularity for $Q$-harmonic functions. Things are less clear in the sub-Riemannian setting.
The H\"older regularity of weak solutions of quasilinear PDE  $\ \sum_{i=1}^{r} X_I^* A(x,\nabla_H u)=0$, modeled on the subelliptic $p$-Laplacian, for $1<p<\infty$,  and for their parabolic counterpart,  is well known, see \cite{CDG, ACCN}.  However, in this generality the higher regularity of solutions is still an open problem.
The only  results  in the literature are for the case of left-invariant sub-Riemannian structures on step two Carnot groups. Under these assumptions one has that solutions in the range $p\ge 2$ have H\"older regular horizontal gradient. This is a formidable achievement in itself, building on  contributions by several authors \cite{Capogna-cpam,MR2085543,MR2257051,MR2336058,  MR2545517,MR2531368, MR2527629,MR3444525}, with the final result being established eventually by Zhong in \cite{Zhong}.  Beyond the Heisenberg group one has some promising results due to Domokos and Manfredi \cite{MR2424544, MR2556756, MR2676172} in the range of $p$ near $2$. In this paper we build on these previous contributions, particularly on Zhong's work \cite{Zhong} to include the dependence on $x$ and prove that contact sub-Riemannian manifolds support regularity for $Q$-harmonic functions (see Theorem \ref{zhong-main}). Note that very recently Zhong and Mukherjee have announced the extension of the regularity result to the range $1<p\le 2$, while  Garofalo, Markasheva and the first two named authors  \cite{CCGM} have extended Zhong's theorem to the parabolic case.
The novelty of our approach is that we use a Riemannian approximation scheme to regularize the $Q$-Laplacian operator, thus allowing to approximate its solutions with smooth functions. In carrying out this approximation the main difficulty is to show that the regularity estimates do not blow up as the approximating parameter approaches the critical case. Our main result in this context, proved in Section \ref{contactsmooth}, is the following.

\begin{theorem}  Sub-Riemannian contact manifolds {\em support regularity for  $p$-harmonic functions} for every $p\ge 2$.
\end{theorem}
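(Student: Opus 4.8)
The plan is to reduce the statement, via Darboux's theorem, to a local a priori estimate on an open subset of the Heisenberg group $\mathbb{H}^n$ carrying a smooth, not necessarily left-invariant, sub-Riemannian metric, and then to extract from a Riemannian approximation scheme a version of Zhong's Caccioppoli estimates \cite{Zhong} that is \emph{uniform} in the approximation parameter and stable under $x$-dependent structure. First I would fix a contact chart so that the horizontal bundle is spanned by smooth vector fields $X_1,\dots,X_{2n}$ with $[X_i,X_j]=c_{ij}^k\,X_k + b_{ij}\,T$, where $T$ is a transverse Reeb-type field and $c_{ij}^k,b_{ij}\in C^\infty$; the step-two (contact) hypothesis means that $(b_{ij})$ is invertible, so $T$ is a smooth combination of the brackets $[X_i,X_j]$ modulo the $X_k$. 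The metric on the horizontal bundle is a smooth, uniformly positive definite matrix field $(a_{ij}(x))$, so, after relabeling, $L_p$ acts by $L_p u = X_i^*\!\big(\omega(x,\nabla_{\rm H}u)^{(p-2)/2}\,a_{ij}(x)\,X_j u\big)$ with $\omega(x,\xi)=\sum_{k,l} a_{kl}(x)\,\xi_k\xi_l$, and the datum $X_i^* g^i$ has the same divergence form.

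For $\eps>0$ I would introduce the regularized operator $L_{p,\eps}$ obtained by replacing $\omega(x,\nabla_{\rm H}u)$ under the power with $\eps^2 + \omega(x,\nabla_{\rm H}u) + \eps^2 (Tu)^2$ and adding a term proportional to $\eps^2\, Tu$ to the flux in the $T$-direction, so that $L_{p,\eps}$ is a non-degenerate ($\eps$-perturbed) $p$-Laplacian for the Riemannian metric $g_\eps$ making $X_1,\dots,X_{2n},\eps\, T$ orthonormal. For each fixed $\eps>0$, classical quasilinear elliptic regularity (Uraltseva, Uhlenbeck, Tolksdorf, DiBenedetto) gives that solutions of the corresponding Dirichlet problems are $C^{1,\alpha}$, and smooth where their gradient does not vanish --- but with constants that a priori blow up as $\eps\to 0$.

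The heart of the argument --- and the step I expect to be the main obstacle --- is to prove that these estimates are uniform in $\eps$. Following Zhong, I would differentiate the regularized equation along the horizontal directions $X_j$, derive Caccioppoli inequalities for $X_j u_\eps$, and control the vertical derivative $T u_\eps$ in terms of $\nabla_{\rm H}u_\eps$ and $\nabla_{\rm H}^2 u_\eps$ through the commutator identity $T=(b^{-1})^{ij}\big([X_i,X_j]-c_{ij}^k X_k\big)$; this is precisely where the contact hypothesis is used, and where the term $\eps^2(Tu)^2$ must be retained to close the estimates in the range $p\ge 2$. The genuinely new difficulty relative to \cite{Zhong} is that $a_{ij},c_{ij}^k,b_{ij}$ now depend on $x$, so every integration by parts and every commutation produces extra terms carrying at most one derivative of $u_\eps$ against a bounded smooth factor; these are lower order with respect to the leading terms of the iteration and can be absorbed, at the cost of constants depending only on finitely many $\sup$-norms of derivatives of $a_{ij},c_{ij}^k,b_{ij}$ and on the bound $\ell$ (resp.\ on $\ell$ and $\ell'$). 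Iterating as in Zhong's scheme then yields the uniform $C^{1,\alpha}_{\rm H}$ bound, i.e.\ property (1) of Definition~\ref{support} in its $p$-harmonic reformulation; under the additional hypothesis $1/\ell' < |\nabla_{\rm H}u| < \ell'$ the operator is uniformly elliptic, one may take a further horizontal difference quotient, and a standard energy estimate gives the uniform $W^{2,2}_{\rm H}$ bound, which is property (2).

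With the uniform estimates in hand, I would pass to the limit. Covering $U$ by small balls $B\subset\subset U$, on each $B$ solve the regularized Dirichlet problems with boundary datum $u|_{\partial B}$ to obtain $u_\eps$; letting $\eps\to0$, the $u_\eps$ are precompact in $C^1_{\rm loc}(B)$ and converge to a weak solution $\tilde u$ of $L_p\tilde u = X_i^* g^i$ on $B$ agreeing with $u$ on $\partial B$, hence $\tilde u = u$ by uniqueness of weak solutions (the relevant functional being strictly convex) with prescribed boundary data, so the uniform bounds descend to $u$ on a slightly smaller ball; patching over the cover gives the a priori estimates on $U$. The most delicate technical points will be the favorable sign and structure of the $\eps$-terms produced when the equation is differentiated, and the precise bookkeeping of the $x$-dependent error terms throughout the Caccioppoli iteration.
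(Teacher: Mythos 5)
Your proposal follows essentially the same route as the paper: Darboux reduction to a Heisenberg chart with an $x$-dependent metric written in divergence form with respect to a divergence-free frame, a Riemannian regularization in which the Reeb direction enters scaled by $\eps$ (so that the vertical derivative is controlled by $\eps$ times the horizontal second derivatives via the commutator, exactly the mechanism the paper exploits through $|X_3^\eps u^\eps|\le\eps\sum_{i,j}|X_i^\eps X_j^\eps u^\eps|$), uniform-in-$\eps$ Caccioppoli estimates in Zhong's scheme with the $x$-dependent terms absorbed as lower order, and a limiting argument resting on uniqueness of the Dirichlet problem. The only minor divergence is that the paper obtains the $W^{2,2}_{\rm H}$ bound directly from the $\beta=0$ Caccioppoli inequality for second derivatives (Corollary~\ref{zhong-c3}) combined with the lower gradient bound, rather than by an extra difference-quotient step, but both are standard and correct.
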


%
%
\endremark

%
%
%

The regularity hypotheses in Definition~\ref{support} 
have two important consequences.
First, 
it allows us to construct horizontal $Q$-harmonic
coordinates.
Second, together with the existence of such coordinates,
it
eventually leads to an initial $C^{1,\alpha}$ regularity for  
 $1$-quasiconformal 
 maps (see Theorem \ref{duballe} (ii)).
 When this basic regularity is present, 
one can use classical PDE arguments 
to derive smoothness without the additional hypothesis of Definition~\ref{support} (see Theorem \ref{duballe} (i)).

\begin{theorem}\label{duballe}

Let $f:M\to N$ be a $1$-quasiconformal map between equiregular sub-Riemannian manifolds  of Hausdorff dimension $Q$,
 endowed with   smooth volume forms. 
 
 (i)  If $f$ is bi-Lipschitz  and in $C^{1,\alpha}_{\rm{H, loc}}(M,N)\cap W^{2,2}_{\rm H,loc}(M,N)$, then $f$ is   conformal
.
 

(ii)  If  $M$ and $N$
 support regularity for  $Q$-harmonic functions (in the sense of Definition~\ref{support}),
 then 
  $f$ is bi-Lipschitz  and in $C^{1,\alpha}_{\rm H, loc}(M,N)\cap W^{2,2}_{\rm H,loc}(M,N)$,    and hence conformal.
\end{theorem}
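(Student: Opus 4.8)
\textbf{Proof plan for Theorem \ref{duballe}.}

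For part (i) the plan is to exploit the invariance properties already established in this paper and then feed them into a classical elliptic bootstrap. Since $f$ is $1$-quasiconformal, by Corollary \ref{morphism-property} it is a morphism for the $Q$-Laplacian: pulling back the coordinate functions $y^1,\dots,y^n$ of $N$ (taken in a suitable $Q$-harmonic chart, whose existence follows from the construction of horizontal $Q$-harmonic coordinates, valid because $N$ is smooth with smooth Popp measure) one sees that the components $f^j = y^j \circ f$ are weak solutions of quasilinear equations of the form $L_Q f^j = X_i^*\, g^{ij}(\cdot, f, \nabla_{\rm H} f)$ with smooth right-hand side. Under the standing hypotheses ($f$ bi-Lipschitz, hence $|\nabla_{\rm H} f|$ bounded above and, by the bi-Lipschitz lower bound and Corollary \ref{analytic-definition} identifying $\ell_f = \Lip_f$ a.e., bounded below away from $0$), the operator $L_Q$ becomes uniformly subelliptic along the solution. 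The additional $C^{1,\alpha}_{\rm H}\cap W^{2,2}_{\rm H}$ regularity is exactly the threshold needed so that one can differentiate the equation in the horizontal directions, treat the differentiated equations as linear subelliptic systems with H\"older coefficients, and invoke subelliptic Schauder theory (à la H\"ormander/Rothschild--Stein) to bootstrap $f \in C^\infty$. Once $f$ is smooth, the a.e.\ statement \eqref{HS}--\eqref{S} that $(\d_{\rm H} f)_p$ is a similarity upgrades to an everywhere statement by continuity, and smoothness plus the similarity property on horizontal spaces is precisely the definition of conformal (Definition \ref{conf}); one should also check that $\d f$ preserves the horizontal bundle, which follows from the Margulis--Mostow differential being a graded group isomorphism a.e.\ and then propagating by smoothness.

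For part (ii) the plan is to produce the initial $C^{1,\alpha}_{\rm H}\cap W^{2,2}_{\rm H}$ regularity and the bi-Lipschitz bound so that part (i) applies. First, using that $M$ (in fact $N$) supports regularity for $Q$-harmonic functions, construct horizontal $Q$-harmonic coordinates near any point of $N$: solve $L_Q u = 0$ with boundary data equal to a fixed smooth coordinate system, use condition (1) of Definition \ref{support} to get $C^{1,\alpha}_{\rm H}$ bounds, and use an implicit-function/degree argument together with the non-degeneracy $|\nabla_{\rm H} u| \ne 0$ (this is where one needs the full rank $r$ of the horizontal layer and the propagation-of-regularity idea mentioned in the abstract) to see these are genuine charts. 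Then, by the morphism property (Corollary \ref{morphism-property}(ii)), the pulled-back coordinates $f^j$ are $Q$-harmonic on $M$, hence weak solutions of $L_Q f^j = X_i^* g^{ij}$ with smooth $g$; condition (1) of Definition \ref{support} gives $f \in C^{1,\alpha}_{\rm H,loc}$, and since the metric invariance gives $|\nabla_{\rm H} f^j|$ comparable to $\Lip_f$, which is bounded and (after establishing the bi-Lipschitz lower bound) bounded below, condition (2) of Definition \ref{support} gives $f \in W^{2,2}_{\rm H,loc}$. The bi-Lipschitz property itself should be extracted from the $Q$-capacity (or modulus) preservation in Corollary \ref{EP-CP}: capacity estimates of spherical condensers force $\Lip_f$ and $\Lip_{f^{-1}}$ to be locally bounded, and combined with \eqref{L} this yields two-sided bi-Lipschitz bounds. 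With all hypotheses of (i) verified, conclude $f$ conformal.

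\textbf{Main obstacle.} The crux is the construction and non-degeneracy of the horizontal $Q$-harmonic coordinates: one must show that the $Q$-harmonic functions solving the relevant boundary-value problems have nowhere-vanishing horizontal gradient and that the resulting map is a local diffeomorphism in the sub-Riemannian sense — delicate because the sub-Riemannian structure is not locally bi-Lipschitz to its tangent cone and the usual Euclidean Jacobian argument does not transplant directly. This is precisely the "propagation of regularity from horizontal layers" technique advertised in the abstract, and it is the step I expect to require the most genuinely new work; a secondary difficulty is verifying that the subelliptic Schauder bootstrap in part (i) closes, i.e.\ that the differentiated equations retain enough ellipticity (guaranteed by the bi-Lipschitz bounds keeping $|\nabla_{\rm H} f|$ away from $0$) for the iteration to reach $C^\infty$.
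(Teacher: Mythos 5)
Your overall architecture (morphism property $\Rightarrow$ pulled-back coordinates solve the $Q$-Laplace equation $\Rightarrow$ PDE regularity $\Rightarrow$ bootstrap) matches the paper, but two steps as you propose them would not go through. The first is the bi-Lipschitz bound in (ii). You want to extract it from the preservation of $Q$-capacity via estimates on spherical condensers; that is exactly Pansu's Carnot-group argument, and it relies on dilations and on metric balls being uniformly comparable to their tangent-cone counterparts — neither is available here, since a general equiregular sub-Riemannian manifold is not locally bi-Lipschitz to its tangent cone. The paper explicitly points this out in the introduction and reverses the logic: one first proves $f\in C^{1,\alpha}_{\rm H,loc}(M,N)$ (hence locally Lipschitz), runs the identical argument for $f^{-1}$ (also $1$-quasiconformal), and only then has the two-sided bound; only after that can condition (2) of Definition~\ref{support} be invoked to get $W^{2,2}_{\rm H,loc}$. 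Your plan needs the lower Lipschitz bound as an input to the very regularity theory from which you would otherwise deduce it, so as written it is circular.

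The second gap is that only the first $r$ coordinate functions can be made $Q$-harmonic, so the morphism property plus Definition~\ref{support} control only $f^1,\dots,f^r$; you never say how the remaining $n-r$ components acquire $C^{1,\alpha}_{\rm H}$ and $W^{2,2}_{\rm H}$ regularity. This is the actual content of the ``propagation from horizontal layers'' device, Proposition~\ref{smoothfromhorizontal}: because $f$ is ACC and maps horizontal curves to horizontal curves, along flow lines of horizontal fields the vertical derivatives of $f$ are algebraic expressions in the horizontal ones (formula \eqref{vertical-derivatives}), and regularity transfers componentwise. In your write-up the propagation idea is instead aimed at the non-degeneracy of the coordinate charts, which the paper handles by a different mechanism (the Caccioppoli-plus-interpolation comparison of $u^i_\e$ with $x^i$ in Lemmas~\ref{caccioppoli}--\ref{Qinterp}). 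Two smaller points on (i): the paper uses \emph{arbitrary} smooth coordinates there ($Q$-harmonic charts are neither needed nor available, since Theorem~\ref{hQhc} requires Definition~\ref{support}, which is not assumed in (i)); and the bootstrap does not differentiate the quasilinear equation — which is not justified at $C^{1,\alpha}_{\rm H}\cap W^{2,2}_{\rm H}$ regularity because commutators generate uncontrolled vertical derivatives — but instead writes $L_Q f^i=(L_Q y^i\circ f)\,\Jac_f^{\rm Popp}\in C^{\alpha}_{\rm H,loc}$ in non-divergence form and applies the frozen-coefficient Lemma~\ref{ADN} followed by the Schauder estimates of Proposition~\ref{schauder}.
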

The function spaces in Theorem \ref{duballe} are defined componentwise, see Section~\ref{sec:regular}.
Theorem~\ref{duballe} (i) is proved in Section~\ref{unapizza}.
Theorem~\ref{duballe} (ii) is proved in Section~\ref{sec:smoothness_conclusion}.

The above theorem provides
 the following result.

\begin{corollary}\label{theorem1}
Let $f$ be  a  homeomorphism  between two equiregular sub-Riemannian manifolds each
supporting the regularity estimates in Definition~\ref{support}. 
The map $f$ is conformal if and only if it is 1-quasiconformal.
\end{corollary}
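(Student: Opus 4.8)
\textbf{Proof proposal for Corollary \ref{theorem1}.}
The plan is to deduce this corollary directly from the results already assembled in the introduction, with essentially no new work. The statement is a biconditional, so I would prove the two implications separately.

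For the forward direction, suppose $f$ is conformal. By Definition \ref{conf}, $f$ is a smooth diffeomorphism whose horizontal differential $(\d_{\rm H}f)_p$ is a similarity at every point. In particular $(\d_{\rm H}f)_p$ is a similarity for a.e.\ $p$, which is exactly condition \eqref{HS} of Theorem \ref{theorem0}. Hence $f$ is $1$-quasiconformal by Definition \ref{def:one-quasiconformal}. (One should also note that a conformal map is in particular quasiconformal in the sense of Definition \ref{def:quasiconformal}: smoothness of $f$ and $f^{-1}$ together with the fact that the horizontal restrictions of the differential are similarities gives a uniform bound $H_f(p)\le K$ via the description of the Margulis--Mostow differential, so the hypothesis of Theorem \ref{theorem0} is met.) This direction requires only unwinding definitions.

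For the reverse direction, suppose $f$ is $1$-quasiconformal. Since both $M$ and $N$ support regularity for $Q$-harmonic functions in the sense of Definition \ref{support}, Theorem \ref{duballe} (ii) applies and yields that $f$ is bi-Lipschitz and lies in $C^{1,\alpha}_{\rm H,loc}(M,N)\cap W^{2,2}_{\rm H,loc}(M,N)$, and is therefore conformal. This is an immediate invocation of Theorem \ref{duballe}.

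The only subtlety worth checking is that the notion of ``homeomorphism'' in the hypothesis of Corollary \ref{theorem1} is compatible with the running hypotheses of the cited results: Theorem \ref{duballe} takes as input a $1$-quasiconformal map, which by Definition \ref{def:quasiconformal} is already a homeomorphism, so no gap arises there; and in the forward direction a conformal map, being a smooth diffeomorphism, is certainly a homeomorphism. Thus there is no real obstacle — the corollary is a formal consequence of Theorem \ref{duballe} (ii) together with the definitional observation that conformal maps satisfy condition \eqref{HS}. If any step deserves slightly more care than a one-line citation, it is verifying that a conformal map is quasiconformal (Definition \ref{def:quasiconformal}) and not merely ``horizontally conformal'', so that Theorem \ref{theorem0} may legitimately be invoked to conclude $1$-quasiconformality; this is where I would spend a sentence or two in the full write-up.
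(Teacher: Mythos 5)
Your proposal is correct and follows the same route the paper intends: the corollary is stated as an immediate consequence of Theorem \ref{duballe} (ii) for the reverse implication, while the forward implication is the definitional observation that a conformal map satisfies condition \eqref{HS} (and is metrically quasiconformal, being locally bi-Lipschitz on horizontal curves), hence is $1$-quasiconformal by Theorem \ref{theorem0}. Your flagged subtlety — checking that conformal implies quasiconformal in the sense of Definition \ref{def:quasiconformal} before invoking Theorem \ref{theorem0} — is the right place to spend the extra sentence.
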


\begin{remark} Note that in view of the regularity theory in the  work of Uraltseva \cite{MR0244628}, we recover Ferrand's theorem (and the Liimatainen--Salo argument): every $1$-quasiconformal map between Riemannian manifolds is conformal. \end{remark}

The proof   of the first part of Theorem \ref{duballe}  rests on the morphism property for $1$-quasiconformal   maps (see Theorem \ref{morphism-property}) and on Schauder's estimates, as developed by Rothschild and Stein \cite{Roth:Stein} and Xu \cite{MR1135924}. The second part is one of the main contributions of this paper and is based on the construction of ad-hoc systems of coordinates, the {\it horizontal $Q$-harmonic coordinates}, that play an analogue role to that of the $n$-harmonic coordinates in the work of Liimatainen--Salo \cite{Salo}. However, in contrast to the Riemannian setting, only a subset of the coordinate systems (the {\it horizontal} components) can be constructed so that they are  $Q$-harmonic, but not the remaining ones. This yields a potential obstacle, as $Q$-harmonicity is the key to the smoothness of the map. We remedy to this potential drawback by producing an argument  showing that if an ACC map has suitably regular horizontal components then such regularity is transferred to all the other components (see Proposition~\ref{smoothfromhorizontal}). This method was introduced in \cite{Capogna-Cowling} in the special setting of Carnot groups, where $Q$-harmonic horizontal coordinates arise naturally as the exponential coordinates associated to the first layer of the stratification.
\bigskip

Looking ahead, it seems plausible to conjecture that the Liouville theorem holds in any equiregular sub-Riemannian manifold. Our work shows in fact  that this is implied by the regularity theory for $p-$Laplacians and the latter is widely expected to hold for general systems of H\"ormander vector fields. However the latter remains a challenging open problem.

\bigskip
We conclude this introduction with a comparison between our work and the Carnot group case as studied in \cite{Capogna-Cowling}. In the latter setting one has that all the canonical exponential horizontal coordinates happen to be  also  smooth $Q$-harmonic (in fact they are also harmonic).  Moreover, a simple argument based on the existence of dilations and 
the $1$-quasiconformal invariance of the conformal capacity 
(see \cite{Pansu}) yields the bi-Lipschitz regularity for $1$-quasiconformal maps immediately, without having to invoke any PDE result. As a consequence {\it the Liouville theorem in the Carnot group case can be proved relying on a much weaker regularity theory than the one above, i.e., one has just to use the $C^{1,\alpha}$ estimates for the $Q$-Laplacian in the simpler case where the gradient is bounded away from zero and from infinity}. This result goes back to  the earlier work of the first-named author \cite{Capogna} in the Carnot group setting. In our more general, non-group setting, there are no canonical $Q$-harmonic coordinates, and so one has to invoke the PDE regularity to construct them. Similarly, the lack of dilations makes it necessary to rely on the PDE regularity also to show bi-Lipschitz regularity.

\noindent{\it Acknowledgements.} 
We are  grateful to Xiao Zhong for many useful conversations. 
%
%
%


\section{Preliminaries}
\subsection{Sub-Riemannian geometry}\label{sec:SR:def}

A {\em sub-Riemannian manifold} is a connected, smooth manifold $M$ endowed with   a      subbundle  $HM$ of the tangent bundle $TM$ that bracket generates $TM$ and 
 a smooth section of positive-definite quadratic forms 
 $g$     on $HM$,
see \cite{Montgomery}. 
The form $g$ is locally completely determined by any orthonormal frame 
$X_1, \ldots, X_r $   of $ HM $. The bundle $ HM $ is called {\em horizontal 
distribution}.
The section
$g$ is called {\em sub-Riemannian 
metric}.

 Analogously to the Riemannian setting, one can endow a sub-Riemannian manifold $M$ with a metric space structure by defining the {\it Carnot-Carath\'eodory}    distance: For any pair $x,y\in M$ set  \begin{multline} d(x,y)=\inf\{\delta>0 \text{ such that there exists a curve } \gamma\in C^\infty([0,1]; M) \text{ with endpoints }x,y \\\text{ such that }\dot\gamma\in H_\gamma M  \text{ and }|\dot\gamma|_g\le \delta\}.\notag\end{multline}

Consider a sub-Riemannian manifold $ M$ with horizontal distribution $HM$ and denote by $\Gamma(HM)$ the smooth sections of $HM$, i.e., the vector fields tangent to $HM$.
For all $k\in \N$, consider
$$H^kM:={\bigcup_{q\in M} 
\rm span}\{[Y_1,[Y_2,[\dots [Y_{l-1},Y_l]]]]_q\;:\; l\leq k,Y_j\in\Gamma(HM),   j=1,\dots,l\}.
$$
The
bracket generating condition (also called {\it H\"ormander's finite rank hypothesis})
is expressed by the existence of $s\in \N$ such that
$H^sM=TM$. 
\begin{definition}\label{equiregular} 
A sub-Riemannian manifold $ M$ with horizontal distribution $HM$ is {\it equiregular} if, for all $k\in \N$, 
each set $H^kM$ defines a subbundle of $TM$.
\end{definition}

%

Consider the metric space $(M,d)$ where $ M$ with horizontal distribution $\Delta$  is an equiregular sub-Riemannian manifold and $d$ is the corresponding  Carnot-Carath\'eodory    distance. 
As a consequence of Chow-Rashevsky
Theorem
 such a  distance is always finite  and induces on $M$ the original topology.
 As a result of Mitchell \cite{Mitchell}, the Hausdorff dimension of $(M,d)$ coincides with 
  the Hausdorff dimension of its tangents spaces.

Let  $ X_1, \ldots, X_r  $ be an orthonormal frame of the horizontal distribution of a    sub-Riemannian manifold $M$. 
We define the {\em horizontal gradient}  of a function $u:M\to \R$ with respect to   $X_1,\dots,X_r$ as
\begin{equation}\label{hor_grad}
\nabla_{\rm H} u:= (X_1 u)X_1+\ldots +(X_r u)X_r.
\end{equation}
\remark
Let $X_1^\prime,\dots,X_r^\prime$ be another frame of the same distribution. Let $B$ be the matrix such that 
$$
X_j^\prime(p)=\sum_{i=1}^r B_j^i(p)X_i(p).
$$
Then the horizontal gradient $\nabla_{\rm H}^\prime u$ of $u$ with respect to $X_1^\prime,\dots,X_r^\prime$ is
\begin{align*}
\nabla_{\rm H}^\prime u(p)&= \sum_j(X_j^\prime u(p))X_j^\prime(p)\\
&=\sum_j(\sum_i B_j^i(p)X_i(p)u) \sum_k B_j^k(p) X_k(p)\\
&=\sum_i\sum_j\sum_k B_j^i(p)B_k^j(p)^T X_iu(p)X_k(p)\\
&= (B(p) B(p)^T)_k^i X_iu(p)X_k(p).
\end{align*}

\remark
If $X_1,\dots,X_r$ and $X_1^\prime,\dots,X_r^\prime$ are two frames that are orthonormal with respect to  a sub-Riemannian structure on the distribution, then $\nabla_{\rm H}^\prime u=\nabla_{\rm H} u$. Indeed, in this case the matrix $B(p)$ would be in $O(r)$ for every $p$.


\subsection{PDE preliminaries}

In this section we collect some of the PDE results that will be used later in the paper.
 Let ${X_1,\dots,X_r}$ be an   orthonormal   frame  of the horizontal bundle of a sub-Riemannian manifold $M$.
For each $i=1,\ldots,r$ denote by $X_i^*$ the {\em adjoint} of $X_i$ with respect to a smooth volume form $\vol$, i.e.,
 $$\int_M u X_i \phi \d \vol = \int_M X_i^* u \phi \d\vol,$$ 
 for every compactly supported $\phi$ for which the integral is finite. In any system of coordinates, the smooth volume form can be expressed in terms of the Lebesgue measure $ \mathcal{L}$ through a smooth density $\omega$, i.e.,  $\d\vol= \omega \d\mathcal{L}$. If in local coordinates we write $X_i=\sum_{k=1}^n b^i_k\p_k$,
  then one has 
\begin{equation}\label{sdj:vec}
X_i^*u=  - \omega^{-1} (X_i (\omega u))   - u\p_k b^i_k   .
\end{equation}

Next we define some of the function spaces that will be used in the paper.

\begin{definition} Let ${X_1,\dots,X_r}$ be an   orthonormal   frame  of the horizontal bundle of a sub-Riemannian manifold $M$ and consider an open subset $\Omega\subset M$.  For  any   $k\in \N$,  and $\alpha\in (0,1)$   we define the $C^{k,\alpha}_{\rm H}$ norm 
$$ \|u\|^2_{C^{k,\alpha}_{\rm H}(\Omega)}:=\sup_\Omega ( \sum_{|I|\le k-1} |X^I u|^2)+ \sup_{p,q\in \Omega \text{ and }p\neq q} \frac{\sum_{|I|=k}  |X^Iu(p)- X^Iu(q)|^2}{d(p,q)^{2\alpha}},$$ 
where,  for each $m=0,\ldots,k$ and each 
$m$-tuple $I=(i_1,\ldots,i_m)\in \{1,\ldots,r\}^m$, 
we have denoted  by $X^I$ 
the   $m$-order  operator $X_{i_1}\cdots X_{i_m}$
 and   we   set $|I|=m$. 
We write
$$
C^{k,\alpha}_{\rm H}(\Om)=\left\{u:\Omega\to \R : \  X^I u \text{ is continuous in }\Om \text{ for } |I|\leq k\ \text{ and }    \|u\|_{C^{k,\alpha}_{\rm H}(\Om)}<\infty\right\}.
$$
  A function $u$ is in $C^{k,\alpha}_{\rm H,loc}(\Omega)$, if for any $K\subset \subset \Omega$ one has $\|u\|_{C^{k,\alpha}_{\rm H}(K)}<\infty$.
  \end{definition}
  
  \begin{definition}\label{Sobolev spaces}
Let ${X_1,\dots,X_r}$ be an   orthonormal   frame  of the horizontal bundle of a sub-Riemannian manifold $M$ and consider an open subset $\Omega\subset M$.  For $k\in \N$ and for any multi-index $I=(i_1,...,i_k)\in \{1,...,r\}^k$ we define $|I|=k$ and $X^I u = X_{i_1}...X_{i_k} u$. For $p\in [1,\infty)$ we define the {\it horizontal Sobolev space} $W_{\rm H}^{k,p}(\Omega)$ 
  to be the space of all $u\in L^p(\Omega)$  whose distributional derivatives $X^Iu$ are also in $L^p(\Omega)$ for all multi-indexes $|I|\le k$. This space can also be defined as the closure   of the space of $C^\infty(\Omega)$  functions   with respect to the norm
\begin{equation}\label{sobolev}
\| u \|_{W_{\rm H}^{k,p}}^p := \| u\|_{L^p(\Omega)}^p + \int_{\Omega} [\sum_{|I|=1}^k (X^I u)^2]^{p/2}  \;\mathrm d{\rm vol},
\end{equation}
see \cite{Garofalo-Nhieu98}, \cite{fssc-ms} and references therein.
A function $u\in L^p(\Omega)$ is in the local Sobolev space $W^{k,p}_{\rm H,loc}(\Omega)$ if, for any $\phi\in C^\infty_c(\Omega)$, one has $u\phi\in W_{\rm H}^{k,p}(\Omega)$. 
\end{definition}

\subsection{Schauder estimates} 
Here we discuss Schauder estimates for second order, non-divergence form subelliptic linear operators.
Given an orthonormal   frame  ${X_1,\dots,X_r}$ of the horizontal bundle of $M$, one defines the {\em subLaplacian} on $M$ of a function $u$ as
 \begin{equation}\label{sublaplacian}
L_2 u:=\sum_{i=1}^r X_i^* X_i u.
 \end{equation}
 One can check that such an operator does not depend on the choice of the orthonormal   frame, but only on the sub-Riemannian structure of $M$ and the choice of the volume form.

Let $\Omega $ be an open set of $M$. A function $u:\Omega\to \R$ is called $2$-{\em harmonic} (or, more simply, {\em harmonic}) if $L_2 u = 0$ in $\Omega$, in the sense of distribution.
H\"ormander's celebrated  Hypoellipticity Theorem  \cite{Hormander} implies that harmonic functions are smooth.
 
A well known result of Rothschild and Stein \cite{Roth:Stein}, yields {\em Schauder estimates} for subLaplacians,  that  is
 if $L_2 u\in C^\alpha_{\rm H}(\Omega)$, then  
for any $K\subset\subset \Omega$, there exists a constant $C$ depending on $K,\alpha$ and the sub-Riemannian structure such that  
$$
\|u\|_{C^{2,\alpha}_{\rm H}(K)} \leq C  \|L_2 u\|_{C^\alpha_{\rm H}(\Omega)}.
$$
In particular we shall use that
\begin{equation}\label{schauder-consequence}
\|u\|_{C^{1,\alpha}_{\rm H}(\overline{B}_{\epsilon/2})}\leq C\|L_2 u\|_{C^\alpha_{\rm H}(B_\epsilon)}.
\end{equation}
The Schauder estimates have been extended to subelliptic operators with low regularity by a number of authors. For our purposes we will 
consider operators of the form 
$$L_{a(x)}u(x):= \sum_{i,j=1}^r a_{ij}(x)X_i X_j u(x),$$
where $a_{ij}$ is a symmetric matrix such that  for some constants $\lambda,\Lambda>0$ one has 
\begin{equation}\label{elliptic}
\lambda |\xi|^2 \le a_{ij}(x) \xi_i \xi_j \le \Lambda |\xi|^2
\end{equation} for every $x\in M$ and for all $\xi\in \R^r$.   We recall a version of the classical  Schauder estimates as established in \cite{MR1135924}

\begin{proposition}\label{schauder}
Let $u\in C^{2,\alpha}_{\rm H,loc} (M)$ for some $\alpha \in (0,1)$. Let $a_{ij}\in C^{k,\alpha}_{\rm H,loc}(M)$. If $L_a u\in C^{k,\alpha}_{\rm H,loc}(M)$, then 
$u\in C^{k+2,\alpha}_{\rm H,loc}(M) $ and for every $U\subset \subset M$ there exists a positive constant $C=C(U,\alpha, k,X)$ such that
$$||u||_{C^{k+2,\alpha}(U)} \le C ||Lu||_{C^{k,\alpha}(M)}.$$
\end{proposition}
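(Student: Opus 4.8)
The plan is to reduce the statement to the classical Rothschild--Stein Schauder theory for the constant-coefficient (or rather, model) subLaplacian via a freezing-the-coefficients argument, and then bootstrap. First I would observe that, because $a_{ij}\in C^{k,\alpha}_{\rm H,loc}$ with $k\geq 0$, the coefficients are in particular continuous and satisfy the uniform ellipticity \eqref{elliptic}, so that $L_a$ is a second-order subelliptic operator with the same characteristic form as $L_2$. Following \cite{MR1135924} (and \cite{Roth:Stein}), one works in a fixed relatively compact $U\subset\subset M$, chooses a slightly larger $U'\subset\subset M$ with $U\subset\subset U'$, and for each point $x_0\in U$ writes $L_a = L_{a(x_0)} + (L_a - L_{a(x_0)})$, where $L_{a(x_0)}$ is the frozen operator with constant matrix $a(x_0)$. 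The frozen operator is a subLaplacian-type operator (after a linear change of the horizontal frame making $a(x_0)$ the identity, one is exactly in the setting of \eqref{sublaplacian}), so the Rothschild--Stein estimate \eqref{schauder-consequence}-type inequality applies to it on small balls.

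The core quantitative step is the standard interpolation/perturbation estimate: on a ball $B_\rho(x_0)$,
\begin{equation}\label{eq:freeze}
\|u\|_{C^{2,\alpha}_{\rm H}(B_{\rho/2}(x_0))} \le C\Big(\|L_{a(x_0)} u\|_{C^{0,\alpha}_{\rm H}(B_\rho(x_0))} + \|u\|_{L^\infty(B_\rho(x_0))}\Big),
\end{equation}
and then $L_{a(x_0)}u = L_a u - (a(x_0)-a(\cdot))\cdot X^2 u$, where the error term has small $C^{0,\alpha}_{\rm H}$ norm on $B_\rho(x_0)$ because $a\in C^{0,\alpha}_{\rm H}$ (so the H\"older seminorm of $a(x_0)-a(\cdot)$ is controlled, and its sup is $O(\rho^\alpha)$). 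Choosing $\rho$ small and absorbing, one gets the base case $k=0$: $u\in C^{2,\alpha}_{\rm H,loc}$ with $\|u\|_{C^{2,\alpha}_{\rm H}(U)}\le C(\|L_a u\|_{C^{0,\alpha}_{\rm H}(U')}+\|u\|_{C^{0}(U')})$, and since $u$ is already assumed to be in $C^{2,\alpha}_{\rm H,loc}$ one can quote \cite{MR1135924} directly for the clean inequality in the statement. For the inductive step, suppose the result holds up to order $k-1$. Apply a horizontal vector field $X_\ell$ to the equation $L_a u = f$ ($f:=L_a u\in C^{k,\alpha}_{\rm H,loc}$): one obtains $L_a(X_\ell u) = X_\ell f - [X_\ell, L_a]u$. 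The commutator $[X_\ell, L_a]u$ is a sum of terms $a_{ij}[X_\ell,X_iX_j]u$ (second-order in $u$, hence controlled in $C^{k-1,\alpha}_{\rm H}$ by the inductive hypothesis after re-expressing third-order brackets using Hörmander's condition) and $(X_\ell a_{ij})X_iX_ju$ (with $X_\ell a_{ij}\in C^{k-1,\alpha}_{\rm H}$ since $a_{ij}\in C^{k,\alpha}_{\rm H}$). Thus $L_a(X_\ell u)\in C^{k-1,\alpha}_{\rm H,loc}$, so by induction $X_\ell u\in C^{k+1,\alpha}_{\rm H,loc}$, i.e.\ $u\in C^{k+2,\alpha}_{\rm H,loc}$, with the corresponding estimate obtained by chaining the inequalities and absorbing lower-order terms via interpolation.

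The main obstacle is controlling the commutator term $[X_\ell, L_a]u$ in the inductive step: the bracket $[X_\ell, X_iX_j]$ produces vector fields that are not horizontal, so a priori it involves derivatives of $u$ in directions transverse to $HM$, which are not directly estimated by the horizontal $C^{k,\alpha}_{\rm H}$ norms. This is handled exactly as in the Rothschild--Stein framework: one uses that the non-horizontal directions are obtained as iterated brackets of the $X_i$ (equiregularity / Hörmander), so after commuting these are re-expressed as sums of $X^I u$ with $|I|\le k+1$, landing back in the horizontal scale — at the cost of careful bookkeeping of the bracket orders, which is precisely the content of \cite{Roth:Stein} and \cite{MR1135924}. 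The other technical point, routine but necessary, is the interpolation inequality $\|u\|_{C^{m,\alpha}_{\rm H}}\le \varepsilon\|u\|_{C^{m+1,\alpha}_{\rm H}} + C_\varepsilon\|u\|_{L^\infty}$ in the horizontal scale, needed to absorb lower-order terms; this too is standard in the subelliptic literature. Given that the proposition is stated as a recollection of \cite{MR1135924}, I would in fact present only the reduction and cite \cite{MR1135924, Roth:Stein} for the model estimates and the commutator bookkeeping.
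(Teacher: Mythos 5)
The paper gives no proof of this proposition: it is stated as a recollection of Xu \cite{MR1135924} (building on Rothschild--Stein \cite{Roth:Stein}), which is exactly what you say you would do in your final sentence, so your treatment matches the paper's. Your freezing-of-coefficients and bootstrap sketch is the standard argument behind the cited result; the one point to be careful about --- which you correctly flag and delegate to \cite{Roth:Stein,MR1135924} --- is that $[X_\ell,X_iX_j]u$ involves non-horizontal brackets, which are controlled via the anisotropic (weighted) regularity calculus of those references rather than by literally rewriting them as horizontal derivatives $X^Iu$.
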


In a similar spirit, the Schauder estimates hold for any operator of the form $Lu= \sum_{i,j=1}^r a_{ij}(x)X_i^* X_j u$ where $X_i^*$ denotes the adjoint of $X_i$ with respect to some fixed smooth volume form.

Next, following an argument originally introduced by Agmon, Douglis and Nirenberg \cite[Theorem A.5.1]{ADN1} in the Euclidean setting,  we show that one can  lift the burden  of the a-priori regularity hypothesis from the Schauder estimates.
\begin{lemma}\label{ADN} Let  $\alpha \in (0,1)$ and assume that $u\in W^{2,2}_{\rm H,loc}
(M)\
$ is a function  that satisfies for a.e. $x\in M$ 
$$L_{A(x)} u(x)= \sum_{i,j=1}^r a_{ij}(x) X_i X_j u (x) \in C^{\alpha}_{\rm H,loc}(M).$$
If $a_{ij}\in C^{\alpha}_{\rm H,loc}(M)$, then $u$ is in fact a $C^{2,\alpha}_{\rm H,loc}(M)$ function.
\end{lemma}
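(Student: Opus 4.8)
The plan is to bootstrap from the given $W^{2,2}_{\rm H,loc}$ regularity to $C^{2,\alpha}_{\rm H,loc}$ by a difference-quotient / approximation argument in the spirit of Agmon--Douglis--Nirenberg. First I would fix a point $x_0\in M$, a small coordinate ball $B_\epsilon=B_\epsilon(x_0)$, and a cutoff $\eta\in C_c^\infty(B_\epsilon)$ with $\eta\equiv 1$ on $B_{\epsilon/2}$. The strategy is to freeze the coefficients at $x_0$: write $L_{A(x)}u = L_{A(x_0)}u + \bigl(L_{A(x)}-L_{A(x_0)}\bigr)u$, so that
\begin{equation*}
L_{A(x_0)}u = f(x) := L_{A(x)}u - \sum_{i,j} \bigl(a_{ij}(x)-a_{ij}(x_0)\bigr)X_iX_j u .
\end{equation*}
Since $a_{ij}\in C^\alpha_{\rm H,loc}$ and $X_iX_j u\in L^2_{\rm loc}$, the extra term is small in $L^2$ on $B_\epsilon$ (its $L^2$ norm is controlled by $\epsilon^\alpha\|u\|_{W^{2,2}_{\rm H}(B_\epsilon)}$), while by hypothesis $L_{A(x)}u\in C^\alpha_{\rm H,loc}\subset L^\infty_{\rm loc}$. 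The operator $L_{A(x_0)}$ is a constant-coefficient second-order operator in the $X_i$, elliptic in the sense of \eqref{elliptic}, hence (after symmetrizing) of the form $\sum_k Y_k^2 + $ lower order with $Y_k$ a linear combination of the $X_i$; in particular $L_{A(x_0)}$ is hypoelliptic and satisfies the Schauder-type and $W^{2,p}_{\rm H}$ estimates for a subLaplacian.

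The heart of the argument is to upgrade the integrability/differentiability of $u$ in stages. First I would mollify, or equivalently apply the constant-coefficient estimates to difference quotients along horizontal directions, to show $u\in W^{2,p}_{\rm H,loc}$ for all $p<\infty$: indeed $L_{A(x_0)}(\eta u)$ can be expanded, the principal term being $\eta f$ plus commutator terms involving $X_i u$, $u$ and derivatives of $\eta$, all of which lie in $L^2_{\rm loc}$ to begin with; the $W^{2,p}_{\rm H}$-Calder\'on--Zygmund estimates for $L_{A(x_0)}$ then give $u\in W^{2,p}_{\rm H}(B_{\epsilon/2})$ with the gain propagated by a standard iteration (each step raises the exponent via the subelliptic Sobolev embedding, since $L^p$ control of $X_iX_j u$ gives $C^{0,\beta}_{\rm H}$ control of $u$ once $p$ exceeds the homogeneous dimension). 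Once $u\in C^{1,\beta}_{\rm H,loc}$ for some $\beta$, the frozen-coefficient right-hand side $f$ above belongs to $C^{\gamma}_{\rm H,loc}$ with $\gamma=\min(\alpha,\beta)$, because $(a_{ij}(x)-a_{ij}(x_0))X_iX_j u$ — no, that term is only $L^p$; instead I should keep it on the left and absorb. More cleanly: once $X_iX_j u\in L^p_{\rm loc}$ for large $p$ and $u\in C^{1,\beta}_{\rm H}$, rewrite $L_{A(x)}u = g$ with $g\in C^\alpha_{\rm H}$ and apply the a-priori Schauder estimate of Proposition~\ref{schauder} (in its $X_i^*X_j$ or $X_iX_j$ form) — but Proposition~\ref{schauder} assumes $u\in C^{2,\alpha}_{\rm H,loc}$ a priori, which is exactly what we are trying to prove, so instead I apply it to mollifications $u_\delta$ and pass to the limit using uniform bounds.

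Concretely, the final step: let $u_\delta$ be a (group-type, or coordinate) mollification of $u$; then $L_{A(x)}u_\delta = g_\delta + E_\delta$ where $g_\delta\to g$ in $C^\alpha_{\rm H}$ on a slightly smaller ball and $E_\delta$ — the commutator between mollification and the variable coefficients $a_{ij}$ — is bounded in $C^{\alpha'}_{\rm H}$ uniformly in $\delta$ for some $\alpha'<\alpha$, using that $a_{ij}\in C^\alpha_{\rm H}$, $u\in C^{1,\beta}_{\rm H}$ and the commutator structure of the $X_i$. Applying the Schauder estimate of Proposition~\ref{schauder} to $u_\delta$ (now legitimately, as $u_\delta$ is smooth) gives $\|u_\delta\|_{C^{2,\alpha'}_{\rm H}(B_{\epsilon/4})}\le C$ uniformly in $\delta$; by Arzel\`a--Ascoli $u_\delta\to u$ in $C^{2,\alpha''}_{\rm H}(B_{\epsilon/4})$ along a subsequence, so $u\in C^{2,\alpha''}_{\rm H,loc}$. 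Finally, with $u\in C^{2}_{\rm H,loc}$ in hand, the a-priori Schauder estimate applies directly to $u$ itself and bootstraps the H\"older exponent from $\alpha''$ up to the full $\alpha$ dictated by the data, completing the proof.

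The main obstacle I anticipate is controlling the commutator term $E_\delta$ between the mollification and the variable coefficients $a_{ij}$ uniformly in the H\"older norm — in the sub-Riemannian setting mollification does not commute with the vector fields $X_i$ as cleanly as in the Euclidean case (there are lower-order error terms coming from $[\text{mollifier},X_i]$ and from the non-commutativity of the $X_i$ themselves), so one must either use a mollification adapted to the osculating group structure (Rothschild--Stein lifting) or work throughout with horizontal difference quotients rather than smoothing. Either way, verifying the uniform $C^{\alpha'}_{\rm H}$ bound on $E_\delta$, and checking that the exponent loss $\alpha\to\alpha'$ can be recovered in the last bootstrap step, is the technical crux; everything else is a routine iteration of the subelliptic Calder\'on--Zygmund and Schauder estimates already quoted in the paper.
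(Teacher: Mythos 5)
Your overall architecture is the right one and matches the paper's: freeze the coefficients at a point, treat $L_{A(x)}u=g$ as a perturbation of the constant-coefficient operator $L_{A(x_0)}$, bootstrap the integrability of $X_iX_ju$, and finish with a Schauder-type step. But the key mechanism of the bootstrap is missing, and as written the iteration does not close. You claim that the $W^{2,p}_{\rm H}$ Calder\'on--Zygmund estimates for $L_{A(x_0)}$ give $u\in W^{2,p}_{\rm H}(B_{\epsilon/2})$ for all $p$, but the right-hand side $f=g-\sum_{i,j}(a_{ij}(x)-a_{ij}(x_0))X_iX_ju$ is only known to be in $L^2$: feeding an $L^2$ right-hand side into a CZ estimate returns $W^{2,2}$, with no gain. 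The absorption you then fall back on (``keep it on the left and absorb'') is an \emph{a priori} estimate: it lets you bound $\|X_iX_ju\|_{L^p}$ only if you already know this quantity is finite, which is precisely what you are trying to prove. You correctly sense the problem mid-argument but never supply the missing ingredient.

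The paper's resolution is to write $X_iX_ju$ explicitly through the fundamental solution $\Gamma_{p_0}$ of the frozen operator (identity \eqref{a5.3}, first justified for $W^{2,2}_{\rm H}$ functions by CZ approximation) and to observe that the perturbation term has kernel $X_iX_j\Gamma_p(p,q)\,(a_{ij}(p)-a_{ij}(q))$, which by the Nagel--Stein--Wainger estimates and the H\"older continuity of $a_{ij}$ is bounded by $d(p,q)^{\alpha-Q}$ --- a \emph{fractional} integral of order $\alpha$, not a singular one. This operator maps $L^\beta\to L^\gamma$ with $\tfrac1\beta-\tfrac1\gamma=\tfrac{\alpha}{Q}$, so each pass gains a fixed amount of integrability ($L^2\to L^{2Q/(Q-2\alpha)}$, etc.), while the first-order terms gain via Sobolev embedding; after finitely many steps the fractional integral lands in a H\"older space and $X_iX_ju$ is continuous. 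This quantified gain is exactly what your ``standard iteration'' lacks. The paper then avoids your mollification-plus-commutator endgame entirely: once $X_iX_ju$ is continuous it invokes a Hopf-type theorem (Bramanti et al., \cite[Theorem~14.4]{MR2604962}) to pass to $C^{2,\alpha}_{\rm H,loc}$. The commutator bound $E_\delta$ that you flag as the ``technical crux'' is indeed genuinely problematic in the sub-Riemannian setting, and you do not resolve it; I would recommend replacing both weak points by the representation-formula argument.
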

\begin{proof}
The strategy in \cite{ADN1} consists in setting up a bootstrap argument through which the integrability of the weak second order derivatives $X_iX_j u$ of the solution increases until, in a finite number of steps, one achieves that they are continuous.  At this point ones invokes a standard extension of a classical result of Hopf  \cite{MR1545250}  or \cite[page 723]{ADN1} (for a proof  in the subelliptic setting see for instance  Bramanti et al., \cite[Theorem 14.4]{MR2604962}) which yields the last step in regularity, i.e., if $X_iX_j u$ are continuous then  $u\in C^{2,\alpha}_{\rm H,loc}.$

For a fixed $p_0\in M$ consider the {\it frozen coefficients} operator $$L_{A(p_0)}w=\sum_{i,j=1}^r a_{ij}(p_0)X_iX_j w.$$ For sake of simplicity we will write $L_p, L_{p_0}$ for $L_{A(p)},L_{A(p_0)}$.  Denote by $\Gamma_{p_0}(p,q)$ the fundamental solution of $L_{p_0}$. For fixed $r>0$, consider a smooth function $\eta\in C^{\infty}_0(B(p_0,2r))$ such that $\eta=1$ in $B(p_0,r)$. For any $p\in M$ and any smooth function $w$  one has
$$\eta(p)w(p)= \int \Gamma_{p_0}(p,q) L_{p_0}(\eta w)(q) \d \vol (q).$$
Differentiating the latter 
along two horizontal vector fields $X_i, X_j$  $i,j=1,\ldots ,r$ one obtains that   for any $p\in B(p_0,r)$ 
$$X_{i,p} u(p)= \int \bigg[X_{i,p} \Gamma_{p_0}(p,q) L_q(u\eta)  + X_{i,p}\Gamma_{p_0}(p,q) ( L_{p_0}-L_q)u\eta (q) \bigg]\d\vol(q),$$
and
$$X_{i,p} X_{j,p} u(p)= \int \bigg[X_{i,p} X_{j,p}\Gamma_{p_0}(p,q) L_q(u\eta)  + X_{i,p}X_{j,p}\Gamma_{p_0}(p,q) ( L_{p_0}-L_q)u\eta (q) \bigg]\d\vol(q)+  C(p_0)L_p(u\eta),$$
where $X_{i,p}$ denotes differentiation in the variable $p$ and $C$ is a H\"older continuous function arising from the principal value of the integral.

Setting $p=p_0$ one obtains the identity
\begin{equation}\label{a5.3} X_{i,p} X_{j,p} u(p)= \int \bigg[X_{i,p} X_{j,p}\Gamma_{p}(p,q) L_q(u\eta)  + X_{i,p}X_{j,p}\Gamma_{p}(p,q) ( L_{p}-L_q)u\eta (q) \bigg]\d\vol(q)+  C(p_0)L_p(u\eta),
\end{equation}
where the differentiation in the first term in the integrand is intended in the first set of the argument variables only. The next task is to show that identity  \eqref{a5.3} holds also for functions in $W^{2,2}_{\rm H}$, in the sense that the difference between the two sides has $L^2$ norm zero. To see this we consider a sequence of smooth approximations $w_n\to u\in W^{2,2}_{\rm H}$ in $W^{2,2}_{\rm H}$ norm. To guarantee convergence we observe that in view of the work in \cite{Roth:Stein} and \cite{nagelstwe}, the expression $ X_{i,p}X_{j,p}\Gamma_{p}(p,q) $ is a Calderon-Zygmund kernel. To prove our claim it is  then sufficient to invoke the boundedness between Lebesgue spaces  of  Calderon-Zygmund operators in the setting of  homogenous spaces (see \cite{MR0499948}), and \cite{Deng-Han}).

Our next goal is to show an improvement in the integrability of the second derivatives of the solution $u\in W^{2,2}_{\rm H}$. We write
$$X_{i,p} X_{j,p} u(p)= I_1+I_2+I_3+I_4$$
where 
$$I_1(p)=\int X_{i,p} X_{j,p} \Gamma_{p}(p,q) \eta(q) L_qu(q) \d\vol(q)+C(p) L_pu(p),$$
$$I_2(p)=\int X_{i,p} X_{j,p} \Gamma_{p}(p,q)  \sum_{i,j=1}^r a_{ij}(q) X_i \eta(q) X_j u(q)+ u(q) \sum_{i,j=1}^r a_{ij}(q) X_iX_j \eta(q) \d\vol(q),$$
$$I_3(p)=\int X_{i,p} X_{j,p} \Gamma_{p}(p,q) \sum_{i,j=1}^r (a_{ij}(p)-a_{ij}(q)) X_i X_j (\eta u) \d\vol(q).$$
Since $Lu\in C^\alpha$ and in view of the continuity of singular integral operators in H\"older spaces (see Rothschild and Stein \cite{Roth:Stein}) then $I_1\in C^{\alpha}$ and we can disregard this term in our argument.  

Next we turn our attention to $I_2$ and $I_3$.  Since $u\in W^{2,2}$ then Sobolev embedding theorem  \cite{HK} yields $\nabla_{\rm H} u\in L^{\frac{2Q}{Q-2}}_{loc}$ and as a consequence of the continuity of   Calderon-Zygmund operators in homogenous spaces one has $I_2\in L^{\frac{2Q}{Q-2}}$. 

In view of the estimates on the fundamental solution for sublaplacians by Nagel, Stein and Wainger  \cite{nagelstwe}, one has that 
$$|X_i X_j \Gamma_p (p,q)| \sup_{i,j} |a_{ij}(p)-a_{ij}(q)|  \le C(K)d(p,q)^{\alpha-Q},$$
for every $q\in K\subset \subset M$. One can then bound  $I_3$ with fractional integral operators $${\bf \mathcal I_\alpha}(\psi)(p):=\int d(p,q)^{\alpha-Q}\psi(q) d vol (q).$$ In the context of homogenous spaces (see for instance \cite{MR0499948}), these operators are bounded between the Lebesgue spaces $L^\beta\to L^\gamma$ with $\frac{1}{\beta}-\frac{1}{\gamma}=\frac{\alpha}{Q}$, whenever $1<\beta<\frac{\alpha}{Q}$. When $1+\frac{\alpha}{Q}>\beta>\frac{\alpha}{Q}$ one has that $\mathcal I_\alpha$ maps continuously $L^\beta$ into the Holder space $C^{\beta-\frac{\alpha}{Q}}_{\rm H}$.

In view of such continuity we infer that  $I_3\in L^{2\kappa}$ with $\frac{Q}{Q-2}>\kappa=\frac{Q}{Q-2\alpha} >1$. 

In conclusion, so far we have showed that if $u\in W^{2,2}_{\rm H,loc}(M)$ is a solution of $L_pu(p)\in C^{\alpha}_{\rm H}$ then  one has the integrability gain $u\in W^{2, 2 \frac{Q}{Q-2\alpha}}_{\rm H,loc}(M)$.  Iterating this process  for a finite number of steps, in the manner described in \cite[page 721-722]{ADN1}, one can increase the integrability exponent until it is larger than $\alpha/Q$ and at that point the fractional integral operators maps into a H\"older space and one finally has that $X_iX_ju$ are continuous. As described above, to complete the proof one now invokes  Bramanti et al., \cite[Theorem~14.4]{MR2604962}.

\end{proof}

\subsection{Subelliptic $Q$-Laplacian and $C^{\infty}$ estimates for non-degeneracy}\label{sec:QLap}
%
Denote by $Q$ the Hausdorff dimension of $M$. For $u\in W^{1,Q}_{\rm H,loc}(M)$, define the $Q$-Laplacian  $L_Q u$ by means of the following identity
\begin{equation}\label{QLap}
\int_M L_Q u  \,\phi \,\d\vol = \int_M |\nabla_{\rm H} u|^{Q-2} \langle \nabla_{\rm H} u, \nabla_{\rm H} \phi\rangle \d\vol, \qquad \text{ for any } \phi \in W^{1,Q}_{{\rm H},0}().
\end{equation}     If $u\in W^{2,2}_{\rm H,loc} (M)\cap W^{1,Q}_{\rm H,loc}(M)$ one can then  write almost everywhere in $M$\begin{equation}\label{QLap:forte}
L_Q u= X_i^*  (|\nabla_{\rm H} u|^{Q-2}X_i u).
\end{equation}

\begin{definition}
[$Q$-harmonic function]
Let $M $ be an equiregular sub-Riemannian manifold  of Hausdorff dimension $Q$. Fixed a measure $\vol$ on $M$, a function $u\in W^{1,Q}_{\rm H,loc}(M)$
is called {\em $Q$-harmonic} if
$$ \int_{M} |\nabla_{\rm H} u|^{Q-2} \langle \nabla_{\rm H} u, \nabla_{\rm H} \phi\rangle \d\vol =0, \qquad  \forall\phi\in W^{1,Q}_{{\rm H},0}(M).$$
\end{definition}

%
%
%
\begin{proposition}\label{smoothness} Let $M$ be an equiregular sub-Riemannian manifold  endowed with a smooth volume form $\vol$.  Let $u\in W^{1,Q}_{\rm H,loc}(M)$ be  a weak solution of $L_Qu=h$ in $M$,  with 
$h\in C^\alpha_{\rm H,loc}(M)$ and
$|\nabla_{\rm H}u|$  not vanishing in $M$. If 
$u\in C^{1,\alpha}_{\rm H,loc}(M)\cap W^{2,2}_{\rm H,loc}(M)$, then $u \in C^{2,\alpha}_{\rm H,loc}(M).$ 
\end{proposition}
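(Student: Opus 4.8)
The goal is to upgrade an a priori $C^{1,\alpha}_{\rm H,loc} \cap W^{2,2}_{\rm H,loc}$ solution of $L_Q u = h$ (with $h$ Hölder and $\nabla_{\rm H} u$ non-vanishing) to a $C^{2,\alpha}_{\rm H,loc}$ solution, and the natural route is to rewrite the quasilinear equation, which is non-degenerate at $u$ thanks to the hypothesis $|\nabla_{\rm H} u|\neq 0$, as a linear equation of the type treated in Proposition~\ref{schauder} and Lemma~\ref{ADN}. First I would expand $L_Q u = X_i^*(|\nabla_{\rm H} u|^{Q-2} X_i u)$ using the product-type formula \eqref{sdj:vec} for $X_i^*$ and carry out the differentiation of the factor $|\nabla_{\rm H} u|^{Q-2} = \big(\sum_k (X_k u)^2\big)^{(Q-2)/2}$. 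This produces, almost everywhere in $M$, an identity of the form
\begin{equation*}
\sum_{i,j=1}^r a_{ij}(x)\, X_i X_j u = h(x) + (\text{lower order terms}),
\end{equation*}
where
\begin{equation*}
a_{ij}(x) = |\nabla_{\rm H} u(x)|^{Q-2}\Big(\delta_{ij} + (Q-2)\frac{X_i u(x)\, X_j u(x)}{|\nabla_{\rm H} u(x)|^{2}}\Big),
\end{equation*}
and the lower-order terms involve only $u$ and $X_i u$ (through the structure coefficients $b^i_k$, $\partial_k b^i_k$, the density $\omega$, and its derivatives), hence are continuous — indeed Hölder — under the assumption $u\in C^{1,\alpha}_{\rm H,loc}$.

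The key structural points to check are: (i) ellipticity of $(a_{ij})$ in the sense of \eqref{elliptic} on any $U\subset\subset M$. This holds because $|\nabla_{\rm H} u|$ is continuous and non-vanishing, hence bounded above and below on compacts, and the symmetric matrix $\delta_{ij} + (Q-2)\xi_i\xi_j/|\xi|^2$ has eigenvalues $1$ (multiplicity $r-1$) and $Q-1$ (in the direction $\xi$), both positive since $Q>1$; so $\lambda,\Lambda>0$ can be chosen locally. (ii) Hölder regularity of the coefficients: $a_{ij}\in C^{\alpha}_{\rm H,loc}(M)$ because it is a smooth function (away from the zero set, which we avoid) of the components $X_k u$, each of which is in $C^{0,\alpha}_{\rm H,loc}$ by hypothesis, composed with the smooth density factors. (iii) The right-hand side $h + (\text{l.o.t.})$ lies in $C^{\alpha}_{\rm H,loc}(M)$, again because the lower-order terms are Hölder functions of $u$ and $X_k u$.

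With these three facts in hand, the conclusion is immediate from Lemma~\ref{ADN}: we have $u\in W^{2,2}_{\rm H,loc}(M)$ satisfying $\sum_{i,j} a_{ij}(x) X_i X_j u(x) = \tilde h(x) \in C^{\alpha}_{\rm H,loc}(M)$ with $a_{ij}\in C^{\alpha}_{\rm H,loc}(M)$, so Lemma~\ref{ADN} gives $u\in C^{2,\alpha}_{\rm H,loc}(M)$, as claimed. (Alternatively, since we are already assuming $u\in C^{1,\alpha}_{\rm H,loc}$, one could try to run the frozen-coefficient Schauder argument directly, but invoking Lemma~\ref{ADN} is cleaner and we have exactly its hypotheses; note also the remark after Proposition~\ref{schauder} that the Schauder theory applies equally to operators written with $X_i^*$.) The only genuinely delicate point is bookkeeping in step (i)–(iii): one must be careful that the adjoint $X_i^*$ acting on the divergence-form expression really does produce only terms of order $\le 1$ in $u$ with continuous coefficients, i.e., that no uncontrolled second derivatives of $u$ are hidden in the lower-order remainder — but since the only second-order contribution comes from differentiating $X_i u$ itself, and that contribution is precisely what is collected into $a_{ij}X_iX_j u$, this is straightforward. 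I expect the main obstacle, such as it is, to be organizing the expansion of $X_i^*(|\nabla_{\rm H} u|^{Q-2} X_i u)$ cleanly enough that the ellipticity constants and the Hölder bounds are visibly local and visibly finite; there is no deep difficulty, only care.
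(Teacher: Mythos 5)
Your proposal is correct and follows essentially the same route as the paper: rewrite $L_Q u$ in non-divergence form $a_{ij}(x)X_iX_ju + g(x,\nabla_{\rm H}u) = h$ with $a_{ij}=|\nabla_{\rm H}u|^{Q-2}\bigl(\delta_{ij}+(Q-2)X_iu\,X_ju/|\nabla_{\rm H}u|^2\bigr)$, observe that $C^{1,\alpha}_{\rm H,loc}$ regularity of $u$ together with the non-vanishing of $\nabla_{\rm H}u$ gives Hölder coefficients and local ellipticity, and conclude via Lemma~\ref{ADN}. Your explicit eigenvalue computation for the ellipticity constants is a detail the paper leaves implicit, but the argument is the same.
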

\begin{proof} 
In coordinates, let $\omega\in C^\infty$ such that $\d\vol=\omega  d\mathcal{L}$, where $\mathcal{L}$ is the Lebesgue measure.
 Since  $u\in W^{2,2}_{\rm H,loc}(M)$,  then,  a.e.~in $M$, 
 the $Q$-Laplacian can be expressed in non-divergence form

\begin{equation}\label{non-div}
(L_Qu)(x)= \alpha_{ij}(x,\nabla_{\rm H} u)  X_i X_j u + g(x,\nabla_{\rm H} u) = h(x),
\end{equation}
where
 $$\alpha_{ij}(x,\xi)=- |\xi|^{Q-4} (\delta_{ij}+ (Q-2)) \xi_i \xi_j$$ and
 $$g(x,\xi)=-\omega(x)^{-1} X_i  \omega (x) |\xi|^{Q-2}\xi_i+ \p_kb_k^i(x) |\xi|^{Q-2} \xi_i.$$
%
Set $a_{ij}(x)=\alpha_{ij}(x,\nabla_{\rm H} u)$.
 Since $u\in C^{1,\alpha}_{\rm H,loc}(M)$, we have
$$a_{ij}(\cdot) \, \text{ and }\,  g(\,\cdot\,,\nabla_{\rm H} u)  \in C^\alpha_{\rm H,loc}(M).$$
 In view of the non-vanishing of $\nabla_{\rm H} u$, one can invoke  Lemma~\ref{ADN}, to obtain $u\in C^{2,\alpha}_{\rm H,loc}(M).$   
\end{proof}

\section{Definitions of $1$-quasiconformal maps}\label{defs1qc}

 In this whole section we prove Theorem \ref{theorem0} and the corollaries thereafter. In particular, we show the equivalence of the definitions \eqref{H} - \eqref{LP} of $1$-quasiconformal maps, and show how \eqref{EP} and \eqref{CP}  
are consequences. 
To help the reader, we provide the following road map. The nodes of the graph indicate the definitions in  Theorem \ref{theorem0}, the tags on the arrows are the labels of  Propositions, Corollaries and Remarks in the present section. 

\begin{displaymath}
\xymatrix{
	&	\eqref{H} \ar[d]|{\,\,\ref{tan1qcsr}
}\ar@{<->}[r]^ {\ref{H_H=}}& \eqref{H=} 
	&	\\
	 \ar[r]^{\ref{rmk:similarity}}
\eqref{HS}
	\ar@{-}[d]   
	\ar@{<->}[r]&	\eqref{S}
	\ar@/^/[d]^{\ref{S_JP}}
	 \ar@{<->}[r]^ {\ref{L=LN}}  
&  \ar[u]|{\,\,\ref{L_H}} \eqref{L} \ar@{<->}[r]^ {\ref{L=LN}}& \eqref{LN}   	  \\
\ar[d]_{\ref{JP_LP}}
\ar@{-}[r]
\oplus
&	 \eqref{JP} 	\ar@/^/[u]^{\ref{JP_S}} 
 \ar@{<->}[r]^{{\ref{MP_L}} }&
  \eqref{MP} 	
  &	\\
 \eqref{LP} 
 \ar[ru]^{{\ref{LP_JP}}} 
 \ar[r]^{{\ref{rmk:E_I}}} & \eqref{EP}  \ar[r]^{\ref{EP_CP}} &		\eqref{CP} 
 &
 }
 \end{displaymath}

%
%
%
%
%
%
%
%
%
%
%
%

\subsection{Ultratangents of $1$-quasiconformal maps}

We refer 
 the reader who is not familiar with the notions of nonprincipal ultrafilters and ultralimits to Chapter 9 of Kapovich's book \cite{Kapovich_book}.
 Roughly speaking, taking ultralimits with respect to a nonprincipal ultrafilter is a consistent way of using the axiom of choice to select an accumulation point of any bounded sequence of real numbers.
Let $\omega$  be a nonprincipal ultrafilter.
Given a sequence $X_j$ of metric spaces with base points $\star_j\in X_j$, we shall consider the {\em based ultralimit metric space}
$$(X_\omega, \star_\omega):=(X_j, \star_j)_\omega := \lim_{j\to \omega} (X_j, \star_j).$$
We recall briefly the construction. Let 
$$X^\N_b:=\left\{ (p_j)_{j\in \N} : p_j\in  X_j , \sup\{ d(p_j, \star_j):j\in\N\}<\infty\right\}.$$
For all $ ( p_j ) _{j},  (q_j ) _{j}\in X^\N_b$, set
$$d_\omega ( ( p_j ) _{j},  ( q_j ) _{j}):= \lim_{j\to \omega} d_j(p_j, q_j),$$
where $ \lim_{j\to \omega}$ denotes  the $\omega$-limit of a sequence  indexed by $j$.
Then $X_\omega$ is the metric space obtained by taking the quotient of $(X^\N_b, d_\omega)$
by the semidistance $d_\omega$. We denote by $[p_j]$ the equivalence class of $ ( p_j ) _{j}$. The base point  $\star_\omega$ in $X_\omega$ is $[\star_j]$.

Suppose $f_j:X_j\to Y_j$ are 
maps between 
metric spaces, 
$\star_j\in X_j$
are base points,  and 
we have the property that
$  ( f_j(p_j) ) _j 
 \in Y^\N_b$,
for all 
$  ( p_j ) _j 
\in X^\N_b$.
 %
Then the ultrafilter $\omega$ assigns a limit map
$f_\omega := 
\lim_{j\to \omega} f_j :
(X_j, \star_j)_\omega 
\to
(Y_j, f_j(\star_j))_\omega $
as
$f_\omega  ([p_j]) := [f_j(p_j)]$.

Let $X$ be a metric space with distance $d_X$.
We fix a nonprincipal ultrafilter $\omega$, a base point $\star\in X$, and a sequence of positive numbers
$\lambda_j\to \infty$ as $j\to \infty$.
We define the {\em ultratangent at} $\star$ of $X$ as
$$T_\omega(X,\star):=\lim_{j\to \omega} (X,\lambda_j d_X, \star).$$
Moreover, given $f:(X,d_X)\to (Y,d_Y)$, we call the {\it ultratangent map} of $f$ at $\star$ the limit, whenever it exists, of the maps $f:(X,\lambda_jd_X,\star)\to(Y,\lambda_j d_Y,f(\star))$, denoted $T_\omega(f,\star)$.

\begin{lemma}\label{Lemma:tan1qc}
Let  $X$ and $Y$ be  geodesic metric spaces and let $ f:X\to Y$ be  a  quasiconformal map satisfying $H_f(\star) = 1$ at some point $\star \in X$.
Fix  a nonprincipal ultrafilter  $\omega$ and  dilations factors $\lambda_j\to \infty$.
If the ultratangent map $f_\omega=T_\omega(f,\star)$
exists, 
 then for
$p,q\in T_\omega(X,\star)$
$$
d(\star_\omega, p) = d(\star_\omega, q)\implies d(f_\omega(\star_\omega), f_\omega(p)) = d( f_\omega(\star_\omega), f_\omega( q)).$$
\end{lemma}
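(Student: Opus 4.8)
The plan is to exploit the definition of $H_f(\star)=1$ directly, passing to the blow-up, and to leverage the fact that in a geodesic space the ``sup over a ball'' and ``inf over a sphere'' quantities become genuine maxima and minima at the given scale. First I would set up notation: write $\star_\omega = [\star]$ for the base point of $T_\omega(X,\star)$, and recall that a point $p \in T_\omega(X,\star)$ is represented by a sequence $(p_j)_j$ with $p_j \in X$ and $\lambda_j d_X(\star, p_j)$ bounded, so $d(\star_\omega, p) = \lim_{j\to\omega} \lambda_j d_X(\star, p_j)$; similarly $d(f_\omega(\star_\omega), f_\omega(p)) = \lim_{j\to\omega} \lambda_j d_Y(f(\star), f(p_j))$, since $f_\omega$ is by hypothesis the ultralimit of the rescaled maps.

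The heart of the argument is the following claim: for every $r > 0$ and $p, q \in T_\omega(X,\star)$ with $d(\star_\omega, p) = d(\star_\omega, q) = r$, one has $d(f_\omega(\star_\omega), f_\omega(p)) = d(f_\omega(\star_\omega), f_\omega(q))$. I would prove this by a squeeze using $H_f(\star) = 1$. Fix $\varepsilon > 0$. Since $\limsup_{s\to 0}\frac{\sup\{d_Y(f(\star),f(x)) : d_X(\star,x)\le s\}}{\inf\{d_Y(f(\star),f(x)) : d_X(\star,x)\ge s\}} = 1$, for all sufficiently small $s$ the ratio is at most $1 + \varepsilon$. Apply this with $s = s_j := d_X(\star, p_j)$ (which tends to $0$ along $\omega$ because $\lambda_j s_j$ is bounded and $\lambda_j \to \infty$): the numerator dominates $d_Y(f(\star), f(p_j))$, and the denominator is dominated by $d_Y(f(\star), f(q_j))$ provided $d_X(\star, q_j) \ge s_j$. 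Here is where geodesicity enters: if $d_X(\star, q_j) < s_j$, replace $q_j$ by a point $\tilde q_j$ on a geodesic from $\star$ through $q_j$ extended (or on a geodesic from $\star$ to a far point passing near $q_j$) with $d_X(\star, \tilde q_j) = s_j$ and $d_X(q_j, \tilde q_j) = s_j - d_X(\star,q_j) = o(1/\lambda_j)$ along $\omega$ — since $d(\star_\omega,p) = d(\star_\omega,q)$ forces $\lambda_j(s_j - d_X(\star,q_j)) \to 0$. Then $\lambda_j d_Y(f(q_j), f(\tilde q_j)) \to 0$ too, because $f$ restricted to a small ball has bounded pointwise Lipschitz behavior (quasiconformality gives $\Lip_f(\star) < \infty$), so replacing $q_j$ by $\tilde q_j$ does not change the ultralimit. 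Thus $\lambda_j d_Y(f(\star),f(p_j)) \le (1+\varepsilon)\,\lambda_j d_Y(f(\star),f(\tilde q_j))$ for $\omega$-almost all $j$, and passing to the $\omega$-limit gives $d(f_\omega(\star_\omega),f_\omega(p)) \le (1+\varepsilon)\, d(f_\omega(\star_\omega),f_\omega(q))$. Letting $\varepsilon \to 0$ and then swapping the roles of $p$ and $q$ yields equality.

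The main obstacle I anticipate is the geodesic-extension step and controlling $\lambda_j d_Y(f(q_j), f(\tilde q_j))$: one must be careful that the ``correction'' point $\tilde q_j$ can be chosen with $d_X(\star, \tilde q_j)$ exactly equal to $s_j$ (a geodesic between $\star$ and $q_j$ only reaches radius $d_X(\star,q_j)$, so if $d_X(\star,q_j) < s_j$ one genuinely needs to go beyond $q_j$, which requires extending a geodesic — available in a geodesic space only up to some length, but here the needed extension length is $o(1/\lambda_j)$, hence arbitrarily small for large $j$, so local geodesic extendability suffices, or one can instead perturb $p_j$ inward rather than $q_j$ outward). One also needs the uniform local Lipschitz bound for $f$ near $\star$: this follows because $\Lip_f(\star) = \limsup_{x\to\star} d_Y(f(\star),f(x))/d_X(\star,x) < \infty$ is implied by $H_f(\star) = 1$ together with quasiconformality (boundedness of $H_f$), so for $x$ in a small ball $d_Y(f(\star),f(x)) \le C\, d_X(\star,x)$, and by the triangle inequality and a standard chaining one gets $d_Y(f(x),f(y)) \le C'\, d_X(x,y)$ for $x,y$ near $\star$. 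Once these two technical points are in place, the squeeze argument is routine and the lemma follows.
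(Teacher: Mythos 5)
Your overall strategy is the same squeeze as the paper's: normalize both representatives to lie at a common distance from $\star$ and then compare numerator and denominator of the ratio defining $H_f(\star)$. However, there is a genuine gap in how you justify replacing $q_j$ by $\tilde q_j$. You claim that $\lambda_j\, d_Y(f(q_j),f(\tilde q_j))\to 0$ because ``$f$ restricted to a small ball has bounded pointwise Lipschitz behavior'' and that a ``standard chaining'' upgrades $\Lip_f(\star)<\infty$ to $d_Y(f(x),f(y))\le C'\,d_X(x,y)$ for $x,y$ near $\star$. Neither step is available: quasiconformality plus $H_f(\star)=1$ does not imply $\Lip_f(\star)<\infty$ (radial stretchings such as $x\mapsto x/|x|^{1/2}$ have $H_f(0)=1$ but infinite pointwise Lipschitz constant at $0$), and even where the pointwise constant is finite at every point, a quasiconformal map need not be locally Lipschitz, so no chaining gives a two-point estimate $d_Y(f(x),f(y))\le C'\,d_X(x,y)$ for points $x,y$ both at distance $\approx 1/\lambda_j$ from $\star$. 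So the inequality $\lambda_j\, d_Y(f(q_j),f(\tilde q_j))\to 0$ is unjustified as you argue it.

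The correct justification — and the one the paper uses — requires no metric control on $f$ at all: one shows only that $[\tilde q_j]=[q_j]$ \emph{in the domain ultratangent} (this is the computation $\lim_{j\to\omega}\lambda_j\bigl(d(\star,q_j)-r_j\bigr)=R-R=0$), and then invokes the standing hypothesis that $f_\omega$ exists, hence is well defined on equivalence classes, to conclude $f_\omega(q)=f_\omega([\tilde q_j])=[f(\tilde q_j)]$. No comparison of $f(q_j)$ with $f(\tilde q_j)$ inside $Y$ is ever needed. The paper also sidesteps your geodesic-extension worry by always moving \emph{inward}: it sets $r_j=\min\{d(\star,p_j),d(\star,q_j)\}$ and slides the farther point back along a geodesic to $\star$, so only existence (not extendability) of geodesics is used. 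You do mention perturbing $p_j$ inward as an alternative, which is the right move; with that choice and with the well-definedness of $f_\omega$ replacing your Lipschitz argument, your proof closes and coincides with the paper's.
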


\proof
Take $p=[p_j], q=[q_j]\in T_\omega (X, \star)$ with 
$d(\star_\omega, p) = d(\star_\omega, q)=:R.$
Namely,
$$\lim_{j\to \omega} \lambda_j d(\star,p_j) = \lim_{j\to \omega} \lambda_j d(\star,q_j) = R.$$

Set $r_j:= \min\{  d(\star,p_j), d(\star,q_j)\}.$
Fix $j$ and suppose $r_j=d(\star,p_j)$ so  $r_j\leq d(\star,q_j)$. Since $Y$ is geodesic, there exists $q'_j\in X$ along a geodesic between $\star$ and $q_j$ with 
$$d (\star,q'_j) = r_j\quad \text{  and } \quad d (q_j,q'_j)  =d (\star,q_j) - r_j.$$ 

We claim that $[q'_j]=[q_j]$. Indeed,
\begin{eqnarray*}
d_\omega ([q'_j],[q_j]) &=&  \lim_{j\to \omega} \lambda_j d(q'_j,q_j)\\
&=& \lim_{j\to \omega} \lambda_j (  d (\star,q_j) - r_j )\\
&=& \lim_{j\to \omega} \lambda_j  d (\star,q_j)  -   \lambda_j  d (\star,p_j)\\
&=& R-R=0. 
\end{eqnarray*}
Reasoning similarly with $p_j$'s, we may conclude that $p=[p'_j]$ and $
q=[q_j']$ with
$  d(\star,p'_j)= d(\star,q'_j)=r_j.$
Hence, by definition of $f_\omega$ we have
$f_\omega  (p) = f_\omega  ([p'_j]) = [f_j(p'_j)]$
and
$f_\omega  (q) = f_\omega  ([q'_j]) = [f_j(q'_j)]$.
We then calculate 
\begin{eqnarray*}
\dfrac{d_\omega (f_\omega(\star_\omega), f_\omega(p)) }{d_\omega (f_\omega(\star_\omega), f_\omega(q))}
&=&\dfrac{\lim_{j\to \omega} \lambda_j d(f(\star), f(p_j')) }{\lim_{j\to \omega} \lambda_j d(f(\star), f(q'_j))}\\
&=&\dfrac{\lim_{j\to \omega}  d(f(\star), f(p_j')) }{\lim_{j\to \omega}  d(f(\star), f(q'_j))}\\
&\leq&\lim_{j\to \omega}\dfrac{\sup \{d(f(\star), f(a)) : 
d(\star, a)\leq
r_j\}}{\inf\{d(f(\star), f(b)) : 
d(\star, b)\geq
r_j \}}\\
&=&1.
\end{eqnarray*}
Arguing along the same lines one obtains ${d_\omega (f_\omega(\star_\omega), f_\omega(q))} \leq {d_\omega (f_\omega(\star_\omega), f_\omega(p)) }$ and hence the statement of the lemma follows.
\qed

\subsection{Tangents of quasiconformal maps in sub-Riemannian geometry}\label{sec:tange}
We recall now some known results due to Mitchell \cite{Mitchell} and Margulis, Mostow \cite{Margulis-Mostow}, which are needed to show that every $1$-quasiconformal map induces at almost every point a $1$-quasiconformal isomorphism of the relative ultratangents. For the sake of our argument, we rephrase their results using the convenient language of ultrafilters.

Let $M$ be an equiregular sub-Riemannian manifold. From \cite{Mitchell}, for every $p\in M$ the ultratangent $T_\omega(M,p)$ is isometric to a Carnot group, denoted $\mathcal N_p(M)$,  also called nilpotent approximation of $M$ at $p$.
Each horizontal vector of $M$ at $p$ has a natural identification with an horizontal vector of $\mathcal N_p(M)$ at the identity. 
Such identification is an isometry between the horizontal space $H_pM$ and the horizontal space of 
$\mathcal N_p(M)$ at the identity, both equipped with the  scalar products given by respective sub-Riemannian structures.
Next, consider $f:M\to N$ a quasiconformal map between equiregular sub-Riemannian manifolds $M$ and $N$. By the work of Margulis and Mostow \cite{Margulis-Mostow}, there exists at almost every $p\in M$ the ultratangent map $T_\omega(f,p)$ that is a group isomorphism
$$
{\mathcal N}_p(f): {\mathcal N}_p(M)\to {\mathcal N}_{f(p)}(N)
$$
that commutes with the group dilations, and
 it is independent on the ultrafilter $\omega$ and the sequence $\lambda_j$.
Part of Margulis and Mostow's result is that the map $f$ is almost everywhere differenziable along horizontal vectors. 
Hence, for almost every $p\in M$ and for all horizontal vectors $v$ at $p$, 
we can consider the
push-forwarded vector, which we
denote by 
$(\d_{\rm H} f)_p (v)$.
We call the map
$$(\d_{\rm H} f)_p :H_p M \to H_{f(p)}N$$
the {\em horizontal differential} of $f$ at $p$.

\begin{remark}\label{rmk:similarity}
With the above identification, we have 
\begin{equation}\label{horizontal-push-forward}
(\d_{\rm H} f)_p(v) = \mathcal N_p(f)_* v, \qquad \forall v\in H_pM,
\end{equation}
so
$(\d_{\rm H} f)_p$ is a restriction of 
$\mathcal N_p(f)_*$. Vice versa, 
$(\d_{\rm H} f)_p$ 
completely determines $\mathcal N_p(f)$, since $\mathcal N_p(f)$
is a homomorphism and 
$H_p M$ generates the Lie algebra of $\mathcal N_p(M)$.
In particular, 
$(\d_{\rm H} f)_p$ is a similarity if and only if 
$\mathcal N_p(f)$ is a similarity with same factor.
Hence, Conditions~\eqref{S} and \eqref{HS} are equivalent.
\end{remark}

Next we introduce some expressions that can be used to quantify the distortion.
\begin{align*}
\underline\L_f(p)&:= \liminf_{r \to 0}\frac{
\sup \{d_N(f(p),f(q)):  {d_M(p,q)\leq r}   \}}{r},\\
\overline\L_f(p)&:= \limsup_{r \to 0}\frac{
\sup \{d_N(f(p),f(q)):  {d_M(p,q)\leq r}   \}}{r},\\
\overline\L_f^=(p)&:= \limsup_{r \to 0}\frac{
\sup \{d_N(f(p),f(q)):  {d_M(p,q)= r}   \}}{r},\\
\underline\L_f^=(p)&:= \liminf_{r \to 0}\frac{
\sup \{d_N(f(p),f(q)):  {d_M(p,q)= r}   \}}{r},
\end{align*}
\begin{eqnarray*}
\norm{\mathcal N_p(f)}&:=&
\max \{d(e, \mathcal N_p(f) (y) ) : d_{\mathcal N_p(M)}(e,y)\leq1\}\\
&=& \max \{d(e, \mathcal N_p(f) (y) ) : d_{\mathcal N_p(M)}(e,y)=1\}.
\end{eqnarray*}
\begin{remark}\label{realization}
There exists a horizontal vector at $p$ such that $\norm{X}=1 $ and
$\norm{f_*X} =  
\norm{\mathcal N_p(f)}
$,
which in other words means 
that $X$ is in the first layer of the Carnot group 
${\mathcal N_p(M)}$,
$
d_{\mathcal N_p(M)}(e,\exp(X))=1$, and 
$d_{\mathcal N_{f(p)}(N)}(e, \mathcal N_p(f) (\exp(X)) ) =
\norm{\mathcal N_p(f)}
$.
\end{remark}

The following holds.
\begin{lemma}\label{uguaglianze}
Let  $M$ and $N$ be (equiregular) sub-Riemannian manifolds and let $ f:M\to N$ be  a  quasiconformal map. Let $p$ be  a point of differentiability for $f$. 
We have
$$\Lip_f(p)=\norm{\mathcal N_p(f)}=\Lip_{\mathcal N_p(f)}(e)=\overline\L_f(p)=\underline\L_f(p)=\overline\L_f^=(p)=\underline\L_f^=(p).$$

\end{lemma}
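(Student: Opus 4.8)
The plan is to establish Lemma~\ref{uguaglianze} by showing that all seven quantities are squeezed between two expressions involving $\norm{\mathcal N_p(f)}$, using the pointwise horizontal differentiability of $f$ at $p$ together with the fact (from Mitchell and Margulis--Mostow) that $f$ has an ultratangent map at $p$ equal to the group isomorphism $\mathcal N_p(f)$, which is $1$-homogeneous with respect to the dilations. First I would record the trivial inequalities that hold for any map: $\underline\L_f^=(p)\le\overline\L_f^=(p)$, $\underline\L_f(p)\le\overline\L_f(p)$, and since the condition $d_M(p,q)=r$ is more restrictive than $d_M(p,q)\le r$, also $\overline\L_f^=(p)\le\overline\L_f(p)$ and $\underline\L_f^=(p)\le\underline\L_f(p)$; moreover $\Lip_f(p)\le\overline\L_f(p)$ directly from the definitions. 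The geodesic property of $M$ (Carnot--Carathéodory spaces are geodesic) gives the reverse comparison between the ``$=r$'' and ``$\le r$'' suprema: given $q$ with $d_M(p,q)\le r$, a point $q'$ on a geodesic from $p$ to $q$ at distance exactly $r$ (or, if $d_M(p,q)$ is small, a continuity/approximation argument) lets one replace $\sup_{d\le r}$ by $\sup_{d=r}$ up to the modulus of continuity of $f$, yielding $\overline\L_f(p)\le\overline\L_f^=(p)$ and similarly for the $\liminf$. This collapses the four $\L$-type quantities to a single number.

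Next I would identify that common number with $\norm{\mathcal N_p(f)}$. The point is that rescaling both $M$ near $p$ and $N$ near $f(p)$ by $\lambda_j\to\infty$ converges (along the ultrafilter $\omega$) to the Carnot groups $\mathcal N_p(M)$, $\mathcal N_{f(p)}(N)$, and $f$ converges to $\mathcal N_p(f)$; concretely, for $q_j\to p$ one has $d_N(f(p),f(q_j))/d_M(p,q_j)\to d_{\mathcal N_{f(p)}(N)}(e,\mathcal N_p(f)(v))/\|v\|$ where $v$ is the ultralimit of the rescaled displacement vectors. Taking the supremum over directions gives $\overline\L_f(p)=\norm{\mathcal N_p(f)}$, and the $\liminf$ version gives $\underline\L_f(p)=\norm{\mathcal N_p(f)}$ as well because $\mathcal N_p(f)$ is a genuine homogeneous isomorphism, so the ratio has a true limit, not merely limsup/liminf, over sequences. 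The equality $\norm{\mathcal N_p(f)}=\Lip_{\mathcal N_p(f)}(e)$ is essentially a tautology: $\Lip_{\mathcal N_p(f)}(e)=\limsup_{y\to e}d(e,\mathcal N_p(f)(y))/d(e,y)$, and by $1$-homogeneity of $\mathcal N_p(f)$ and of the Carnot metric this limsup is attained along every ray and equals $\max\{d(e,\mathcal N_p(f)(y)):d(e,y)=1\}$, which is the definition of $\norm{\mathcal N_p(f)}$ (using Remark~\ref{realization} that the max over the ball equals the max over the sphere). Finally $\Lip_f(p)=\norm{\mathcal N_p(f)}$ follows by combining $\Lip_f(p)\le\overline\L_f(p)=\norm{\mathcal N_p(f)}$ with the reverse inequality obtained by choosing, via Remark~\ref{realization}, a direction $X$ in the first layer realizing the operator norm and pushing points along $\exp(tX)$-type curves.

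I expect the main obstacle to be the careful passage to the ultratangent in the step identifying $\overline\L_f(p)$ and $\underline\L_f(p)$ with $\norm{\mathcal N_p(f)}$: one must verify that the ultralimit of rescaled maps is well-defined and equals $\mathcal N_p(f)_*$ on the relevant displacement vectors (this is where Margulis--Mostow differentiability and the independence of $\mathcal N_p(f)$ from $\omega$ and $\lambda_j$ are used), and one must be sure that taking $\limsup$ and $\liminf$ over $r\to0$ interacts correctly with taking $\omega$-limits over rescaling sequences — in particular that the $\liminf$ is not strictly smaller, which uses that $\mathcal N_p(f)$ is an honest homeomorphism so that the infimum of $d(e,\mathcal N_p(f)(y))$ over the unit sphere is positive and achieved. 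The geodesic-space argument bridging ``$d=r$'' and ``$d\le r$'' is routine but must be stated with the correct uniform continuity estimate for $f$ near $p$, which is available since $f$ is a homeomorphism and $H_f$ is locally bounded for quasiconformal maps. Everything else is bookkeeping of inequalities.
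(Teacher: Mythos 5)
Your proposal follows essentially the same route as the paper: blow up at $p$ via the Margulis--Mostow ultratangent, identify each distortion quantity with $\norm{\mathcal N_p(f)}$ using homogeneity of $\mathcal N_p(f)$ and Remark~\ref{realization}, and observe that $\norm{\mathcal N_p(f)}=\Lip_{\mathcal N_p(f)}(e)$ is immediate from $1$-homogeneity. One sub-step is described with the wrong mechanism, though. To pass from $\sup\{d(f(p),f(q)):d(p,q)\le r\}$ to the sphere version you propose taking ``a point $q'$ on a geodesic from $p$ to $q$ at distance exactly $r$''; but extending the geodesic beyond $q$ gives no lower bound on $d(f(p),f(q'))$, so this does not show $\overline\L_f(p)\le\overline\L_f^=(p)$. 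The correct (and elementary) fix, which is what the paper does, is to shrink rather than extend: the supremum over the closed ball $\bar B(p,r)$ is attained at some $q_0$ with $d(p,q_0)=r'\le r$, so it coincides with the sphere supremum at radius $r'$, and since $r'/r\le 1$ the ratio only improves; this yields $\underline\L_f(p)\le\overline\L_f^=(p)$ along such radii. The genuine geodesic argument is needed only for the lower bound $\norm{\mathcal N_p(f)}\le\underline\L_f^=(p)$, where one represents $\delta_{1+\eps}(y)$ by a sequence $q_j'$ and cuts the geodesic from $p$ to $q_j'$ at distance exactly $s_j$, using continuity of $d(e,\mathcal N_p(f)(\cdot))$ as $\eps\to 0$. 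With that correction your outline matches the paper's proof; the remaining claims (in particular that $\sup_{\le r}/r$ has an honest limit equal to $\norm{\mathcal N_p(f)}$, proved in the paper by squeezing $\overline\L_f\le\norm{\mathcal N_p(f)}\le\underline\L_f$ using $\delta_\mu(y)$ with $\mu<1$ to stay strictly inside the ball) are handled exactly as you indicate.
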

\begin{proof} 

Proof of $\Lip_f(p)\leq \norm{\mathcal N_p(f)}$.
Let $p_j\in M$ such that $p_j\to p$ and
$$\Lip_{ f}(p) = \lim_{j\to \infty} \dfrac{d(f(p), f(p_j))}{d(p,p_j)}.$$
Let $\lambda_j:=1/d(p,p_j)$, so $\lambda_j\to \infty$. We fix now any nonprincipal ultrafilter $\omega$ and consider ultratangents with respect to dilations $\lambda_j$.
Hence, 
\begin{align*}
\Lip_{ f}(p) &= \lim_{j\to \infty} \lambda_j d(f(p), f(p_j))
\\&= 
d_\omega([f(p)],  [f(p_j)])\\
&= 
d_\omega(\mathcal N_pf([p]),\mathcal N_pf( [p_j]))
\\
&
\leq
\norm{\mathcal N_p f}
d_\omega([p], [p_j])\\&=
\norm{\mathcal N_p f}\lim_{j\to\omega}\lambda_j d(p,p_j) \\&=\norm{\mathcal N_p f}
.
\end{align*}

Proof of $\Lip_f(p)\geq \norm{\mathcal N_p(f)}$.
  Take $y\in {\mathcal N_p(M)}$ with 
$d (e,y)= 1$
that realizes the maximum in $\norm{\mathcal N_p(f)}$.  
 Choose a sequence  $q_j\in M$ such that $[q _j]$
represents the point $ y$.
Let $\lambda_j\to \infty$ be the dilations factors for which we calculate the ultratangent. 
Since
$$1=d (e,y)=\lim_{j\to \omega} \lambda_j d (p,q_j),$$
then, up to passing to a subsequence of indices, $d(p,q_j)\to 0$. Moreover, 
\begin{align*}
\Lip_{ f}(p) &\geq 
\limsup_{j\to \infty} 
\dfrac{ d(f(p), f(q_j))}{d(p,q_j)}
\\&= 
\limsup_{j\to \infty} 
\lambda_j  d(f(p), f(q_j)) 
\\&= 
d_\omega([f(p)],  [f(q_j)])\\
&= 
d_\omega(e,\mathcal N_pf( y))
\\
& =\norm{\mathcal N_p f}
.
\end{align*}


Proof of $\overline\L_f(p)\leq \norm{\mathcal N_p(f)}$.
There exists $r_j\to 0$ and $p_j\in M$ with $d_M(p,p_j)\leq r_j$ such that
$$
\overline\L_f(p)=\lim_j \frac{d_N(f(p),f(p_j))}{r_j}.
$$
 Then, using $1/r_j$ as scaling for the ultratangent, we have
$d_\omega ([p], [p_j] ) \leq 1$ and
$\overline\L_f(p)=\lim_j \frac{1}{r_j}d_N(f(p),f(p_j))
= d_\omega ([f(p)], [f(p_j)] ) 
\leq \norm{\mathcal N_p(f)}$.
 
Proof of $\norm{\mathcal N_p(f)}\leq \underline\L_f(p)$. Take $y\in {\mathcal N_p(M)}$ with 
$d_{\mathcal N_p(M)}(e,y)\leq1$
that realizes the maximum in $\norm{\mathcal N_p(f)}$.  
Choose subsequences $s_j\to 0 $ 
that realizes the limit in the definition of $\underline\L_f(p)$, i.e., 
so that
$$
\underline\L_f(p)=\lim_j 
\frac{
\sup \{d_N(f(p),f(q)):  {d_M(p,q)\leq s_j}   \}}{s_j}.$$


We use $1/s_j$ as scaling factors for the ultratangent space. 
For any $\mu\in (0,1)$ choose a sequence  $q_j\in M$ such that $[q _j]$
represents the point $\delta_\mu(y)$. Therefore,
we have that
$$
\lim_j \frac{d_M(p,q_j)}{s_j}
=d_{\mathcal N_p(M)}(e,\delta_\mu(y))\leq \mu d_{\mathcal N_p(M)}(e,y)\leq \mu <1.
$$
For $j$ big enough we then have $d_M(p,q_j)<s_j$. So 
$$
d_N(f(p),f(q_j))\leq \sup \{d_N(f(p),f(q)): d_M(p,q)\leq s_j\},
$$
whence, dividing both sides by $s_j$ and letting $j\to \infty$, we get
$$d_\omega ( [f(p)] ,[f(q_j)] )  \leq \underline\L_f(p), $$
which, in view of the homogeneity of  $\mathcal N_p(f)$, yields  
$$
\mu \norm{\mathcal N_p(f)} =d_{\mathcal N_p(M)}(e,\mathcal N_p(f)(\delta_\mu y))\leq \underline\L_f(p).
$$
Since the last inequality holds for all $\mu \in (0,1)$, the conclusion follows.


 Proof of $ \underline\L_f(p)\geq \overline\L_f^=(p).$
  Since $$     \sup \{d_N(f(p),f(q)):  {d_M(p,q)\leq r}   \} \geq \sup \{d_N(f(p),f(q)):  {d_M(p,q)= r}   \},$$ one has
\begin{eqnarray*}
\underline\L_f(p)&=&
\liminf_{r \to 0} \frac{\sup \{d_N(f(p),f(q)):  {d_M(p,q)\leq r}   \}}{r}\\
&\geq& \limsup_{r\to 0} \frac{\sup \{d_N(f(p),f(q)):  {d_M(p,q)= r\}}}{r} = \overline\L_f^=(p).
\end{eqnarray*}

 Proof of $ \underline\L_f(p)\leq\overline\L_f^=(p).$
Choose a  sequence $r_j\to 0$ such that 
$$
  \frac{
\sup \{d_N(f(p),f(q)):  {d_M(p,q)\leq r_j}   \}}{r_j}=\frac{
\sup \{d_N(f(p),f(q)):  {d_M(p,q)= r_j}   \}}{r_j},
$$
and so in particular 
\begin{eqnarray*}
\underline\L_f(p)
&=&
 \liminf_j \frac{
\sup \{d_N(f(p),f(q)):  {d_M(p,q)\leq r_j}   \}}{r_j}\\
&\leq&\limsup_j \frac{
\sup \{d_N(f(p),f(q)):  {d_M(p,q)= r_j}   \}}{r_j}\leq \overline\L_f^=(p).
\end{eqnarray*}

 Proof of $ \norm{\mathcal N_p(f)}\leq\underline\L_f^=(p).$
  Take $y\in {\mathcal N_p(M)}$ with 
$d (e,y)= 1$
that realizes the maximum in $\norm{\mathcal N_p(f)}$.  
Choose subsequences $s_j\to 0 $ 
that realizes the limit in the definition of $\underline\L_f^=(p)$, i.e., 
so that
$$
\underline\L_f^=(p)=\lim_j 
\frac{
\sup \{d_N(f(p),f(q)):  {d_M(p,q)= s_j}   \}}{s_j}.$$
We use $1/s_j$ as scaling factors for the ultratangent space. 
For any $\eps>0 $ choose a sequence  $q'_j\in M$ such that $[q' _j]$
represents the point $\delta_{1+\eps}(y)$. Therefore,
we have that
$$
1+\eps=d(e, \delta_{1+\eps}(y))=
\lim_{j\to \omega} \frac{d(p,q'_j)}{s_j}.
$$
For $j$ big enough we then have $d(p,q'_j)\in (s_j, (1+2\eps)s_j)$. Since $M$ is a geodesic space, we consider a point $q''_j\in M$ such that
$d(p,q''_j)= s_j$ and lies in the geodesic between $p$ and $q'_j$, consequently 
$d(q'_j,q''_j)\leq 2\eps s_j$.

Set $y_\eps\in  {\mathcal N_p(M)}$ the point being represented by the sequence $q''_j$.
We have $d(\delta_{1+\eps} y , y_\eps) <2\eps$.
From which we get that $y_\eps \to y$, as $\eps\to 0$.
We then bound
$$\underline\L_f^=(p)
\geq 
\lim_j 
\frac{
 d(f(p),f(q''_j)) }{s_j}
 = d (\mathcal N_p(f)(y_\eps), e).
 $$
 Since  $d (\mathcal N_p(f)(y_\eps), e)$ is continuous at $\eps=0$ and converges to $\norm{\mathcal N_p(f)}$, as $\eps\to 0$, we obtain the desired estimate.

  To conclude the proof of the proposition, one observes that 
 $\underline\L_f(p)\leq \overline\L_f(p)$  and  $\underline\L_f^=(p)\leq \overline\L^=_f(p)$  are trivial. 
\end{proof}

\begin{corollary}\label{L=LN:first}
Let  $M$ and $N$ be (equiregular) sub-Riemannian manifolds and let $ f:M\to N$ be  a  quasiconformal map. Let $p$ be  a point of differentiability for $f$. 
We have
\begin{equation}\label{eq:L=LN}
 \Lip_f(p)= 
 \Lip_{\mathcal N_p(f)}(e)
 \qquad \text{and} \qquad 
\ell_f(p)=   \ell_{\mathcal N_p(f)}(e).
\end{equation}
\end{corollary}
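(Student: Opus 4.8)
The plan is to deduce Corollary~\ref{L=LN:first} directly from Lemma~\ref{uguaglianze}, simply by noting that the same chain of equalities applies verbatim to the map $\mathcal N_p(f)$ in place of $f$. The first equality, $\Lip_f(p)=\Lip_{\mathcal N_p(f)}(e)$, is already one of the identities listed in Lemma~\ref{uguaglianze}, so nothing further is needed there. For the second equality I would argue as follows: apply Lemma~\ref{uguaglianze} to the quasiconformal map $\mathcal N_p(f)\colon \mathcal N_p(M)\to \mathcal N_{f(p)}(N)$ at the base point $e$. This is legitimate because $\mathcal N_p(M)$ and $\mathcal N_{f(p)}(N)$ are Carnot groups, hence equiregular sub-Riemannian (in fact geodesic) manifolds, and $\mathcal N_p(f)$ is a quasiconformal homeomorphism between them which is differentiable at $e$ — indeed it is a graded group isomorphism, so it is smooth. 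The lemma then gives in particular $\Lip_{\mathcal N_p(f)}(e)=\norm{\mathcal N_e(\mathcal N_p(f))}$ and also the analogous statement for the "lim inf" quantities.

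The remaining point is to identify the ultratangent of $\mathcal N_p(f)$ at $e$ with $\mathcal N_p(f)$ itself. Since $\mathcal N_p(f)$ commutes with the group dilations $\delta_\lambda$ (this is part of the Margulis--Mostow result recalled in Section~\ref{sec:tange}), the rescaled maps $\mathcal N_p(f)\colon(\mathcal N_p(M),\lambda_j d,e)\to(\mathcal N_{f(p)}(N),\lambda_j d,e)$ are all isometrically conjugate to $\mathcal N_p(f)$ itself via $\delta_{\lambda_j}$; hence the ultratangent map $T_\omega(\mathcal N_p(f),e)$ exists and equals $\mathcal N_p(f)$, and in particular $\mathcal N_e(\mathcal N_p(f))=\mathcal N_p(f)$. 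Plugging this into the conclusion of Lemma~\ref{uguaglianze} applied to $\mathcal N_p(f)$ yields $\Lip_{\mathcal N_p(f)}(e)=\norm{\mathcal N_p(f)}=\Lip_f(p)$, re-deriving the first identity, and $\ell_{\mathcal N_p(f)}(e)=\underline\L_{\mathcal N_p(f)}(e)$ etc.

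To get the $\ell$-statement I would run the "dual" of the argument in Lemma~\ref{uguaglianze}: the same ultratangent computations that prove $\Lip_f(p)=\norm{\mathcal N_p(f)}$ show, with $\liminf$ in place of $\limsup$ and $\min$-over-the-sphere in place of $\max$, that $\ell_f(p)=\min\{d(e,\mathcal N_p(f)(y)):d(e,y)=1\}=\ell_{\mathcal N_p(f)}(e)$. Concretely: for any sequence $q_j\to p$ realizing $\ell_f(p)$, set $\lambda_j=1/d(p,q_j)\to\infty$ and pass to an ultratangent along $\omega$; then $[q_j]\in T_\omega(M,p)=\mathcal N_p(M)$ lies on the unit sphere and $\ell_f(p)=d_\omega([f(p)],[f(q_j)])=d(e,\mathcal N_p(f)([q_j]))\ge \ell_{\mathcal N_p(f)}(e)$. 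Conversely, picking $y$ on the unit sphere of $\mathcal N_p(M)$ realizing the minimum of $d(e,\mathcal N_p(f)(\cdot))$ and a sequence $q_j$ with $[q_j]=y$ forces $d(p,q_j)\to 0$ and gives $\ell_f(p)\le \limsup_j d(f(p),f(q_j))/d(p,q_j)=d(e,\mathcal N_p(f)(y))=\ell_{\mathcal N_p(f)}(e)$; since $\mathcal N_p(f)$ is its own ultratangent at $e$, the right-hand side is exactly $\ell_{\mathcal N_p(f)}(e)$. I expect the only genuinely delicate point to be the (routine but worth stating) verification that $\mathcal N_p(f)$ is an admissible input to Lemma~\ref{uguaglianze}, i.e. that it is quasiconformal and differentiable at $e$ with ultratangent equal to itself; everything else is a transcription of the arguments already carried out for $f$.
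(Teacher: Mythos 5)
Your argument is correct, but for the second identity it takes a genuinely different (and longer) route than the paper. The first identity is, as you note, literally one of the equalities in Lemma~\ref{uguaglianze}, so nothing more is needed there. For $\ell_f(p)=\ell_{\mathcal N_p(f)}(e)$ the paper does not redo any ultratangent computation: it applies Lemma~\ref{uguaglianze} to the inverse map $f^{-1}$ at $f(p)$ and combines it with the duality identities $\ell_f(p)=1/\Lip_{f^{-1}}(f(p))$ and $\mathcal N_p(f)^{-1}=\mathcal N_{f(p)}(f^{-1})$ of \eqref{Lvsl}, giving $\ell_f(p)=1/\Lip_{\mathcal N_p(f)^{-1}}(e)=\ell_{\mathcal N_p(f)}(e)$. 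Your route instead dualizes the proof of the lemma, replacing $\limsup$/$\max$ by $\liminf$/$\min$ throughout; this works, and has the mild advantage of not invoking that $f^{-1}$ is again quasiconformal and differentiable at $f(p)$, at the cost of repeating the two-sided ultratangent estimate. Your preliminary step of feeding $\mathcal N_p(f)$ itself into Lemma~\ref{uguaglianze} is not needed for the first identity, and for the second it only serves to identify $\min\{d(e,\mathcal N_p(f)(y)):d(e,y)=1\}$ with $\ell_{\mathcal N_p(f)}(e)$, which follows more directly from the homogeneity of $\mathcal N_p(f)$ under dilations together with compactness and continuity on the unit sphere. One small correction in your converse step: the $\omega$-limit of a bounded sequence lies between its $\liminf$ and its $\limsup$, so the chain should read $\ell_f(p)\le\liminf_j d(f(p),f(q_j))/d(p,q_j)\le\lim_{j\to\omega}\lambda_j\, d(f(p),f(q_j))=d(e,\mathcal N_p(f)(y))$; the $\limsup_j$ you wrote may strictly exceed the ultralimit, so the equality you assert there is not justified as stated, though the desired inequality survives.
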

\begin{proof}
The proof follows from Lemma  \ref{uguaglianze} applied to $f$ and $f^{-1}$, and by observing that

\begin{equation}\label{Lvsl}
\ell_f(p) = 1/ \Lip_{f^{-1}}(f(p)), \text{ and } \mathcal N_p(f)^{-1}=\mathcal N_{f(p)}(f^{-1}).
\end{equation}

\end{proof}

\begin{corollary}\label{H_H=}
Let  $M$ and $N$ be (equiregular) sub-Riemannian manifolds and let $ f:M\to N$ be  a  quasiconformal map. Then for almost every $p\in M$
$$H_f(p)=H^=_f(p)
.$$
\end{corollary}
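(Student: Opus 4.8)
### Proof plan for Corollary~\ref{H_H=}

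The plan is to deduce $H_f(p)=H_f^=(p)$ for a.e.~$p$ purely from Lemma~\ref{uguaglianze}, using the fact (established by Mitchell and Margulis--Mostow, as recalled in Section~\ref{sec:tange}) that a quasiconformal map between equiregular sub-Riemannian manifolds is differentiable along horizontal directions at a.e.~point, so that at a.e.~$p$ the conclusions of Lemma~\ref{uguaglianze} apply both to $f$ at $p$ and to $f^{-1}$ at $f(p)$. Fix such a point of differentiability $p$. The first step is to observe the elementary rewriting
\begin{equation*}
H_f(p)=\limsup_{r\to 0}\frac{\sup\{d_N(f(p),f(q)):d_M(p,q)\le r\}}{\inf\{d_N(f(p),f(q)):d_M(p,q)\ge r\}}
=\overline\L_f(p)\cdot\limsup_{r\to 0}\frac{r}{\inf\{d_N(f(p),f(q)):d_M(p,q)\ge r\}},
\end{equation*}
and similarly with $=r$ in place of $\le r$ and $\ge r$ in both numerator and denominator to express $H_f^=(p)$ in terms of $\overline\L_f^=(p)$ and a corresponding infimum quantity. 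So the task reduces to two separate claims: that the ``$\sup$'' parts agree, i.e.~$\overline\L_f(p)=\overline\L_f^=(p)$, and that the ``$\inf$'' parts agree.

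The ``$\sup$'' part is immediate from Lemma~\ref{uguaglianze}, which gives $\overline\L_f(p)=\overline\L_f^=(p)=\Lip_f(p)$ at every point of differentiability. For the ``$\inf$'' part I would introduce the quantities
\begin{equation*}
\overline\ell_f(p):=\liminf_{r\to 0}\frac{\inf\{d_N(f(p),f(q)):d_M(p,q)\ge r\}}{r},\qquad
\overline\ell_f^{\,=}(p):=\liminf_{r\to 0}\frac{\inf\{d_N(f(p),f(q)):d_M(p,q)=r\}}{r},
\end{equation*}
so that $H_f(p)=\Lip_f(p)/\overline\ell_f(p)$ and $H_f^=(p)=\Lip_f(p)/\overline\ell_f^{\,=}(p)$ (the $\limsup$ of the ratio splitting into $\Lip_f(p)$ over the $\liminf$ of the denominator, since the numerator converges to $\Lip_f(p)$). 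It then suffices to show $\overline\ell_f(p)=\overline\ell_f^{\,=}(p)$. One inequality is trivial since the constraint $d_M(p,q)=r$ is more restrictive than $d_M(p,q)\ge r$, giving $\overline\ell_f(p)\le\overline\ell_f^{\,=}(p)$. For the reverse, I would apply Lemma~\ref{uguaglianze} to $f^{-1}$ at $f(p)$: writing $\ell_f(p)=1/\Lip_{f^{-1}}(f(p))$ as in \eqref{Lvsl}, one checks that $\overline\ell_f(p)$ and $\overline\ell_f^{\,=}(p)$ are both reciprocals of quantities of the form $\overline\L_{f^{-1}}(f(p))$ and $\overline\L_{f^{-1}}^=(f(p))$ respectively — using that $f$ is a homeomorphism so that the sphere $\{d_M(p,q)=r\}$ maps onto a set whose distances from $f(p)$ realize $\inf/\sup$ appropriately, and a short argument (using geodesicity of $M$ to fill in, as in the proof of $\underline\L_f(p)\le\overline\L_f^=(p)$ in Lemma~\ref{uguaglianze}) passing between the "$=r$" and "$\ge r$" versions of the infimum. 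Since $\overline\L_{f^{-1}}(f(p))=\overline\L_{f^{-1}}^=(f(p))=\Lip_{f^{-1}}(f(p))$ by Lemma~\ref{uguaglianze}, we conclude $\overline\ell_f(p)=\overline\ell_f^{\,=}(p)$, hence $H_f(p)=H_f^=(p)$.

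The main obstacle I anticipate is the bookkeeping in the last step: the "$\inf$ over $d_M(p,q)\ge r$" quantity is not literally the reciprocal of a quantity already treated in Lemma~\ref{uguaglianze}, because of the asymmetry between "$\ge$" and "$\le$" under passing to the inverse map, so one needs a short independent argument — most naturally, the same ultratangent/geodesic approximation trick used repeatedly in the proof of Lemma~\ref{uguaglianze} — to identify $\overline\ell_f(p)$ and $\overline\ell_f^{\,=}(p)$ with $\ell_{\mathcal N_p(f)}(e)$. In fact the cleanest route may be to bypass the infimum quantities entirely and instead show directly, via ultratangents at $p$ with $H_f(p)$ possibly $>1$ replaced by the general distortion bound, that $H_f(p)$ and $H_f^=(p)$ both equal $\Lip_{\mathcal N_p(f)}(e)/\ell_{\mathcal N_p(f)}(e)$, i.e.~the metric distortion of the homogeneous map $\mathcal N_p(f)$ on its unit sphere, which by homogeneity is the same whether computed on the closed ball, the sphere, or the ball complement.
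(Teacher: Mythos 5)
Your plan is essentially sound and rests on the same key ingredient as the paper's proof, namely Lemma~\ref{uguaglianze}; the difference is entirely in how the denominator (the ``inf'' part) is handled, and there the paper's route is much shorter than yours. The paper observes that for \emph{every} $r$ one has the exact identity
$\inf\{d_N(f(p),f(q)):d_M(p,q)\geq r\}=\inf\{d_N(f(p),f(q)):d_M(p,q)=r\}$:
given $q$ with $d_M(p,q)\geq r$, pull back a geodesic of $N$ joining $f(p)$ to $f(q)$ via the homeomorphism $f^{-1}$ and apply the intermediate value theorem to find $q'$ on it with $d_M(p,q')=r$ and $d_N(f(p),f(q'))\leq d_N(f(p),f(q))$. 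This makes the two denominators literally equal, so no asymptotic quantities $\overline\ell_f,\overline\ell_f^{\,=}$ are needed at all; the whole corollary then reduces to converting the numerator's ``$\leq r$'' into ``$=r$'', which the paper does via $\overline\L_f(p)=\underline\L_f^{\,=}(p)$ and the inequality $\limsup a_r\cdot\liminf b_r\leq\limsup(a_rb_r)$ (your cleaner factorization $H_f(p)=\Lip_f(p)/\overline\ell_f(p)$ is also valid at points of differentiability, since there $\sup\{d_N(f(p),f(q)):d_M(p,q)\leq r\}/r$ actually converges to $\Lip_f(p)\in(0,\infty)$). Two cautions about your sketch of the denominator step. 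First, the filling argument needs geodesicity of $N$ (a geodesic of $N$ pulled back by $f^{-1}$), not of $M$ as you wrote: interpolating along a geodesic of $M$ between $p$ and $q$ gives a point $q'$ with $d_M(p,q')=r$ but no control on $d_N(f(p),f(q'))$ versus $d_N(f(p),f(q))$. Second, your fallback of identifying $\overline\ell_f(p)$ and $\overline\ell_f^{\,=}(p)$ with $\ell_{\mathcal N_p(f)}(e)$ via ultratangents, or of computing $H_f$ and $H_f^{=}$ directly as the distortion of $\mathcal N_p(f)$, would work but requires redoing the realization arguments of Lemma~\ref{uguaglianze} for infimum-type quantities (which the lemma does not state); as you anticipate, that is exactly where the bookkeeping lives, and the pointwise geodesic identity above lets you avoid it entirely.
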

\begin{proof}
%
Note that in every geodesic metric space
$$\inf \{d_N(f(p),f(q)) : {d_M(p,q)\geq r}\}= 
{
\inf \{d_N(f(p),f(q)) : {d_M(p,q)= r}\}}.$$
Hence $H_f(p)\geq H^=_f(p)$
is immediate.

Regarding the opposite inequality, let $p$ be  a point of differentiability for $f$.
Consequently,
\begin{align*}
H_f(p)&\stackrel{def}{=}
\limsup_{r \to 0}\frac{
\sup \{d_N(f(p),f(q)):  {d_M(p,q)\leq r}   \}}
{
\inf \{d_N(f(p),f(q)) : {d_M(p,q)\geq r}\}}\\
&= 
\limsup_{r \to 0}\frac{
\sup \{d_N(f(p),f(q)):  {d_M(p,q)\leq r}   \}}
{
\inf \{d_N(f(p),f(q)) : {d_M(p,q)= r}\}}\\
&\le \limsup_{r \to 0}\frac{
r    }
{
\inf \{d_N(f(p),f(q)) : {d_M(p,q)= r}\}} \limsup_{r\to 0}\dfrac{\sup \{d_N(f(p),f(q)):  {d_M(p,q)\leq r\}}}{r}
\\
&= \limsup_{r \to 0}\frac{
r    }
{
\inf \{d_N(f(p),f(q)) : {d_M(p,q)= r}\}} 
\overline{\mathcal{ L}}_f(p)
\\
&= \limsup_{r \to 0}\frac{r    }
{
\inf \{d_N(f(p),f(q)) : {d_M(p,q)= r}\}} 
\underline{
\mathcal L}_f^=(p)\\
&= \limsup_{r \to 0}\frac{
r    }
{
\inf \{d_N(f(p),f(q)) : {d_M(p,q)= r}\}} \liminf_{r\to 0}\dfrac{\sup \{d_N(f(p),f(q)):  {d_M(p,q)= r\}}}{r}
\\
&\leq\limsup_{r \to 0}\frac{
\sup \{d_N(f(p),f(q)):  {d_M(p,q)= r}   \}}
{
\inf \{d_N(f(p),f(q)) : {d_M(p,q)= r}\}} \\
&\stackrel{def}{=} H_f^=(p),
\end{align*}
where in the last two steps we have used that $\overline{\mathcal{ L}}_f(p)=
\underline{
\mathcal L}_f^=(p)$ from Lemma \ref{uguaglianze} and the fact that $\limsup a_j \liminf b_j\leq \limsup(a_jb_j)$. 
\end{proof}

 \begin{proposition}\label{mug}
 Let $f:M\to N$ be a quasiconformal map between sub-Riemannian manifolds. 
 The function $p\mapsto \norm{\mathcal N_p(f)}$ is the minimal upper-gradient  of $f$.
 \end{proposition}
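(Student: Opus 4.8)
The plan is to show two things: that $p\mapsto \norm{\mathcal N_p(f)}$ is an upper gradient for $f$, and that it is the minimal one (up to modification on a null set, i.e.\ in the $\operatorname{Mod}_Q$ sense). For the first part I would use the fact, recalled from Margulis--Mostow, that $f$ is differentiable along horizontal curves at almost every point, together with the pointwise estimate $\Lip_f(p)=\norm{\mathcal N_p(f)}$ from Lemma~\ref{uguaglianze}. Given a horizontal (rectifiable) curve $\gamma:[0,L]\to M$ parametrized by arc length, one wants the inequality
\begin{equation*}
d_N(f(\gamma(0)),f(\gamma(L)))\leq \int_0^L \norm{\mathcal N_{\gamma(s)}(f)}\,\d s.
\end{equation*}
The standard route is: the composition $f\circ\gamma$ is absolutely continuous (this is where one uses that quasiconformal maps are absolutely continuous on almost every curve of a given modulus-almost-every family, or alternatively that $f$ is Pansu-type differentiable a.e.\ combined with a Fubini/ACL argument on a foliation by horizontal curves), so $d_N(f(\gamma(0)),f(\gamma(L)))\leq \int_0^L |(f\circ\gamma)'(s)|\,\d s$, and at points of differentiability $|(f\circ\gamma)'(s)|\leq \Lip_f(\gamma(s))\cdot|\gamma'(s)|=\norm{\mathcal N_{\gamma(s)}(f)}$ since $\gamma$ is unit-speed horizontal. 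One must be slightly careful that this holds for $\operatorname{Mod}_Q$-a.e.\ curve, not literally every curve; but that is exactly the definition of an upper gradient in the weak/$\operatorname{Mod}_Q$ sense, which is the relevant notion here. Absolute continuity on almost every curve for quasiconformal maps between such spaces is standard (it follows from the metric definition plus a covering/Fubini argument, cf.\ the references already invoked for the Margulis--Mostow theory).

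For minimality, suppose $\rho$ is any other (nonnegative Borel) upper gradient of $f$. I want $\norm{\mathcal N_p(f)}\leq \rho(p)$ for a.e.\ $p$. Fix a point $p$ of differentiability of $f$ and a Lebesgue point of $\rho$. By Remark~\ref{realization} there is a unit horizontal vector $X$ in the first layer of $\mathcal N_p(M)$ with $\norm{f_*X}=\norm{\mathcal N_p(f)}$. Realize $X$ by the tangent direction of a horizontal geodesic $\gamma$ emanating from $p$; applying the upper gradient inequality along short subarcs $\gamma|_{[0,r]}$ gives
\begin{equation*}
\frac{d_N(f(p),f(\gamma(r)))}{r}\leq \frac{1}{r}\int_0^r \rho(\gamma(s))\,\d s.
\end{equation*}
Letting $r\to 0$, the left side tends to $\norm{\mathcal N_p(f)}$ (by differentiability of $f$ at $p$ in the direction $X$ together with $d_M(p,\gamma(r))=r$), while the right side tends to $\rho(p)$ for a.e.\ choice of direction at a.e.\ $p$ — here one uses a Fubini-type argument: for a.e.\ $p$ and a.e.\ geodesic through $p$, $s\mapsto\rho(\gamma(s))$ has the correct Lebesgue-point behavior at $s=0$. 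This yields $\norm{\mathcal N_p(f)}\leq\rho(p)$ a.e.

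The main obstacle I anticipate is the rigorous justification of absolute continuity of $f$ along $\operatorname{Mod}_Q$-almost every horizontal curve in the general equiregular sub-Riemannian setting — in the Euclidean/Carnot case this is classical, but here one must invoke the Margulis--Mostow differentiability together with the fact that horizontal curves can be organized into foliations to which Fubini applies, and check that the exceptional sets have $\operatorname{Mod}_Q$ zero. A secondary technical point is making precise the relation between "$f$ differentiable at $p$" (a.e.\ statement) and "$\rho$ behaves like its value at $p$ along the relevant geodesic" (another a.e.\ statement), so that the two full-measure conditions can be intersected; this is routine measure theory but needs the right parametrization of the space of (germs of) horizontal geodesics. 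Once these are in place, Lemma~\ref{uguaglianze} and Remark~\ref{realization} do all the geometric work.
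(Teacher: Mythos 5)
Your overall strategy coincides with the paper's: the upper-gradient half reduces to the fact that $\Lip_f$ is a (weak) upper gradient together with $\Lip_f(p)=\norm{\mathcal N_p(f)}$ from Lemma~\ref{uguaglianze}, and the minimality half integrates the competitor $\rho$ along short horizontal curves and passes to the limit at Lebesgue points. The first half is fine as you present it.

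In the minimality half there is one genuine gap. You fix a point $p$ and then want to apply the upper-gradient inequality along \emph{the} geodesic issuing from $p$ in the direction realizing $\norm{\mathcal N_p(f)}$ (Remark~\ref{realization}). But $\rho$ need only be a weak upper gradient, i.e.\ the inequality holds only along ${\rm Mod}_Q$-a.e.\ curve, and the exceptional family could a priori contain, for each $p$, exactly that maximizing geodesic; the maximizing direction varies with $p$, so you cannot fix it in advance. Your proposed repair -- ``for a.e.\ $p$ and a.e.\ geodesic through $p$'' -- does not close the gap either, because an inequality valid for almost every direction does not by itself reach the supremum over directions. The paper's resolution has two ingredients you are missing. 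First, it works with the flows $\Phi_{X^\theta}^t$ of \emph{fixed} unit horizontal vector fields $X^\theta=\sum_i\theta_iX_i$: for such a foliation, the subfamilies of flow lines of zero $Q$-modulus are exactly those issuing from a null set of base points, so for each fixed $\theta$ one gets $g(p)\ge d_\omega\bigl(e,\mathcal N_p(f)\exp(\widetilde{X^\theta}_p)\bigr)$ on a full-measure set $\Omega_{X^\theta}$. Second, it takes a countable dense set $\{\theta_j\}\subset\mathbb S^{r-1}$, intersects the sets $\Omega_{X^{\theta_j}}$, and then uses density together with the continuity of $\theta\mapsto d_\omega\bigl(e,\mathcal N_p(f)\exp(\widetilde{X^\theta}_p)\bigr)$ to pass from the countably many sampled directions to the maximizing one of Remark~\ref{realization}. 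With these two steps inserted, your argument becomes the paper's proof.
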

\begin{proof}
The function $p\mapsto \norm{\mathcal N_p(f)}$ is an upper-gradient of $f$ since 
 $\Lip_f(\cdot)$ 
 is such and 
$\Lip_f(p)=\norm{\mathcal N_p(f)}$ by Lemma \ref{uguaglianze} . 
Regarding the minimality, let $g$ be a weak upper-gradient of $f$.
We need to show that
\begin{equation}\label{g_maggiore_norm}
g(p)\geq \norm{\mathcal N_p(f)}, \qquad \text{ for almost all } p.
\end{equation}
Localizing, we take a unit horizontal vector field $X$.
For $p\in M$, let $\gamma_p$ be the curve defined by the flow of $X$, i.e.,
$$\gamma_p(t) :=\Phi_X^t(p),$$
which is defined for $t$ small enough.
We remark that the subfamilies of $\{\gamma_p\}_{p\in M}$ that have zero $Q$-modulus are of the form 
  $\{\gamma_p\}_{p\in E}$ with $E\subset M$ of zero $Q$-measure.
  Then, for every unit horizontal vector field $X$, there exists a set $\Omega_X\subseteq M$ of full measure such that for all $p\in \Omega_X$ we have
  $$\int_{\gamma_p|_{[0,\eps]}} g \geq d(f(\gamma_p(0)), f(\gamma_p(\eps))).$$
Since $\norm{X}\equiv1$, then each $ \gamma_p$ is parametrized by arc length. Thus
  $$\dfrac{1}{\eps} \int_0^\eps  g (\gamma_p(t)) \d t \geq  \dfrac{1}{\eps}    d(f(p), f(\Phi_X^\eps(p))).$$
 Assuming that $p$ is a Lebesgue point for $g$, taking the limit as $\eps \to 0$, and considering ultratangents with dilations 
$1/{\eps}$,
we have
\begin{eqnarray}
g(p ) &\geq& d_\omega (e , \mathcal N_p(f) [   \Phi_X^\eps(p) ]),  \nonumber
\\
&=& d_\omega (e , \mathcal N_p(f) \exp(\tilde X_p)), \qquad \forall p\in \Omega_X,
\label{g_d_omega}	
\end{eqnarray}
where $\tilde X_p$ is the vector induced on $ \mathcal N_p(M)$ by $   X_p$.

Set now $X_1, \ldots X_r$ an orthonormal frame of $\Delta$ and consider for all $\theta \in \mathbb S^{r-1}\subset \R^r$, the
unit horizontal
 vector field
$X^\theta := \sum_{i=1}^r \theta_i X_i$.
Fix $\{\theta_j\}_{j\in \N}$ a countable dense subset of 
$\mathbb S^{r-1}$
and define 
$\Omega:= \cap_j \Omega_{X^{\theta_j}}$, which has full measure.
Take $p\in \Omega$ and, recalling Remark \ref{realization}, take   $Y\in \Delta_p$ such that $\norm{Y}=1$ and 
$$ d_\omega (e , \mathcal N_p(f) \exp (\tilde Y))=     \norm{\mathcal N_p(f)}$$
By density, there exists a sequence $j_k$ of integers such that $\theta_{j_k}$ converges to some $\theta$ with the property that 
$Y=(X^\theta)_p$.
Therefore, by \eqref{g_d_omega} we conclude \eqref{g_maggiore_norm}.
\end{proof}

\subsection{Equivalence of metric definitions}

\begin{proposition}[Tangents of $1$-QC maps]\label{tan1qcsr}
Let $f:M\to N$ be  a  quasiconformal map between equiregular sub-Riemannian manifolds.
  Condition~\eqref{H} implies Condition~\eqref{S}.


\end{proposition}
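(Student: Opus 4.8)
The goal is to show that if $H_f(p) = 1$ for a.e.\ $p$, then $\mathcal N_p(f)$ is a similarity for a.e.\ $p$. The strategy is to pass to the ultratangent and exploit the rigidity already recorded in Lemma~\ref{Lemma:tan1qc}, which says that at a point $\star$ where $H_f(\star) = 1$, the ultratangent map $f_\omega = T_\omega(f,\star)$ sends spheres centered at $\star_\omega$ to spheres centered at $f_\omega(\star_\omega)$. First I would fix a point $p$ that simultaneously lies in the full-measure set where $f$ is differentiable along horizontal vectors (so that $\mathcal N_p(f)$ exists by Margulis--Mostow) and in the full-measure set where $H_f(p) = 1$. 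By the theory recalled in Section~\ref{sec:tange}, the ultratangent map $f_\omega$ at such a $p$ exists, is independent of $\omega$ and of the dilation sequence, and equals the group isomorphism $\mathcal N_p(f)\colon \mathcal N_p(M)\to \mathcal N_{f(p)}(N)$.

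The heart of the argument is then purely about the Carnot-group homomorphism $\Phi := \mathcal N_p(f)$: I must show that the sphere-preserving property from Lemma~\ref{Lemma:tan1qc}, namely
\begin{equation*}
d(e, x) = d(e, y) \implies d(e, \Phi(x)) = d(e, \Phi(y)),
\end{equation*}
forces $\Phi$ to be a similarity. This says precisely that $d(e, \Phi(x))$ depends only on $d(e,x)$, i.e.\ there is a function $\psi\colon [0,\infty)\to[0,\infty)$ with $\|\Phi(x)\| = \psi(\|x\|)$ for all $x$, where $\|\cdot\|$ denotes CC-distance from $e$. Now I exploit that $\Phi$ commutes with the one-parameter dilation families $\delta_t$ on domain and target (this is part of the Margulis--Mostow conclusion): $\Phi(\delta_t x) = \delta_t \Phi(x)$, hence $\|\Phi(\delta_t x)\| = t\|\Phi(x)\|$, which gives $\psi(ts) = t\psi(s)$ for all $t,s > 0$. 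Setting $s = 1$ yields $\psi(t) = \lambda t$ with $\lambda = \psi(1) = \|\Phi\|$, so $\|\Phi(x)\| = \lambda\|x\|$ for every $x\in\mathcal N_p(M)$. By left-invariance of the CC-distance and the fact that $\Phi$ is a group homomorphism, $d(\Phi(x),\Phi(y)) = \|\Phi(x^{-1}y)\| = \lambda\|x^{-1}y\| = \lambda\, d(x,y)$, so $\Phi$ is a similarity — which is exactly Condition~\eqref{S}.

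\textbf{Main obstacle.} The routine pieces (choosing the good point, invoking Lemma~\ref{Lemma:tan1qc}, the dilation homogeneity computation) are straightforward; the delicate point is making sure the ultratangent map is genuinely available at the chosen point. Lemma~\ref{Lemma:tan1qc} is stated conditionally on the existence of $f_\omega$, so I need the Margulis--Mostow differentiability to supply this, and I should be careful that the dilation factors $\lambda_j\to\infty$ used in forming the ultratangent can be taken arbitrary (not tied to a particular sequence $p_j$), which is indeed part of the cited result: $\mathcal N_p(f)$ is independent of $\omega$ and of $\lambda_j$. A secondary subtlety is that Lemma~\ref{Lemma:tan1qc} requires $X$ and $Y$ to be geodesic metric spaces; here $M$, $N$ are equiregular sub-Riemannian manifolds, which are locally geodesic, and the statement is local in nature, so one restricts to geodesically convex neighborhoods of $p$ and $f(p)$ before passing to the limit. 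Once these foundational matters are in place, the conclusion that a sphere-preserving dilation-equivariant homomorphism of Carnot groups is a similarity is immediate from the homogeneity identity $\psi(ts)=t\psi(s)$.
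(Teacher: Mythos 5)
Your proof is correct and follows essentially the same route as the paper: pick a point of Margulis--Mostow differentiability where $H_f(p)=1$, identify $\mathcal N_p(f)$ with the ultratangent map, and apply Lemma~\ref{Lemma:tan1qc} to get that spheres about the origin go to spheres about the origin. Your final step (the homogeneity identity $\psi(ts)=t\psi(s)$ forced by dilation-equivariance, combined with left-invariance and the homomorphism property) is in fact a more explicit justification of what the paper compresses into ``the distortion is $1$ at every point, and in fact $\mathcal N_p(f)$ is a similarity.''
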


\begin{proof}
For almost every $p\in M$, the  map $\mathcal N_p (f)$ exists and coincides with the ultratangent $f_\omega$ with respect to any nonprincipal ultrafilter and any sequence of dilations. Hence, we can apply Lemma \ref{Lemma:tan1qc} and deduce that spheres about the origin are sent to spheres about the origin. Therefore, the distortion $H_{\mathcal N_p (f)}(e)$ at the origin is $1$. Being $\mathcal N_p (f)$ an isomorphism,  the distortion is $1$ at every point, and in fact  $\mathcal N_p (f)$ is a similarity.

%
\end{proof}

\begin{corollary}\label{L=LN}
Let $f:M\to N$ be  a  quasiconformal map between equiregular sub-Riemannian manifolds.
Conditions~\eqref{S}, \eqref{L}, and \eqref{LN} are equivalent. 
\end{corollary}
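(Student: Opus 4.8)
The plan is to use Corollary~\ref{L=LN:first} to transfer the metric quantities attached to $f$ at $p$ to the corresponding quantities for the homogeneous isomorphism $\mathcal N_p(f)$ at the identity, and then to reduce the statement to an elementary fact about such isomorphisms. Throughout we may work on the set of points $p$ where $f$ is differentiable and $\mathcal N_p(f)$ exists as a group isomorphism commuting with dilations; by Margulis--Mostow (Section~\ref{sec:tange}) this set has full measure. The equivalence \eqref{L}$\iff$\eqref{LN} is then immediate: by Corollary~\ref{L=LN:first} one has $\Lip_f(p)=\Lip_{\mathcal N_p(f)}(e)$ and $\ell_f(p)=\ell_{\mathcal N_p(f)}(e)$ for almost every $p$, so $\ell_f(p)=\Lip_f(p)$ a.e.\ holds exactly when $\ell_{\mathcal N_p(f)}(e)=\Lip_{\mathcal N_p(f)}(e)$ a.e.

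It remains to prove that, for a fixed $p$, the isomorphism $\Phi:=\mathcal N_p(f)\colon\mathcal N_p(M)\to\mathcal N_{f(p)}(N)$ is a similarity if and only if $\ell_\Phi(e)=\Lip_\Phi(e)$. The forward implication is immediate from the definition of similarity. For the converse, note that $\Phi$ commutes with the dilations $\delta_t$ and that the Carnot--Carath\'eodory distances scale as $d(e,\delta_t z)=t\,d(e,z)$, so for every $z\neq e$ and $t>0$
\begin{equation*}
\frac{d(e,\Phi(\delta_t z))}{d(e,\delta_t z)}=\frac{d(e,\delta_t\Phi(z))}{d(e,\delta_t z)}=\frac{d(e,\Phi(z))}{d(e,z)}.
\end{equation*}
Thus the quotient $z\mapsto d(e,\Phi(z))/d(e,z)$ is continuous and $\delta_t$-invariant on $\mathcal N_p(M)\setminus\{e\}$; writing any $z\neq e$ as a dilate of a point of the unit sphere, one sees that $\Lip_\Phi(e)$ and $\ell_\Phi(e)$ are, respectively, the maximum and minimum of this quotient over the compact unit sphere, hence over all of $\mathcal N_p(M)\setminus\{e\}$. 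If these coincide, say to $\lambda$, then $d(e,\Phi(z))=\lambda\,d(e,z)$ for every $z$, and since $\Phi$ is a homomorphism and $d$ is left-invariant,
\begin{equation*}
d(\Phi(x),\Phi(y))=d\bigl(e,\Phi(x)^{-1}\Phi(y)\bigr)=d\bigl(e,\Phi(x^{-1}y)\bigr)=\lambda\,d(e,x^{-1}y)=\lambda\,d(x,y)
\end{equation*}
for all $x,y$, so $\Phi$ is a similarity. Applying this pointwise a.e.\ gives \eqref{S}$\iff$\eqref{LN}, which together with \eqref{L}$\iff$\eqref{LN} proves the corollary.

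The only genuinely delicate point is the identification of $\Lip_\Phi(e)$ and $\ell_\Phi(e)$ with the extrema of the (dilation-invariant) difference quotient; everything else is formal. Alternatively, one can obtain the needed identities by applying Lemma~\ref{uguaglianze} to the quasiconformal map $\Phi=\mathcal N_p(f)$ and to $\Phi^{-1}$ between the Carnot groups, which are themselves equiregular sub-Riemannian manifolds, together with the relation \eqref{Lvsl}; this reduces \eqref{LN} to $\norm{\Phi}\,\norm{\Phi^{-1}}=1$, which by submultiplicativity of $\norm{\cdot}$ forces $d(e,\Phi(\cdot))$ to be proportional to $d(e,\cdot)$ on the unit sphere and hence $\Phi$ to be a similarity.
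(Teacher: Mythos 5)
Your proof is correct and follows essentially the same route as the paper: reduce \eqref{L}$\iff$\eqref{LN} to Corollary~\ref{L=LN:first}, and reduce \eqref{S}$\iff$\eqref{LN} to the fact that a dilation-commuting isomorphism of Carnot groups is a similarity precisely when its pointwise Lipschitz and lower Lipschitz constants at the identity agree. The only difference is that you spell out (correctly, via dilation-invariance of the difference quotient and left-invariance of the distance) the latter fact, which the paper states in one line without proof.
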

\begin{proof}
For every point $p$ of differentiability for $f$, we have that 
$\mathcal N_p(f)$ is a similarity
if and only if 
$
 \Lip_{\mathcal N_p(f)}(e)=
 \ell_{\mathcal N_p(f)}(e),$ which by Corollary \ref{L=LN:first} is equivalent to   $\ell_f(p)=\Lip_f(p)$.
 \end{proof}

%
%


\begin{proposition}\label{L_H} Let $f:M\to N$ be a quasiconformal map between sub-Riemannian manifolds.
At every point $p\in M$ such that $\Lip_f(p)=\ell_f(p)$  one has that  $H^=_f(p)=1$.
Hence, Condition~\eqref{L} implies
Conditions  \eqref{H=}. 
\end{proposition}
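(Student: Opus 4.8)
The plan is to work at a point $p$ where $f$ is differentiable in the Margulis--Mostow sense and where $\Lip_f(p)=\ell_f(p)$, and to translate this equality into a statement about spheres in $N$ using the chain of identities already established in Lemma~\ref{uguaglianze} and Corollary~\ref{L=LN:first}. First I would unwind the definition of $H_f^=(p)$: it is the $\limsup$ as $r\to0$ of the ratio of $\sup\{d_N(f(p),f(q)):d_M(p,q)=r\}$ to $\inf\{d_N(f(p),f(q)):d_M(p,q)=r\}$. The numerator, after dividing by $r$ inside the $\limsup$, is controlled by $\overline{\mathcal L}_f^=(p)$, and the denominator by a corresponding ``lower'' quantity for the inverse distortion; more precisely $\limsup_{r\to0} r^{-1}\sup\{d_N(f(p),f(q)):d_M(p,q)=r\}=\overline{\mathcal L}_f^=(p)$ and $\liminf_{r\to0} r^{-1}\inf\{d_N(f(p),f(q)):d_M(p,q)=r\}$ equals the analogous minimal displacement quantity, which by the same ultratangent computation as in Lemma~\ref{uguaglianze} (applied via $f^{-1}$ and \eqref{Lvsl}) equals $\ell_f(p)$.

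The key step is then the sandwich
$$
H_f^=(p)\;\le\;\frac{\overline{\mathcal L}_f^=(p)}{\ell_f(p)}
\quad\text{and}\quad
H_f^=(p)\;\ge\;\frac{\ell_f^=(p)\text{-type lower bound}}{\overline{\mathcal L}_f^=(p)\text{-type upper bound}}\;=\;1,
$$
where the upper bound uses $\limsup(a_j b_j)\le \limsup a_j\cdot\limsup b_j$ applied to the reciprocal factors (as in the proof of Corollary~\ref{H_H=}), and the lower bound is immediate since the ratio of a $\sup$ over a sphere to the $\inf$ over the same sphere is always $\ge1$. Now Lemma~\ref{uguaglianze} gives $\overline{\mathcal L}_f^=(p)=\Lip_f(p)$, and Corollary~\ref{L=LN:first} together with \eqref{Lvsl} gives that the lower displacement rate equals $\ell_f(p)$. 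Under the hypothesis $\Lip_f(p)=\ell_f(p)$ the upper bound becomes $H_f^=(p)\le \Lip_f(p)/\ell_f(p)=1$, and combined with the trivial lower bound $H_f^=(p)\ge1$ we conclude $H_f^=(p)=1$.

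For the final sentence of the statement, I would invoke Margulis--Mostow: $f$ is differentiable almost everywhere, so for a.e.\ $p\in M$ both the differentiability hypothesis and (under Condition~\eqref{L}) the equality $\Lip_f(p)=\ell_f(p)$ hold; the pointwise argument above then yields $H_f^=(p)=1$ for a.e.\ $p$, which is precisely Condition~\eqref{H=}. The main obstacle, and the one deserving care, is justifying that $\liminf_{r\to0} r^{-1}\inf\{d_N(f(p),f(q)):d_M(p,q)=r\}$ genuinely equals $\ell_f(p)$ rather than merely bounding it: this requires rerunning the geodesic-truncation argument from Lemma~\ref{uguaglianze} (using that $M$ is geodesic to replace a near-extremal $q$ at distance $<r$ by a point exactly at distance $r$ with controlled error) in the ``minimal displacement'' direction, i.e.\ applied to $f^{-1}$ via \eqref{Lvsl}. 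Everything else is a bookkeeping exercise with $\limsup$/$\liminf$ inequalities already rehearsed in Corollary~\ref{H_H=}.
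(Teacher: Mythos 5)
Your argument is correct (at almost every point), but it is a genuinely different and much heavier route than the paper's. The paper's proof is a two-line observation: the hypothesis $\Lip_f(p)=\ell_f(p)$ says precisely that the full limit $\lim_{q\to p} d(f(p),f(q))/d(p,q)$ exists, so the normalized supremum and infimum over each sphere $\{d(p,q)=r\}$ both converge to this common value, and their ratio tends to $1$; this is purely pointwise and uses no differentiability, no ultratangents, and no Lemma~\ref{uguaglianze}. You instead sandwich $H_f^=(p)$ between $1$ and $\overline{\mathcal L}_f^=(p)/\ell_f(p)$ and then call on Lemma~\ref{uguaglianze}, Corollary~\ref{L=LN:first} and Margulis--Mostow differentiability to identify the two quantities, which forces you to restrict to a.e.\ $p$; that still yields the implication \eqref{L}~$\Rightarrow$~\eqref{H=}, but it loses the stronger ``at every point'' form of the statement. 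Moreover, the step you flag as the main obstacle is actually trivial: since the punctured ball $\{q: 0<d(p,q)<\delta\}$ is the disjoint union of the spheres $\{d(p,q)=r\}$ for $0<r<\delta$, one has directly $\liminf_{r\to0} r^{-1}\inf\{d(f(p),f(q)):d(p,q)=r\}=\ell_f(p)$ and $\limsup_{r\to0} r^{-1}\sup\{d(f(p),f(q)):d(p,q)=r\}=\Lip_f(p)$ at \emph{every} point, with no geodesic truncation and no appeal to $f^{-1}$; once you notice this, your sandwich collapses to the paper's proof. (Both your argument and the paper's tacitly use $0<\ell_f(p)=\Lip_f(p)<\infty$ to make sense of the ratio; at points of differentiability this is guaranteed since $\mathcal N_p(f)$ is an isomorphism.)
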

\proof
Notice  that at every point in which $\Lip_f(p)=\ell_f(p)$ one has the existence of the limit  $$\lim_{ d(p,q)=r\to 0} \frac{d(f(p),f(q))}{r}.$$
Consequently, at those points one has  
\[H^=_f(p) =\lim_{r \to 0}
\frac{ \frac{
\sup \{d_Y(f(p),f(q)):  {d_X(p,q)= r}   \}}
{r}}{\frac{
\inf \{d_Y(f(p),f(q)) : {d_X(p,q)= r}\}}{r}} = \dfrac{\Lip_f(p)}{\ell_f(p)}=1.\qedhere\]

Therefore, we proved the equivalence of the metric definitions, i.e., Conditions
\eqref{H}, \eqref{H=},
  \eqref{S}, \eqref{L}, and \eqref{LN}. 

\subsection{Jacobians and Popp measure}\label{jacobians}
Let $(M,\mu_M)$ and  $(N,\mu_N)$ be metric measure spaces and let $f:M\to N$ be a homeomorphism.
We say that 
$\Jac_f:M\to \R$ is a {\em Jacobian} for $f$ with respect to the measures $\mu_M$ and $\mu_N$, if 
$f^* \mu_N = \Jac_f \mu_M$, which is equivalent to
the
change of variable formula:
\begin{equation}\label{change of variable}
\int_{f(A)} h \d \mu_N = 
\int_{ A} (h\circ f) \Jac_f \d \mu_M,
\end{equation}
for every $A\subset M$ measurable and every continuous function $h:N\to \R$.


If $M$ and $N$ are 
equiregular sub-Riemannian manifolds of Hausdorff dimension $Q$, we consider 
 $\mu_M$ and $\mu_N$ to be both either  the 
 $Q$-dimensional spherical Hausdorff measures 
 or the 
   Popp measures. 
See \cite{Montgomery,Barilari-Rizzi} for the definition of the Popp measure and  Example \ref{Ex:Popp_Carnot_2} for the case of step-2 Carnot groups.   
In these cases, we denote the corresponding Jacobians as
$\Jac_f^{\rm Haus}$ and $\Jac_f^{\rm Popp}$, respectively.
If $f$ is a quasiconformal map, such Jacobians  are
uniquely determined up to sets of measure zero.
In fact, by Theorem \cite[Theorem~4.9, Theorem~7.11]{Heinonen-Koskela} and \cite[Theorem~7.1]{Margulis-Mostow}, they can be espressed as volume derivatives.
Moreover,
by an elementary  calculation using just the definition one checks that
 the Jacobian satisfies the formula
\begin{equation}\label{Jac:inv}
\Jac_f(p) = 1/ \Jac_{f^{-1}}(f(p)).
\end{equation}

\begin{remark}\label{sameTangents}
We have that if 
$f:M\to N$ is   quasiconformal
and at almost every point $p$ its differential $\mathcal N_p(f)$   is a similarity, then  for almost every $p
\in M$ the Carnot groups
$
{\mathcal N}_p(M) $ and $ {\mathcal N}_{f(p)}(N)$
are isometric. Indeed,
if $\lambda_p$ is the dilation factor of  $\mathcal N_p(f)$,
then
the composition of 
${\mathcal N}_p(f)$
and the group dilation by $\lambda_p^{-1}$ gives an isometry.
As a consequence, $
{\mathcal N}_p(M) $ and $ {\mathcal N}_{f(p)}(N)$ are isomorphic as metric measure spaces when equipped with their Popp measures
$\vol_{ {\mathcal N}_{p}(M) }$ and $ \vol_{ {\mathcal N}_{f(p)} (N) }$, respectively.
In particular, for almost every $p
\in M$,
we have
\begin{equation}\label{54}
\vol_{ {\mathcal N}_{p}(M) }(B_{{\mathcal N}_{p}(M)} (e,1)) = \vol_{ {\mathcal N}_{f(p)} (N) }(B_{{\mathcal N}_{f(p)} (N)} (e,1))    .
\end{equation}
\end{remark}

\begin{example}\label{Ex:Popp_Carnot_2}
We recall in a simple case  the construction of the Popp measure.
Namely, we consider a Carnot group of step $2$, that is,  the Lie algebra is stratified as $V_1\oplus V_2$. 
Let $B\subseteq V_1\subseteq T_eG$ be the (horizontal) unit ball with respect to a sub-Riemannian metric tensor $g_1$ at the identity, which is the intersection of  the metric unit ball at the identity with $V_1$, in exponential coordinates.
The set $[B,B]:=\{[X,Y]:X,Y\in B\}$ is the unit ball of a unique scalar product $g_2$ on $V_2$.
The formula $g:=\sqrt{g_1^2+g_2^2}$ defines the unique  scalar product on $V_1\oplus V_2$ that make $V_1$ and $ V_2$ orthogonal and extend $g_1$ and $g_2$. 
Extending the scalar product on $T_e G$ by left translation,
one obtains a Riemannian metric tensor
$\tilde g$  on the Lie group $G$.
For such a Carnot group the Popp measure is by definition the Riemannian volume measure of $\tilde g$.
\end{example}

\begin{remark}\label{Popp_monotone} 
In Carnot groups the Popp measure is strictly monotone as a function of the distance, in the sense that if $d$ and $d'$ are two distances on the same Carnot group such that $d'\leq d$ and $d'\neq d$, 
then 
${\rm Popp}_{d'}\leq{\rm Popp}_{d}$ 
and
${\rm Popp}_{d'}\neq{\rm Popp}_{d}$.
Indeed, this claim follows easily from the construction of the measure. 
For simplicity of notation, we illustrate the proof
for Popp measures in
Carnot groups of step $2$
as we recalled  in Example \ref{Ex:Popp_Carnot_2}. 
%
If $B'$ is a set that   strictly contains $B$ then clearly  
 $[B,B]\subseteq  [B',B']$ and hence the unit ball for $g$ is strictly contained in the unit ball for $g'$.
 In other words, 
 the vector space $T_e G$ is equipped with two different (Euclidean) distances, say $\rho$ and $\rho'$, and by assumption, the identity
 $\id:(T_eG,\rho)\to (T_eG,\rho')$ is $1$-Lipschitz.
 Therefore, the Hausdorff measure with respect to $\rho$ is greater than the one with respect to $\rho'$.
At this point we recall that  the Hausdorff measure of a Eulidean space equals the Lebesque measure with respect to orthonormal coordinates.
In other words,  the Hausdorff measure
is equal to the measure induced by the top-dimensional form that takes value $1$ on any orthonormal basis, which is by definition the Riemannian volume form.
We therefore deduce that 
 the Riemannian volume measure of $\tilde g$ is less than 
 the Riemannian volume measure of $\tilde g'$.
 Hence,
   ${\rm Popp}_{d'}\leq{\rm Popp}_{d}$. Moreover, the equality holds only if 
   $\tilde g=\tilde g'$, which holds if and only if 
   $B'=B$.    
\end{remark} 

\begin{lemma}
\label{tangJP_S} 
Let $A:G\to G'$ be an isomophism of Carnot groups of Hausdorff dimension $Q$.
If either $\Jac_{A} (e)=(\Lip_A(e))^Q$ or $\Jac_{A} (e)=(\ell_A(e))^Q$,
then $A$ is a similarity.
\end{lemma}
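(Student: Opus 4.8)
The plan is to exploit the fact that an isomorphism of Carnot groups dilates the distance by a factor that is pinched between $\ell_A(e)$ and $\Lip_A(e)$, and that the Popp measure is strictly monotone in the distance (Remark~\ref{Popp_monotone}). Set $\la := \Lip_A(e)$ and $\mu := \ell_A(e)$; since $A$ is a group isomorphism commuting with dilations, $A$ is Lipschitz with constant $\la$ (so $d'(A x, A y)\le \la\, d(x,y)$) and its inverse is Lipschitz with constant $1/\mu$ (so $d'(Ax,Ay)\ge \mu\, d(x,y)$); moreover $\mu\le\la$ always. The Jacobian of $A$ between the two Carnot groups, each equipped with its Popp measure, is a constant (by homogeneity under dilations it is the same at every point), and equals $\vol_{G'}(B_{G'}(e,1))/\vol_G(B_G(e,1))$ — more precisely, $\Jac_A(e)$ is the volume derivative, i.e.\ $\Jac_A(e) = \lim_{r\to 0}\vol_{G'}(A(B_G(e,r)))/\vol_G(B_G(e,r))$, which by scaling is $r$-independent.

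First I would treat the case $\Jac_A(e) = \la^Q$. Consider the rescaled isomorphism $A' := \delta^{G'}_{1/\la}\circ A$, which is now a $1$-Lipschitz isomorphism $G\to G'$: it satisfies $d'(A'x,A'y)\le d(x,y)$. Pulling back the metric $d'$ of $G'$ via the diffeomorphism $A'$ gives a Carnot-group distance $\tilde d := (A')^*d'$ on $G$, with $\tilde d\le d$. By Remark~\ref{Popp_monotone}, ${\rm Popp}_{\tilde d}\le {\rm Popp}_d$, with equality if and only if $\tilde d = d$. On the other hand, $A'$ is by construction an isometry from $(G,\tilde d)$ to $(G',d')$, so it maps ${\rm Popp}_{\tilde d}$ to ${\rm Popp}_{d'} = \vol_{G'}$; unwinding, $\Jac_{A'}(e)$ (computed with Popp measures) equals the ratio ${\rm Popp}_{\tilde d}(B_{\tilde d}(e,1))/{\rm Popp}_d(B_d(e,1))$, which is $\le 1$, with equality iff $\tilde d = d$. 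But the dilation $\delta^{G'}_{1/\la}$ scales the Popp measure of $G'$ by $\la^{-Q}$, so $\Jac_{A'}(e) = \la^{-Q}\Jac_A(e) = \la^{-Q}\cdot\la^Q = 1$. Hence $\tilde d = d$, i.e.\ $A'$ is an isometry $(G,d)\to(G',d')$, which means $A$ is a similarity with factor $\la$.

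For the case $\Jac_A(e) = \mu^Q$ I would run the symmetric argument using $A^{-1}$ and the identity $\Jac_{A^{-1}}(e) = 1/\Jac_A(e)$ together with $\ell_{A^{-1}}(e) = 1/\Lip_A(e) = 1/\la$ (and $\Lip_{A^{-1}}(e) = 1/\mu$), which are the Carnot-group analogues of \eqref{Lvsl} and \eqref{Jac:inv}. Indeed, $\Jac_{A^{-1}}(e) = \mu^{-Q} = (\Lip_{A^{-1}}(e))^Q$, so by the first case applied to $A^{-1}$, the map $A^{-1}$ is a similarity, hence so is $A$. Alternatively, one can argue directly: rescale to $A'' := \delta^{G'}_{1/\mu}\circ A$, which has $\Jac_{A''}(e) = \mu^{-Q}\mu^Q = 1$ and is $(1/\mu)\cdot\mu = 1$-co-Lipschitz in the sense that $d'(A''x,A''y)\ge d(x,y)$; pulling back $d'$ gives $\hat d\ge d$, so ${\rm Popp}_{\hat d}\ge {\rm Popp}_d$ with equality iff $\hat d=d$, and the Jacobian computation forces equality.

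The main obstacle I anticipate is the bookkeeping around the Popp measure under dilations and under isometric pullback: one must be careful that $\delta_t$ scales the step-$Q$ Popp measure exactly by $t^Q$ (clear from Example~\ref{Ex:Popp_Carnot_2} since in step~$2$ the Riemannian volume form $\tilde g$ scales homogeneously), and that pulling back the Popp measure along an isometry of Carnot groups yields the Popp measure of the pulled-back structure — this is essentially the statement that the Popp construction is metrically canonical, which is exactly what makes Remark~\ref{Popp_monotone} applicable. Everything else is a direct application of Remark~\ref{Popp_monotone}'s strict monotonicity and the homogeneity of Jacobians of isomorphisms.
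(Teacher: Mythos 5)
Your proposal is correct and follows essentially the same route as the paper's proof: normalize $A$ by composing with a dilation so that it becomes $1$-Lipschitz with Jacobian $1$, reduce to comparing two Carnot distances $d'\leq d$ on the same group, and invoke the strict monotonicity of the Popp measure (Remark~\ref{Popp_monotone}) to force $d'=d$; the second case is handled, as in the paper, by symmetry via $A^{-1}$ using \eqref{Lvsl} and \eqref{Jac:inv}. Your extra care about how dilations and isometric pullbacks act on the Popp measure is exactly the bookkeeping the paper leaves implicit.
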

\begin{proof}
Up to composing $A$ with a dilation, we assume that $\Lip_A(e)=1$, i.e., $A$ is $1$-Lipschitz. 
Then if  $\Jac_{A} (e)=(\Lip_A(e))^Q$ we have that 
$\Jac_{A} =1$, which means that the 
push forward via $A$ of the 
Popp measure on $G$ is the 
Popp measure on $G'$. 
Moreover, identifying the group structures via $A$, we assume that 
we are in the same group $G$ (algebraically) that is equipped with two different Carnot distances $d$ and $d'$ such that $d'\leq d$, since the identity $A=\id:(G,d)\to (G,d')$ is $1$-Lipschitz.
If $d'\neq d$, then 
by Remark \ref{Popp_monotone}
${\rm Popp}_{d'}\neq{\rm Popp}_{d}$, which contradicts the assumption.
We conclude that $d'= d$, i.e., $A=\id$ is an isometry.
The case when $\Jac_{A} (e)=(\ell_A(e))^Q$ is similar.  
\end{proof}

\subsection{A remark on tangent volumes}


We prove
that the Jacobian of a quasiconformal map coincides with the Jacobian of its tangent map 
almost everywhere.
We begin by recalling
the Margulis and Mostow's convergence \cite{Margulis-Mostow}.
Fix a point $p$ in a sub-Riemannian manifold $M$ and consider privileged  coordinates centered at $p$, see \cite[page 418]{Margulis-Mostow}.
Let 
$g$ be the sub-Riemannian metric tensor of  $M$.
Let 
$\delta_\eps$ be the  dilations associated to the privileged coordinates.
Notice that
$(\delta_\eps)_*g  $ is isometric via $\delta_\eps$ to $g$
and
$g_\eps := \frac{1}{\eps}(\delta_\eps)_*g  $ is isometric via $\delta_\eps$ to $\frac{1}{\eps} g$.
A
key fact is that $g_\eps$ converge to $g_0$, as $\eps\to 0$, which is a sub-Riemannian metric. (This convergence is the convergence of some orthonormal frames uniformly on compact sets).

Mitchell's theorem \cite{Mitchell} can be restated as the fact that
$(\R^n, g_0)$ is the tangent Carnot group ${\mathcal N}_p(M)$.
Margulis and Mostow  actually proved  that the maps
$\delta_\eps^{-1}\circ f \circ \delta_\eps$ converge uniformly, as $\eps\to 0$,  on compact sets to the map ${\mathcal N}_p(f)$.
Moreover, 
by functoriality of the  construction  of the Popp measure, we have that
$\vol^{g_\eps} \to \vol^{g_0} $, in the sense that if $\omega_\eps$ is the smooth function such that 
$\vol^{g_\eps} = \omega_\eps \mathcal L$,  then $\omega_\eps \to \omega_0$
 uniformly on compact sets.

\begin{proposition}\label{Jac=NJac}
Let $f:M\to N$ be  a   quasiconformal map between equiregular sub-Riemannian manifolds of Hausdorff dimension $Q$. For almost every $p\in M$
\begin{align*}
 \Jac_{{\mathcal N}_p(f)} (e) =\Jac_{f} (p).
\end{align*}
\end{proposition}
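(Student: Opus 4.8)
The plan is to deduce this from the convergence facts recalled just above the statement, by expressing the Jacobian as a volume derivative and passing to the limit under the rescaling maps $\delta_\eps$. First I would fix a point $p$ at which $f$ is differentiable in the sense of Margulis--Mostow (almost every $p$), work in privileged coordinates centered at $p$, and use that $\Jac_f$ is characterized almost everywhere as the volume derivative
\[
\Jac_f(p) = \lim_{r\to 0}\frac{\vol_N\big(f(B_M(p,r))\big)}{\vol_M\big(B_M(p,r)\big)},
\]
by \cite[Theorem~4.9, Theorem~7.11]{Heinonen-Koskela} and \cite[Theorem~7.1]{Margulis-Mostow}; the same characterization applies to $\Jac_{\mathcal N_p(f)}$ at $e$ on the tangent Carnot groups.

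Next I would rewrite the numerator and denominator after the rescaling $\delta_\eps$. Writing $\vol_M = \omega_0^M \,\mathcal L$ in privileged coordinates at $p$ and using that $\vol^{g_\eps}=\omega_\eps^M\mathcal L$ with $\omega_\eps^M \to \omega_0^M$ uniformly on compact sets (functoriality of the Popp measure, as recalled), one has, for the balls $B_{g_\eps}(p,1)=\delta_\eps^{-1}(B_M(p,\eps))$, that $\eps^{-Q}\vol_M(B_M(p,\eps)) = \vol^{g_\eps}_M(B_{g_\eps}(p,1)) \to \vol^{g_0}_M(B_{g_0}(e,1)) = \vol_{\mathcal N_p(M)}(B(e,1))$, and similarly on the $N$ side one must track $f(B_M(p,\eps)) = \delta_\eps\big((\delta_\eps^{-1}\circ f\circ\delta_\eps)(B_{g_\eps}(p,1))\big)$ together with the convergence of the volume forms pulled back to $N$ near $f(p)$. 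Since $\delta_\eps^{-1}\circ f\circ\delta_\eps \to \mathcal N_p(f)$ uniformly on compact sets, the rescaled image sets converge (in the Hausdorff sense, using also uniform continuity estimates coming from quasiconformality and the bound on $\Lip_f$ from Lemma~\ref{uguaglianze}) to $\mathcal N_p(f)(B_{\mathcal N_p(M)}(e,1))$, whence
\[
\frac{\vol_N\big(f(B_M(p,\eps))\big)}{\vol_M\big(B_M(p,\eps)\big)}
= \frac{\eps^{-Q}\vol_N\big(f(B_M(p,\eps))\big)}{\eps^{-Q}\vol_M\big(B_M(p,\eps)\big)}
\xrightarrow[\eps\to 0]{}
\frac{\vol_{\mathcal N_{f(p)}(N)}\big(\mathcal N_p(f)(B(e,1))\big)}{\vol_{\mathcal N_p(M)}\big(B(e,1)\big)} = \Jac_{\mathcal N_p(f)}(e).
\]

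The main obstacle I expect is justifying the interchange of the two limits: the volume-derivative limit $\eps\to 0$ defining $\Jac_f(p)$ must be shown to agree with the value obtained by first rescaling and then letting $\eps\to 0$ in the rescaled picture. This is where one needs more than mere pointwise convergence of the maps $\delta_\eps^{-1}\circ f\circ\delta_\eps$: one needs that the measure of the (rescaled) image of the unit ball converges to the measure of the image ball under $\mathcal N_p(f)$, which requires controlling the boundary $\partial B$ (a Popp-null set, since Popp measure is smooth) and using a dominated-convergence-type argument for the integrals $\int_{(\delta_\eps^{-1}\circ f\circ\delta_\eps)(B)} \omega_\eps^N$ against the uniformly converging densities. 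Here the differentiability of $f$ at $p$ in the strong sense of Margulis--Mostow, together with the uniform local bi-Lipschitz-type bounds furnished by quasiconformality, is exactly what makes the image sets behave well enough to pass to the limit; the remaining steps are then routine manipulations with the change-of-variables formula \eqref{change of variable} and the identity \eqref{Jac:inv}.
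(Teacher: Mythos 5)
Your proposal is correct and follows essentially the same route as the paper: rescale by $\delta_\eps$, use the uniform convergence of $\delta_\eps^{-1}\circ f\circ\delta_\eps$ to $\mathcal N_p(f)$ together with the convergence of the volume densities $\omega_\eps\to\omega_0$ to identify $\lim_\eps \eps^{-Q}\vol_N(f(B(p,\eps)))$ with $\vol_{\mathcal N_{f(p)}(N)}(\mathcal N_p(f)(B(e,1)))$, and handle the denominator via the expansion $\vol_M(B(p,\eps))=\eps^Q\vol_{\mathcal N_p(M)}(B(e,1))+o(\eps^Q)$ (which the paper takes from Ghezzi--Jean). Your extra remarks on justifying the convergence of the measures of the rescaled image sets make explicit a step the paper passes over silently, but the argument is the same.
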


\proof
Denote by $B_r^{g_\eps}$   the ball at $0$ of radius $r$ with respect to the metric $g_\eps$.
We have
\begin{eqnarray}\label{18354228}
\eps^{-Q}    \vol^g (f(B_\eps^g))&=&
\vol^{\frac{1}{\eps} g} (f(B_1^{\frac{1}{\eps}g})) \\
\nonumber
&=&\vol^{\frac{1}{\eps} g} (    \delta_\eps  \circ \delta_\eps^{-1}\circ f \circ \delta_\eps (B_1^{g_\eps})\\
\nonumber&=&\vol^{  g_\eps} (       \delta_\eps^{-1}\circ f \circ \delta_\eps (B_1^{g_\eps}))\\
\nonumber &\to& 
\vol^{g_\infty} ({\mathcal N}_p(f)(B_1^{g_0})).
\end{eqnarray}
By \cite[Lemma 1 (iii)]{ghezzi-jean}, for all $q\in M$ we have the expantion
\begin{equation}\label{1828}
\vol_M(B(q,\epsilon))=\epsilon^Q \mathcal \vol_{\mathcal N_q(M)}(B_{\mathcal N_q(M)}(e,1))+o(\epsilon^Q).
\end{equation}
Using \eqref{18354228} and the latter, we conclude
\begin{eqnarray*}
\Jac_{{\mathcal N}_p(f)} (e) &= &
 \frac{ \vol_{\mathcal N_q(M)}(  N_q(f)( B_{\mathcal N_q(M)}(e,1)))}
 {  \vol_{\mathcal N_q(M)}(B_{\mathcal N_q(M)}(e,1)) }
 \\
\nonumber &=& \lim_{\eps \to 0} 
\frac{\vol_N(f(B(p,\eps)))}{ \eps^{Q}\mathcal \vol_{\mathcal N_p(M)}(B_{\mathcal N_p(M)}(e,1))}\\
 \nonumber
 &=& \lim_{\eps \to 0} 
\frac{\vol_N(f(B(p,\eps)))}{\vol_M(B(p,\eps))}\\
 \nonumber &=&
 \Jac_{f} (p).
\end{eqnarray*}
\qed

%
%
%
%
%
%
%

\subsection{Equivalence of the analytic definition}

\begin{lemma}\label{S_JP}  
Let $f:M\to N$ be  a   quasiconformal map between equiregular sub-Riemannian manifolds of Hausdorff dimension $Q$. 
If the differential $\mathcal N_p(f)$ of $f$ is a similarity for almost every $p\in M$, then
\begin{align*}
&\ell_f(p)^Q= 
\Jac_f^{\rm Popp}(p)= \Lip_f(p)^Q, \qquad \text{ for almost every } p\in M.
\end{align*}
\end{lemma}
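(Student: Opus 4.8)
The plan is to transfer all three quantities to the nilpotent tangent $\mathcal N_p(f)$ at $p$, where the hypothesis that $\mathcal N_p(f)$ is a similarity makes them manifestly equal, and then to push the identity back down using the results already established in this section. Concretely, I would combine three facts: Corollary~\ref{L=LN:first}, which gives $\Lip_f(p)=\Lip_{\mathcal N_p(f)}(e)$ and $\ell_f(p)=\ell_{\mathcal N_p(f)}(e)$ at every point of differentiability; Proposition~\ref{Jac=NJac}, which gives $\Jac_f^{\rm Popp}(p)=\Jac_{\mathcal N_p(f)}(e)$ for a.e.\ $p$; and Remark~\ref{sameTangents}, which — precisely under the present hypothesis — provides the unit-ball identity \eqref{54} and the fact that $\delta_{1/\lambda_p}\circ\mathcal N_p(f)$ is a Popp-measure-preserving isometry of the tangent Carnot groups, where $\lambda_p>0$ denotes the dilation factor of $\mathcal N_p(f)$. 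So I would fix $p$ in the full-measure set where $f$ is differentiable, where $\mathcal N_p(f)$ exists and is a similarity with factor $\lambda_p$, and where the conclusions of Proposition~\ref{Jac=NJac} and \eqref{54} hold.

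First I would dispose of the metric quantities: since $\mathcal N_p(f)$ is a group homomorphism it fixes the identity $e$, and being a similarity with factor $\lambda_p$ it satisfies $\Lip_{\mathcal N_p(f)}(e)=\ell_{\mathcal N_p(f)}(e)=\lambda_p$, so Corollary~\ref{L=LN:first} yields $\Lip_f(p)=\ell_f(p)=\lambda_p$. Next I would compute the Popp-Jacobian of $\mathcal N_p(f)$ at $e$ as a volume derivative. Because $\mathcal N_p(f)$ is a similarity of factor $\lambda_p$ fixing $e$, it maps $B_{\mathcal N_p(M)}(e,r)$ onto $B_{\mathcal N_{f(p)}(N)}(e,\lambda_p r)$; since on a Carnot group of homogeneous dimension $Q$ the Popp measure is a (left-invariant) Haar measure, one has $\vol(B(e,s))=s^Q\,\vol(B(e,1))$ for all $s>0$, so with \eqref{54}
\[
\Jac_{\mathcal N_p(f)}(e)=\lim_{r\to0}\frac{\vol_{\mathcal N_{f(p)}(N)}(\mathcal N_p(f)(B(e,r)))}{\vol_{\mathcal N_p(M)}(B(e,r))}=\frac{(\lambda_p r)^Q\,\vol_{\mathcal N_{f(p)}(N)}(B(e,1))}{r^Q\,\vol_{\mathcal N_p(M)}(B(e,1))}=\lambda_p^Q.
\]
(Equivalently, factor $\mathcal N_p(f)=\delta_{\lambda_p}\circ(\delta_{1/\lambda_p}\circ\mathcal N_p(f))$: the second factor is measure-preserving by Remark~\ref{sameTangents}, and the dilation $\delta_{\lambda_p}$ contributes exactly $\lambda_p^Q$.) Feeding this into Proposition~\ref{Jac=NJac} gives $\Jac_f^{\rm Popp}(p)=\Jac_{\mathcal N_p(f)}(e)=\lambda_p^Q=\Lip_f(p)^Q=\ell_f(p)^Q$, which is the claim.

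I expect the only genuinely delicate point to be the scaling law $\vol(\delta_\lambda E)=\lambda^Q\,\vol(E)$ for the Popp measure on a Carnot group, i.e.\ the assertion that the Popp measure is a Haar measure on which the intrinsic dilations act by the homogeneous dimension; this is standard from the functoriality of the Popp construction, and moreover Remark~\ref{sameTangents} already isolates the isometry-invariance half of the argument, so no new idea is needed here. Everything else is routine bookkeeping assembling Corollary~\ref{L=LN:first}, Proposition~\ref{Jac=NJac}, and Remark~\ref{sameTangents}.
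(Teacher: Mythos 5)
Your proof is correct, and it is organized differently from the paper's. The paper argues directly on the manifolds: from the definition of $\Lip_f(p)$ it gets the inclusion $f(B(p,r))\subset B(f(p),r(\Lip_f(p)+\epsilon))$, compares volumes using the ball-volume expansion \eqref{1828} at $p$ and at $f(p)$ together with the unit-ball identity \eqref{54}, and deduces the one-sided bound $\Jac_f^{\rm Popp}(p)\le \Lip_f(p)^Q$; the lower bound $\ell_f(p)^Q\le \Jac_f^{\rm Popp}(p)$ then comes from running the same argument for $f^{-1}$ and using the inversion formulas for $\Jac$ and $\ell$, and the two bounds are closed up with Corollary~\ref{L=LN}. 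You instead push everything to the tangent cone via Proposition~\ref{Jac=NJac} and Corollary~\ref{L=LN:first} and compute the Jacobian of the similarity $\mathcal N_p(f)$ exactly as $\lambda_p^Q$, using the homogeneity of the Popp measure under Carnot dilations and again \eqref{54}. Both routes hinge on the same two nontrivial inputs — the similarity hypothesis entering through \eqref{54}, and a blow-up statement (for you, Proposition~\ref{Jac=NJac}; for the paper, the expansion \eqref{1828}) — so neither is strictly more elementary; what your version buys is an exact identity $\Jac_{\mathcal N_p(f)}(e)=\lambda_p^Q$ in one shot rather than a two-sided squeeze via $f$ and $f^{-1}$, at the cost of invoking the Margulis--Mostow convergence packaged in Proposition~\ref{Jac=NJac}. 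The one point you flag as delicate, namely $\vol(\delta_\lambda E)=\lambda^Q\vol(E)$ for the Popp measure of a Carnot group, is indeed standard (the Popp measure is left-invariant, hence Haar, and $\det \d\delta_\lambda=\lambda^Q$), so there is no gap.
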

\begin{proof}	
Let $p$ be a point where $\Jac^{\rm Popp}_f(p)$ is expressed as volume derivative. By definition, for all $\epsilon >0$, there exists $\bar r>0$ such that, if $q\in M$ is such that $d(q,p)\in (0,\bar r)$, then 
$$
 \frac{d(f(q),f(p))}{d(p,q)}\leq \Lip_f(p) + \epsilon.
$$
Hence for every $r\in (0,\bar r)$,
\begin{equation*}\label{1727}
 f(B(p,r))\subset B(f(p),r(\Lip_f(p) + \epsilon)).
\end{equation*}
So  one has
$$
 \frac{\vol_N(f(B(p,r)))}{\vol_M(B(p,r))}\leq \frac{\vol_N(B(f(p),r(\Lip_f(p) + \epsilon)))}{\vol_M(B(p,r))}.
$$
Letting $r\to 0$, using \eqref{1828} with $q=p$ and $q=f(p)$, and using \eqref{54}, we have
$$
\Jac_f^{\rm Popp} (p)\leq (\Lip_f(p) + \epsilon)^Q.
$$
Notice that equation~\eqref{54}  requires the assumption of the differential being a similarity.
Since $\epsilon$ is arbitrary,  $\Jac_f^{\rm Popp}(p)\leq \Lip_f(p)^Q$. 
Once we recall that 
$\Jac^{\rm Popp}_f(p)\cdot \Jac^{\rm Popp}_{f^{-1}}(f(p))=1$ and $\ell_f(p)\cdot \Lip_{f^{-1}}(f(p))=1$,
the same argument  applied to $f^{-1}$ yields 
$\ell_f(p)^Q\leq \Jac_f^{\rm Popp}(p)$.
With Corollary \ref{L=LN} we conclude.
\end{proof} 

For an arbitrary 
 quasiconformal map 
we expect the relation
\begin{align*}
&\ell_f(p)^Q\leq \Jac_f^{\rm Popp}(p)\leq \Lip_f(p)^Q
\end{align*}
to hold. However, our proof of Lemma \ref{S_JP} makes a crucial use of 
equation \eqref{54}, which is not true in general.

\begin{lemma}\label{JP_S}  
Let $f:M\to N$ be  a   quasiconformal map between equiregular sub-Riemannian manifolds of Hausdorff dimension $Q$.
 If for almost every $p\in M$
either
\begin{align*}
&\ell_f(p)^Q= \Jac_f^{\rm Popp}(p) 
\end{align*}
or
\begin{align*} 
 \Jac_f^{\rm Popp}(p)= \Lip_f(p)^Q,
\end{align*}
then 
${\mathcal N_p(f)}$ is a similarity, for almost every $p$.
\end{lemma}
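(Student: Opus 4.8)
The plan is to reduce everything, via the tangent-map formalism of Proposition~\ref{Jac=NJac} and Corollary~\ref{L=LN:first}, to the purely algebraic statement about Carnot group isomorphisms already isolated in Lemma~\ref{tangJP_S}. Concretely, suppose $f:M\to N$ is quasiconformal and that, say, $\Jac_f^{\rm Popp}(p)=\Lip_f(p)^Q$ for almost every $p$ (the case $\ell_f(p)^Q=\Jac_f^{\rm Popp}(p)$ being entirely parallel). By the Margulis--Mostow differentiability theorem, for almost every $p\in M$ the tangent map $\mathcal N_p(f):\mathcal N_p(M)\to \mathcal N_{f(p)}(N)$ exists and is an isomorphism of Carnot groups; fix such a $p$ which is moreover a point where the conclusions of Proposition~\ref{Jac=NJac} and Lemma~\ref{uguaglianze} hold. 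The first step is to transport the hypothesis to the tangent: by Proposition~\ref{Jac=NJac} we have $\Jac_{\mathcal N_p(f)}(e)=\Jac_f(p)=\Jac_f^{\rm Popp}(p)$, and by Lemma~\ref{uguaglianze} (applied at the point of differentiability $p$) we have $\Lip_{\mathcal N_p(f)}(e)=\Lip_f(p)$. Substituting the hypothesis yields
\[
\Jac_{\mathcal N_p(f)}(e)=\bigl(\Lip_{\mathcal N_p(f)}(e)\bigr)^Q.
\]

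The second step is simply to invoke Lemma~\ref{tangJP_S} with $A=\mathcal N_p(f):\mathcal N_p(M)\to \mathcal N_{f(p)}(N)$ (an isomorphism of Carnot groups, both of Hausdorff dimension $Q$ since $f$ is a homeomorphism between equiregular manifolds of Hausdorff dimension $Q$ and the tangent preserves Hausdorff dimension): the lemma concludes that $\mathcal N_p(f)$ is a similarity. Since $p$ ranged over a set of full measure, $\mathcal N_p(f)$ is a similarity for almost every $p$, which is exactly the assertion of the lemma. For the alternative hypothesis $\ell_f(p)^Q=\Jac_f^{\rm Popp}(p)$, one uses instead that $\ell_{\mathcal N_p(f)}(e)=\ell_f(p)$ from Corollary~\ref{L=LN:first} together with $\Jac_{\mathcal N_p(f)}(e)=\Jac_f(p)$, obtaining $\Jac_{\mathcal N_p(f)}(e)=\bigl(\ell_{\mathcal N_p(f)}(e)\bigr)^Q$, and then applies the second case of Lemma~\ref{tangJP_S}.

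There is essentially no hard analytic step here: all of the real work has already been done in Proposition~\ref{Jac=NJac} (identifying the Jacobian with the Jacobian of the tangent map via the $\vol_M(B(q,\epsilon))=\epsilon^Q\vol_{\mathcal N_q(M)}(B(e,1))+o(\epsilon^Q)$ expansion) and in Lemma~\ref{tangJP_S} (the monotonicity of the Popp measure under shrinking Carnot distances, Remark~\ref{Popp_monotone}). The only point requiring a word of care is making sure one selects a common full-measure set of points $p$ at which simultaneously: $f$ is Margulis--Mostow differentiable, $\Jac_f^{\rm Popp}(p)$ is realized as a volume derivative, the expansion \eqref{1828} holds at $p$ and at $f(p)$, and the hypothesis $\Jac_f^{\rm Popp}(p)=\Lip_f(p)^Q$ (resp. $\ell_f(p)^Q=\Jac_f^{\rm Popp}(p)$) holds; the intersection of finitely many full-measure sets is again full-measure, so this is routine. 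Thus the anticipated ``main obstacle'' is only bookkeeping, and the proof is short.
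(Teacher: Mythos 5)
Your proof is correct and follows exactly the paper's own argument: transport the hypothesis to the tangent map via Proposition~\ref{Jac=NJac} and Corollary~\ref{L=LN:first}, then invoke Lemma~\ref{tangJP_S}. The extra bookkeeping you note about intersecting full-measure sets is sound and only makes explicit what the paper leaves implicit.
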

\begin{proof}
In view of Proposition~\ref{Jac=NJac} and Corollary \ref{L=LN:first}, we have either 
$\ell_{\mathcal N_p(f)}(e)^Q= \Jac_{\mathcal N_p(f)}(e)$
or $
 \Jac_{\mathcal N_p(f)}(e)= \Lip_{\mathcal N_p(f)}(e)$.
 Therefore, by
Lemma \ref{tangJP_S}
we get that ${\mathcal N_p(f)}$ is a similarity.
\end{proof}


\subsection{Equivalence of geometric  definitions}\label{sec:geodef}

We recall the definition of the  modulus of  a family 
  $\Gamma$ of curves in a metric measure space $(M, \vol)$.  A Borel function $\rho\colon M\rightarrow [0,\infty]$ is said to be \textit{admissible} for $\Gamma$ if for every rectifiable $\gamma\in \Gamma$,
\begin{equation}
\label{admissibility}
\int_\gamma \rho\,ds\geq 1\text{.}
\end{equation}
The \textit{$Q$-modulus} of $\Gamma$ is 
\begin{equation*}
{\rm Mod}_Q(\Gamma) = \inf \left\{ \int_M \rho^Q\,\d\vol:\text{$\rho$ is admissible for $\Gamma$} \right\} \text{.}
\end{equation*}

\begin{proposition}\label{MP_L}
Let $f:M\to N$ be  a   quasiconformal map between equiregular sub-Riemannian manifolds of Hausdorff dimension $Q$.
Then 
 ${\rm Mod}_Q(\Gamma) = {\rm Mod}_Q(f(\Gamma))$
  for every family $\Gamma$ of curves in $M$ if and only if $\Lip_f^Q(p)=\Jac_f(p)$ for a.e. $p$.
\end{proposition}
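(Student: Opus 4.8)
The plan is to prove the two implications by relating the $Q$-modulus to the analytic Jacobian through the standard change-of-variables machinery for quasiconformal maps, combined with the fact (from Proposition \ref{mug}) that $p\mapsto\norm{\mathcal N_p(f)}=\Lip_f(p)$ is the minimal upper gradient of $f$. For the forward direction, I would use that modulus is invariant if and only if no "compression of mass" occurs: given a curve family $\Gamma$ and an admissible $\rho$ on $M$, the function $\tilde\rho:=(\rho\circ f^{-1})\cdot\big(\Lip_{f^{-1}}\circ f^{-1}\big)$ is, by the upper-gradient/chain-rule estimate, admissible for $f(\Gamma)$ (up to a curve family of zero $Q$-modulus, which by the Fuglede-type argument implicit in Proposition \ref{mug} is harmless); the change-of-variables formula \eqref{change of variable} for the Popp Jacobian then gives
\[
{\rm Mod}_Q(f(\Gamma))\le\int_N\tilde\rho^Q\,\d\vol_N=\int_M\rho^Q\,\big(\Lip_{f^{-1}}\circ f\big)^Q\,\Jac_f\,\d\vol_M.
\]
If $\Jac_f(p)=\Lip_f(p)^Q$ a.e., then using $\ell_f(p)\Lip_{f^{-1}}(f(p))=1$ from \eqref{Lvsl} and, from Lemma \ref{S_JP} (whose hypothesis is available here, since $\Jac_f=\Lip_f^Q$ forces $\mathcal N_p(f)$ to be a similarity by Lemma \ref{JP_S}), we get $\ell_f=\Lip_f$ a.e., hence $\big(\Lip_{f^{-1}}\circ f\big)^Q\Jac_f=1$ a.e., and the integral on the right is exactly $\int_M\rho^Q\d\vol_M$; taking the infimum over admissible $\rho$ yields ${\rm Mod}_Q(f(\Gamma))\le{\rm Mod}_Q(\Gamma)$, and applying the same reasoning to $f^{-1}$ (which is $1$-quasiconformal with $\Jac_{f^{-1}}=\Lip_{f^{-1}}^Q$ by \eqref{Jac:inv} and the same equivalences) gives the reverse inequality.

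For the converse, assume modulus is preserved for all curve families. The strategy is the classical one of testing on the family $\Gamma_E$ of flow lines of a unit horizontal vector field $X$ through a small set $E$, exactly as in the proof of Proposition \ref{mug}. On one hand, since $\norm{\mathcal N_p(f)}=\Lip_f(p)$ is the minimal upper gradient, a density/Lebesgue-point argument along these flow lines shows that for any Borel $\rho\ge0$ one has the sharp inequality $\int_M\rho^Q\d\vol_M\ge\int_N\big(\rho\circ f^{-1}\big)^Q\Lip_{f^{-1}}(\,\cdot\,)^{-Q}\cdot\big(\text{stuff}\big)$, and the standard argument (Fuglede's lemma plus the fact that $\Lip_f$ realizes the minimal upper gradient in every horizontal direction simultaneously) forces
\[
\frac{1}{\Jac_f(p)}\ge\frac{1}{\Lip_f(p)^Q}\quad\text{and dually}\quad \Jac_f(p)\ge\ell_f(p)^Q\qquad\text{a.e.},
\]
i.e. $\ell_f(p)^Q\le\Jac_f(p)\le\Lip_f(p)^Q$ always holds; this is the general inequality alluded to after Lemma \ref{S_JP}. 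On the other hand, modulus preservation gives equality in the key estimate above for a.e. choice of $X$ and a.e. point, which pins down $\Jac_f(p)=\Lip_f(p)^Q$ a.e.: concretely, if $\Jac_f<\Lip_f^Q$ on a positive-measure set one constructs, using the flow-line family through that set with the extremal $\rho$, a curve family whose image has strictly smaller modulus, contradicting preservation.

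The main obstacle I expect is making the curve-family comparison rigorous modulo the exceptional families of zero $Q$-modulus: one must know that a quasiconformal map sends a family of zero $Q$-modulus to a family of zero $Q$-modulus (equivalently, that $f$ and $f^{-1}$ are absolutely continuous on $Q$-a.e. curve), so that the admissibility transfer $\rho\mapsto(\rho\circ f^{-1})\Lip_{f^{-1}}$ is valid off a negligible family. This is where the quasiconformality hypothesis and the Heinonen--Koskela theory (the references \cite{Heinonen-Koskela} and \cite{Margulis-Mostow} already invoked in Section \ref{jacobians}) do the real work — it guarantees the $\mathrm{ACC}_Q$ property and that the Jacobians are genuine volume derivatives. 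Once that point is granted, everything else reduces to the chain rule for upper gradients, the change-of-variables formula \eqref{change of variable}, and the already-established identities \eqref{Lvsl}, \eqref{Jac:inv}, together with Lemmas \ref{S_JP} and \ref{JP_S}.
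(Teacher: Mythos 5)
Your route is genuinely different from the paper's: the paper deduces both implications by quoting Williams' result \cite[Theorem 1.1]{Williams} as a black box, applied once to $f$ and once to $f^{-1}$ (together with Proposition \ref{mug} and Lemma \ref{uguaglianze} identifying $\Lip_f$ as the minimal upper gradient), whereas you attempt a direct proof. Your argument for the implication ``$\Jac_f=\Lip_f^Q$ a.e.\ $\Rightarrow$ modulus preservation'' is sound and amounts to proving the easy half of Williams' theorem: the admissibility transfer $\rho\mapsto(\rho\circ f^{-1})\cdot\Lip_{f^{-1}}$, the change of variables \eqref{change of variable}, and the identity $(\Lip_{f^{-1}}\circ f)^Q\,\Jac_f=\ell_f^{-Q}\Lip_f^Q=1$ (using Lemma \ref{JP_S} and Corollary \ref{L=LN} to get $\ell_f=\Lip_f$ a.e.) do exactly what you claim, modulo the ${\rm ACC}_Q$/Fuglede bookkeeping that you correctly flag.

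The converse implication is where there is a genuine gap. You assert that the two-sided inequality $\ell_f^Q\le\Jac_f^{\rm Popp}\le\Lip_f^Q$ ``always holds'' and attribute it to the remark following Lemma \ref{S_JP}; but that remark says the opposite: the inequality is only \emph{expected} for a general quasiconformal map, and the paper's proof of it uses \eqref{54}, i.e.\ the equality of the Popp volumes of the unit balls of the tangent groups at $p$ and $f(p)$, which is available only after one knows the differential is a similarity --- precisely what is being proved. Concretely, the volume-derivative bound coming from $f(B(p,r))\subset B\bigl(f(p),(\Lip_f(p)+\epsilon)r\bigr)$ together with \eqref{1828} only yields
$\Jac_f^{\rm Popp}(p)\le\Lip_f(p)^Q\cdot\vol_{\mathcal N_{f(p)}(N)}(B(e,1))/\vol_{\mathcal N_p(M)}(B(e,1))$,
and the correction factor need not be $1$, so your upper bound is unproven. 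Moreover, your remaining step --- producing, from a positive-measure set where the identity fails, a curve family whose modulus strictly changes --- is exactly the hard half of Williams' theorem, and a flow-line sketch plus ``the standard argument'' does not substitute for it. Note that applying Williams to both $f$ and $f^{-1}$ removes any need for an a priori inequality: one application gives $\Lip_f^Q\le\Jac_f$, the other gives $\Jac_f\le\ell_f^Q\le\Lip_f^Q$, whence equality.
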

\begin{proof}
This equivalence is actually a very general fact after the work of Cheeger \cite{Cheeger} and Williams \cite{Williams}.
Since locally  sub-Riemannian manifolds are doubling metric spaces that satisfy a Poincar\'e inequality, we have that 
 the pointwise Lipschitz constant
 $\Lip_{f}(\cdot)$ is the minimal upper gradient of the map $f$, see
 	Proposition~\ref{mug} and Lemma~\ref{uguaglianze}. 
 We also remark that 
  any quasiconformal map is in $W_{\rm loc}^{1,Q}$ and hence in the Newtonian space $N_{\rm loc}^{1,Q}$, see \cite{Balogh-Koskela-Rogovin}.
 By a result of Williams \cite[Theorem 1.1]{Williams},  
$\Lip_{f}(p)^Q \leq   J_{f} (p)  $, for almost every $p$,
if and only if
${\rm Mod}_Q(\Gamma) \leq {\rm Mod}_Q(f(\Gamma)),$
 for every family $\Gamma$ of curves in $M$.
 Hence, we get the inequality 
 $\Lip_{f}(p)^Q \leq   J_{f} (p)  $.
 
 Now consider the inverse map $f^{-1}$. Such a map satisfies the same assumptions of $f$.
 In particular, applying  to $f^{-1}$ William's result, we have that ${\rm Mod}_Q(\Gamma) \leq {\rm Mod}_Q(f^{-1}(\Gamma))$
 for every family $\Gamma$ of curves in $N$ if and only if $\Lip_{f^{-1}}(q)^Q \leq   J_{f^{-1}} (q)  $, for almost every $q\in N$.
 Writing $f^{-1}(\Gamma)=\Gamma'$ and $q=f(p)$ and using \eqref{Lvsl} and \eqref{Jac:inv}, 
 we conclude that ${\rm Mod}_Q(f(\Gamma')) \leq {\rm Mod}_Q(\Gamma')$
 for every family $\Gamma'$ of curves in $M$ if and only if $J_{f} (p)\leq \ell_f^Q (p) \leq \Lip_f^Q (p)  $, for almost every $p\in M$

%
%
%
%
\end{proof}

Let $M$ be an equiregular sub-Riemannian manifold of Hausdorff dimension $Q$. Let $\vol_M$ be the Popp measure of $M$.
For all $u\in W^{1,Q}_{\rm H}(M, \vol_M) $,   the $Q$-energy of $u$ is
$${\rm E}_Q(u) := \int_{M} |\nabla_{\rm H}u|^Q \d\vol_M.$$
\begin{remark}\label{rmk:E_I}
Since 
${\rm E}_Q(u) =\Lap(u ,u)$, 
if
the operator $\Lap$    is preserved,
then
 the $Q$-energy  is preserved. Namely, Condition  \eqref{LP} implies Condition~\eqref{EP}.
\end{remark}

\begin{proposition}\label{EP_CP}
For  a   quasiconformal map 
 $f:M\to N$   between equiregular sub-Riemannian manifolds 
 of Hausdorff dimension $Q$, Condition  \eqref{EP} implies Condition~\eqref{CP}.
\end{proposition}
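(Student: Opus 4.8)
The plan is to use the standard characterization of $Q$-capacity as an infimum of $Q$-energies over admissible test functions, together with the invariance of the $Q$-energy under $f$ (Condition \eqref{EP}). Recall that for compact sets $E,F\subset M$ (with $F$ playing the role of the "outer" condenser plate, or after localizing to a bounded domain) the $Q$-capacity can be written as
\begin{equation*}
{\rm Cap}_Q(E,F)=\inf\left\{ {\rm E}_Q(u) : u\in W^{1,Q}_{\rm H}(M),\ u\geq 1 \text{ on a neighborhood of } E,\ u\leq 0 \text{ on a neighborhood of } F\right\}.
\end{equation*}
(One may equivalently use $u\in C^\infty$ with the stated boundary behaviour, or Lipschitz $u$; the value is the same by density, since the admissible class is stable under truncation $u\mapsto \min\{1,\max\{0,u\}\}$, which does not increase the $Q$-energy.) First I would fix compact $E,F\subset M$ and an admissible competitor $v$ for the pair $(f(E),f(F))$ in $N$, i.e. $v\in W^{1,Q}_{\rm H}(N)$ with $v\geq 1$ near $f(E)$ and $v\leq 0$ near $f(F)$.

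Next, the key step is to push $v$ back via $f$: set $u:=v\circ f$. Since $f$ is a homeomorphism, $u\geq 1$ on the open set $f^{-1}(\{v\geq1\text{-neighborhood}\})\supset E$ and $u\leq 0$ on an open neighborhood of $F$, so $u$ is an admissible competitor for $(E,F)$ in $M$ — provided $u\in W^{1,Q}_{\rm H}(M)$. The latter is exactly the content of the fact that $1$-quasiconformal maps give $W^{1,Q}_{\rm H}$-morphisms; more precisely, $v\in W^{1,Q}_{\rm H}(N)$ and Condition \eqref{EP} together guarantee ${\rm E}_Q(v\circ f)={\rm E}_Q(v)<\infty$, and combined with the fact that $f$ and $f^{-1}$ are quasiconformal (hence in $W^{1,Q}_{\rm H,loc}$, as used in Proposition \ref{MP_L}) one gets $u\in W^{1,Q}_{\rm H}(M)$. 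Then
\begin{equation*}
{\rm Cap}_Q(E,F)\leq {\rm E}_Q(u)={\rm E}_Q(v\circ f)={\rm E}_Q(v),
\end{equation*}
and taking the infimum over all admissible $v$ gives ${\rm Cap}_Q(E,F)\leq {\rm Cap}_Q(f(E),f(F))$. Applying the same argument to $f^{-1}$ — which by Corollary \ref{analytic-definition}, or directly from the symmetry of the conditions in Theorem \ref{theorem0}, is again $1$-quasiconformal and hence also satisfies \eqref{EP} — with the pair $(f(E),f(F))$ yields the reverse inequality ${\rm Cap}_Q(f(E),f(F))\leq {\rm Cap}_Q(E,F)$, and the two combine to the claimed equality \eqref{CP}.

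The only genuine subtlety, and the step I would spell out most carefully, is the bookkeeping around admissibility and the membership $v\circ f\in W^{1,Q}_{\rm H}(M)$: one must check that composing with the homeomorphism $f$ sends open neighborhoods of $f(E)$, $f(F)$ to open neighborhoods of $E$, $F$ (immediate from continuity of $f$ and $f^{-1}$), that truncation preserves admissibility without raising energy, and that the chain-rule-type identity ${\rm E}_Q(v\circ f)={\rm E}_Q(v)$ — which is precisely \eqref{EP} and is assumed — legitimately applies to the competitor $v$ chosen in $W^{1,Q}_{\rm H}(N)$ rather than a smooth one. Since \eqref{EP} is stated for all $v\in W^{1,Q}_{\rm H}(N)$, no further approximation is needed, and the proof is short; I do not expect any essential obstacle beyond this routine verification.
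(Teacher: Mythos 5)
Your proof is correct and follows essentially the same route as the paper: both pull back admissible competitors via $f$, invoke Condition \eqref{EP} to preserve the energy, and pass to the infimum (the paper phrases this compactly as the statement that $v\mapsto v\circ f$ is an energy-preserving bijection between the admissible classes $\mathcal S(f(E),f(F))$ and $\mathcal S(E,F)$, which packages your two inequalities into one step). Your extra care about membership of $v\circ f$ in $W^{1,Q}_{\rm H}(M)$ and about the precise admissibility convention is reasonable but not a departure from the paper's argument.
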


\begin{proof}
Let $E,F\subset M$ compact sets in $M$.
We set $\mathcal S(E,F)$ to denote the family of all $u\in W^{1,Q}_{\rm H}(M)$ such that $u|_{E}=1$, $u|_{F}=0$ and $0\leq u\leq 1$. Recall that  the $Q$-capacity ${\rm Cap}_Q(E,F)$ is then defined as the infimum of the $Q$-energy $
{\rm E}_Q(u)
$
among all competitors  $u\in\mathcal S(E,F)$:
$$
{\rm Cap}_Q(E,F)= \inf \int_M  |\nabla_{\rm H}u|^Q \d\vol.
$$
Since $f$ satisfies \eqref{EP}, the map $v\ \mapsto v\circ f $  is  a bijection between $\mathcal S(f(E),f(F)) $ and $ \mathcal S(E,F)$
that preserves the $Q$-energy. 
Correspondingly, one has that
\begin{eqnarray*}
{\rm Cap}_Q(f(E),f(F))&=& \inf\{    {\rm E}_Q(v)  \;:\;  {v\in \mathcal S(f(E),f(F))}\}
\\
&=& \inf \{ {\rm E}_Q( v \circ f) \;:\; {v\in \mathcal S(f(E),f(F))} \}
\\
&=& \inf \{ {\rm E}_Q(u) \;:\; {u\in \mathcal S(E,F)} \}
\\
&=& {\rm Cap}_Q(E,F),
\end{eqnarray*}
completing the proof. 
\end{proof}

\begin{proposition}\label{JP_LP}
Let $f:M\to N$ be a 
quasiconformal map between equiregular sub-Riemannian manifolds of Hausdorff dimension $Q$. 
Either of Condition~\eqref{HS} and Condition~\eqref{JP} implies Condition~\eqref{LP}.
 \end{proposition}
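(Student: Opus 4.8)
The plan is to reduce Condition~\eqref{LP} to the single pointwise identity $\Jac_f(p)=\Lip_f(p)^Q$ together with the fact that the horizontal differential rescales the horizontal gradient by the inverse factor. First I would observe that by Remark~\ref{rmk:similarity} and Lemma~\ref{S_JP} (applied via the already-proved implications \eqref{HS}$\Leftrightarrow$\eqref{S}$\Rightarrow$\eqref{JP}), either of Conditions~\eqref{HS} and \eqref{JP} gives us both: $\mathcal N_p(f)$ is a similarity a.e.\ (with some factor $\lambda(p)=\Lip_f(p)$, using Lemma~\ref{uguaglianze}), and $\Jac_f^{\rm Popp}(p)=\Lip_f(p)^Q=\lambda(p)^Q$ a.e. So it suffices to establish \eqref{LP} for a quasiconformal map whose Margulis--Mostow differential is a similarity a.e.

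The core computation is a change of variables. Fix an open $V\subset N$, set $U=f^{-1}(V)$, and take $v,\phi\in W_{\rm H}^{1,Q}(V)$; I would write out
\[
\Lap(v,\phi;V)=\int_V |\nabla_{\rm H}v|^{Q-2}\langle\nabla_{\rm H}v,\nabla_{\rm H}\phi\rangle\,\d\vol_N
\]
and push it through $f$ using \eqref{change of variable} with $h=|\nabla_{\rm H}v|^{Q-2}\langle\nabla_{\rm H}v,\nabla_{\rm H}\phi\rangle$ (Borel, not continuous — so I would first note the change-of-variables formula extends to Borel integrands by monotone class / the fact that for a quasiconformal map $\Jac_f$ is a genuine Radon--Nikodym derivative, as recalled before \eqref{Jac:inv}). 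This turns the integral into $\int_U \big(h\circ f\big)\,\Jac_f\,\d\vol_M$. The point is then to identify $(h\circ f)\,\Jac_f(p)$ with the integrand of $\Lap(v\circ f,\phi\circ f;U)$ at $p$. Here one uses that $f$ is horizontally differentiable a.e.\ with $(\d_{\rm H}f)_p$ a similarity of factor $\lambda(p)$: by the chain rule for the Margulis--Mostow differential (Margulis--Mostow, as recalled in Section~\ref{sec:tange}), $\nabla_{\rm H}(v\circ f)_p = (\d_{\rm H}f)_p^{*}\,(\nabla_{\rm H}v)_{f(p)}$, and since $(\d_{\rm H}f)_p$ is a $\lambda(p)$-similarity its adjoint is too, so
\[
|\nabla_{\rm H}(v\circ f)|^{Q-2}\langle\nabla_{\rm H}(v\circ f),\nabla_{\rm H}(\phi\circ f)\rangle\big|_p
=\lambda(p)^{Q}\,\Big(|\nabla_{\rm H}v|^{Q-2}\langle\nabla_{\rm H}v,\nabla_{\rm H}\phi\rangle\Big)\big|_{f(p)}
=\Jac_f(p)\,(h\circ f)(p),
\]
using $\lambda(p)^Q=\Jac_f(p)$. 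Integrating over $U$ gives exactly $\Lap(v\circ f,\phi\circ f;f^{-1}(V))$, which is \eqref{LP}.

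The main obstacle I expect is justifying the chain-rule step $\nabla_{\rm H}(v\circ f)=(\d_{\rm H}f)^{*}\nabla_{\rm H}v$ at the level of Sobolev functions: one needs that $v\circ f\in W_{\rm H}^{1,Q}(U)$ (which follows because $1$-quasiconformal maps preserve $W^{1,Q}_{\rm H}$ — this is implicit in the preservation of modulus/energy and the membership $f\in W^{1,Q}_{\rm loc}$ cited from \cite{Balogh-Koskela-Rogovin}), and that the weak horizontal gradient of the composition is computed by the horizontal differential a.e. A clean way around delicate chain-rule technicalities is to prove the identity first for $v,\phi\in C^\infty(V)$, where the pointwise a.e.\ chain rule for the horizontal differential of an ACL quasiconformal map is classical, and then pass to general $v,\phi\in W_{\rm H}^{1,Q}(V)$ by density together with the continuity of both sides of \eqref{LP} under $W_{\rm H}^{1,Q}$-convergence — the latter using the bi-Lipschitz-in-energy control provided by \eqref{EP} and Hölder's inequality. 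I would flag that the a.e.\ identification of $\nabla_{\rm H}(v\circ f)$ with $(\d_{\rm H}f)^*\nabla_{\rm H}v$ is the one genuinely sub-Riemannian input, relying on the almost-everywhere horizontal differentiability from Margulis--Mostow rather than on any smoothness of $f$.
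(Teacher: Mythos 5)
Your proposal is correct and follows essentially the same route as the paper: reduce to the case where the differential is a.e.\ a similarity with factor $\Lip_f(p)$ and $\Jac_f=\Lip_f^Q$, use the chain rule for the horizontal differential to see that $\nabla_{\rm H}(v\circ f)$ is the pullback of $\nabla_{\rm H}v$ rescaled by $\Lip_f(p)$, and conclude by the change-of-variables formula \eqref{change of variable}. The paper runs the computation from $\Lap(v\circ f,\phi\circ f;U)$ to $\Lap(v,\phi;V)$ rather than the reverse, and is silent on the Sobolev chain-rule and Borel-integrand technicalities you rightly flag, but these are presentational differences only.
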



\proof 
Let $p$ be a point of differentiability of $f$. Given an orthonormal basis $\{X_j\}$ of $H_pM$, 
from \eqref{HS} we have that
vectors
$$
Y_j:= \Lip_f(p)^{-1}(\d_{\rm H}f)_p X_j
$$
form an orthonormal basis of $H_qN$, with $q=f(p)$.
Then, for every open subset $V\subset N$ and for every $v\in W^{1,Q}_{\rm H}(N)$,
\begin{align*}
X_j (v\circ f)_p &= \d_{\rm H}(v\circ f)_p(X_j)\\
&= (\d_{\rm H} v)_q(\d_{\rm H}f)_p (X_j)\\
&= (\d_{\rm H}v)_q (\Lip_f(p) Y_j) = \Lip_f(p)(Y_j u)_q.
\end{align*}
Therefore,  for any $v, \phi \in W_{\rm H}^{1,Q}(V)$,
\begin{align*}
\langle \nabla_{\rm H} (v\circ f), \nabla_{\rm H} (\phi\circ f)\rangle_p &= \sum_j X_j(v\circ f)_p X_j(\phi\circ f)_p\\
&= L^2_f(p) \sum_j Y_j(v)_q Y_j(\phi)_q\\
&= L^2_f(p) \langle \nabla_{\rm H}v,\nabla_{\rm H}\phi\rangle_q.
\end{align*}
In particular
$$
|\nabla_{\rm H} (v\circ f)| = L_f(p) |(\nabla_{\rm H}v)_{f(\cdot)}|.
$$
So, using Condition~\eqref{JP} and writing $U=f^{-1}(V)$,
 \begin{eqnarray*}
I_Q(v\circ f,\phi\circ f; U)&=& \int_{U} |\nabla_{\rm H} (v\circ f)|^{Q-2} \langle \nabla_{\rm H} (v\circ f), \nabla_{\rm H} (\phi\circ f)\rangle \d\vol_M\\
&=&  \int_{U} L_f^{Q-2} | (\nabla_{\rm H} v)_{f(\cdot)}|^{Q-2} L_f^2 \langle \nabla_{\rm H} v, \nabla_{\rm H} \phi\rangle_{f(\cdot)} \d\vol_M  \\
&=& 
 \int_{U} \Jac_f | (\nabla_{\rm H} v)_{f(\cdot)}|^{Q-2}  \langle \nabla_{\rm H} v, \nabla_{\rm H} \phi\rangle_{f(\cdot)} \d\vol_M  \\
&=& \int_{V} |\nabla_{\rm H} v|^{Q-2} \langle \nabla_{\rm H} v,  \nabla_{\rm H} \phi\rangle \d\vol_N \\
&=& I_Q(v,\phi; V),
 \end{eqnarray*}
 where we used \eqref{change of variable}.
%
%
\qed

\begin{proposition}\label{LP_JP}
Let $f:M\to N$ be a 
quasiconformal map between equiregular sub-Riemannian manifolds of Hausdorff dimension $Q$. 
Then Condition~\eqref{LP} implies Condition~\eqref{JP}.
 \end{proposition}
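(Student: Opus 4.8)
The plan is to run the argument of Proposition \ref{JP_LP} in reverse: condition \eqref{LP} says that the nonlinear pairing $\Lap$ is preserved, and we want to extract from this the pointwise Jacobian identity $\Jac_f^{\rm Popp}(p)=\Lip_f(p)^Q$ a.e. First I would recall that any quasiconformal map between equiregular sub-Riemannian manifolds is differentiable a.e.\ in the Margulis--Mostow sense, so at a.e.\ $p$ the horizontal differential $(\d_{\rm H}f)_p$ is a well-defined linear isomorphism $H_pM\to H_{f(p)}N$, and by the change of variables formula \eqref{change of variable} the Jacobian $\Jac_f^{\rm Popp}$ is represented as a volume derivative. Fix such a good point $p$ and write $q=f(p)$.

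The key computation is the change of variables in $\Lap$. As in the proof of Proposition \ref{JP_LP}, for $v,\phi\in W^{1,Q}_{\rm H}(V)$ one has the pointwise identity relating $\nabla_{\rm H}(v\circ f)$ at $p$ to $\nabla_{\rm H}v$ at $q$ through the transpose of $(\d_{\rm H}f)_p$; plugging this into the left-hand side of \eqref{LP} and using \eqref{change of variable} to pass from an integral over $U=f^{-1}(V)$ to one over $V$, one gets, for all $v,\phi$,
$$
\int_V \bigl\langle S_p[\nabla_{\rm H}v]\,,\,\nabla_{\rm H}\phi\bigr\rangle\,|\cdots|\,\d\vol_N
=\int_V |\nabla_{\rm H}v|^{Q-2}\langle\nabla_{\rm H}v,\nabla_{\rm H}\phi\rangle\,\d\vol_N,
$$
where $S_p$ is built from $(\d_{\rm H}f)_p(\d_{\rm H}f)_p^{T}$ and the scalar factor carries $\Jac_f^{\rm Popp}(p)$ together with a power of the operator norm of $(\d_{\rm H}f)_p$ (the exact bookkeeping is the routine part). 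Because $v$ and $\phi$ are arbitrary and this holds on every open $V$, a localization/Lebesgue-point argument forces the two integrands to agree a.e., and hence forces the linear-algebraic identity $(\d_{\rm H}f)_p(\d_{\rm H}f)_p^{T}=\mu(p)\,\mathrm{Id}$ at a.e.\ $p$; that is, $(\d_{\rm H}f)_p$ is a similarity with $\mu(p)=\Lip_f(p)^2$ (using Lemma \ref{uguaglianze}, which identifies $\Lip_f(p)$ with the operator norm of the horizontal differential). In other words \eqref{LP} already implies \eqref{HS}.

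Once $(\d_{\rm H}f)_p$ is known to be a similarity a.e., I would invoke the change of variables formula one more time to pin down the scalar: comparing $\Jac_f^{\rm Popp}(p)\,\vol_M$ with $f^*\vol_N$ and using that on tangent cones a similarity of factor $\Lip_f(p)$ scales the Popp measure by $\Lip_f(p)^Q$ (this is precisely the content of \eqref{54}, available here because the differential is now a similarity, together with the volume expansion \eqref{1828} and Proposition \ref{Jac=NJac}), one concludes $\Jac_f^{\rm Popp}(p)=\Lip_f(p)^Q$ a.e., which is \eqref{JP}. Alternatively, and more cleanly, having established \eqref{HS} one simply cites Lemma \ref{S_JP}, which gives $\ell_f(p)^Q=\Jac_f^{\rm Popp}(p)=\Lip_f(p)^Q$ directly.

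The main obstacle I anticipate is the step that turns the integral identity \eqref{LP}, valid for \emph{all} test pairs $(v,\phi)$ on \emph{all} open sets, into a pointwise a.e.\ statement about the linear map $(\d_{\rm H}f)_p$: one must choose, near a Lebesgue point $p$, families of functions $v$ whose horizontal gradients at $p$ sweep out enough directions (e.g.\ coordinate-type functions in privileged coordinates) to separate the bilinear forms $\langle S_p\cdot,\cdot\rangle$ and $|\cdot|^{Q-2}\langle\cdot,\cdot\rangle$, while controlling the error terms coming from the non-constancy of the coefficients and of $\Jac_f$ away from $p$. This is where the nonlinear weight $|\nabla_{\rm H}v|^{Q-2}$ makes the bookkeeping delicate, but it is a local and essentially elementary measure-theoretic argument once the a.e.\ differentiability of $f$ is in hand.
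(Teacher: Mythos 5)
Your proposal is correct and follows essentially the same route as the paper: change of variables in the pairing $\Lap$, localization of the resulting integral identity to a pointwise vector identity by testing against coordinate functions near a Lebesgue point, and then reading off the similarity of $(\d_{\rm H}f)_q^{\rm T}$ from the fact that $A\xi$ must be parallel to $\xi$ for all $\xi$. The only cosmetic difference is the endgame: the paper pairs the pointwise identity with $\xi$ itself to get $|(\d_{\rm H}f)_{q}^{\rm T}\xi|^{Q}=\Jac_f(f^{-1}(q))$ for every unit $\xi$ and concludes via Lemma~\ref{uguaglianze}, whereas you first record Condition~\eqref{HS} and then cite Lemma~\ref{S_JP}; both are valid.
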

 
 \begin{proof}
 We start with the following chain of equalities, where we use \eqref{LP}, the chain rule and the change of variable formula \eqref{change of variable}. 
 For every open subset $U\subset M$, denote $V=f(U)\subset N$. 
 For every $v,\phi\in W^{1,Q}_{\rm H}(V)$, 
 \begin{align*}
 \int_{V} |\nabla_{\rm H} v|^{Q-2} \langle \nabla_{\rm H} v,  \nabla_{\rm H} \phi\rangle \d\vol_N =   \int_{U} |\nabla_{\rm H} (v\circ f)|^{Q-2} \langle \nabla_{\rm H} (v\circ f), \nabla_{\rm H} (\phi\circ f)\rangle \d\vol_M  & \\
 =  \int_{U} |(\d_{\rm H}f)^{\rm T}_{f(\cdot)}(\nabla_{\rm H} v)_{f(\cdot)} |^{Q-2} \langle (\d_{\rm H}f)^{\rm T}_{f(\cdot)}(\nabla_{\rm H} v)_{f(\cdot)}, (\d_{\rm H}f)^{\rm T}_{f(\cdot)}(\nabla_{\rm H} \phi)_{f(\cdot)}\rangle \d\vol_M & \\
  = \int_{V}\Jac_{f^{-1}}(\cdot) |(\d_{\rm H}f)_{\cdot}^{\rm T} (\nabla_{\rm H} v)_{\cdot} |^{Q-2}  \langle  (\d_{\rm H}f)_{\cdot}^{\rm T} (\nabla_{\rm H} v)_{\cdot}, (\d_{\rm H}f)_{\cdot}^{\rm T} (\nabla_{\rm H} \phi)_{\cdot}\rangle \d\vol_N & \\
   = \int_{V}\Jac_{f^{-1}}(\cdot) |(\d_{\rm H}f)_{\cdot}^{\rm T} (\nabla_{\rm H} v)_{\cdot} |^{Q-2}  \langle (\d_{\rm H}f)_{f^{-1}(\cdot)} (\d_{\rm H}f)_{\cdot}^{\rm T} (\nabla_{\rm H} v)_{\cdot},  (\nabla_{\rm H} \phi)_{\cdot}\rangle \d\vol_N, &
 \end{align*}
 where $(\d_{\rm H}f)^{\rm T}_{q}$ denotes the adjoint of  $(\d_{\rm H}f)_{f^{-1}(q)}$ with respect to the metrics on $N$ and $M$ at $q$ and $f^{-1}(q)$ respectively.
We then proved that 
\begin{equation}\label{lunga}
 \int_{V} \langle |(\nabla_{\rm H} v)_\cdot|^{Q-2}  (\nabla_{\rm H} v)_\cdot - \Jac_{f^{-1}}(\cdot) |(\d_{\rm H}f)_{\cdot}^{\rm T} (\nabla_{\rm H} v)_{\cdot} |^{Q-2}
 (\d_{\rm H}f)_{f^{-1}(\cdot)} (\d_{\rm H}f)_{\cdot}^{\rm T} (\nabla_{\rm H} v)_{\cdot} ,  (\nabla_{\rm H} \phi)_\cdot\rangle \d\vol_N = 0 
 \end{equation}
 for every $v,\phi\in W^{1,Q}_{\rm H}(V)$ and for every open subset $V\subset N$. Note that \eqref{lunga} holds true for every measurable subset $V\subset N$.
 We claim that, for almost every $q\in N$,
  \begin{equation}\label{lunga-seconda}
 |(\nabla_{\rm H} v)_q|^{Q-2}  (\nabla_{\rm H} v)_q - \Jac_{f^{-1}}(q) |(\d_{\rm H}f)_{q}^{\rm T} (\nabla_{\rm H} v)_{q}|^{Q-2}
 (\d_{\rm H}f)_{f^{-1}(q)} (\d_{\rm H}f)_{q}^{\rm T} (\nabla_{\rm H} v)_{q}=0
   \end{equation}
 for every $v\in W^{1,Q}_{\rm H}(N)$.
Arguing by contradiction,  assume that there is a set $V\subset  N$ of positive measure where \eqref{lunga-seconda} fails for some $v\in W^{1,Q}_{\rm H}(N)$.
Choose any smooth frame $X_1,\dots,X_r$ of $HN$, and write the left hand side of \eqref{lunga-seconda} as  $\sum_{i=1}^r\psi_i X_i$, with
$\psi_i\in L^{Q}(N)$ for every $i=1,\dots,r$.
Then at least one of the $\psi_i$ must be different from zero in $V$.  Without loosing generality, say $\psi_1 \neq 0$ on $V$.
By possibly taking $V$ smaller, we may 
 assume 
that $\int_{V} \psi_1  \d\vol_N \neq 0$. Let  $\phi$ be the coordinate function $x_1$, that is $X_j \phi = \delta_1^j$. Substituting in the left hand side of \eqref{lunga}, we conclude
$$
\int_{V} \langle  (\psi_1,\dots,\psi_r), \nabla_{\rm H} \phi \rangle  \d\vol_N = \int_{V} \psi_1  \d\vol_N \neq 0,
$$
 which contradicts \eqref{lunga}. This completes the proof of  \eqref{lunga-seconda}.
 %
%
%

 Next, fix $q\in N$ a point of differentiability where  \eqref{lunga-seconda} holds. For every vector $\xi\in H_qN$, consider $v_\xi$ such that  $(\nabla_{\rm H} v_\xi)_{q}=\xi$.
 For every $\xi\in H_qN$ such that $|\xi|=1$, the following holds 
 $$
  \Jac_{f^{-1}}(q) |(\d_{\rm H}f)_{q}^{\rm T}\xi |^{Q-2}
 \langle(\d_{\rm H}f)_{f^{-1}(q)} (\d_{\rm H}f)_{q}^{\rm T}\xi,\xi\rangle = 1.
 $$
 Using \eqref{Jac:inv}, the equality above becomes
 $$
   |(\d_{\rm H}f)_{q}^{\rm T}\xi |^{Q-2}\langle (\d_{\rm H}f)_{q}^{\rm T}\xi, (\d_{\rm H}f)_{q}^{\rm T} \xi\rangle = \Jac_f (f^{-1}(q))
 $$
 which is equivalent to
 $$
  |(\d_{\rm H}f)_{q}^{\rm T}\xi|^{Q} = \Jac_f (f^{-1}(q))
 $$
 for every  $\xi$ on $H_qN$ of norm equal to one. From \eqref{horizontal-push-forward} we have
$
|(\d_{\rm H}f)_{q}^{\rm T}\xi(q) |^{Q}=| \mathcal N_q(f)_*^{\rm T}\xi(q) |^{Q} .
 $
 Therefore, at every point $q\in N$ of differentiability,
 $$
 \| \mathcal N_{f^{-1}(q)}(f)_*\|^Q= {\rm max} \{| \mathcal N_q(f)_*^{\rm T}\xi |^{Q}\,:\, \xi \in H_qN, |\xi|=1\}=  \Jac_f (f^{-1}(q)).
 $$
 By Lemma~\ref{uguaglianze} and writing $p=f^{-1}(q)$, we conclude $ \Lip_f(p)^Q= \Jac_f(p)$ for almost every $p\in M$, establishing  \eqref{JP}.
  \end{proof}


\subsection{The morphism property}\label{morphism-prop} 
\begin{proof}[Proof of Corollary~\ref{morphism-property}]
Let $v\in    W_{\rm H}^{1,Q}(N)$ and  $\phi \in    W_{{\rm H},0}^{1,Q}(N)\subset   W_{\rm H}^{1,Q}(N)$,
then from \eqref{LP} it follows
\[
 L_Q (v)(\phi)= 
 \Lap(v, \phi )=\Lap(v\circ f , \phi \circ f )
 =
L_Q ( v\circ f) \circ f^*  (\phi) . \qedhere\]
\end{proof}


\subsection{Equivalence of the two Jacobians} 

Given $M$ an equiregular sub-Riemannian manifold of Hausdorff dimension $Q$,
we prefer to work with the Popp measure $\vol_M$
rather than the spherical Hausdorff measure $\mathcal S_M^Q$ 
since
$\vol_M$ is always smooth whereas there are cases in which $\mathcal S_M^Q$ is not (see \cite{Agrachev_Barilari_Boscain:Hausdorff}).
However, one has the following formula (see
\cite[pages 358-359]{Agrachev_Barilari_Boscain:Hausdorff},  \cite[Section 3.2]{ghezzi-jean}).
\begin{equation}\label{F1}
\mathrm d \vol_M = 2^{-Q} \vol_{{\mathcal N}_p(M)} (B_{{\mathcal N}_p(M)} (e,1)) \mathrm d {\mathcal S}^Q_M,
\end{equation}
where we used the fact that the measure induced on 
${\mathcal N}_p(M)$ by $\vol_M$ is $
\vol_{{\mathcal N}_p(M)}$.

\begin{proposition}\label{equivalence_of_jacobians}
If $f:M\to N$ is  a   $1$-quasiconformal map between equiregular sub-Riemannian manifolds, then
for almost every $p\in M$,
$$\Jac_f^{\rm Popp}(p)=\Jac_f^{\rm Haus}(p).$$
\end{proposition}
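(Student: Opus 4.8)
The plan is to exploit formula~\eqref{F1}, which expresses the Popp measure as a pointwise density multiple of the spherical Hausdorff measure, with density $2^{-Q}\vol_{\mathcal N_p(M)}(B_{\mathcal N_p(M)}(e,1))$ at the point $p$. The key observation is that this density depends only on the isometry class (as a metric measure space) of the nilpotent tangent $\mathcal N_p(M)$ equipped with its own Popp measure. Since $f$ is $1$-quasiconformal, Theorem~\ref{theorem0} tells us $\mathcal N_p(f)$ is a similarity for almost every $p$, and Remark~\ref{sameTangents} then gives that $\mathcal N_p(M)$ and $\mathcal N_{f(p)}(N)$ are isomorphic as metric measure spaces when equipped with their Popp measures; in particular equation~\eqref{54} holds, i.e.\ $\vol_{\mathcal N_p(M)}(B_{\mathcal N_p(M)}(e,1)) = \vol_{\mathcal N_{f(p)}(N)}(B_{\mathcal N_{f(p)}(N)}(e,1))$ for almost every $p$.

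The computation is then a change-of-variables bookkeeping exercise. Write $c(p):=2^{-Q}\vol_{\mathcal N_p(M)}(B_{\mathcal N_p(M)}(e,1))$ and $c'(q):=2^{-Q}\vol_{\mathcal N_q(N)}(B_{\mathcal N_q(N)}(e,1))$, so that $\d\vol_M = c(\cdot)\,\d\mathcal S^Q_M$ on $M$ and $\d\vol_N = c'(\cdot)\,\d\mathcal S^Q_N$ on $N$. For any measurable $A\subset M$ and continuous $h$, the change-of-variable formula~\eqref{change of variable} applied with respect to the Popp measures gives
\begin{equation*}
\int_{f(A)} h\,\d\vol_N = \int_A (h\circ f)\,\Jac_f^{\rm Popp}\,\d\vol_M,
\end{equation*}
while the same applied with respect to the spherical Hausdorff measures gives the analogous identity with $\Jac_f^{\rm Haus}$ and $\mathcal S^Q$. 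Substituting $\d\vol_N = c'(\cdot)\,\d\mathcal S^Q_N$ into the left side and $\d\vol_M = c(\cdot)\,\d\mathcal S^Q_M$ into the right side, and using the Hausdorff change-of-variable identity to rewrite $\int_{f(A)} (h c')\,\d\mathcal S^Q_N = \int_A (h\circ f)\,(c'\circ f)\,\Jac_f^{\rm Haus}\,\d\mathcal S^Q_M$, one obtains
\begin{equation*}
\int_A (h\circ f)\,(c'\circ f)\,\Jac_f^{\rm Haus}\,\d\mathcal S^Q_M = \int_A (h\circ f)\,\Jac_f^{\rm Popp}\,c\,\d\mathcal S^Q_M.
\end{equation*}
Since $A$ and $h$ are arbitrary, this forces $(c'\circ f)\,\Jac_f^{\rm Haus} = c\,\Jac_f^{\rm Popp}$ a.e.\ on $M$. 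But $c(p) = c'(f(p))$ a.e.\ by~\eqref{54}, so the two Jacobians coincide a.e.

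The only real subtlety—the point I would be careful about—is justifying that the densities $c$ and $c'\circ f$ are genuinely equal almost everywhere, which is precisely where $1$-quasiconformality enters and is not a formal manipulation: it rests on Remark~\ref{sameTangents}, which in turn uses that $\mathcal N_p(f)$ is a similarity for a.e.\ $p$ (one of the equivalent conditions of Theorem~\ref{theorem0}) and the functoriality of the Popp construction under isometries. Everything else is routine measure theory. One should also note in passing that both Jacobians are well-defined a.e.\ (up to null sets) for a quasiconformal map, as recalled in Section~\ref{jacobians}, so the identity makes sense. Alternatively, and perhaps more cleanly, one can avoid the test-function argument entirely by computing each Jacobian as a volume derivative: $\Jac_f^{\rm Popp}(p) = \lim_{r\to 0}\vol_N(f(B(p,r)))/\vol_M(B(p,r))$ and similarly for $\mathcal S^Q$, then using~\eqref{F1} at $p$ and at $f(p)$ together with the asymptotics~\eqref{1828}, with~\eqref{54} providing the cancellation of the tangent-ball volumes.
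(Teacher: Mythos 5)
Your proposal is correct and follows essentially the same route as the paper: both arguments rest on the density formula \eqref{F1} relating the Popp and spherical Hausdorff measures, invoke \eqref{54} (valid a.e.\ because the tangent maps $\mathcal N_p(f)$ are similarities) to identify the density on $M$ at $p$ with the density on $N$ at $f(p)$, and then compare the two change-of-variable identities to equate the Jacobians. The paper phrases this as a single chain of equalities for $\vol_N(f(A))$ rather than with a test function $h$, but the computation is the same.
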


\begin{proof}

Let $A\subseteq M$
be a measurable set.
Since $f^{-1}$ is 
\footnote{Here we need to invoke \cite[Corollary 6.5]{Margulis-Mostow} or \cite{Heinonen-Koskela}}
also $1$-quasiconformal, 
then we have
\eqref{54} with $p=f^{-1}(q)$ for almost all $q\in M$,
Then, using   twice \eqref{F1}, we have
\begin{eqnarray*}
2^{Q} (f^* \vol_N)(A) & = & 2^{Q} \vol_N (f(A))\\
  &=&   \int_{f(A)}   \vol_{ {\mathcal N}_q(N) }(B_{{\mathcal N}_q(N)} (e,1)) \,\d {\mathcal S}^Q_N(q)\\ 
&=&
\int_{f(A)}  \vol_{ {\mathcal N}_{f^{-1}(q)}(M) }(B_{{\mathcal N}_{f^{-1}(q)}(M)} (e,1))\, \d {\mathcal S}^Q_N(q)\\
 &=&
 \int_{A}  \vol_{ {\mathcal N}_p(M) }(B_{{\mathcal N}_p(M)} (e,1)) \Jac_f^{\rm Haus}(p) \, \d {\mathcal S}^Q_M(p)\\
  &=& 2^Q ( \Jac_f^{\rm Haus}  \vol_M)(A).
\end{eqnarray*}
Thus, we conclude that
$
\Jac_f^{\rm Popp} \vol_M
= f^* \vol_N 
=\Jac_f^{\rm Haus}\vol_M$.
\end{proof}



\section{Coordinates in sub-Riemannian manifolds}\label{sec:coord}
Given any system of coordinates near a point of a sub-Riemannian manifolds, we will identify special subsets of these coordinates,  that we call {\it horizontal}. By adapting a method of Liimatainen and Salo \cite{Salo}, we show that they can be constructed so that in addition they are also either harmonic or $Q$-harmonic (the more general construction of $p$-harmonic coordinates follows along the same lines, modifying appropriately the hypothesis). The construction of $Q$-harmonic coordinates is based upon a very strong hypothesis, namely that the sub-Riemannian structure supports regularity for  $Q$-harmonic functions. In contrast, the construction of horizontal harmonic coordinates rests on well known Schauder estimates. The key point of this section, and one of the main contributions of this paper, is that we can prove that  the smoothness of maps that preserve in a weak sense the horizontal bundles can be derived by the smoothness of the horizontal components alone.

\subsection{Horizontal coordinates}

\definition
Let $M$ be a sub-Riemannian manifold.
Let $x^1,\dots,x^n$ be a system of coordinates on an open set $U$ of $M$ and let $X_1,\dots,X_r$ be a frame of the horizontal distribution on $U$. We say that $x^1,\dots,x^r$ are {\em horizontal coordinates} with respect to  $X_1,\dots,X_r$ if  the matrix $(X_ix^j)(p)$, with $i,j=1,\dots,r$,   is invertible, for every $p\in U$.

\remark
It is clear that {\it any} system of coordinates $x^1,\ldots .,x^n$ around a point $p\in M$ can be reordered so that the first $r$ components become a system of horizontal coordinates.
\endremark

The next result states that the notion of horizontal coordinate does not depend on the choice of  frame.

\begin{proposition}
Assume that $x^1,\dots,x^n$ are coordinates such that $x^1,\dots,x^r$ are horizontal with respect to  the frame $X_1,\dots,X_r$. Then
\begin{itemize}
\item[(i)] $\nabla_{\rm H} x^1,\dots,\nabla_{\rm H} x^r$ are linearly independent and form a frame of $\Delta$.
\item[(ii)] If $X_1^\prime,\dots,X_r^\prime$ is another frame of $\Delta$, then $x^1,\dots,x^r$ are horizontal coordinates with respect to  $X_1^\prime,\dots,X_r^\prime$.
\end{itemize}
\end{proposition}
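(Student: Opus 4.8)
The plan is to prove both statements by a direct linear-algebra computation, using the change-of-frame formula for the horizontal gradient recorded in the first Remark of Section~\ref{sec:SR:def}, together with the elementary observation that $\nabla_{\rm H} x^j = \sum_{i=1}^r (X_i x^j) X_i$ by \eqref{hor_grad}. Write $P(p)$ for the $r\times r$ matrix with entries $P^j_i(p) = (X_i x^j)(p)$, which is invertible by hypothesis. The key point to extract first is that the $j$-th horizontal gradient $\nabla_{\rm H} x^j$ has, in the frame $X_1,\dots,X_r$, exactly the coordinate vector given by the $j$-th row of $P$; since $P$ is invertible, these $r$ rows are linearly independent, so the vectors $\nabla_{\rm H} x^1,\dots,\nabla_{\rm H}x^r$ are linearly independent in each fibre $\Delta_p$, which has dimension $r$, hence they form a frame of $\Delta$ on $U$. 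This establishes (i).

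For (ii), let $X'_1,\dots,X'_r$ be another frame of $\Delta$ and let $B(p)$ be the (invertible) matrix of the change of frame, $X'_j(p) = \sum_i B^i_j(p) X_i(p)$, as in the Remark. Then $(X'_j x^k)(p) = \sum_i B^i_j(p) (X_i x^k)(p)$, i.e.\ the matrix $P'(p)$ with entries $P'^k_j(p) = (X'_j x^k)(p)$ satisfies $P'(p) = P(p)\, B(p)^{\rm T}$ (up to transpose conventions that I would fix cleanly in the write-up). Since $B(p)$ is invertible for every $p\in U$ (it is a change between two frames of the same bundle) and $P(p)$ is invertible by assumption, $P'(p)$ is invertible for every $p\in U$; by definition this says precisely that $x^1,\dots,x^r$ are horizontal coordinates with respect to $X'_1,\dots,X'_r$. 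I would also remark that the content of (i) is frame-independent in the same way, and gives an intrinsic characterization: $x^1,\dots,x^r$ are horizontal coordinates iff $dx^1|_\Delta,\dots,dx^r|_\Delta$ are pointwise linearly independent as functionals on $\Delta$.

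There is essentially no hard step here: the whole proof is bookkeeping with two invertible $r\times r$ matrices and the already-established change-of-frame formula for $\nabla_{\rm H}$. The only thing requiring a little care is matching the index placement and transpose conventions between the definition of horizontal coordinates (which uses the matrix $(X_i x^j)$) and the formula $\nabla^\prime_{\rm H} u = (B B^{\rm T})^i_k (X_i u) X_k$ from the Remark, so that one does not accidentally assert that $\nabla_{\rm H} x^j$ transforms by $B$ rather than by the relevant combination; I would carry out (i) directly from \eqref{hor_grad} rather than routing it through the $BB^{\rm T}$ formula, precisely to sidestep that bookkeeping, and invoke the change-of-frame formula only where it is genuinely needed, namely in (ii).
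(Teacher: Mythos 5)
Your proposal is correct and follows essentially the same route as the paper: part (i) is obtained by reading off the components of $\nabla_{\rm H}x^j$ in the frame $X_1,\dots,X_r$ from \eqref{hor_grad} and invoking invertibility of the matrix $O=(X_ix^j)$, and part (ii) by observing that the new matrix is the product $BO$ of two invertible matrices. The only differences are cosmetic (your extra intrinsic remark and the explicit care with transposes), so nothing further is needed.
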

\begin{proof}
Since $O:=(X_ix^j)_{ij}$ is invertible and $X_1,\dots,X_r$ is a frame, then 
$$\nabla_{\rm H}x^i=\sum_{i=1}^r(X_kx^i)X_k=\sum_k O_i^kX_k$$ and $(i)$ follows.
Regarding $(ii)$, let $B$ be the matrix such that $X_i^\prime x^j=\sum_{k=1}^r B_i^kX_kx^j= (BO)_{ij}$. The conclusion follows from the invertibility of $BO$.
\end{proof}

\subsection{Horizontal harmonic coordinates}

Let $M$ be a sub-Riemannian manifold endowed with a   volume form $\vol$.  Our goal is to construct  horizontal coordinates in the neighborhood of any point $p\in M$, that are also in the kernel of the subLaplacian $L_2$,  defined in \eqref{sublaplacian},  associated to the sub-Riemannian structure and a volume form.


\begin{theorem}\label{hhc}
Let $M$ be an equiregular sub-Riemannian structure endowed with a smooth volume form $\vol$.
For any point $p\in M$ there exists a set of horizontal harmonic coordinates  defined in a neighborhood of $p$.
\end{theorem}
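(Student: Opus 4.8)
I would construct horizontal harmonic coordinates by solving, near a fixed point $p$, a Dirichlet-type problem for the subLaplacian $L_2$ with boundary data chosen to agree (to first order) with a fixed set of horizontal coordinates, and then verify that the resulting solutions still form a system of horizontal coordinates in a possibly smaller neighborhood. The point is that the horizontality condition — invertibility of the matrix $(X_i x^j)_{i,j=1}^r$ — is an open condition, so a small $C^1$-perturbation of a horizontal coordinate system is still horizontal. Hypoellipticity of $L_2$ (H\"ormander's theorem, already recalled after \eqref{sublaplacian}) guarantees the solutions are smooth, which is what we want from the word ``coordinates.''

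\textbf{Step 1: Reduce to a local model.} Fix $p\in M$. By the remark following the definition of horizontal coordinates, choose coordinates $y^1,\dots,y^n$ on a neighborhood $U_0$ of $p$ such that $y^1,\dots,y^r$ are horizontal with respect to a fixed orthonormal frame $X_1,\dots,X_r$ of $HM$ on $U_0$; that is, the matrix $A(q):=(X_i y^j(q))_{i,j=1}^r$ is invertible for all $q\in U_0$. We may assume $y(p)=0$ and that $y(U_0)$ contains a metric ball; shrinking, fix a small ball $B=B(p,\epsilon)$ with $\overline B\subset U_0$.

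\textbf{Step 2: Solve the subelliptic Dirichlet problem.} For each $j=1,\dots,r$, let $u^j$ be the weak solution of
\begin{equation*}
L_2 u^j = 0 \text{ in } B, \qquad u^j = y^j \text{ on } \partial B.
\end{equation*}
Existence and uniqueness of such a solution in $W^{1,2}_{\rm H}(B)$ follows from the standard variational theory for the subLaplacian (the Dirichlet form $\int_B |\nabla_{\rm H} u|^2\,\d\vol$ is coercive on $W^{1,2}_{\rm H,0}(B)$, together with the Sobolev–Poincar\'e inequality valid on metric balls in equiregular sub-Riemannian manifolds, see \cite{Garofalo-Nhieu98}). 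By H\"ormander's hypoellipticity theorem \cite{Hormander}, each $u^j\in C^\infty(B)$. Set $v^j := u^j - y^j$; then $v^j\in W^{1,2}_{\rm H,0}(B)$ solves $L_2 v^j = -L_2 y^j$ with $L_2 y^j\in C^\infty(\overline B)$, hence $v^j$ is smooth up to a neighborhood of $p$ as well.

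\textbf{Step 3: Horizontality near $p$.} I need $(X_i u^j(q))_{i,j=1}^r$ invertible for $q$ near $p$. Since $X_i u^j = X_i y^j + X_i v^j = A_{ij}(q) + X_i v^j(q)$, and $A(p)$ is invertible, it suffices to make $\|X_i v^j\|$ small on a neighborhood of $p$ relative to $\|A(p)^{-1}\|^{-1}$. This is the main obstacle, and it is the step where I would need a quantitative estimate: I would use the interior Schauder / Caccioppoli-type estimate for $L_2$ — specifically an estimate of the form $\|v^j\|_{C^{1,\alpha}_{\rm H}(\overline B_{\epsilon/2})}\le C\|L_2 v^j\|_{C^\alpha_{\rm H}(B_\epsilon)} = C\|L_2 y^j\|_{C^\alpha_{\rm H}(B_\epsilon)}$, exactly \eqref{schauder-consequence} — combined with the fact that $L_2 y^j$ is a fixed smooth function whose $C^\alpha_{\rm H}$ norm over $B_\epsilon$ can be made small by shrinking $\epsilon$, OR, if that norm does not shrink, by instead rescaling: replace $y^j$ by $\epsilon^{-1}y^j$ (which does not affect horizontality or harmonicity of the correction) and exploit the scaling of the estimate. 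Either way, choosing $\epsilon$ small enough forces $\sup_{B_{\epsilon/2}}\sum_{i,j}|X_i v^j| < \|A(p)^{-1}\|^{-1}/2$, so the matrix $(X_i u^j)$ stays invertible on a neighborhood $V\ni p$ by continuity.

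\textbf{Step 4: Conclude.} On $V$, the functions $u^1,\dots,u^r$ have invertible horizontal differential matrix, hence are horizontal coordinates in the sense of the definition; one then completes them to a full coordinate system by adjoining $y^{r+1},\dots,y^n$ (after a further shrink of $V$ so that $(u^1,\dots,u^r,y^{r+1},\dots,y^n)$ is a diffeomorphism onto its image — again an open condition satisfied at $p$ because its Jacobian at $p$ is a block matrix with invertible blocks). By construction each $u^j$ is harmonic and smooth, giving the desired horizontal harmonic coordinates near $p$. The delicate point throughout is Step 3: quantifying the smallness of the harmonic correction, for which the Schauder estimate \eqref{schauder-consequence} (or a Caccioppoli estimate at the level of $W^{1,2}_{\rm H}$) is the essential tool, and keeping track of how constants behave under shrinking/rescaling of the ball.
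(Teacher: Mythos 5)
Your overall architecture is the same as the paper's: solve the Dirichlet problem $L_2 u^j=0$ in $B_\epsilon$ with boundary data $y^j$, set $v^j=u^j-y^j$, show $\nabla_{\rm H}v^j$ is uniformly small for small $\epsilon$, and conclude by openness of the invertibility of $(X_iu^j)_{i,j\le r}$. The gap is in Step 3, which you yourself flag as the delicate point but do not actually close. First, the claim that $\|L_2 y^j\|_{C^\alpha_{\rm H}(B_\epsilon)}$ ``can be made small by shrinking $\epsilon$'' is false: $L_2y^j$ is a fixed smooth function, so the sup-norm part of that norm tends to $|L_2y^j(p)|$, which is generically nonzero. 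Second, your fallback of replacing $y^j$ by $\epsilon^{-1}y^j$ does not help, because this rescales the target matrix $A=(X_iy^j)$ and the error $X_iv^j$ by the same factor; the invertibility criterion only involves their ratio. What you would actually need is the correct $\epsilon$-dependence of the constant in the Schauder estimate on a ball of radius $\epsilon$ (morally $\|\nabla_{\rm H}v\|_{L^\infty(B_{\epsilon/2})}\le C\epsilon\|L_2v\|_{L^\infty(B_\epsilon)}$ for $v\in W^{1,2}_{{\rm H},0}(B_\epsilon)$), and \eqref{schauder-consequence} as stated carries no such information; in the Euclidean case one proves it with a barrier and the maximum principle, tools the paper deliberately avoids in the sub-Riemannian setting.

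The paper closes exactly this gap with two ingredients you only gesture at. Lemma \ref{caccioppoli} is an energy (Caccioppoli) estimate: testing the equation $L_2w^i_\epsilon=-L_2x^i$ against $w^i_\epsilon$ and using the Poincar\'e inequality for compactly supported functions on $B_\epsilon$ yields
\begin{equation*}
\dashint_{B_\epsilon}|\nabla_{\rm H}w^i_\epsilon|^2\,\d\vol\le C'\epsilon^2,
\end{equation*}
where the crucial factor $\epsilon^2$ comes from Poincar\'e, not from Schauder. Then the interpolation Lemma \ref{interp} converts this $L^2$-smallness, together with the $\epsilon$-independent $C^\alpha_{\rm H}$ bound on $\nabla_{\rm H}w^i_\epsilon$ coming from \eqref{schauder-consequence}, into the pointwise bound $\sup_{B_{\epsilon/4}}|\nabla_{\rm H}u^i_\epsilon-\nabla_{\rm H}x^i|=o(1)$. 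You mention a ``Caccioppoli estimate at the level of $W^{1,2}_{\rm H}$'' parenthetically, but without the interpolation step the $L^2$ estimate does not give the $L^\infty$ control your openness argument requires. To repair your proof, replace the Schauder-only argument of Step 3 with this two-step scheme.
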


To prove this result we start by considering any system of  coordinates $x^1,\dots,x^n$ in a neighborhood of  $p\in M$. Without loss of generality we can assume that the vectors 
$\nabla_{\rm H} x^1,\ldots ,\nabla_{\rm H} x^r$ are linearly independent in a neighborhood of $p$, i.e., $x^1,..,x^r$ are horizontal coordinates.  Set $B_\e:=B_\epsilon(p)= \{q\in M\mid d(p,q)<\epsilon\}$. For $\epsilon >0$, let $u_\epsilon^1,\dots,u_\epsilon^n$ be the unique weak solution of the Dirichlet
problem
$$
\begin{cases}
L_2 u_\epsilon^i = 0\,\, \text{in } B_\epsilon,\,\, i=1,\dots,n\\
u_\epsilon^i=x^i \,\, \text{in } \partial B_\epsilon, \,\, i=1,\dots,n.
\end{cases}
$$
We will show that for $\e>0$ sufficiently small, the $n$-tuple $u^1_\e,\ldots ,u^r_\e,x^{r+1},\ldots ,x^n$ is a system of coordinates. Note that $u^1_\e,\ldots ,u^n_\e$ may fail to be a system of coordinates.

H\"ormander's hypoellipticity result \cite{Hormander} yields $u_\epsilon^i\in C^\infty(B_\epsilon)\cap W^{1,2}_{\rm H}(B_\epsilon)$. Consider now
$$w_\epsilon^i:=u_\epsilon^i-x^i\in C^\infty(B_\epsilon)\cap W_{{\rm H},0}^{1,2}(B_\epsilon).$$ 

\begin{lemma}\label{caccioppoli}
For $p\in K\subset\subset M$, the following  estimate holds
\begin{equation}\label{average-estimate}
\dashint_{B_\epsilon} |\nabla_{\rm H} w_\epsilon^i|^2  \,\d \vol \leq C^\prime \epsilon^2.
\end{equation}
for a constant $C'>0$ depending only on $K$, on the coordinates $x^1,..,x^n$, the Riemannian structure of $M$ and the volume form.
\end{lemma}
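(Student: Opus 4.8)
The plan is to prove the averaged Caccioppoli-type estimate \eqref{average-estimate} by testing the weak equation $L_2 w_\epsilon^i = -L_2 x^i$ against $w_\epsilon^i$ itself. Since $u_\epsilon^i$ is $L_2$-harmonic in $B_\epsilon$ and $w_\epsilon^i = u_\epsilon^i - x^i \in W^{1,2}_{{\rm H},0}(B_\epsilon)$, writing the subLaplacian in the weak form $L_2 u = \sum_i X_i^* X_i u$ gives, after integration by parts against the admissible test function $\phi = w_\epsilon^i$,
\[
\int_{B_\epsilon} \langle \nabla_{\rm H} w_\epsilon^i, \nabla_{\rm H} w_\epsilon^i \rangle \,\d\vol = - \int_{B_\epsilon} \langle \nabla_{\rm H} x^i, \nabla_{\rm H} w_\epsilon^i \rangle \,\d\vol.
\]
(Here one uses that $\int_{B_\epsilon} \langle \nabla_{\rm H} u_\epsilon^i, \nabla_{\rm H} w_\epsilon^i\rangle\,\d\vol = 0$, which is precisely the weak $L_2$-harmonicity of $u_\epsilon^i$ applied to the zero-boundary-value test function $w_\epsilon^i$.) By Cauchy--Schwarz on the right-hand side, $\int_{B_\epsilon} |\nabla_{\rm H} w_\epsilon^i|^2\,\d\vol \le \left(\int_{B_\epsilon} |\nabla_{\rm H} x^i|^2\,\d\vol\right)^{1/2}\left(\int_{B_\epsilon} |\nabla_{\rm H} w_\epsilon^i|^2\,\d\vol\right)^{1/2}$, and hence
\[
\int_{B_\epsilon} |\nabla_{\rm H} w_\epsilon^i|^2\,\d\vol \le \int_{B_\epsilon} |\nabla_{\rm H} x^i|^2\,\d\vol.
\]

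The second step is to estimate the coordinate term and divide by $\vol(B_\epsilon)$. Since $x^i$ is a fixed smooth coordinate function and $X_1,\dots,X_r$ is a fixed smooth horizontal frame, the quantity $|\nabla_{\rm H} x^i|^2 = \sum_{k=1}^r (X_k x^i)^2$ is a smooth, hence locally bounded, function: there is a constant $C_0$ depending only on $K$, the coordinates and the frame (and $K$ fixed with $p \in K \subset\subset M$), such that $|\nabla_{\rm H} x^i|^2 \le C_0$ on $K$, provided $\epsilon$ is small enough that $B_\epsilon(p) \subset K$. Therefore $\int_{B_\epsilon} |\nabla_{\rm H} x^i|^2\,\d\vol \le C_0 \vol(B_\epsilon)$, and dividing the inequality from the first step by $\vol(B_\epsilon)$ yields
\[
\dashint_{B_\epsilon} |\nabla_{\rm H} w_\epsilon^i|^2\,\d\vol = \frac{1}{\vol(B_\epsilon)}\int_{B_\epsilon} |\nabla_{\rm H} w_\epsilon^i|^2\,\d\vol \le C_0.
\]
This already gives a uniform bound; to obtain the sharper $\epsilon^2$ decay one replaces $x^i$ in the test-function argument by the affine part of $x^i$ at $p$ in the chosen coordinates — that is, one works with $\tilde x^i := x^i - x^i(p) - \sum_k (\partial_k x^i)(p)(x^k - x^k(p))$, which has vanishing first-order Taylor expansion at $p$. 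Because $w_\epsilon^i$ is unchanged if we subtract an $L_2$-harmonic function from the boundary data, and the linear part $\ell^i := x^i(p) + \sum_k (\partial_k x^i)(p)(x^k - x^k(p))$ need not be $L_2$-harmonic in these coordinates; instead one sets up the Dirichlet problem for the difference and observes that $\nabla_{\rm H}\tilde x^i = O(\epsilon)$ on $B_\epsilon$ by Taylor's theorem (the first derivatives $X_k \tilde x^i$ vanish at $p$ and are smooth, hence are $O(d(\cdot,p)) = O(\epsilon)$ on $B_\epsilon$). Re-running the energy estimate with $\tilde x^i$ in place of $x^i$ gives $\int_{B_\epsilon} |\nabla_{\rm H} w_\epsilon^i|^2\,\d\vol \le \int_{B_\epsilon}|\nabla_{\rm H}\tilde x^i|^2\,\d\vol \le C' \epsilon^2 \vol(B_\epsilon)$, and dividing by $\vol(B_\epsilon)$ produces \eqref{average-estimate}.

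The main subtlety — and the point requiring care rather than ingenuity — is the correct handling of the subtracted function so that the decay rate is genuinely $\epsilon^2$ and not merely $O(1)$: one must arrange the comparison so that the ``frozen'' competitor against which $u_\epsilon^i$ is compared has horizontal gradient of size $O(\epsilon)$ on $B_\epsilon$, which is exactly what subtracting the first-order Taylor polynomial at $p$ achieves, the smoothness of $x^i$ and of the frame guaranteeing the $C^1$-estimate $|X_k(x^i - \text{linear part})| \le C\, d(\cdot, p)$ on a fixed compact neighborhood. A minor technical point is the admissibility of $w_\epsilon^i$ as a test function in the weak formulation of $L_2 u_\epsilon^i = 0$: this is immediate since $w_\epsilon^i \in W^{1,2}_{{\rm H},0}(B_\epsilon)$ by construction and $u_\epsilon^i \in C^\infty(B_\epsilon) \cap W^{1,2}_{\rm H}(B_\epsilon)$ by H\"ormander hypoellipticity, as already noted before the lemma. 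No use of the equiregularity beyond what makes $(M,d)$ a length space with the stated volume asymptotics is needed here; the constant $C'$ depends only on $K$, the fixed coordinate system, the sub-Riemannian (Riemannian) structure and the volume form, as claimed.
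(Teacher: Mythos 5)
Your opening energy identity is correct, but it is set up in a way that discards the factor $\epsilon^2$, and the device you propose to recover it does not work. The paper's proof keeps the right-hand side in the form $\int_{B_\epsilon} g_i\, w_\epsilon^i\,\d\vol$ with $g_i:=-L_2x^i$ a fixed bounded smooth function --- that is, it integrates by parts so that $w_\epsilon^i$ itself, and not its gradient, appears against the data --- and then applies the Poincar\'e inequality $\|w_\epsilon^i\|_{L^2(B_\epsilon)}\le C\epsilon\,\|\nabla_{\rm H}w_\epsilon^i\|_{L^2(B_\epsilon)}$, valid because $w_\epsilon^i\in W^{1,2}_{{\rm H},0}(B_\epsilon)$. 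The factor $\epsilon$ in that Poincar\'e inequality is the \emph{entire} source of the decay in \eqref{average-estimate}. By instead estimating $-\int\langle\nabla_{\rm H}x^i,\nabla_{\rm H}w_\epsilon^i\rangle$ via Cauchy--Schwarz you can only reach $\dashint_{B_\epsilon}|\nabla_{\rm H}w_\epsilon^i|^2\,\d\vol\le C_0$, as you yourself note.

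The repair you then propose is vacuous: since $x^i$ is itself one of the coordinate functions, $\partial_k x^i=\delta^i_k$, so its first-order Taylor polynomial at $p$ in the coordinates $x^1,\dots,x^n$ is $x^i$ itself; hence $\tilde x^i\equiv 0$ and $\ell^i=x^i$, and there is nothing to subtract. Moreover, even granting a nontrivial splitting $x^i=\ell^i+\tilde x^i$ with $\nabla_{\rm H}\tilde x^i=O(\epsilon)$, you concede that $\ell^i$ need not be $L_2$-harmonic, and controlling the contribution of the non-harmonic affine part to the energy of $w_\epsilon^i$ requires precisely the integration-by-parts-plus-Poincar\'e step you were trying to avoid; the decomposition only defers the problem. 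The fix is to replace your Cauchy--Schwarz step by the identity $\int_{B_\epsilon}|\nabla_{\rm H}w_\epsilon^i|^2\,\d\vol=\int_{B_\epsilon}g_i\,w_\epsilon^i\,\d\vol$, bound the right side by $\|g_i\|_{L^2(B_\epsilon)}\|w_\epsilon^i\|_{L^2(B_\epsilon)}$, and invoke the compact-support Poincar\'e inequality on $B_\epsilon$; this yields $\|\nabla_{\rm H}w_\epsilon^i\|_{L^2(B_\epsilon)}\le C\epsilon\,\vol(B_\epsilon)^{1/2}\sup_{B_\epsilon}|g_i|$, which after squaring and averaging is exactly \eqref{average-estimate}.
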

\begin{proof}  For every $i=1,\dots,n$, the function $w_\epsilon^i$ solves 
$$
\begin{cases}
L_2 w_\epsilon^i=-L_2 x^i=:g_i\\
w_\epsilon^i=0 \,\, {\text{in }} \partial B_\epsilon
\end{cases}
$$
The equation can be interpreted in a weak sense as
$$
\int_{B_\epsilon}\nabla_{\rm H}w_\epsilon^i\nabla_{\rm H} \phi\, {\rm d}\,{\rm vol}=\int_{B_\epsilon}g_i\phi \, {\rm d}\,{\rm vol}
$$
for every $\phi\in W_{{\rm H},0}^{1,2}(B_\epsilon)$. Choosing $\phi=w_\epsilon^i$ gives
$$
\int_{B_\epsilon}|\nabla_{\rm H} w_\epsilon^i|^2\, {\rm d}\,{\rm vol}= \int_{B_\epsilon}g_iw_\epsilon^i \,{\rm d}\,{\rm vol} \leq \left( \int_{B_\epsilon} g_i^2 \,{\rm d}\,{\rm vol}\right)^{1/2}\left(  \int_{B_\epsilon} (w_\epsilon^i)^2\,{\rm d}\,{\rm vol}\right)^{1/2}.
$$
Poincar\'e inequality for functions with compact support gives
$$
\int_{B_\epsilon} (w_\epsilon^i)^2 \,{\rm d}\,{\rm vol}\leq C \epsilon^2 \int_{B_\epsilon} |\nabla_{\rm H} w_\epsilon^i|^2 \,{\rm d}\,{\rm vol},
$$
whence
$$
\int_{B_\epsilon} |\nabla_{\rm H} w_\epsilon^i|^2\,{\rm d}\,{\rm vol} \leq  \left( \int_{B_\epsilon} g_i^2 \,{\rm d}\,{\rm vol}\right)^{1/2} \left( C \epsilon^2 \int_{B_\epsilon} |\nabla_{\rm H} w_\epsilon^i|^2 \,{\rm d}\,{\rm vol}\right)^{1/2}.
$$
We have
$$
\left( \int_{B_\epsilon} |\nabla_{\rm H} w_\epsilon^i|^2 \,{\rm d}\,{\rm vol}\right)^{1/2}\leq \epsilon C^{1/2}   \left( \int_{B_\epsilon} g_i^2 \,{\rm d}\,{\rm vol}\right)^{1/2}\leq \epsilon C^{1/2 }{\rm vol}(B_\epsilon)^{1/2}\left(\sup_{B_\epsilon}g_i^2\right)^{1/2}.
$$
This completes the proof of \eqref{average-estimate}.
\end{proof}
%
%
Next we need an interpolation inequality that allows us to bridge the $L^2$ estimates \eqref{average-estimate} and the $C^{1,\alpha}_{\rm H}$ estimates from \eqref{schauder-consequence} to produce $L^\infty$ bounds. The following is very similar to the analogue interpolation lemma in \cite{Salo}.
\begin{lemma}\label{interp} Let $p\in K\subset \subset M$ and let
 $h$ be a function defined on $B_\epsilon$. If there are constants $\mathcal A,\mathcal B>0$ such that for $\e>0$ sufficiently small one has 
\begin{itemize}
\item[(i)] $\|h\|^2_{L^2(B_\epsilon)}\leq \mathcal A\epsilon^2|B_\epsilon|^2$,\\
\item[(ii)] $\| h\|_{C^\alpha_{\rm H}(B_{ {\epsilon}/{2}})}\leq \mathcal B$,
\end{itemize}
then $\|h\|_{L^\infty(B_{\epsilon/4})}\leq o(1)$ as $\epsilon \to 0$, uniformly in $p\in K$.
\end{lemma}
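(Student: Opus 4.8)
The plan is to prove the interpolation estimate $\|h\|_{L^\infty(B_{\epsilon/4})}\leq o(1)$ by a rescaling (blow-up) argument, exploiting the fact that a uniform $C^\alpha_{\rm H}$ bound controls the modulus of continuity while an $L^2$ bound forces the average to be small. First I would observe that after renormalizing the measure, hypothesis (i) reads $\dashint_{B_\epsilon}h^2\,\d\vol\leq \mathcal A\epsilon^2$, so in particular the mean value $\bar h_\epsilon := \dashint_{B_\epsilon}h\,\d\vol$ satisfies $|\bar h_\epsilon|\leq \sqrt{\mathcal A}\,\epsilon$ by Cauchy--Schwarz. Thus it suffices to estimate $\|h-\bar h_\epsilon\|_{L^\infty(B_{\epsilon/4})}$, and this is where the oscillation bound enters: for $q\in B_{\epsilon/4}(p)$, writing $h(q)-\bar h_\epsilon = \dashint_{B_\epsilon}(h(q)-h(z))\,\d\vol(z)$ and using $|h(q)-h(z)|\leq \|h\|_{C^\alpha_{\rm H}(B_{\epsilon/2})}\,d(q,z)^\alpha\leq \mathcal B\,(2\epsilon)^\alpha$... but this only gives $O(\epsilon^\alpha)$, not $o(1)$ improvement — wait, it already gives $O(\epsilon^\alpha)\to 0$, so in fact the argument is more direct than a blow-up.

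Let me restructure. The cleanest route: fix $q\in B_{\epsilon/4}(p)$. For any $z\in B_{\epsilon/4}(q)\subset B_{\epsilon/2}(p)$ we have, by hypothesis (ii),
\[
|h(q)|\leq |h(z)| + \mathcal B\, d(q,z)^\alpha \leq |h(z)| + \mathcal B\,(\epsilon/4)^\alpha.
\]
Averaging over $z\in B_{\epsilon/4}(q)$ with respect to $\vol$, and using that sub-Riemannian manifolds are locally Ahlfors $Q$-regular (so $\vol(B_{\epsilon/4}(q))\geq c\,\vol(B_\epsilon(p))$ for a dimensional constant $c>0$, uniformly for $p$ in the compact set $K$ and $\epsilon$ small), I get
\[
|h(q)| \leq \dashint_{B_{\epsilon/4}(q)}|h|\,\d\vol + \mathcal B\,(\epsilon/4)^\alpha
\leq \Big(\tfrac{\vol(B_\epsilon(p))}{\vol(B_{\epsilon/4}(q))}\Big)^{1/2}\Big(\dashint_{B_\epsilon(p)}h^2\,\d\vol\Big)^{1/2} + \mathcal B\,(\epsilon/4)^\alpha
\leq C\sqrt{\mathcal A}\,\epsilon + \mathcal B\,(\epsilon/4)^\alpha,
\]
where in the second step I applied Jensen/Cauchy--Schwarz on $B_{\epsilon/4}(q)$, bounded it by the integral over the larger ball $B_\epsilon(p)$, and then used hypothesis (i). Taking the supremum over $q\in B_{\epsilon/4}(p)$ yields $\|h\|_{L^\infty(B_{\epsilon/4})}\leq C\sqrt{\mathcal A}\,\epsilon + \mathcal B\,(\epsilon/4)^\alpha = o(1)$ as $\epsilon\to 0$, with all constants depending only on $K$ and the sub-Riemannian structure, hence uniform in $p\in K$.

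The main obstacle — really the only nontrivial input — is the uniform local Ahlfors $Q$-regularity of the Popp (or smooth) volume measure, i.e., the two-sided bound $c_1\epsilon^Q\leq \vol(B_\epsilon(p))\leq c_2\epsilon^Q$ with constants uniform over $p\in K$ and $\epsilon\in(0,\epsilon_0)$; this is what makes the ratio $\vol(B_\epsilon(p))/\vol(B_{\epsilon/4}(q))$ bounded. This follows from Mitchell's ball-box theorem together with the asymptotic expansion \eqref{1828} already quoted in the paper (or directly from \cite{Mitchell}, \cite{ghezzi-jean}), and the compactness of $K$ upgrades it to a uniform estimate. Once that is in hand, everything else is Cauchy--Schwarz and the triangle inequality applied to the $\alpha$-Hölder modulus of continuity, and the rate obtained is in fact the explicit $O(\epsilon) + O(\epsilon^\alpha)$ rather than a mere qualitative $o(1)$.
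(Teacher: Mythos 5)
Your proof is correct and follows essentially the same route as the paper's: both arguments compare the pointwise value $h(q)$ with its average over the small ball $B_{\epsilon/4}(q)$, control the discrepancy by the $C^\alpha_{\rm H}$ seminorm (giving the $O(\epsilon^\alpha)$ term), and control the average via Cauchy--Schwarz and hypothesis (i) together with the uniform two-sided volume bound $\vol(B_\epsilon(p))\approx \epsilon^Q$ on the compact set $K$. The paper merely phrases the same computation as a reverse triangle inequality in $L^2(B_{\epsilon/4}(q))$ before solving for $|h(q)|$.
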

\begin{proof}
Set $q\in B_{\frac{\epsilon}{4}}(p)$ so that $B_{\frac{\epsilon}{4}}(q)\subset B_{\frac{3\epsilon}{4}}(p)$. 
One has
\begin{align*}
\|h\|_{L^2(B_{\frac{\epsilon}{4}}(q))}&\geq \|h(q)\|_{L^2(B_{\frac{\epsilon}{4}}(q))}-\|h-h(q)\|_{L^2(B_{\frac{\epsilon}{4}}(q))}\\
&=|h(q)|\cdot|B_{\frac{\epsilon}{4}}(q))|^{\frac{1}{2}} - \left(\int_{B_{\frac{\epsilon}{4}}(q)} |h(\cdot)-h(q)|^2 \,{\rm d}\,{\rm vol}\right)^{\frac{1}{2}}\\
&\geq |h(q)|\cdot |B_{\frac{\epsilon}{4}}(q))|^{\frac{1}{2}} - \sup_{B_{\frac{\epsilon}{4}}(q)}\frac{|h(\cdot)-h(q)|}{d(\cdot,q)^\alpha}\left( \int_{B_{\frac{\epsilon}{4}}(q)} d(\cdot,q)^{2\alpha}  \,{\rm d}\,{\rm vol} \right)^{\frac{1}{2}}.
\end{align*}
We then obtain that there exists  constants 
$C_1$ and $C_2$,
depending only on the sub-Riemannian structure, the exponent $\alpha$, and the compact set $K$, 
such that
$$
\|h\|_{L^2(B_{\frac{\epsilon}{4}}(q))}\geq C_1\epsilon^{\frac{Q}{2}}|h(q)|-C_2\epsilon^{\alpha+\frac{Q}{2}}\|h\|_{C^\alpha_{\rm H}(B_{\frac{\epsilon}{4}}(q))}.
$$
Using the hypotheses $(i)$ and $(ii)$, we conclude for all $q\in B_{\frac{\epsilon}{2}(p)}$
\begin{align*}
|h(q)|& \leq C_1^{-1}\epsilon^{-\frac{Q}{2}}\left( \|h\|_{L^2(B_{\frac{\epsilon}{4}(q)})}+C_2\epsilon^{\alpha+\frac{Q}{2}}\|h\|_{C^\alpha_{\rm H}}(B_{\frac{\epsilon}{4}(q)})\right)\\
&\leq  C_1^{-1} \left\{ \mathcal A^{1/2} \e + \mathcal B  C_2\epsilon^\alpha\right\}= o(1)
\end{align*}
as $\epsilon\to 0$.
\end{proof}

In view of \eqref{average-estimate} and  \eqref{schauder-consequence} we can apply the previous lemma to $h=\nabla_{\rm H} w_\epsilon^i$ and infer
$$\sup_{B_{\frac{\epsilon}{4}}} | \nabla_{\rm H} u_\epsilon^i-\nabla_{\rm H} x^i|\leq o(1)$$ as $\epsilon \to 0.$
Since the matrix $(X_i x^j)_{ij}$ for $i,j=1,\ldots ,r$ is invertible in a neighborhood of $p$, then for $\e>0$ sufficiently small the same holds for the matrix $(X_i u^j_\e)_{ij}$. Consequently, the 
$n$-tuple $(u^1_\e,\ldots ,u^r_\e, x^{r+1},\ldots ,x^n)$ yields a system of coordinates in a neighborhood of $p$ and its  first  $r$ components are both horizontal and harmonic.
This concludes the proof of Theorem \ref{hhc}.
\qed

\subsection{Horizontal $Q$-harmonic coordinates} 

Throughout this section we will assume that $M$ is an equivariant sub-Riemannian structure, endowed with a smooth volume form $\vol$, that supports regularity for  $Q$-harmonic functions, in the sense of Definition~\ref{support}.

We will need an interpolation lemma analogue to Lemma \ref{interp}.

\begin{lemma}\label{Qinterp} Let $p\in K\subset \subset M$ and let
 $f$ be a function defined on $B_\epsilon$. If there are constants $\beta, \mathcal A,\mathcal B>0$ and $\alpha\in (0,1)$ such that for $\e>0$ sufficiently small one has 
\begin{itemize}
\item[(i)] $\|h\|_{L^Q(B_\epsilon)}\leq \mathcal A\epsilon^{1+\beta} $\\
\item[(ii)] $\| h\|_{C^\alpha_{\rm H}(B_{\frac{\epsilon}{2}})}\leq \mathcal B$,
\end{itemize}
then $\|h\|_{L^\infty(B_{\epsilon/4})}\leq o(1)$ as $\epsilon \to 0$, uniformly in $p\in K$.
\end{lemma}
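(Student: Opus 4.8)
The plan is to imitate, almost verbatim, the proof of Lemma~\ref{interp}, with the exponent $2$ replaced everywhere by $Q$. The single structural fact I would use in place of the elementary volume growth invoked there is the \emph{local Ahlfors $Q$-regularity} of equiregular sub-Riemannian manifolds: by the Ball--Box theorem (equivalently, by the expansion \eqref{1828}), there exist constants $0<c_0\le C_0$, depending only on $K$ and the sub-Riemannian structure, such that $c_0 r^Q\le \vol(B(x,r))\le C_0 r^Q$ for every $x$ in a fixed compact neighborhood of $K$ and every sufficiently small $r>0$. The uniformity of these bounds over $K$ is what will ultimately yield the uniformity of the conclusion in $p\in K$.

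Concretely, I would fix $p\in K$ and $q\in B_{\epsilon/4}(p)$, and observe that then $B_{\epsilon/4}(q)\subset B_{\epsilon/2}(p)\subset B_\epsilon(p)$, so that hypotheses (i) and (ii) are available on $B_{\epsilon/4}(q)$. Writing the triangle inequality in $L^Q(B_{\epsilon/4}(q))$,
\[
\|h\|_{L^Q(B_{\epsilon/4}(q))}\ \ge\ |h(q)|\,\vol\big(B_{\epsilon/4}(q)\big)^{1/Q}\ -\ \Big(\int_{B_{\epsilon/4}(q)}|h-h(q)|^Q\,\d\vol\Big)^{1/Q},
\]
and estimating the last term by $\|h\|_{C^\alpha_{\rm H}(B_{\epsilon/4}(q))}\big((\epsilon/4)^{Q\alpha}\vol(B_{\epsilon/4}(q))\big)^{1/Q}$ (using $d(\cdot,q)<\epsilon/4$ on $B_{\epsilon/4}(q)$), the two-sided volume bound produces constants $C_1,C_2>0$ (depending only on $K$, $\alpha$ and the structure, not on $p$) with
\[
C_1\,\epsilon\,|h(q)|\ \le\ \|h\|_{L^Q(B_{\epsilon/4}(q))}\ +\ C_2\,\epsilon^{1+\alpha}\,\|h\|_{C^\alpha_{\rm H}(B_{\epsilon/4}(q))}.
\]

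It then remains only to insert the hypotheses: using $\|h\|_{L^Q(B_{\epsilon/4}(q))}\le\|h\|_{L^Q(B_\epsilon(p))}\le\mathcal A\,\epsilon^{1+\beta}$ from (i) and $\|h\|_{C^\alpha_{\rm H}(B_{\epsilon/4}(q))}\le\|h\|_{C^\alpha_{\rm H}(B_{\epsilon/2}(p))}\le\mathcal B$ from (ii), one obtains
\[
|h(q)|\ \le\ C_1^{-1}\big(\mathcal A\,\epsilon^{\beta}+C_2\,\mathcal B\,\epsilon^{\alpha}\big),
\]
which, since $\alpha,\beta>0$, is $o(1)$ as $\epsilon\to0$ independently of $q\in B_{\epsilon/4}(p)$ and of $p\in K$; taking the supremum over $q$ finishes the argument. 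I do not expect a genuine obstacle in this lemma: the only point deserving attention is that the volume comparison $\vol(B(x,r))\asymp r^Q$ must hold uniformly for $x$ in a compact neighborhood of $K$, which is precisely the content of the Ball--Box theorem and is exactly what makes the final estimate uniform in $p$.
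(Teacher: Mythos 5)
Your argument is correct and is essentially the paper's own proof: the paper simply repeats the computation of Lemma~\ref{interp} with the exponent $2$ replaced by $Q$, i.e.\ the reverse triangle inequality in $L^Q(B_{\epsilon/4}(q))$ combined with the H\"older bound and the uniform two-sided volume estimate $\vol(B(x,r))\asymp r^Q$ on a compact neighborhood of $K$. Your explicit invocation of local Ahlfors $Q$-regularity is exactly the source of the constants $C_1,C_2$ that the paper leaves implicit, and the final bound $|h(q)|\le C_1^{-1}(\mathcal A\epsilon^{\beta}+C_2\mathcal B\epsilon^{\alpha})$ is the intended conclusion.
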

\begin{proof}
Using the notation and the argument in the proof of Lemma \ref{interp}, one concludes  that for any $q\in B_{\frac{\e}{4}}(p)$ one has 
$$\|h\|_{L^Q(B_{\frac{\e}{4}}(q))} \ge |h(q)|\cdot|B_{\frac{\e}{4}}(q)|^{\frac{1}{Q}} - \|h\|_{C^\alpha_{\rm H}(B_{\frac{\e}{4}}(q)) } \e^{\al+\frac{Q}{p}}.$$
The proof follows immediately from the latter and from the hypothesis.
\end{proof}

\begin{theorem}\label{hQhc}
Let $M$ be an equiregular sub-Riemannian structure endowed with a smooth volume form $\vol$ that supports regularity for  $Q$-harmonic functions.
For any point $p\in M$ there exists a set of horizontal  coordinates  defined in a neighborhood of $p$ that are $Q$-harmonic.
\end{theorem}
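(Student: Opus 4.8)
The plan is to mimic the proof of Theorem~\ref{hhc}, replacing the linear subLaplacian $L_2$ with the nonlinear $Q$-Laplacian $L_Q$ and the Schauder estimate \eqref{schauder-consequence} with the $C^{1,\alpha}_{\rm H}$ regularity granted by the hypothesis that $M$ supports regularity for $Q$-harmonic functions (Definition~\ref{support}(1)). First I would fix a point $p\in M$ and start with an arbitrary coordinate system $x^1,\dots,x^n$ near $p$, reordered so that $x^1,\dots,x^r$ are horizontal, i.e.\ the matrix $(X_i x^j)_{i,j=1}^r$ is invertible near $p$. For each small $\epsilon>0$ and each $i=1,\dots,n$, I would let $u_\epsilon^i$ be the weak solution of the Dirichlet problem $L_Q u_\epsilon^i = 0$ in $B_\epsilon(p)$, $u_\epsilon^i = x^i$ on $\partial B_\epsilon(p)$; existence and uniqueness follow from the standard variational theory for the $Q$-Laplacian (minimizing $Q$-energy over $x^i + W^{1,Q}_{{\rm H},0}(B_\epsilon)$). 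Set $w_\epsilon^i := u_\epsilon^i - x^i \in W^{1,Q}_{{\rm H},0}(B_\epsilon)$. The goal is to show that for $\epsilon$ small the $n$-tuple $(u_\epsilon^1,\dots,u_\epsilon^r,x^{r+1},\dots,x^n)$ is a coordinate system near $p$ whose first $r$ entries are $Q$-harmonic; as before, $(u_\epsilon^1,\dots,u_\epsilon^n)$ itself need not be a chart.

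The core is a Caccioppoli-type decay estimate for $w_\epsilon^i$ analogous to Lemma~\ref{caccioppoli}, of the form
\begin{equation*}
\dashint_{B_\epsilon}|\nabla_{\rm H} w_\epsilon^i|^Q\,\d\vol \le C'\,\epsilon^{\kappa}
\end{equation*}
for some $\kappa>0$. Here $u_\epsilon^i$ is $Q$-harmonic while $x^i$ is a fixed smooth function, so $w_\epsilon^i$ solves $L_Q u_\epsilon^i=0$ with boundary data equal to that of $x^i$; testing the weak formulations of $u_\epsilon^i$ and of $x^i$ against $w_\epsilon^i = u_\epsilon^i - x^i$, using the monotonicity inequality for the $Q$-Laplacian vector field $\xi\mapsto |\xi|^{Q-2}\xi$ (i.e.\ $\langle |\xi|^{Q-2}\xi - |\eta|^{Q-2}\eta,\xi-\eta\rangle \gtrsim |\xi-\eta|^Q$ when $Q\ge 2$), and the Poincar\'e inequality for compactly supported functions on $B_\epsilon$, one obtains $\int_{B_\epsilon}|\nabla_{\rm H} w_\epsilon^i|^Q \lesssim \epsilon^Q\cdot\vol(B_\epsilon)\cdot\sup_{B_\epsilon}|\nabla_{\rm H} x^i|^Q$, and since $\vol(B_\epsilon)\approx \epsilon^Q$ this gives an averaged bound $\dashint_{B_\epsilon}|\nabla_{\rm H} w_\epsilon^i|^Q \lesssim \epsilon^Q$, i.e.\ $\|\nabla_{\rm H} w_\epsilon^i\|_{L^Q(B_\epsilon)} \lesssim \epsilon^{1+\beta}$ for a suitable $\beta>0$ — exactly hypothesis (i) of Lemma~\ref{Qinterp}. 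Hypothesis (ii) of Lemma~\ref{Qinterp} comes from Definition~\ref{support}(1): since $L_Q u_\epsilon^i = 0 = X_j^* 0$ and the $W^{1,Q}_{\rm H}$ norm of $u_\epsilon^i$ on $B_\epsilon$ is controlled uniformly (by the energy bound and the boundary data), we get $\|u_\epsilon^i\|_{C^{1,\alpha}_{\rm H}(B_{\epsilon/2})}\le C$ uniformly in $\epsilon$ and in $p$ ranging over a compact set, hence $\|\nabla_{\rm H} w_\epsilon^i\|_{C^\alpha_{\rm H}(B_{\epsilon/2})}\le \mathcal B$. Applying Lemma~\ref{Qinterp} with $h=\nabla_{\rm H} w_\epsilon^i$ yields $\sup_{B_{\epsilon/4}}|\nabla_{\rm H} u_\epsilon^i - \nabla_{\rm H} x^i| = o(1)$ as $\epsilon\to 0$.

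With this $C^0$-closeness of horizontal gradients in hand, the conclusion is immediate: the matrix $(X_i u_\epsilon^j)_{i,j=1}^r$ converges uniformly on $B_{\epsilon/4}$ to $(X_i x^j)_{i,j=1}^r$, which is invertible near $p$, so for $\epsilon$ small enough $(X_i u_\epsilon^j)$ is invertible on a neighborhood of $p$; thus $u_\epsilon^1,\dots,u_\epsilon^r$ are horizontal coordinates and, being $Q$-harmonic by construction, together with $x^{r+1},\dots,x^n$ they form the desired chart (one checks the full $n\times n$ differential is nonsingular since only the first $r$ coordinates were perturbed and the perturbation is $C^1_{\rm H}$-small, while the $x^{r+1},\dots,x^n$ block is untouched). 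I expect the main obstacle to be the Caccioppoli estimate: unlike the linear case, one cannot simply write $L_Q w_\epsilon^i = -L_Q x^i$ since $L_Q$ is nonlinear, so the argument must be organized around the monotonicity of $|\xi|^{Q-2}\xi$ and a careful comparison of the two weak formulations, and one must track the exact power of $\epsilon$ to ensure it beats the $\epsilon^{\alpha}$ loss in Lemma~\ref{Qinterp}; a secondary technical point is verifying the uniform-in-$p$ control of the $W^{1,Q}_{\rm H}$ norms needed to invoke Definition~\ref{support}(1) with constants independent of the center point.
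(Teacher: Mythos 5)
Your overall scheme is the paper's: solve the Dirichlet problems $L_Q u_\e^i=0$ in $B_\e$ with boundary data $x^i$, derive the decay $\|\nabla_{\rm H}w_\e^i\|_{L^Q(B_\e)}\lesssim \e^{1+\beta}$ from the monotonicity of $\xi\mapsto|\xi|^{Q-2}\xi$ combined with H\"older and Poincar\'e (the paper obtains $\beta=\tfrac{1}{Q-1}$; your intermediate claim $\int_{B_\e}|\nabla_{\rm H}w_\e^i|^Q\lesssim\e^Q\vol(B_\e)$ is too optimistic, since the monotonicity argument only yields $\|\nabla_{\rm H}w_\e^i\|_{L^Q}^{Q-1}\lesssim\e\,\vol(B_\e)^{(Q-1)/Q}$, but any $\beta>0$ suffices for Lemma~\ref{Qinterp}), then combine with the uniform $C^{1,\alpha}_{\rm H}$ bound from Definition~\ref{support}(1) via Lemma~\ref{Qinterp} to get invertibility of $(X_iu_\e^j)_{i,j\le r}$ for small $\e$. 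Up to that point you reproduce the paper's argument.

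The gap is in your final parenthetical. At that stage the $u_\e^i$ are only known to be in $C^{1,\alpha}_{\rm H,loc}$, i.e.\ to have H\"older-continuous \emph{horizontal} derivatives; nothing yet guarantees they are classically differentiable in the non-horizontal directions, so the ``full $n\times n$ differential'' you propose to check is not even defined, and $C^1_{\rm H}$-smallness of $w_\e^i$ controls only the entries $X_jw_\e^i$, $j\le r$, not $\partial u_\e^i/\partial x^j$ for the remaining directions. The paper closes this by noting that invertibility of $(X_iu_\e^j)$ forces $|\nabla_{\rm H}u_\e^i|$ to be a $C^\alpha_{\rm H}$ function bounded away from zero near $p$, so that part (2) of Definition~\ref{support} applies and gives $u_\e^i\in W^{2,2}_{\rm H,loc}$; Proposition~\ref{smoothness} then upgrades this to $C^{2,\alpha}_{\rm H}$, and a Schauder bootstrap yields smoothness, after which $(u_\e^1,\dots,u_\e^r,x^{r+1},\dots,x^n)$ is a genuine smooth chart. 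Your argument never invokes the non-degeneracy estimate (2) of Definition~\ref{support}, and without it the conclusion that the tuple is a coordinate system does not follow.
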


\begin{proof} We follow the argument outlined in the special case of Theorem \ref{hhc}. For $p\in K\subset \subset M$ and $\e>0$ to be determined later, we consider  weak  solutions $u_\e^i \in W^{1,Q}_{\rm H}(B_\epsilon)$ to the Dirichlet problems
$$
\begin{cases}
L_Q u_\epsilon^i = 0\,\, \text{in } B_\epsilon,\,\, i=1,\dots,n\\
u_\epsilon^i=x^i \,\, \text{in } \partial B_\epsilon, \,\, i=1,\dots,n,
\end{cases}
$$
where $x^1,\ldots,x^n$ is an arbitrary set of coordinates near $p$. These solutions exist and are unique in view of the convexity of the $Q$-energy. The $C^{1,\alpha}_{\rm H}$ estimates assumptions guarantee that $u_\e^i\in C^{1,\alpha}_{\rm H,loc}(B_\e)\cap W^{1,Q}_{\rm H,loc}(B_\e)$.
Arguing as in Lemma~\ref{caccioppoli}, we set
$$w_\epsilon^i:=u_\epsilon^i-x^i\in C^{1,\alpha}_{\rm H,loc}(B_\e)\cap W_{{\rm H},0}^{1,Q}(B_\epsilon)$$ and 
observe that
$$\int_{B_\e} |\nabla_{\rm H} u^i_\e|^{Q-2} X_k u^i_\e X_k w^i_\e \,{\rm d}\,{\rm vol}=0.$$
As a consequence one has
\begin{eqnarray}
\int_{B_\e} |\nabla_{\rm H} w_\e^i|^Q  \,{\rm d}\,{\rm vol} \le &&
\int_{B_\e} (|\nabla_{\rm H} u^i_\e|+ |\nabla_\e x^i|)^{Q-2} | \nabla_{\rm H} w^i_\e|^2 \,{\rm d}\,{\rm vol}\notag\\
\le && \int_{B_\e} (|\nabla_{\rm H} u^i_\e|^{Q-2} X_k u^i_\e- |\nabla_{\rm H} x^i|^{Q-2} X_k x^i) X_k w_\e^i 
 \,{\rm d}\,{\rm vol}\notag \\
 = &&  \int_{B_\e}- X_k^*(  |\nabla_{\rm H} x^i|^{Q-2} X_k x^i)  w_\e^i 
 \,{\rm d}\,{\rm vol}\notag \\
 \le && \bigg( \int_{B_\e}  |L_Q  x^i|^{Q} \,{\rm d}\,{\rm vol}\bigg)^{\frac{Q-1}{Q}}
 \bigg( \int_{B_\e}  | w^i_\e|^{Q} \,{\rm d}\,{\rm vol}\bigg)^{\frac{1}{Q}}\notag \\
\text{ (applying Poincar\'e inequality)} \ \ \   \le &&  C \e \bigg( \int_{B_\e}  |L_Q  x^i|^{Q} \,{\rm d}\,{\rm vol}\bigg)^{\frac{Q-1}{Q}}
 \bigg( \int_{B_\e}  |\nabla_{\rm H} w^i_\e|^{Q} \,{\rm d}\,{\rm vol}\bigg)^{\frac{1}{Q}}\notag \\
  \le && C' \e^{Q}||\nabla_{\rm H} w^i||_{L^Q(B_\e)},
 \end{eqnarray}
  for  constants $C,C'>0$ depending only on $Q,K$, on the coordinates $x^1,\ldots,x^n$, the sub-Riemannian structure, and the volume form.
From the latter  it immediately follows that 
\begin{equation}\label{Qaverage-estimate}
 \|\nabla_{\rm H} w_\epsilon^i\|_{L^Q(B_\e)} \leq C^{''} \epsilon^{1+\frac{1}{Q-1}}
\end{equation}
Arguing as in Theorem \ref{hhc}, and  applying the $C^{1,\alpha}_{\rm H}$ estimates, \eqref{Qaverage-estimate} and the interpolation Lemma \ref{Qinterp},
 one has that for $\e>0$ sufficiently small the matrix $(X_i u^j_\e)_{ij}$, for $i,j=1,\ldots ,r$ is invertible in a neighborhood of $q$. On the other hand, this implies that for each $i=1,\ldots,r$ one has  that $|\nabla_{\rm H} u^i_\e|$ is a $C^\alpha_{\rm H}$ function bounded away from zero in a neighborhood of $p$, and hence by part (2) of
 Definition~\ref{support}  and by Proposition~\ref{smoothness} one has that $u^1_\e,\ldots,u^r_\e, x^{r+1},\ldots,x^n$ is a smooth system of coordinates in a neighborhood of $p$, with $u^1_\e,\ldots,u^r_\e$ both horizontal and $Q$-harmonic.
\end{proof}

\subsection{Regularity from horizontal regularity}

Let $\gamma$ be an horizontal curve in $M$. Let $x^1,\dots,x^n$ be coordinates on $M$ such that $x^1,\dots,x^r$ are horizontal coordinates with respect to an horizontal frame $X_1,\dots,X_r$. 
We write 
$$\gamma_{\rm H}=(x^1\circ \gamma,\dots,x^r\circ\gamma)\quad \text{ and }\gamma_V=(x^{r+1}\circ \gamma,\dots,x^n\circ \gamma).$$ Hence  $\gamma=(\gamma_{\rm H},\gamma_V)$ and $\dot{\gamma}=(\dot{\gamma}_{\rm H},\dot{\gamma}_V)$. There are functions $\beta_1,\dots,\beta_r$ so that 
$$
\dot{\gamma}=\sum_{j=1}^r \beta_j(X_j\circ \gamma).
$$
In coordinates we write $X_j=\sum_{k=1}^n X_j^k \frac{\partial}{\partial x^k}$. 
So
\begin{align*}
(\dot{\gamma}_{\rm H},\dot{\gamma}_V)&=\sum_{j=1}^r\beta_j(X_j\circ\gamma)\\
&=\sum_{j=1}^r\beta_j\sum_{k=1}^n (X_j^k\circ\gamma) \frac{\partial}{\partial x^k}\\
&=  \sum_{k=1}^r\sum_{j=1}^r \beta_j (X_j^k\circ\gamma) \frac{\partial}{\partial x^k}+ \sum_{k=r+1}^n\sum_{j=1}^r \beta_j (X_j^k\circ\gamma)\frac{\partial}{\partial x^k}.
\end{align*}
Set $O=(X_jx^i)_{ij}=X_j^i$. We have $\dot{\gamma_{\rm H}}=\sum_{k=1}^r\sum_{j=1}^r \beta_j (O_j^k\circ \gamma) \frac{\partial}{\partial x^k}=O\beta$, where we denoted $\beta=(\beta_1,\dots,\beta_r)$.
Since $O$ is invertible, $(\beta_1,\dots,\beta_r)=(O^{-1}\circ \gamma) \dot{\gamma_{\rm H}}$. Thus
\begin{equation}\label{vertical-derivatives}
\dot{\gamma}_V= \sum_{k=r+1}^n\sum_{j=1}^r \bigg[(O^{-1}\circ \gamma) \dot{\gamma_{\rm H}}\bigg]_j (X_j^k\circ\gamma)  \frac{\partial}{\partial x^k}.
\end{equation}
In particular, the following holds.
\begin{proposition}
Let $\gamma$ be an absolute continuous curve. If $\gamma_{\rm H}$ is smooth, then $\gamma$ is smooth.
\end{proposition}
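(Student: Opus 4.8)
The plan is to read equation \eqref{vertical-derivatives} as a first-order ODE for the vertical part $\gamma_V$ and then bootstrap the regularity. Smoothness being a local property, I work inside the fixed chart with coordinates $x^1,\dots,x^n$ and horizontal frame $X_1,\dots,X_r$. First I would record that, $\gamma$ being absolutely continuous, both $\gamma_{\rm H}$ and $\gamma_V$ are absolutely continuous; by hypothesis $\gamma_{\rm H}\in C^\infty$, so in particular $\gamma_{\rm H}$ and $\dot\gamma_{\rm H}$ are smooth functions of $t$. Next I would observe that all the coefficients on the right-hand side of \eqref{vertical-derivatives} are smooth: the entries $X_j^k$ of the horizontal frame in the chart are smooth, and the matrix $O=(X_jx^i)_{ij}$ is smooth and, by the very definition of horizontal coordinates, invertible at every point, so $O^{-1}$ is smooth as well. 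Hence \eqref{vertical-derivatives} can be rewritten as
$$\dot\gamma_V(t)=\Phi\big(\gamma_{\rm H}(t),\gamma_V(t),\dot\gamma_{\rm H}(t)\big)$$
for a map $\Phi$ that is smooth in all of its arguments (and linear in the last one), so the right-hand side is a smooth function of the pair $(t,\gamma_V(t))$.

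The bootstrap is then standard. Since $\gamma_V$ is absolutely continuous, it is continuous; substituting it into the right-hand side, which is continuous jointly in $(t,y)$, shows that $\dot\gamma_V$ agrees almost everywhere with a continuous function, so after modification on a null set the identity \eqref{vertical-derivatives} holds everywhere and $\gamma_V\in C^1$. Assuming inductively that $\gamma_V\in C^\ell$ for some $\ell\geq 1$, the map $t\mapsto \Phi(\gamma_{\rm H}(t),\gamma_V(t),\dot\gamma_{\rm H}(t))$ is a composition of $C^\ell$ maps, hence $C^\ell$, so $\dot\gamma_V\in C^\ell$ and $\gamma_V\in C^{\ell+1}$. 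Therefore $\gamma_V\in C^\infty$, and since $\gamma_{\rm H}\in C^\infty$ we conclude $\gamma=(\gamma_{\rm H},\gamma_V)\in C^\infty$.

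The argument is essentially mechanical; the only points requiring a moment's care are the passage from the a.e.\ identity \eqref{vertical-derivatives} to an everywhere identity, which is what legitimizes the base case $\gamma_V\in C^1$ of the induction, and the observation that the invertibility of $O$ throughout the chart — built into the definition of horizontal coordinates — is precisely what guarantees the smoothness of the coefficient $O^{-1}$ in $\Phi$.
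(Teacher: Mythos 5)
Your proof is correct and follows essentially the same route as the paper: read \eqref{vertical-derivatives} as expressing $\dot\gamma_V$ in terms of $\gamma$ and the smooth data $\dot\gamma_{\rm H}$, obtain continuity of $\dot\gamma_V$, and bootstrap. You simply make explicit the points the paper leaves implicit (smoothness and invertibility of $O$, and the upgrade of the a.e.\ identity to an everywhere one), which is fine.
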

\begin{proof}
By hypothesis $\gamma$ and $\dot{\gamma_{\rm H}}$ are absolute continuous. Then by \eqref{vertical-derivatives} also $\dot{\gamma}_V$ is absolute continuous. Thus $\dot\gamma$ is continuous. A bootstrap argument shows that $\gamma$ is smooth.
\end{proof}

In the following, we will consider maps that are {\em absolutely continuous on curves} (ACC$_Q$). 
 We recall that such maps
send almost every (with respect to the $Q$-modulus measure)
 rectifiable  curve into a rectifiable  curve (see \cite{Shanmugalingam_Newtonian} for more details). 
 In the case of a sub-Riemannian manifold $M$, 
 ACC maps defined on $M$ have the following property.
 Let $X$ be any horizontal vector field in $M$ and denote by $\phi_X^t$ the corresponding flow. 
 Then for  almost every $p\in M$ (with respect to Lebesgue measure), one has that  
 $t\to  f(\phi_X^t(p))$ is a rectifiable curve.

\begin{proposition}\label{smoothfromhorizontal}
Let $M$ and $N$ two sub-Riemannian manifolds. 
Let $f:M\to N$ be an ACC map.  
Let $k\geq 1$, $\alpha\in(0,1)$, and $p\geq 1$. 
If $f^1,\dots,f^r$ are in $C^{k,\alpha}_{\rm H,loc} (M)$  (resp. in $W^{k,p}_{\rm H,loc} (M)$), then $f^1,\dots,f^n$ is $C^{k,\alpha}_{\rm H,loc} (M)$ (resp. in $W^{k,p}_{\rm H,loc} (M)$).
\end{proposition}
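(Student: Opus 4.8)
The plan is to exploit formula~\eqref{vertical-derivatives}, which expresses the derivative of the vertical components of a horizontal curve purely in terms of the horizontal components and the (smooth) coefficients of the frame. First I would fix a point $p_0\in M$ and coordinates $x^1,\dots,x^n$ near $p_0$ with $x^1,\dots,x^r$ horizontal with respect to a frame $X_1,\dots,X_r$, so that $O=(X_j x^i)_{ij}$ is invertible and smooth. Writing $f=(f^1,\dots,f^n)$ in these coordinates, the hypothesis gives that $f_{\rm H}:=(f^1,\dots,f^r)$ has the claimed regularity ($C^{k,\alpha}_{\rm H,loc}$ or $W^{k,p}_{\rm H,loc}$); the goal is to upgrade $f_V:=(f^{r+1},\dots,f^n)$ to the same regularity. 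The idea is to integrate \eqref{vertical-derivatives} along the flow lines of each horizontal frame vector $X_m$, $m=1,\dots,r$: since $f$ is ACC$_Q$, for each $m$ and for a.e.\ $p$ (with respect to Lebesgue measure) the curve $t\mapsto f(\phi_{X_m}^t(p))$ is rectifiable, hence absolutely continuous, and \eqref{vertical-derivatives} applies with $\gamma(t)=\phi_{X_m}^t(p)$ and $\gamma(t)\mapsto f(\gamma(t))$. This yields, for a.e.\ $p$,
$$
\frac{d}{dt}\Big(x^k\circ f\circ \phi_{X_m}^t\Big)(p)\Big|_{t=0} = \sum_{j=1}^r \big[(O^{-1}\circ (f(p)))\, (d_{\rm H}(f_{\rm H}))(X_m)\big]_j \, (X_j^k\circ f)(p),\qquad k=r+1,\dots,n,
$$
i.e.\ the distributional derivative $X_m f^k$ is a product/composition built out of: the entries of $O^{-1}$ composed with $f$, the first horizontal derivatives $X_m f^i$ ($i\le r$) of the horizontal components, and the frame coefficients $X_j^k$ composed with $f$. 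In particular the horizontal gradient of $f_V$ is expressed algebraically in terms of $f$ itself, the smooth data, and $\nabla_{\rm H} f_{\rm H}$.

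The next step is the bootstrap. I would argue by induction on $k$. For the base case $k=1$: knowing $f^1,\dots,f^r\in C^{0,\alpha}_{\rm H,loc}$ (resp.\ in $L^p_{\rm loc}$ with first horizontal derivatives in $L^p_{\rm loc}$), the formula above shows $X_m f^k$ is a composition of smooth functions with the continuous (resp.\ Sobolev) map $f$, multiplied by the $C^\alpha$ (resp.\ $L^p$) functions $X_m f^i$; since $f$ is already continuous (it is ACC$_Q$ and has a horizontal gradient, or one first notes $f^1,\dots,f^r$ continuous forces — via the coordinate structure and \eqref{vertical-derivatives} integrated — continuity of $f_V$, hence of $f$), products and compositions preserve the relevant class, giving $f^k\in C^{1,\alpha}_{\rm H,loc}$ (resp.\ $W^{1,p}_{\rm H,loc}$). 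For the inductive step, assuming the conclusion for $k-1$ so that $f\in C^{k-1,\alpha}_{\rm H,loc}$ (resp.\ $W^{k-1,p}_{\rm H,loc}$) entirely, I differentiate the identity for $X_m f^k$ up to order $k-1$ in the horizontal directions $X^I$, $|I|\le k-1$; by the chain and Leibniz rules each resulting term is a polynomial in: derivatives of order $\le k-1$ of the smooth data composed with $f$ (times derivatives of $f$ of order $\le k-1$, controlled by the inductive hypothesis), and derivatives $X^J f^i$ with $|J|\le k$, $i\le r$, which are controlled by the hypothesis on the horizontal components. One then uses that $C^{k-1,\alpha}_{\rm H}$ (resp.\ $W^{k-1,p}_{\rm H}$) is a module over itself in the relevant local sense and closed under composition with smooth maps, to conclude $X^I X_m f^k\in C^{\alpha}_{\rm H,loc}$ (resp.\ in $L^p_{\rm loc}$), i.e.\ $f^k\in C^{k,\alpha}_{\rm H,loc}$ (resp.\ $W^{k,p}_{\rm H,loc}$).

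Finally one patches the local conclusions together over a cover of $M$. The main obstacle I anticipate is making rigorous the passage from ``$f$ is ACC$_Q$'' to the pointwise-a.e.\ validity of the ODE \eqref{vertical-derivatives} along the flow lines of each $X_m$ and the identification of the resulting a.e.\ expression with the distributional horizontal derivative $X_m f^k$: one must check that the exceptional set (curves on which $f$ fails to be absolutely continuous) has zero $Q$-modulus, hence by the fibering/Fubini argument sketched before Proposition~\ref{mug} corresponds to a set of starting points of Lebesgue measure zero, and that the resulting measurable function indeed represents the weak derivative (so that membership in $W^{k,p}_{\rm H}$, which is an $L^p$-based statement, genuinely follows). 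The composition estimates (bounding $\|\,\mathrm{(smooth)}\circ f\,\|$ in $C^{k,\alpha}_{\rm H}$ or $W^{k,p}_{\rm H}$ by a function of $\|f\|$) are routine but require the module and Faà di Bruno–type inequalities for the horizontal function spaces; these are standard once $f$ is known to be locally bounded with the appropriate number of derivatives, which is exactly what the induction supplies.
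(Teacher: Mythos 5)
Your plan is correct and follows essentially the same route as the paper's proof: both exploit the identity \eqref{vertical-derivatives} along flow lines of horizontal vector fields (justified almost everywhere by the ACC property) to express $X f^{r+1},\dots,Xf^n$ algebraically in terms of $O^{-1}\circ f$, the frame coefficients composed with $f$, and the horizontal derivatives $Xf^1,\dots,Xf^r$, and then bootstrap. The technical caveats you flag (identifying the a.e.\ pointwise expression with the weak derivative, and the intermediate observation that $C^1_{\rm H,loc}$ regularity of all components forces $f$ to be locally Lipschitz, hence $C^\alpha$) are exactly the points the paper's argument relies on as well.
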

\begin{proof}
Let $X$ be any horizontal vector field in $M$.
Notice that if $f^1,\dots,f^r$ are in $C^{k,\alpha}_{\rm H,loc} (M)$  (resp. in $W^{k,p}_{\rm H,loc} (M)$),
then $Xf^1,\dots,Xf^r$ are in $C^{k-1,\alpha}_{\rm H,loc} (M)$  (resp. in $W^{k-1,p}_{\rm H,loc} (M)$).
For  almost every $p\in M$, the curve 
$$
f(\phi_X^t(p))=:\gamma(p,t)=(\gamma_H(p,t),\gamma_V(p,t)),
$$
is an horizontal curve and hence 
  \eqref{vertical-derivatives} holds. Therefore, for  almost every $p$,
  we have
\begin{align*}
(X f^{m+1}(p),\dots,Xf^n(p))
&= 
\frac{d}{dt} \gamma_V(p,t)|_{t=0}\\
&=\sum_{j=1}^m \sum_{k=m+1}^n \bigg[(O^{-1}\circ \gamma(p,0))\dot{\gamma_{\rm H}} (p,0)\bigg]_j\, X_j^k(\gamma(p,0))  \frac{\partial}{\partial x^k}\\
&= \sum_{j=1}^m \sum_{k=m+1}^n \bigg[O^{-1}(f(p))(Xf^1(p),\dots,Xf^r(p))^{T}\bigg]_j (X_j^k\circ f)(p)  \frac{\partial}{\partial x^k}.
\end{align*}
 Since  the functions $X_j^k\circ f $ and $X f^{1} ,\dots,Xf^r$ are continuous  (resp.   in $L^p$), then the functions
$X f^{m+1} ,\dots,Xf^n$ are   continuous (resp.  in $L^p$),
for all horizontal $X$.
Hence, $f^1,\ldots, f^n\in C^1_{\rm H,loc} (M)$ (resp.  in $W^{ 1,p}_{\rm H,loc} (M)$)
and 
 then $X_j^k\circ f \in C^1_{\rm H,loc} (M)$ (resp.  in $W^{ 1,p}_{\rm H,loc} (M)$).
 Notice that, if $f^1,\ldots, f^n\in C^1_{\rm H,loc} (M)$ then 
 on any compact $K$ the functions
 $ \nabla_{\rm H} f^1,\ldots, \nabla_{\rm H} f^n$ are   bounded, say by a constant $C$, therefore,
 for all horizontal curve $\sigma:[0,1]\to K$,
 $${\rm Length}(f(\sigma))=\int_0^1 \|f_* \sigma' \| ds \le
 C \int_0^1 \|  \sigma' \| ds = C{\rm Length}(\sigma).$$
Hence,
$f^1,\ldots, f^n\in C^1_{\rm H,loc} (M)$ implies
 that
 $f$ is Lipschitz and therefore its components are in $C^\alpha$.
Bootstrapping, we conclude that $f^1,\ldots, f^n$ is $C^{k,\alpha}_{\rm H,loc} (M)$ (resp.  in $W^{ k,p}_{\rm H,loc} (M)$).
\end{proof}


\section{Regularity of $1$-quasiconformal maps}\label{sec:regular}

In this section we prove Theorem \ref{duballe}.
Let us first clarify the definition of the function spaces involved.
Given two  equiregular sub-Riemannian manifolds $M,N$, we say that a homeomorphism $f$ is in $C^{1,\alpha}_{\rm H,loc}(M,N)\cap W^{2,2}_{\rm H,loc}(M,N)$ if, 
in any (smooth) coordinate system of $N$,
the components of $f$ 
   belong to $C^{1,\alpha}_{\rm H,loc}(M)\cap W^{2,2}_{\rm H,loc}(M)$.
\subsection{Every 1-quasiconformal map in $C^{1,\alpha}_{\rm H,loc}(M,N)\cap W^{2,2}_{\rm H,loc}(M,N)$  is conformal}\label{unapizza}



We now show that, 
assuming that a 1-quasiconformal map 
has the basic regularity, then the map is smooth.
The proof is independent from the results in 
Section~\ref{sec:coord}.
Namely, we do not need to assume any 
regularity theory for $Q$-Laplacian.

\begin{proof}[Proof of Theorem \ref{duballe}.(i)] Denote by $\vol_M$ and $\vol_N$ the Popp measures of $M$ and $N$.
For $p\in M$, consider any system of smooth coordinates $y^1,...,y^n$ in a neighborhood of $f(p)\in N$. Set $f^i:=y^i\circ f$ and $h^i:=L_Q(y^i)\in C^{\infty}(N)$.
From Corollary \ref{morphism-property}.(i), it follows that for all $u\in  
C^{\infty}_0(M)$
\begin{equation*}\label{pullback}
\int_{M} L_Q  (f^i) \,u \,\d\vol_M = \int_{M} h^i \circ f \ J_f^{\rm Popp} \,u\, \d\vol_M.
\end{equation*}
For $i=1,...,n$, set $H^i:=h^i\circ f \ J_f^{\rm Popp}$.
Since the Popp measures are smooth and 
 $f\in C^{1,\alpha}_{\rm H,loc}(M,N)$, we have that
 $J_f^{\rm Popp}\in C^{\alpha}_{\rm H,loc}(M)$ and therefore
 $H^i\in C^{\alpha}_{\rm H,loc}(M)$. At this point we have that $L_Q f^i\in C^{\alpha}_{\rm H,loc}(M)$ and that $f^i\in C^{1,\alpha}_{\rm H,loc}(M)\cap W^{2,2}_{\rm H,loc}(M)$. 
 Notice that $|\nabla_{\rm H}f^i|$ is bounded away from $0$,
since $f$ is bi-Lipschitz. 
Therefore, Proposition~\ref{smoothness} applies, yielding that $  f \in C^{2,\alpha}_{\rm H,loc}(M,N).$ 
 The proof follows by bootstrap using the Schauder estimates in Proposition~\ref{schauder}.
\end{proof}


\subsection{Regularity of $Q$-harmonic functions implies conformality}\label{sec:smoothness_conclusion}
We now reduce the smoothness assumption by  using horizontal $Q$-harmonic coordinates, see Section~\ref{sec:coord}. 
To ensure their existence and  to use them we  need to assume that the manifolds support the regularity theory for $Q$-Laplacian as defined in Definition~\ref{support}.



\begin{proof}[Proof of Theorem \ref{duballe}.(ii)]


We shall use Proposition~\ref{smoothfromhorizontal}. 
Since sub-Riemannian manifolds are $Q$-regular, by 
\cite{Heinonen-Koskela} any quasiconformal map is ACC$_Q$ (see also \cite[Corollary 6.5]{Margulis-Mostow})
In view of Theorem \ref{hQhc}, consider  $u_1,\ldots,u_n$ a system of local coordinates around  a point $f(p)\in M$
  for which the horizontal coordinates  $u^1,\ldots,u^r$  are
  $Q$-harmonic. 

In view of the morphism property (Corollary \ref{morphism-property})
the pull-backs $f_i=u_i\circ f$, for $i=1,\ldots,r$ are $Q$-harmonic functions in a neighborhood of $p\in M$.
By the $Q$-harmonic regularity assumption, 
both $u^i$
 and $f^i=u^i\circ f$ are in $C^{1,\alpha}_{\rm H,loc}(M) 
$, for $i=1,\ldots,r$. 
 Apply Proposition~\ref{smoothfromhorizontal} to $f$ with $k=1$ and get $ f \in C^{1,\alpha}_{\rm H,loc}(M,N)$.
 Since also $f^{-1}$ is 1-quasiconformal, the same argument shows that  $ f^{-1} \in C^{1,\alpha}_{\rm H,loc}(N,M)$.
 In particular, the map $f$ is bi-Lipschitz and $f^1 ,\ldots,f^n $  is a local system of bi-Lipschitz coordinates.
       In particular, $|\nabla_{\rm H} f^1|,\ldots, |\nabla_{\rm H} f^n|$ are   bounded away from zero. 
        Because of the $Q$-regularity hypothesis, 
        we have that 
        $f^1,\ldots,f^r$ are  in $W^{2,2}_{\rm H,loc}(M)
       $. 
       Invoking  Proposition~\ref{smoothfromhorizontal} once more, we have that 
               $f^1,\ldots,f^n$ are  in $W^{2,2}_{\rm H,loc}(M)
       $. 
\end{proof}

We remark that in the setting of Carnot groups 
both the existence of horizontal  $Q$-harmonic
coordinates
and the Lipschitz regularity of
1-quasiconformal
can be proven directly without using any PDE argument, see \cite{Pansu}.


\section{Liouville Theorem for contact sub-Riemannian manifolds}\label{contactsmooth}

\subsection{$Q$-Laplacian with respect to a divergence-free frame} 

In this section we intend to write the $Q$-Laplacian in a sub-Riemannian manifold using a horizontal frame that is not necessarily orthonormal, but is divergence-free with respect to some other volume form.
Recall that a vector field $X$ is divergence-free with respect to a volume form $\mu$ if its adjoint 
with respect to   $\mu$
equals $-X$.

Let $M$ be a  sub-Riemannian manifold equipped with a smooth volume form $\vol$.
Let $Y_1, \ldots, Y_r$ be an orthonormal frame for the horizontal distribution $HM$ of $M$.
Recall from 
\eqref{QLap:forte} that the $Q$-Laplacian of a twice differentiable function   is
\begin{eqnarray}\label{Coogee}
L_Q u    & = & \sum_i Y_i^*  \left(    \left( \sum_k (Y_k u)^{2} \right )^{\frac{Q-2}{2}}Y_i u\right)
\end{eqnarray}

Assume that there exists another frame $X_1, \ldots, X_r$ of $HM$ and another 
smooth volume form $\mu $ such that 
each $X_i$ is divergence-free with respect to 
$\mu $.
If $g$ is the sub-Riemannian metric of $M$, let 
$$g_{ij} := g(X_i, X_j)  \in C^\infty (M).$$
For all $x\in M$, let $g^{ij}(x)$ be the inverse matrix of $g_{ij}(x)$ and define the family of scalar products on $\R^r$ as
$$ \tilde g_x (v,w) := v_i g^{ij}(x) w_j,\qquad x\in M,\, v,w\in \R^r  .$$
Then there exists 
$a_i^j \in C^\infty (M)$ such that
\begin{equation}\label{4321}
Y_i= a_i^jX_j.
\end{equation}
So $\delta_{ij}= a_i^k a_j^l g_{kl} $ and $g^{ij} = a^i_k a ^j_k$.

Let $\omega$ be the smooth function such that 
$  \vol = \omega \mu $.
Since $X_i$ are divergence-free with respect to $\mu $, the 
adjoint vector fields with respect to $\vol$ of $Y_i$ are such that
$$Y_i^* u= X^*_j ( a _i^j u)  =  
-\omega^{-1}X_j (\omega  a _i^j u ) .
$$  
We use the notation
$$ \nabla_0u := (X_1 u, \ldots , X_r u ).$$
Noticing that $\sum_k (Y_k u)^{2}  = 
\tilde g (
\nabla_0u,
\nabla_0u),$
the expression \eqref{Coogee} becomes 
\begin{equation*}
(L_Q u)(x) = -\omega(x)^{-1} X_i A_i(x,\nabla_0 u),
\end{equation*}
where
\begin{equation}\label{Coogee2}
A_i(x, \xi) :=
\omega (x) \,\tilde g_x(\xi, \xi)^{\frac{Q-2}{2}} g^{ik}(x)\xi_k, \qquad \text{ for } \xi \in \R^r , x\in M.
\end{equation}
The derivatives of such functions are
$$\partial_{x_j}A_i(x,\xi)
=
\partial_{x_j} \omega 
\tilde g(\xi, \xi)^{\frac{Q-2}{2}} g^{ik}\xi_k
+
\omega\tfrac{Q-2}{2}
\tilde g(\xi, \xi)^{\frac{Q-2}{2}-1} 
 \partial_{x_j} g^{l,l'} \xi_l \xi_{l'}
 g^{ik}\xi_k
+
\omega
\tilde g(\xi, \xi)^{\frac{Q-2}{2}} \partial_{x_j} g^{ik}\xi_k
$$
and
$$  \partial_{\xi_j} A_i(x,\xi)  
=
\omega \left(  (Q-2) \tilde g(\xi, \xi)^{\frac{Q-4}{2}} g^{lj} g^{ik} \xi_l\xi_k
+ 
 \tilde g(\xi, \xi)^{\frac{Q-2}{2}} g^{ij}\right).
$$
Hence, 
$$ \partial_{\xi_j} A_i(x,\xi) \eta_i \eta_j 
=
\omega  \left(  (Q-2) \tilde g(\xi, \xi)^{\frac{Q-4}{2}}  \tilde g(\xi, \eta)^2
+ 
 \tilde g(\xi, \xi)^{\frac{Q-2}{2}}  \tilde g(\eta, \eta)\right).
$$
Using Cauchy-Schwarz inequality, 
the equivalence of norms in $\R^r$, and the smoothness of the functions $\omega$ and $g^{ij}$'s, the functions $A_i$ in \eqref{Coogee2} satisfy the following estimates:
on each compact set of $M$, for some
$\lambda, \Lambda>0$ depending only on $Q$, and for every $\chi  \in \R^r$,
\begin{equation}
\label{123412}
\lambda |\xi|^{{Q-2}} |\chi|^2\le  \partial_{\xi_j} A_i(x,\xi) \chi_i \chi_j \le \Lambda |\xi|^{{Q-2}} |\chi|^2
\end{equation}
 and
\begin{equation}
\label{9234287}
|\partial_{x_j}A_i(x,\xi)|\le \Lambda |\xi|^{{Q-1}}.\end{equation}

Summarizing, we have the following.
\begin{proposition}\label{6.6}
Let $M$ be a  sub-Riemannian manifold and consider  $\vol$ and $\mu$  two  smooth volume forms on $M$.
Assume there is  
a horizontal frame $X_1,\ldots, X_r$ on $M$ of
 vector fields that are  divergence-free with respect to $\mu$.
If $u$ 
is a function on $M$ that is $Q$-harmonic with respect to $\vol$,
 then $u$ satisfies 
\begin{equation*}
\sum_{i=1}^{r} X_i A_i(x,\nabla_0 u)=0,
\end{equation*}
for some $A_i$ for which \eqref{123412} and \eqref{9234287} hold.
\end{proposition}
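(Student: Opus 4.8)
The statement is essentially a distillation of the computation carried out in the paragraphs just above it, so the plan is to organize that computation into a clean argument. First I would recall that, by definition, $u$ being $Q$-harmonic with respect to $\vol$ means $\int_M |\nabla_{\rm H}u|^{Q-2}\langle\nabla_{\rm H}u,\nabla_{\rm H}\phi\rangle\,\d\vol=0$ for every $\phi\in W^{1,Q}_{{\rm H},0}(M)$, where $\nabla_{\rm H}$ is taken with respect to the \emph{orthonormal} frame $Y_1,\dots,Y_r$; when $u\in W^{2,2}_{\rm H,loc}\cap W^{1,Q}_{\rm H,loc}$ this is \eqref{QLap:forte}, i.e.\ $L_Q u=0$ with $L_Q u$ as in \eqref{Coogee}. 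Then I would perform the change of frame $Y_i=a_i^jX_j$ from \eqref{4321}: this turns $\sum_k(Y_ku)^2$ into $\tilde g_x(\nabla_0 u,\nabla_0 u)$, and, writing $\vol=\omega\mu$ and using that each $X_j$ is divergence-free with respect to $\mu$, turns the adjoint $Y_i^*$ into $Y_i^*v=X_j^*(a_i^jv)=-\omega^{-1}X_j(\omega a_i^jv)$. Substituting into \eqref{Coogee} yields $(L_Q u)(x)=-\omega(x)^{-1}X_iA_i(x,\nabla_0 u)$ with $A_i$ given by \eqref{Coogee2}; hence $Q$-harmonicity with respect to $\vol$ is equivalent to $\sum_{i=1}^r X_iA_i(x,\nabla_0 u)=0$ in the weak sense, which is meaningful already for $u\in W^{1,Q}_{\rm H,loc}(M)$ since then $A_i(\cdot,\nabla_0 u)\in L^{Q/(Q-1)}_{\rm loc}$ by \eqref{Coogee2}.

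Next I would verify the structure conditions \eqref{123412} and \eqref{9234287} directly from \eqref{Coogee2}. Differentiating gives the displayed formulas for $\partial_{x_j}A_i$ and $\partial_{\xi_j}A_i$, and in particular
\[
\partial_{\xi_j}A_i(x,\xi)\,\chi_i\chi_j=\omega\Big((Q-2)\,\tilde g(\xi,\xi)^{\frac{Q-4}{2}}\tilde g(\xi,\chi)^2+\tilde g(\xi,\xi)^{\frac{Q-2}{2}}\tilde g(\chi,\chi)\Big).
\]
On a fixed compact $K\subset M$ the functions $\omega$ and $g^{ij}$ are bounded with $(g^{ij}(x))$ uniformly positive-definite, so $\tilde g_x$ is uniformly comparable to the Euclidean inner product on $K$; the lower bound in \eqref{123412} then follows by discarding the first (nonnegative) term, and the upper bound by estimating $\tilde g(\xi,\chi)^2\le\tilde g(\xi,\xi)\tilde g(\chi,\chi)$ via Cauchy--Schwarz. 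The same norm equivalence applied termwise to $\partial_{x_j}A_i$ gives \eqref{9234287}. The resulting $\lambda,\Lambda$ depend only on $Q$ and on $K$ through the $C^0$ bounds on $\omega,g^{ij}$ and the ellipticity constant of $g^{ij}$, as asserted.

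The argument is essentially bookkeeping, so there is no deep obstacle; the one point that needs genuine care is the very first reduction, namely that the passage from the orthonormal-frame weak formulation to the divergence-form equation $\sum_i X_iA_i(x,\nabla_0 u)=0$ is legitimate for $u$ only of class $W^{1,Q}_{\rm H,loc}$. This is precisely where the hypothesis that the $X_i$ be divergence-free with respect to \emph{some} volume form $\mu$ (and not merely horizontal) is essential: it is what makes $X_i^*(\cdot)=-\omega^{-1}X_i(\omega\,\cdot\,)$ carry no zeroth-order remainder, so that integrating by parts against test functions produces exactly the operator with the clean structure \eqref{Coogee2} and leaves no lower-order terms to control.
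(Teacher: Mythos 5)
Your proposal is correct and follows essentially the same route as the paper, whose proof of Proposition \ref{6.6} is precisely the computation displayed in the paragraphs preceding it: the change of frame $Y_i=a_i^jX_j$, the rewriting of $Y_i^*$ via the divergence-free property of the $X_j$ with respect to $\mu$, the resulting formula \eqref{Coogee2} for $A_i$, and the verification of \eqref{123412}--\eqref{9234287} by Cauchy--Schwarz and equivalence of norms on compact sets. Your added remarks — that the conclusion must be read weakly for $u\in W^{1,Q}_{\rm H,loc}$ with $A_i(\cdot,\nabla_0u)\in L^{Q/(Q-1)}_{\rm loc}$, and that $\lambda,\Lambda$ depend on the compact set through $\omega$ and $g^{ij}$ — are correct refinements of points the paper leaves implicit.
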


\begin{remark}
In the above we used two different structures of metric measure space on the same manifold $M$. These are $(M,g,\vol)$ and  $(M,g_0,\mu)$, where $g_0$ is the metric for which $X_1,\dots,X_m$ form an orthonormal frame.
 For each of these structures we may define  corresponding Sobolev spaces 
$W^{p,q}_{\rm H}(M,g,\vol)$ and $W^{p,q}_{\rm H}(M,g_0,\mu)$. Similarly, we consider spaces
 $C^{1,\alpha}_{\rm H}(M,g)$ and $C^{1,\alpha}_{\rm H}(M,g_0)$. Since the 
 the matrix $(a_{i}^j)$
 in \eqref{4321}
  and its inverse have
  locally Lipschitz coefficients,
   it follows that on compact sets $\Omega\subset M$ the space
   $W^{p,2}_{\rm H}(\Omega,g,\vol)$ is biLipschitz to $W^{p,2}_{\rm H}(\Omega,g_0,\mu)$ for $p=1,2$, and  $C^{1,\alpha}_{\rm H}(\Omega,g)$ is biLipschitz to $C^{1,\alpha}_{\rm H}(\Omega,g_0)$.
\end{remark}

\subsection{Darboux coordinates on contact manifolds}\label{sec:contact}
On every contact manifold, the existence of a frame of divergence-free  vector fields with respect to some measure is ensured  by 
Darboux Theorem.
More generally, every sub-Riemannian manifold that is contactomorphic to a unimodular (e.g., nilpotent) Lie group equipped with a horizontal left-invariant distribution admits such a frame. The reason is that 
left-invariant vector fields are divergence-free with respect to the Haar measure of the group.
We shall recall now 
Darboux Theorem and we recall the standard contact structures, which are  those of the Heisenberg groups. 

Darboux Theorem states, see \cite{Etnyre_intro}, 
 that every two contact manifolds of the same dimension are locally contactomorphic. In particular, any 
contact  $2n+1$-manifold is locally contactomorphic to the standard contact structure  on $\R^{2n+1}$, a frame of which is given by 
\begin{equation}
\label{Heis:campi}X_i:=\p_{x_i}-\frac{x_{n+i}}{2}\p_{x_{2n+1}},\qquad  \qquad X_{n+i}:= \p_{x_{n+i}}+\frac{x_i}{2}\p_{x_{2n+1}}, 
\end{equation}
where $i=1,\dots,n$. For future reference we will also set $X_{2n+1}=\p_{x_{2n+1}}$.
This frame is   left-invariant for a specific Lie group structure, which we denote by $ \mathbb H^n$: 
the Heisenberg group.

\begin{corollary}(of Darboux Theorem)\label{dardar}
Let $M$ be a contact sub-Riemannian $2n+1$-manifold equipped with a volume form $\vol$.
There are local coordinates $x_1,\dots, x_{2n+1}$ in which
the horizontal distribution 
 is given by the vector fields in \eqref{Heis:campi}, which are divergence-free with respect to the Lebesgue measure $ \mathcal{L}$, and 
 there exists  $\omega\in C^\infty$ such that 
$\omega^{-1}\in C^\infty$ and $\d\vol=\omega \d \mathcal{L}$.
\end{corollary}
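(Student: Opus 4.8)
The statement is a direct consequence of the Darboux theorem together with one elementary computation, so the plan is short. First I would recall that, since $M$ is a contact sub-Riemannian manifold, its horizontal distribution $HM$ is by definition a contact distribution: locally $HM=\ker\theta$ for a smooth $1$-form $\theta$ with $\theta\wedge(\d\theta)^n$ nowhere zero. Applying the Darboux theorem for contact structures (see \cite{Etnyre_intro}), around any $p\in M$ there are coordinates $x_1,\dots,x_{2n+1}$ on a neighborhood $U$ in which $HM$ is exactly the distribution spanned by $X_1,\dots,X_{2n}$ of \eqref{Heis:campi}; note that only the distribution, and not the sub-Riemannian metric, enters this step. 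This yields the first assertion of the corollary.

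Next I would verify that each vector field in \eqref{Heis:campi} is divergence-free with respect to the Lebesgue measure $\mathcal L=\d x_1\wedge\cdots\wedge\d x_{2n+1}$, in the sense recalled above (i.e. $X_i^*=-X_i$). This is immediate from \eqref{sdj:vec}: taking $\omega\equiv1$, the coefficients of $X_i=\p_{x_i}-\tfrac{x_{n+i}}{2}\p_{x_{2n+1}}$ and of $X_{n+i}=\p_{x_{n+i}}+\tfrac{x_i}{2}\p_{x_{2n+1}}$ satisfy $\p_k b^i_k=0$ in each case, so $X_i^*u=-X_iu$. Equivalently, the $X_i$ are the left-invariant fields generating the Heisenberg group $\mathbb H^n$, which is nilpotent hence unimodular, and the Lebesgue measure is a Haar measure on $\mathbb H^n$; left-invariant vector fields are automatically divergence-free with respect to a Haar measure.

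Finally, since $\vol$ is a smooth volume form and, in these coordinates, $\mathcal L$ is also a smooth nowhere-vanishing volume form on $U$, the ratio $\omega$ determined by $\d\vol=\omega\,\d\mathcal L$ is a smooth function on $U$ with no zeros, whence $\omega^{-1}\in C^\infty(U)$ as well; if one insists on $\omega>0$ it suffices to compose the Darboux chart with a reflection in a vertical coordinate when needed, which leaves the frame \eqref{Heis:campi} unchanged up to relabelling. The only step requiring any care is the invocation of Darboux's theorem in its form for contact \emph{distributions} rather than for contact \emph{forms}, since the sub-Riemannian data prescribe only $HM$ and not a preferred contact form; this is precisely the version recorded in \cite{Etnyre_intro}, so I do not anticipate a genuine obstacle.
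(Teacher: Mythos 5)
Your proposal is correct and follows essentially the same route as the paper: the paper also derives the corollary directly from Darboux's theorem (in its form for contact distributions) and from the observation that the frame \eqref{Heis:campi} consists of left-invariant vector fields of $\mathbb H^n$, hence divergence-free with respect to Haar (i.e.\ Lebesgue) measure. Your additional explicit check via \eqref{sdj:vec} and the remark about the smoothness and non-vanishing of the density $\omega$ are exactly the points the paper leaves implicit.
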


\subsection{Riemannian approximations}
Let us consider a contact  $2n+1$ manifold $M$, with subRiemannian metric $g_0$ and volume form $\vol$. Let 
$Y_1,...,Y_{2n}$ denote a $g_0$-orthonormal horizontal  frame in a neighborhood $\Omega\subset M$, and denote by $Y_{2n+1}$ the  Reeb vector field. For every $\e\in (0,1)$ we may define a $1$-parameter family of Riemannian metrics $g_\e$ on $M$ so that the frame $Y_1,..., Y_{2n}, \e Y_{2n+1}$ is orthonormal. Denote by $Y_1^\e,...,Y_{2n+1}^\e$ such $g_\e$-orthonormal frame.  For  $\e\ge 0$ and  $\delta\ge 0$ we will consider the family of regularized $Q$-Laplacian operators
\begin{eqnarray}\label{Coogee01}
L_Q^{\e,\delta} u    & := & \sum_{i=1}^{2n+1} Y_i^{\e *}  \left( \left(   \delta+  \sum_k (Y_k^\e u)^{2} \right )^{\frac{Q-2}{2}}Y_i^\e u\right)
\end{eqnarray}
Invoking Corollary \ref{dardar}, and applying  the same arguments as in Proposition \ref{6.6}, one can see that such  $Q$-Laplacian operators $L_Q^{\e,\delta} $,  can be written in the form 
\begin{equation}\label{zhong-e-eq}
L_{Q}^{\e,\delta} u = \sum_{i=1}^{2n+1} X_i^\e A_i^{\e, \delta} (x,\nabla_\e u)=0,
\end{equation}
where $X_i^\e=X_i$ for $i=1,...,2n$ and $X_{2n+1}^\e=\e X_{2n+1}$, with $X_1,...,X_{2n+1}$ as in \eqref{Heis:campi}.  Here we have set $\nabla_\e f=(X_1^\e f, ..., X_{2n+1}^\e f)$. The case $\e=\delta=0$ in \eqref{zhong-e-eq} reduces to the subelliptic $Q$-Laplacian. The components $A_i^{\e, \delta}$ in
\eqref{zhong-e-eq} are defined as in \eqref{Coogee2}, starting with the $g_\e$ metric, i.e., for every $\xi\in \R^{2n+1}$ and $x\in \Omega$,
 \begin{equation}\label{Coogee21}
A_i^{\e, \delta}(x, \xi) :=
\omega (x) \,(\delta+\tilde g_{\e,x}(\xi, \xi))^{\frac{Q-2}{2}} g^{ik}_\e (x)\xi_k.\end{equation}
By the same token as in \eqref{123412}, one has that there exists $\lambda,\Lambda>0$ depending  only on $Q$, such that the estimates
\begin{eqnarray}
\label{zhong-structure}
&
 \lambda (\delta+|\xi|^2)^{\frac{Q-2}{2}} |\chi|^2\le \sum_{i,j=1}^{2n+1} \partial_{\xi_j} A_i^{\e, \delta}(x,\xi) \chi_i \chi_j \le \Lambda (\delta+|\xi|^2)^{\frac{Q-2}{2}} |\chi|^2. &\\
\label{zhong-structure2}
&
|\partial_{x_j}A_i^{\e, \delta}(x,\xi)|\le \Lambda (\delta+|\xi|^2)^{\frac{Q-1}{2}}.&
\end{eqnarray}
hold for all $\e\ge 0$ and $\delta\ge 0$ and for all $\xi\in \R^{2n+1}$ and $\chi\in \R^{2n+1}$.

In the next section we prove that 
contact sub-Riemannian manifolds support regularity for $Q$-harmonic functions.
Hence, together with Theorem \ref{duballe}, this result will yield Theorem~\ref{thm:contact}. 

\subsection{$C^{1,\alpha}$ estimates after Zhong}

In this section we consider weak solutions $u\in W^{1,Q}_{\rm H, loc}(\Omega)$ of  $L_Q^0 u=0$, where $L_Q^0$ denotes the $Q$-Laplacian operator corresponding to a subRiemannian metric $g_0$ (not necessarily left-invariant)
%
in an open set $\Omega\subset \mathbb H^n$, endowed with its Haar measure, which coincides with the Lebesgue measure in $\R^{2n+1}$.
%
We prove the following theorem

\begin{theorem}\label{zhong-main} 
The following two properties hold:
\begin{enumerate} \item  
For every open $U\subset\subset \Omega$ and for every $\ell >0$, there exist constants $\alpha\in (0,1), C>0$ 
such that
for each 
$u\in W^{1,Q}_{\rm H, loc}(\Omega)$  weak solution of $L_Q^0 u=0$
with
$||u||_{W^{1,Q}_{\rm H}( U)} <\ell$, one has
\begin{equation*} 
 ||u||_{C^{1,\alpha}_{\rm H}( U)} \le C.
\end{equation*}
\item
For every open $U\subset \subset \Omega$
 and for every $\ell , \ell' >0$,
there exists a constant $C>0$ 
such that
for each 
$u\in W^{1,Q}_{\rm H, loc}(\Omega)$  weak solution of $L_Q^0 u=0$
with
$||u||_{W^{1,Q}_{\rm H}( K)} <\ell$ 
and
$\tfrac{1}{\ell'}<|\nabla_{\rm H} u|<\ell'$ on $U$,
one has
\begin{equation*} 
||u||_{W^{2,2}_{\rm H}(U)} 
\le C.
\end{equation*}
\end{enumerate}
\end{theorem}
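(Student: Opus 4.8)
The two statements are exactly the hypotheses of Definition~\ref{support}, specialized to the subelliptic $Q$-Laplacian $L_Q^0$ on the Heisenberg group with a (possibly non-left-invariant) metric $g_0$, written in Darboux coordinates so that the underlying volume is Lebesgue measure. The strategy is a Riemannian approximation: regularize $L_Q^0$ by the operators $L_Q^{\e,\delta}$ of \eqref{Coogee01}--\eqref{zhong-e-eq}, whose structure constants satisfy the uniform bounds \eqref{zhong-structure}--\eqref{zhong-structure2} \emph{independently of} $\e,\delta\ge 0$. For each $\e,\delta>0$ the operator $L_Q^{\e,\delta}$ is uniformly elliptic (although non-uniformly so as $\e\to 0$ in the vertical direction, and degenerate as $\delta\to 0$), so classical De Giorgi--Nash--Moser and elliptic regularity theory give smooth solutions $u_{\e,\delta}$ of the corresponding Dirichlet problem with boundary data equal to the given weak solution $u$; the plan is then to pass to the limit $\e,\delta\to 0$ and recover estimates for $u$ itself, the key being that the a~priori bounds do not blow up.

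For part~(1), first I would establish a uniform Caccioppoli inequality and then, following Zhong \cite{Zhong} (see also the earlier contributions \cite{Capogna-cpam,MR2545517,MR2527629} and the $x$-dependent extension needed here), a uniform Cordes--Nirenberg / Moser-type estimate yielding $\|\nabla_\e u_{\e,\delta}\|_{L^\infty(U')}\le C$ on slightly smaller $U'$, with $C$ depending only on $\ell$, $U$, $U'$, $Q$ and the ellipticity constants $\lambda,\Lambda$ from \eqref{zhong-structure}--\eqref{zhong-structure2}, but \emph{not} on $\e,\delta$. The heart of Zhong's argument is a Caccioppoli-type inequality for the vector field $V_\e u:=(X_1^\e u,\dots,X_{2n+1}^\e u)$ and for second-order horizontal derivatives, combined with a delicate choice of test function handling the commutator $[X_i,X_j]=X_{2n+1}$ (the vertical field), exploiting that its contribution is controlled by the horizontal second derivatives. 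This uniform gradient bound makes the equation uniformly elliptic on $U'$ with H\"older-continuous frozen coefficients, so that local $C^{1,\alpha}_{\rm H}$ estimates follow (e.g.\ by the subelliptic Schauder/De Giorgi machinery in the Heisenberg group, uniformly in the parameters). Passing to the limit $\e,\delta\to 0$ and using that $u_{\e,\delta}\to u$ weakly in $W^{1,Q}_{\rm H}$ (by uniqueness of the limit, via monotonicity of the $Q$-Laplacian) gives $\|u\|_{C^{1,\alpha}_{\rm H}(U)}\le C$.

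For part~(2), the additional hypothesis $\tfrac1{\ell'}<|\nabla_{\rm H}u|<\ell'$ means that near the limit the operator is \emph{non-degenerate}: the structure function $(\delta+|\xi|^2)^{(Q-2)/2}$ is bounded above and below on the relevant range of $\xi$, so \eqref{zhong-structure} upgrades to genuine uniform ellipticity. Then differentiating the equation \eqref{zhong-e-eq} in a horizontal direction $X_k$ and testing with $X_k u_{\e,\delta}$ times a cutoff — i.e.\ the standard difference-quotient / energy argument for second derivatives — produces $\|u_{\e,\delta}\|_{W^{2,2}_{\rm H}(U)}\le C$, again uniformly in $\e,\delta$ once the gradient bound from part~(1) is in hand; the vertical commutator terms are absorbed using the horizontal second-order energy, exactly as in the Heisenberg-group $W^{2,2}$ theory. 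Letting $\e,\delta\to 0$ gives the claim for $u$. The main obstacle is precisely the uniform-in-$(\e,\delta)$ gradient estimate of part~(1): one must carry Zhong's intricate Caccioppoli-and-iteration scheme through the Riemannian-approximation family while simultaneously allowing the coefficients $A_i^{\e,\delta}(x,\xi)$ to depend on $x$ (through $\omega$ and $g^{ij}_\e$), checking at each step that the constants depend only on $\lambda,\Lambda$ in \eqref{zhong-structure}--\eqref{zhong-structure2} and on the $C^1$-norms of the coefficient functions on compact sets, and never on the regularizing parameters. The $x$-dependence introduces extra lower-order terms in every Caccioppoli estimate, bounded by $\Lambda(\delta+|\xi|^2)^{(Q-1)/2}$ via \eqref{zhong-structure2}, which must be reabsorbed by Young's inequality without spoiling the iteration.
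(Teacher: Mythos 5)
Your overall strategy coincides with the paper's: regularize via the Riemannian approximation $L_Q^{\e,\delta}$ of \eqref{zhong-e-eq}, prove estimates that are uniform in $\e,\delta$ using only the structure conditions \eqref{zhong-structure}--\eqref{zhong-structure2}, and pass to the limit by uniqueness of the Dirichlet problem. Your reading of part (2) is also essentially the paper's: the weighted second-order Caccioppoli inequality (Corollary \ref{zhong-c3}) holds with no non-degeneracy assumption, and the lower bound $|\nabla_{\rm H}u|>1/\ell'$ enters only at the very end, to remove the weight $(\delta+|\nabla_\e u^\e|^2)^{(Q-2)/2}$ from the left-hand side.

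There is, however, a genuine gap in your conclusion of part (1). After the uniform Lipschitz bound you assert that the equation becomes ``uniformly elliptic on $U'$ with H\"older-continuous frozen coefficients, so that local $C^{1,\alpha}_{\rm H}$ estimates follow by Schauder/De Giorgi machinery.'' An \emph{upper} bound on $|\nabla_\e u^\e|$ does not restore uniform ellipticity: the modulus of ellipticity of the linearized operator is $(\delta+|\nabla_\e u^\e|^2)^{(Q-2)/2}$, which (as $\delta\to0$) still degenerates wherever the gradient is small, and in part (1) no lower bound on the gradient is available. Freezing coefficients or invoking Schauder estimates therefore cannot produce the $C^{1,\alpha}$ bound. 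What is actually required --- and what the paper does --- is to show that each component $X_l^\e u^\e$ satisfies a Caccioppoli inequality on super- and sub-level sets carrying the degenerate weight (Proposition \ref{caccio-sub}, inequalities \eqref{z4.3} and \eqref{z4.3-bis}), and then to remove that weight via the dichotomy lemma at the end of the section (the analogue of \cite[Proposition~4.1]{DiBenedetto}): either the set where some component is small relative to ${\rm osc}_{B(x,2r)}|\nabla_\e u^\e|$ has small measure, in which case the weight is comparable to the constant $(\delta+\mu(2r)^2)^{(Q-2)/2}$ on the relevant ball, or one obtains the required measure decay directly. Only then does $X_l^\e u^\e$ belong to a De Giorgi class and the oscillation estimate of Theorem \ref{finale} yield H\"older continuity of the gradient. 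Moreover, within that level-set Caccioppoli inequality the $x$-dependence and the vertical derivative produce the term $I_3$ of \eqref{int3}, whose absorption requires the separate bootstrap of Lemma \ref{step-1-00}; your plan, which treats the vertical commutator as absorbable ``exactly as in the Heisenberg-group theory,'' does not account for this step.
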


This result is due to Zhong \cite{Zhong}, in the case when $g_0$ is a left invariant subRiemannian metric in $\mathbb H^n$. The proof in \cite{Zhong} breaks down with the additional dependence on $x$, in the coefficients of the equation as expressed in Proposition \ref{6.6}.  In fact,   in one of the approximations used  in \cite{Zhong}, the argument relies on the existence of explicit barrier functions, which one does not have in our setting. To deal with this issue we follow the method recently used in \cite{CCGM} where a Riemannian approximation scheme was employed to carry out the corresponding regularization for evoluationary equations. Apart from this aspect the arguments in \cite{Zhong} apply to the present setting as well. Note that the H\"older regularity of the solution $u$ is considerably simpler (see for instance \cite{CDG}).

\begin{remark} The proof in \cite{Zhong} applies to any Carnot group of step two, and likewise the conclusion of Theorem \ref{zhong-main} continues to hold in this more general setting.
\end{remark}

\subsubsection{Riemannian approximation}  Throughout the rest of the section we will assume $\delta>0$ and let $u$ denote a solution of $L_Q^0 u=0$ in $\Omega\subset \mathbb H^n$. For $\e>0$ we consider $ W^{k,p}_{\e, {\rm loc}}$ and $C^{k,\al}_\e$ to be the  Sobolev and H\"older spaces corresponding to the frame $X_1^\e,...,X_{2n+1}^\e$.
%
%
Observe that by virtue of classical elliptic theory  (see for instance \cite{LU} ) for $\delta>0$ one has that the weak solutions $u^\e\in W^{1,Q}_{\e, {\rm loc}}(\Omega)$ of \eqref{zhong-e-eq} 
are in fact smooth in $\Omega$. For a fixed ball $D\subset \subset \Omega$ and for any $\e\ge 0$, standard PDE arguments (see for instance \cite{HKM}) yield  the existence and unicity of the solution to the Dirichlet problem
\begin{equation}\label{zhong-diri}
\begin{cases}
L_{Q}^{\e,\delta}u^\e=0  \text{ in }D\\
u^\e-u \in W^{1,Q}_{\e,0}(D).
\end{cases}
\end{equation}
Although the smoothness of $u^\e$ may degenerate as $\e\to 0$ and $\delta\to 0$, we will show that the estimates on the H\"older norm of the gradient do not depend on these parameters and hence will hold uniformly in the limit. Note that in view of the Caccioppoli inequality and of the uniform bounds on the H\"older norm of $u^\e$ as $\e\to 0$ (such bounds depend only on the stability of the Poincar\'e inequality and on the doubling constants of the Riemannian Heisenberg groups $(\mathbb H^n, g_\e)$ which are stable in view of \cite{CCR}), one has that for any $K\subset \subset D$ there exists a constant $M_{K,Q}>0$ depending only on $Q, K$ such that 
\begin{equation*}
||\nabla_\e u^\e||_{L^Q(K)}\le M_{K,Q}.
\end{equation*}
The next proposition addresses the non trivial uniform bounds.
\begin{proposition}\label{zhong-prop} 
The following two properties hold:
\begin{enumerate} \item  
For every open $U\subset\subset D$ and for every $\ell >0$, there exist constants $\alpha\in (0,1), C>0$ 
such that
if
$u^\e\in W^{1,Q}_{\e, {\rm loc}}(D)\cap C^{\infty}(D)$  is the unique  solution of \eqref{zhong-diri} 
with
$||u||_{W^{1,Q}_{\rm H}( D)} <\ell$, then one has
\begin{equation*}
 ||u^\e||_{C^{1,\alpha}_\e( U)} \le C,\qquad \forall \eps>0.
\end{equation*}
\item
For every  open $U\subset \subset D$
 and for every $\ell , \ell' >0$,
there exists a constant $C>0$ 
such that
if
$u^\e\in W^{1,Q}_{\e, {\rm loc}}(D)\cap C^{\infty}(D)$ is the  unique  solution of \eqref{zhong-diri} 
with $||u||_{W^{1,Q}_{\rm H}( D)} <\ell$, and
$\tfrac{1}{\ell'}<|\nabla_\e u^\e|<\ell'$ on $U$, then 
one has
\begin{equation*}
||u^\e||_{W^{2,2}_\e(U)} 
\le C,\qquad \forall \eps>0.
\end{equation*}
\end{enumerate}
\end{proposition}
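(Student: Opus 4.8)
The plan is to run the entire regularity machinery of Zhong~\cite{Zhong}, but uniformly in the two parameters $\e$ and $\delta$, exploiting the structure conditions \eqref{zhong-structure} and \eqref{zhong-structure2}, which hold with constants $\lambda,\Lambda$ depending only on $Q$ and not on $\e,\delta$. First I would record the preliminary a-priori bounds that are already available: since $u^\e$ solves the Dirichlet problem \eqref{zhong-diri} with the same boundary datum $u$ on a fixed ball $D$, the Caccioppoli inequality for \eqref{zhong-e-eq} together with the stability of the doubling and Poincar\'e constants for $(\mathbb H^n,g_\e)$ from~\cite{CCR} gives $\|\nabla_\e u^\e\|_{L^Q(K)}\le M_{K,Q}$ uniformly in $\e$ (and $\delta\in(0,1)$); this is the starting point quoted before the statement. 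The key observation throughout is that, because $\delta>0$ and $u^\e$ is smooth, every computation Zhong performs formally is legitimate, and I only need to check that no constant produced along the way blows up as $\e\to0$ or $\delta\to0$.

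For part (1), I would reproduce the Caccioppoli-type estimates for the horizontal and vertical derivatives $X_i^\e u^\e$ of the solution, that is, the Cordes--Nirenberg / Moser iteration scheme of~\cite{Zhong} applied to the differentiated equation. The crucial point is that Zhong's argument uses only: the ellipticity \eqref{zhong-structure}, the growth \eqref{zhong-structure2} of the $x$-derivatives (here is where the genuinely new $x$-dependence enters, absent in the left-invariant case, but it is controlled by exactly the bound \eqref{zhong-structure2}), the commutator structure $[X_i,X_{n+i}]=X_{2n+1}$ (which in the Riemannian approximation becomes $[X_i^\e,X_{n+i}^\e]=\e^{-1}\e X_{2n+1}^\e$, i.e. the structure constants are $\e$-independent after the rescaling built into $X_{2n+1}^\e=\e X_{2n+1}$), and Sobolev/Poincar\'e inequalities with $\e$-stable constants. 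From this one first gets $\nabla_\e u^\e\in L^s_{\rm loc}$ for all $s$, then a Caccioppoli inequality for the vertical derivative controlled by the horizontal gradient, and finally a De Giorgi / Moser argument yielding local boundedness and then H\"older continuity of $\nabla_\e u^\e$ with exponent $\alpha=\alpha(Q)$ and constant $C=C(U,\ell,Q)$, all independent of $\e,\delta$. The place where the proof in~\cite{Zhong} "breaks down" — the construction of explicit barrier functions in one of the approximations — I would circumvent exactly as announced in the text, following~\cite{CCGM}: replace the barrier step by the Riemannian regularization itself, i.e. use the already-smooth $u^\e$ in place of the function obtained by Zhong's second approximation, so the comparison argument is no longer needed.

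For part (2), assuming in addition $\tfrac1{\ell'}<|\nabla_\e u^\e|<\ell'$ on $U$, the operator \eqref{zhong-e-eq} is uniformly (non-degenerately) elliptic on $U$ with ellipticity constants depending on $\ell',Q$ but not on $\e,\delta$ (from \eqref{zhong-structure} with $|\xi|$ bounded above and below). Differentiating the equation along $X_k^\e$ and testing with $\psi^2 X_k^\e u^\e$ for a cutoff $\psi$, I obtain the Caccioppoli inequality $\int_U \psi^2 |\nabla_\e(\nabla_\e u^\e)|^2 \le C(U,\ell,\ell',Q)$, which is precisely the $W^{2,2}_\e$ bound; the first-order term coming from \eqref{zhong-structure2} is absorbed using the $L^Q$ (hence $L^2$ on bounded $U$) bound on $\nabla_\e u^\e$ from part (1).

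The main obstacle I anticipate is bookkeeping in part (1): verifying line by line that Zhong's rather delicate iteration — particularly the interplay between horizontal and vertical Caccioppoli inequalities and the choice of test functions involving powers of $(\delta+|\nabla_\e u^\e|^2)$ — produces constants that are genuinely uniform as $\delta\to0$ and $\e\to0$ simultaneously, and that the extra terms generated by the $x$-dependence and by the frame $X_i^\e$ being $\e$-dependent are all dominated by \eqref{zhong-structure2} and the $\e$-stable functional inequalities of~\cite{CCR}. Once that uniformity is in hand, letting $\e\to0$ (with $\delta$ fixed, then $\delta\to0$) and invoking lower semicontinuity of the norms will transfer the estimates to solutions of $L_Q^0 u=0$, which is Theorem~\ref{zhong-main}.
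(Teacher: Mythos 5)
Your plan follows essentially the same route as the paper: uniform-in-$(\e,\delta)$ Caccioppoli estimates for the differentiated equation, a separate Caccioppoli inequality for the vertical derivative, Moser iteration to get the Lipschitz bound, and a De Giorgi-class argument for the H\"older continuity of the gradient, with the barrier step of \cite{Zhong} replaced by the Riemannian regularization as in \cite{CCGM}. The one place where your sketch, taken literally, would fail is part (2): after differentiating \eqref{zhong-e-eq} along $X_k^\e$ and testing with $\psi^2 X_k^\e u^\e$, the commutator $[X_1^\e,X_2^\e]=X_3$ produces terms involving $X_3 u^\e=\e^{-1}X_3^\e u^\e$, which are \emph{not} controlled by the $L^Q$ norm of $\nabla_\e u^\e$ (only $\e X_3 u^\e$ is a component of $\nabla_\e u^\e$, and the paper explicitly remarks that these terms blow up as $\e\to 0$ in the $g_\e$ metric). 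They cannot simply be ``absorbed using the $L^Q$ bound''; the paper handles them via the vertical Caccioppoli inequality (Lemma~\ref{zhong-l1}, whose right-hand side carries a compensating factor $\e^2$) combined with the pointwise bound $|X_3 u^\e|\le\sum_{i,j}|X_i^\e X_j^\e u^\e|$, which lets the Hessian term on the left reabsorb them. With that mechanism supplied, your outline matches Corollary~\ref{zhong-c3} for part (2) and the Lipschitz-plus-De Giorgi scheme for part (1); the remaining work is indeed the bookkeeping you identify, i.e.\ the bootstrap in powers of $(\delta+|\nabla_\e u^\e|^2)$ and $|X_3^\e u^\e|$ carried out in Lemmas~\ref{zhong-l2}--\ref{zhong-l3}, Theorem~\ref{bigbadcaccioppoli}, and Proposition~\ref{caccio-sub}.
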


The main regularity result Theorem \ref{zhong-main} then follows from Proposition \ref{zhong-prop}, by means of Ascoli-Arzela theorem and the uniqueness of the Dirichlet problem \eqref{zhong-diri} when $\e=0$.

The proof of Proposition \ref{zhong-prop} follows very closely the arguments in \cite{Zhong}. For the reader's convenience we reproduce them in the two sections below. For the sake of notation's simplicity, and without  any loss of generality, we will just present the proof in the case $n=1$.

\subsubsection{Uniform Lipschitz regularity} The aim of this section is to establish Lipschitz estimates that are uniform as $\e\to 0$, on a open ball $B\subset \subset D$. 

\begin{theorem}\label{lipschitzestimate}
Let  $u^\e\in W^{1,Q}_{\e, {\rm loc}}(D)\cap C^{\infty}(D)$ be the unique  solution of \eqref{zhong-diri}.   
If $B\subset  2B \subset \subset D$  then there exists $C>0$, depending only on $Q, \Lambda, \lambda$ of \eqref{zhong-structure} and \eqref{zhong-structure2}, such that
\begin{equation*}
\sup_B |\nabla_\e u^\e| \le C \left( \frac{1}{\mathcal{L}(2B)}\int_{2B} (\delta+ |\nabla_\e u^\e|^2)^{\frac{Q}{2}}  \right)^{\frac{1}{Q}},
\end{equation*}
where $2B$ denotes the ball with the same center of $B$ and twice the radius.
\end{theorem}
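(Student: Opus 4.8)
The plan is to adapt Zhong's gradient Moser iteration \cite{Zhong} to the regularized family \eqref{zhong-e-eq}, being careful that every constant produced in the argument depends only on $Q$ and on the structure constants $\lambda,\Lambda$ of \eqref{zhong-structure}--\eqref{zhong-structure2}, so that the final bound is uniform in $\e>0$ and $\delta>0$. Throughout set $V:=(\delta+|\nabla_\e u^\e|^2)^{1/2}$, and recall that for $\delta>0$ the solution $u^\e$ of \eqref{zhong-diri} is smooth by classical elliptic theory, so all the manipulations below are justified.

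First I would differentiate \eqref{zhong-e-eq}: applying $X_s^\e$ shows that $v_s:=X_s^\e u^\e$ is a weak solution of a linear equation
$$
\sum_{i,j} X_i^\e\big(\partial_{\xi_j}A_i^{\e,\delta}(x,\nabla_\e u^\e)\,X_j^\e v_s\big)
= -\sum_i X_i^\e\big(\partial_{x_s}A_i^{\e,\delta}(x,\nabla_\e u^\e)\big) + R_s ,
$$
where $R_s$ gathers the commutator terms from $[X_s^\e,X_i^\e]$. Here the step-two structure of $\mathbb{H}^n$ is used in an essential way: the only nontrivial brackets among the frame \eqref{Heis:campi} are $[X_i^\e,X_{n+i}^\e]=X_{2n+1}$, so $R_s$ involves only the single vertical derivative $Tu^\e:=X_{2n+1}u^\e$ and no higher vertical derivative. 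Testing this equation against $\phi^2 V^{2\beta}v_s$ with $\phi$ a cutoff between nested balls and $\beta\ge 0$, summing over the horizontal indices $s\le 2n$, and invoking \eqref{zhong-structure}--\eqref{zhong-structure2} together with Young's inequality, yields a Caccioppoli inequality controlling $\int \phi^2 V^{2\beta+Q-2}|X_i^\e X_j^\e u^\e|^2$ and $\int \phi^2 V^{2\beta+Q-2}|Tu^\e|^2$ by $C(\beta)\int(\phi^2+|\nabla\phi|^2)\,V^{2\beta+Q}$, modulo a further term of the same type coming from $R_s$ and again bounded by $\int\phi^2 V^{2\beta+Q-2}|Tu^\e|^2$.

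The crucial point — the heart of Zhong's proof — is therefore the \emph{reverse} bound for the vertical derivative: one must estimate $\int\phi^2 V^{2\beta+Q-2}|Tu^\e|^2$ in terms of the horizontal quantities above with a constant independent of $\e,\delta$. This is obtained by writing $Tu^\e=X_i^\e X_{n+i}^\e u^\e-X_{n+i}^\e X_i^\e u^\e$, inserting it into the integral, integrating by parts to move one horizontal derivative off $u^\e$ onto $\phi$ and onto the weight $V^{2\beta+Q-2}$, and absorbing the resulting second-derivative terms. It is exactly at the analogue of this step that the argument in \cite{Zhong} relies on an explicit comparison/barrier function, which is unavailable once the coefficients depend on $x$; I would instead run the whole scheme on the Riemannian approximations, replacing the barrier argument by the stability estimates of \cite{CCGM}, using that the doubling constants and the $(1,Q)$-Poincar\'e constants of $(\mathbb{H}^n,g_\e)$ are uniform in $\e$ by \cite{CCR} and that the ellipticity constants in \eqref{zhong-structure}--\eqref{zhong-structure2} are uniform as well. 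With both Caccioppoli inequalities secured, a Moser iteration over the exponents $2\beta+Q$ along a sequence of balls between $B$ and $2B$ produces a reverse-H\"older chain whose iteration gives $\sup_B V\le C\big(\tfrac{1}{\mathcal{L}(2B)}\int_{2B}V^Q\big)^{1/Q}$; since $V\ge|\nabla_\e u^\e|$ this is the assertion. I expect the main obstacle to be precisely carrying the vertical-derivative Caccioppoli inequality through with constants independent of $\e$ and $\delta$; once that is done, the remainder is the standard bookkeeping of a Moser iteration in a doubling space supporting a Poincar\'e inequality.
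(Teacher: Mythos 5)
Your overall architecture coincides with the one used in the paper: differentiate the regularized equation \eqref{zhong-e-eq}, derive a weighted Caccioppoli inequality for the full horizontal Hessian with a vertical remainder, control the vertical derivative by horizontal quantities, and run a Moser iteration whose constants are uniform in $\e$ and $\delta$ thanks to the stability of the doubling and Poincar\'e constants of $(\mathbb H^n,g_\e)$. However, two points that carry the real weight of the proof are either misdiagnosed or left open in your sketch.

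First, you locate the barrier-function obstruction of \cite{Zhong} at the vertical-derivative Caccioppoli step; in fact the barriers in \cite{Zhong} enter the approximation argument (passing from the nondegenerate regularized problems back to the degenerate $Q$-Laplacian), which is precisely why the paper sets up the double regularization in $\e$ and $\delta$ from the outset. The vertical estimate itself is obtained by a purely integral argument, and its uniformity in $\e$ rests on a specific compensation mechanism that your sketch does not identify: the test functions are weighted by powers of $X_3^\e u^\e=\e X_3 u^\e$, integrating by parts $X_3=\e^{-1}[X_1^\e,X_2^\e]$ produces factors $\e^{-1}$ (and $\e^{-2}$ after Young's inequality), and these are cancelled using the pointwise inequality $|X_3^\e u^\e|\le\e\sum_{i,j}|X_i^\e X_j^\e u^\e|$; see Lemma \ref{zhong-l1}, Lemma \ref{zhong-l3} and their combination via H\"older's inequality in Theorem \ref{bigbadcaccioppoli}. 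Saying ``integrate by parts and absorb'' does not by itself show that the constants remain bounded as $\e\to0$. Second, the $x$-dependence of the coefficients is not disposed of by replacing barriers: by \eqref{zhong-structure2} the terms $\partial_{x_j}A_i^{\e,\delta}$ contribute, after testing, an extra term of the form $C\int\eta^2(\delta+|\nabla_\e u^\e|^2)^{(Q+\beta+1)/2}$, whose exponent exceeds by one half that of the good term $\int(\delta+|\nabla_\e u^\e|^2)^{(Q+\beta)/2}$; this term is absent from Zhong's setting and must be reabsorbed in the Sobolev/reverse-H\"older step by an additional H\"older interpolation on a ball of measure less than one. Your plan does not account for it. Once these two points are supplied, the remaining Moser iteration is indeed the standard bookkeeping you describe.
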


The proof of this theorem is developed across several lemmata in this section.

For $\e,\delta>0$ and $i=1,2,3$ set $v_i=X_i^\e u^\e$ and observe that by differentiating \eqref{zhong-e-eq} along  $X^\e_i$, $i=1,2,3$  one has
\begin{multline}\label{zhong-d-eq1}
\sum_{i,j=1}^3 X_i^\e \bigg( A_{i,\xi_j}^{\e,\delta} (x, \nabla_\e u^\e) X_j^\e v_1\bigg) +\sum_{i=1}^3  X_i^\e \bigg( A_{i,\xi_2}^{\e,\delta} (x,\nabla_\e u^\e) X_3 u^\e  \bigg) + X_3\bigg( A_2^{\e,\delta} (x,\nabla_\e u^\e)\bigg) 
\\+\sum_{i=1}^3X_i^\e \bigg(A^{\e,\delta}_{i,x_1}(x,\nabla_\e u^\e) -\frac{x_2}{2}A^{\e,\delta}_{i,x_3}(x,\nabla_\e u^\e)  \bigg)=0; 
\end{multline}
\begin{multline}\label{zhong-d-eq2}
\sum_{i,j=1}^3 X_i^\e  \bigg( A_{i,\xi_j}^{\e,\delta}  (x, \nabla_\e u^\e) X_j^\e v_2 \bigg) -\sum_{i=1}^3 X_i^\e \bigg( A_{i,\xi_1}^{\e,\delta} (x,\nabla_\e u^\e) X_3 u^\e  \bigg) -X_3\bigg( A_1^{\e,\delta} (x,\nabla_\e u^\e)\bigg) \\+\sum_{i=1}^3X_i^\e \bigg(A^{\e,\delta}_{i,x_2}(x,\nabla_\e u^\e) +\frac{x_1}{2}A^{\e,\delta}_{i,x_3}(x,\nabla_\e u^\e)  \bigg)=0; \end{multline}
and
\begin{multline}\label{zhong-d-eq3}
\sum_{i,j=1}^3 X_i^\e \bigg(A_{i,\xi_j}^{\e,\delta}  (x, \nabla_\e u^\e) X_j^\e v_3 \bigg)
+\e\sum_{i=1}^3 X_i^\e \bigg(A^{\e,\delta}_{i,x_3}(x,\nabla_\e u^\e) \bigg)=0.
\end{multline}

\begin{remark}
Note that the terms containing $X_3$ in the equations above are not bounded as $\e\to 0$ in the $g_\e$ metric. In the following it will be crucial to obtain estimates that are stable as $\e\to 0$.
\end{remark}

The following results were originally proved for the case with no dependence of $x$, in \cite[Theorem 7]{MR2336058}, \cite[Lemma 5.1]{MR2531368} and then again in \cite{Zhong} with a more direct argument bypassing the difference quotients method. The proofs in our setting are very similar and we omit most of the details.

\begin{lemma}\label{zhong-l1}
For every $\beta \ge 0$ and $\eta\in C^{\infty}_0(B)$ one has 
\begin{multline*}
\int_B (\delta+|\nabla_\e u^\e|^2)^{\frac{Q-2}{2}} |\nabla_\e v_3|^2 |v_3|^\beta \eta^2  \d\mathcal{L} \le \bigg(\frac{2\Lambda}{\lambda (\beta+1)} +2\Lambda
\bigg) \int_B (\delta+|\nabla_\e u^\e|^2)^{\frac{Q-2}{2}} |\nabla_\e \eta|^2 |v_3|^{\beta +2} \d\mathcal{L}
\\+ 2 \e^2\Lambda \bigg(1+\frac{1}{\lambda (\beta+1)^2}\bigg) \int_B  (\delta+|\nabla_\e u^\e|^2)^{\frac{Q}{2}}  |v_3|^\beta \eta^2 \d\mathcal{L}.
\end{multline*}
\end{lemma}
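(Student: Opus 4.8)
The statement is a Caccioppoli-type inequality for the derivative $v_3 = X_3^\e u^\e$ along the "bad" direction, and the natural strategy is to test the differentiated equation \eqref{zhong-d-eq3} with an appropriate cutoff of $v_3$ weighted by a power $|v_3|^\beta$. Concretely, I would take as test function $\varphi = v_3 |v_3|^\beta \eta^2$ (suitably regularized, e.g.\ replacing $|v_3|^\beta$ by $(\delta' + v_3^2)^{\beta/2}$ and letting $\delta' \to 0$, to ensure $\varphi$ is an admissible test function in $W^{1,2}_{\e,0}$; since $u^\e$ is smooth for $\delta>0$ this is legitimate). Plugging into \eqref{zhong-d-eq3} and integrating by parts moves the $X_i^\e$ off the coefficient terms and onto $\varphi$.

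The main term produced is $\int_B A_{i,\xi_j}^{\e,\delta}(x,\nabla_\e u^\e)\, X_j^\e v_3\, X_i^\e\varphi\, d\mathcal L$. Expanding $X_i^\e\varphi = (\beta+1)|v_3|^\beta X_i^\e v_3\, \eta^2 + 2 v_3|v_3|^\beta \eta X_i^\e\eta$, the first piece gives, by the ellipticity lower bound in \eqref{zhong-structure}, a term bounded below by $\lambda(\beta+1)\int_B (\delta+|\nabla_\e u^\e|^2)^{\frac{Q-2}{2}}|\nabla_\e v_3|^2 |v_3|^\beta \eta^2$. The second piece is estimated using the upper bound in \eqref{zhong-structure} and Cauchy--Schwarz/Young with a parameter, absorbing a small multiple of $\int (\delta+|\nabla_\e u^\e|^2)^{\frac{Q-2}{2}}|\nabla_\e v_3|^2|v_3|^\beta\eta^2$ into the left side, at the cost of a term of the form $C\frac{\Lambda}{\lambda(\beta+1)}\int (\delta+|\nabla_\e u^\e|^2)^{\frac{Q-2}{2}}|\nabla_\e\eta|^2 |v_3|^{\beta+2}$. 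The remaining term from \eqref{zhong-d-eq3} is $\e\sum_i \int A^{\e,\delta}_{i,x_3}(x,\nabla_\e u^\e)\, X_i^\e\varphi\, d\mathcal L$; here I use the $x$-derivative bound \eqref{zhong-structure2}, $|A^{\e,\delta}_{i,x_3}| \le \Lambda(\delta+|\nabla_\e u^\e|^2)^{\frac{Q-1}{2}}$, together again with Young's inequality to split off $\e^2$ times the gradient-of-$v_3$ term (absorbed left) plus $\e^2$ times terms of the shape $\int (\delta+|\nabla_\e u^\e|^2)^{\frac{Q}{2}}|v_3|^\beta\eta^2$ and $\int(\delta+|\nabla_\e u^\e|^2)^{\frac{Q}{2}}|v_3|^{\beta}\,|\nabla_\e\eta|\eta |v_3|$; the latter is controlled by the former and by the $|\nabla_\e\eta|^2|v_3|^{\beta+2}$ term. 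Collecting constants, dividing through by $(\beta+1)$ where it appears, yields exactly the claimed inequality with the stated coefficients $\frac{2\Lambda}{\lambda(\beta+1)}+2\Lambda$ and $2\e^2\Lambda(1+\frac{1}{\lambda(\beta+1)^2})$.

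The point to be careful about — and what I expect to be the genuine obstacle — is ensuring that every constant in the final estimate is \emph{independent of $\e$ and $\delta$}, as the remark preceding the lemma stresses: the terms involving $X_3$ are unbounded in the $g_\e$ metric as $\e\to 0$, but they always appear multiplied by $\e$ (note $X_3^\e = \e X_3$), so the products $X_i^\e$ acting on things are uniformly controlled, and the structural bounds \eqref{zhong-structure}--\eqref{zhong-structure2} are uniform in $\e,\delta$ by construction. One must track the $\e$ factors through every integration by parts to see that they land precisely as $\e^2$ in front of the second integral on the right, and nowhere else; this bookkeeping, rather than any deep new idea, is the crux. The regularization in $v_3$ (to make $|v_3|^\beta$ differentiable near $v_3 = 0$ when $\beta$ is not an even integer) and the passage to the limit $\delta'\to 0$ by monotone/dominated convergence is routine and I would only remark on it. Since the paper explicitly says "the proofs in our setting are very similar and we omit most of the details," I would present the test-function choice, the three resulting terms, and the absorption argument, and refer to \cite{Zhong, MR2336058, MR2531368} for the remaining computations.
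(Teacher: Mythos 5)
Your proposal follows essentially the same route as the paper: the paper's (very terse) proof consists precisely of testing \eqref{zhong-d-eq3} with $\phi=\eta^2\,|X_3^\e u^\e|^{\beta}X_3^\e u^\e$ and applying Young's inequality together with the structure conditions, which is exactly your plan, including the correct identification of where the factor $\e^2$ must land via \eqref{zhong-structure2}. The regularization of $|v_3|^\beta$ for non-even $\beta$ that you mention is a routine detail the paper omits; otherwise the arguments coincide.
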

\begin{proof} Multiply both sides of 
\eqref{zhong-d-eq3} 
by  $\phi=\eta^2 |X_3^\e u^\e|^{\beta} X_3^\e u^\e$
and integrate over $B$. The result follows in a standard way from Young's inequality and from the structure conditions \eqref{zhong-structure}. \end{proof}

Note that dividing both sides of
the inequality above
by $\e^{\beta+2}$ and letting $\beta\to 0$ one recovers the Manfredi-Mingione original lemma (see for instance \cite[Lemma 3.3]{Zhong}).

\begin{lemma}\label{zhong-l2}
For every $\beta \ge 0$ and $\eta\in C^{\infty}_0(B)$ one has 
\begin{multline*}
\int_B (\delta+|\nabla_\e u^\e|^2)^{\frac{Q-2+\beta}{2}} \sum_{i,j=1}^{3} |X_i^\e X_j^\e u^\e|^2  \eta^2  \d\mathcal{L} \le C (\beta+1)^4 \int_B (\delta+|\nabla_\e u^\e|^2)^{\frac{Q-2+\beta}{2}} |X_3 u^\e|^2 \eta^2  \d\mathcal{L} \\+
C \int_B (\eta^2+|\nabla_\e \eta|^2) (\delta+|\nabla_\e u^\e|^2)^{\frac{Q+\beta}{2}}  \d\mathcal{L}
+ C \int_B  \eta^2 (\delta+|\nabla_\e u^\e|^2)^{\frac{Q+\beta+1}{2}}  \d\mathcal{L},
\end{multline*}
for some constant $C=C(\lambda, \Lambda)>0$.

\end{lemma}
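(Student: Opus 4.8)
The plan is to differentiate the equation $L_Q^{\e,\delta}u^\e = 0$, test against a suitable power of the gradient, and absorb the second derivatives using the ellipticity estimate \eqref{zhong-structure}; the point of the lemma is to isolate the full Hessian $\sum_{i,j}|X_i^\e X_j^\e u^\e|^2$ on the left, controlled by the ``bad'' direction $|X_3 u^\e|^2$, by the gradient alone, and by a term carrying one extra power of the gradient (which reflects the $x$-dependence of the coefficients, estimate \eqref{zhong-structure2}). First I would fix $i\in\{1,2,3\}$, differentiate \eqref{zhong-e-eq} along $X_i^\e$ to obtain equations \eqref{zhong-d-eq1}--\eqref{zhong-d-eq3} for $v_i = X_i^\e u^\e$, and then multiply the $i$-th differentiated equation by the test function $\phi_i = \eta^2 (\delta + |\nabla_\e u^\e|^2)^{\beta/2} v_i$ and sum over $i$, integrating over $B$. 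Integration by parts moves one $X_i^\e$ derivative off the leading term, producing a good term $\int (\delta+|\nabla_\e u^\e|^2)^{(Q-2)/2}\sum_{i,j}A_{i,\xi_j}^{\e,\delta}X_j^\e v_k X_i^\e(\eta^2(\delta+|\nabla_\e u^\e|^2)^{\beta/2}v_k)$; expanding the derivative of the weight gives the desired Hessian term (via \eqref{zhong-structure} from below) plus cross terms.

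The main work is then bookkeeping on the error terms. The terms where $X_i^\e$ hits $\eta^2$ are handled by Young's inequality, giving the $\int(\eta^2+|\nabla_\e\eta|^2)(\delta+|\nabla_\e u^\e|^2)^{(Q+\beta)/2}$ contribution. The terms coming from $X_3$-derivatives appearing in \eqref{zhong-d-eq1}--\eqref{zhong-d-eq2} (the commutator terms $[X_1^\e,X_2^\e]=X_3$, modulo the $\e$ scaling) are precisely what forces the $(\beta+1)^4\int(\delta+|\nabla_\e u^\e|^2)^{(Q-2+\beta)/2}|X_3u^\e|^2\eta^2$ term — here one uses Lemma~\ref{zhong-l1} to bound the contributions involving $\nabla_\e v_3$, and the polynomial dependence on $\beta$ is tracked carefully through repeated applications of Young's inequality with $\beta$-dependent splitting constants. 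The terms from $\partial_{x_j}A_i^{\e,\delta}$, bounded by \eqref{zhong-structure2} as $\Lambda(\delta+|\xi|^2)^{(Q-1)/2}$, after integration by parts and Young's inequality against the Hessian term, contribute the $\int\eta^2(\delta+|\nabla_\e u^\e|^2)^{(Q+\beta+1)/2}$ piece — this is the genuinely new term relative to Zhong's left-invariant setting, and it is the reason the statement carries one extra half-power of the gradient compared to the $x$-independent case.

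I expect the main obstacle to be the careful absorption argument: the full Hessian term appears on both sides (once as the quantity we want to estimate, and again, with a small constant, after applying Young's inequality to the cross terms and the $X_3$-terms), so one must choose the Young constants small enough — uniformly in $\e$ and $\delta$ but with controlled polynomial blow-up in $\beta$ — to move all such terms to the left. Tracking the exact power $(\beta+1)^4$ requires being disciplined about which estimates cost a factor of $(\beta+1)$ and which cost $(\beta+1)^2$; since the paper explicitly says ``The proofs in our setting are very similar and we omit most of the details,'' I would present this as a sketch: set up the test function, indicate the integration by parts, point to \eqref{zhong-structure}, \eqref{zhong-structure2} and Lemma~\ref{zhong-l1} for the three families of error terms, and note that the $\e$-uniformity is automatic because all constants in \eqref{zhong-structure}--\eqref{zhong-structure2} depend only on $Q$. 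Crucially one must check that the $\e X_3$ factors in \eqref{zhong-d-eq1}--\eqref{zhong-d-eq3} only help (they come with a positive power of $\e \le 1$), so no estimate degenerates in the limit.
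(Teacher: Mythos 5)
Your setup is the same as the paper's: test the three differentiated equations \eqref{zhong-d-eq1}--\eqref{zhong-d-eq3} with $\phi=\eta^2(\delta+|\nabla_\e u^\e|^2)^{\beta/2}v_k$, sum over $k$, bound the leading term from below by the Hessian via \eqref{zhong-structure}, and identify the $\partial_{x}A$ contributions (via \eqref{zhong-structure2}) as the source of the extra half-power term $\int\eta^2(\delta+|\nabla_\e u^\e|^2)^{(Q+\beta+1)/2}$. That part is correct. The gap is in your treatment of the $X_3$-terms. First, your claim that the $\e$-factors ``only help'' is backwards for the dangerous terms: \eqref{zhong-d-eq1} and \eqref{zhong-d-eq2} contain the \emph{bare} Reeb derivative $X_3=\e^{-1}X_3^\e$ (in the terms $A^{\e,\delta}_{i,\xi_2}X_3u^\e$ and $X_3(A_2^{\e,\delta})$), and the paper explicitly flags these as unbounded in the $g_\e$ metric as $\e\to0$; only the term in \eqref{zhong-d-eq3} carries a helpful factor of $\e$. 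Second, the paper does not use Lemma~\ref{zhong-l1} here at all, and invoking it the way you propose does not close. Lemma~\ref{zhong-l1} carries the weight $|v_3|^\beta$, not $(\delta+|\nabla_\e u^\e|^2)^{\beta/2}$, so it does not directly apply; and even a $\beta$-weighted variant of that Caccioppoli estimate, after dividing by $\e^2$ and using $\e^{-2}|v_3|^2=|X_3u^\e|^2$, returns a term of the form $\int(\delta+|\nabla_\e u^\e|^2)^{(Q-2+\beta)/2}|\nabla_\e\eta|^2|X_3u^\e|^2$, with $|\nabla_\e\eta|^2$ where the statement requires $\eta^2$. That mismatch is not cosmetic: in the proof of Theorem~\ref{bigbadcaccioppoli} the term $\int(\delta+|\nabla_\e u^\e|^2)^{(Q-2+\beta)/2}|X_3u^\e|^2\eta^2$ is processed by H\"older against $\eta^{\beta+2}$ powers via Corollary~\ref{zhong-l3.1}, and that step fails with $|\nabla_\e\eta|^2$ in place of $\eta^2$.

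What the paper actually does with these terms (its $\mathcal B$) is integrate by parts: since $X_3$ commutes with each $X_k^\e$, one writes $X_3 v_1=X_1^\e X_3u^\e$ and $X_3 X_k^\e u^\e=X_k^\e X_3u^\e$, then moves the $X_k^\e$ derivative onto $A^{\e,\delta}$, $\eta^2$, and the weight. After this, only \emph{undifferentiated} factors of $X_3u^\e$ survive, multiplied by Hessian factors and gradients of the cutoff; Young's inequality with a $\beta$-dependent splitting parameter then absorbs the Hessian into the left-hand side and leaves exactly the term $C(\beta+1)^4\int(\delta+|\nabla_\e u^\e|^2)^{(Q-2+\beta)/2}|X_3u^\e|^2\eta^2$. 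You should replace the appeal to Lemma~\ref{zhong-l1} by this commutation-plus-integration-by-parts step; with that correction the rest of your bookkeeping (cutoff terms by Young, $\partial_xA$ terms giving the $(Q+\beta+1)/2$ power, uniformity in $\e$ and $\delta$) goes through as you describe.
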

\begin{proof} Following the arguments in \cite{Zhong}, we multiply both sides of  \eqref{zhong-d-eq1}, \eqref{zhong-d-eq2} and \eqref{zhong-d-eq3}  by  $\phi=\eta^2  (\delta+|\nabla_\e u^\e|^2)^{\frac{\beta}{2}} v_i $ for $i=1,2,3$
and integrate over $B$ to obtain
\begin{multline*} \int_B \sum_{i,j=1}^3 A_{i,\xi_j}^{\e,\delta} (x, \nabla_\e u^\e) X_j^\e v_1 X_i^\e (\eta^2  (\delta+|\nabla_\e u^\e|^2)^{\frac{\beta}{2}} v_1 )    \d\mathcal{L}
\\+\int_B \sum_{i=1}^3   A_{i,\xi_2}^{\e,\delta}(x,\nabla_\e u^\e) X_3 u^\e   X_i^\e(\eta^2  (\delta+|\nabla_\e u^\e|^2)^{\frac{\beta}{2}} v_1 )    \d\mathcal{L}\\ + 
\int_B A_2^{\e,\delta} (x,\nabla_\e u^\e)X_3 (\eta^2  (\delta+|\nabla_\e u^\e|^2)^{\frac{\beta}{2}} v_1 )  \d\mathcal{L}
\\+\sum_{i=1}^3 \big[ A^{\e,\delta}_{i,x_1}(x,\nabla_\e u^\e) -\frac{x_2}{2}A^{\e,\delta}_{i,x_3}(x,\nabla_\e u^\e)  \big]  X_i^\e (\eta^2  (\delta+|\nabla_\e u^\e|^2)^{\frac{\beta}{2}} v_1 )\d\mathcal{L}=0; 
\end{multline*}

\begin{multline*} \int_B \sum_{i,j=1}^3 A_{i,\xi_j}^{\e,\delta} (x, \nabla_\e u^\e) X_j^\e v_2 X_i^\e (\eta^2  (\delta+|\nabla_\e u^\e|^2)^{\frac{\beta}{2}} v_2 )    \d\mathcal{L}
\\-\int_B \sum_{i=1}^3   A_{i,\xi_1}^{\e,\delta}(x,\nabla_\e u^\e) X_3 u^\e   X_i^\e(\eta^2  (\delta+|\nabla_\e u^\e|^2)^{\frac{\beta}{2}} v_2 )    \d\mathcal{L}\\ -
\int_B A_1^{\e,\delta} (x,\nabla_\e u^\e)X_3 (\eta^2  (\delta+|\nabla_\e u^\e|^2)^{\frac{\beta}{2}} v_2 )  \d\mathcal{L}
\\+\sum_{i=1}^3 \big[ A^{\e,\delta}_{i,x_2}(x,\nabla_\e u^\e) +\frac{x_1}{2}A^{\e,\delta}_{i,x_3}(x,\nabla_\e u^\e)  \big]  X_i^\e (\eta^2  (\delta+|\nabla_\e u^\e|^2)^{\frac{\beta}{2}} v_2 )\d\mathcal{L}=0; 
\end{multline*}
and
\begin{multline*} \int_B\sum_{i,j=1}^3 A_{i,\xi_j}^{\e,\delta} (x, \nabla_\e u^\e) X_j^\e v_3X_i^\e (\eta^2  (\delta+|\nabla_\e u^\e|^2)^{\frac{\beta}{2}} v_3 )    \d\mathcal{L}
\\
+\e\sum_{i=1}^3 A^{\e,\delta}_{i,x_3}(x,\nabla_\e u^\e) X_i^\e (\eta^2  (\delta+|\nabla_\e u^\e|^2)^{\frac{\beta}{2}} v_3 )    \d\mathcal{L}
=0.
\end{multline*}
Combining the previous identities we have
\begin{multline}\label{1.21}
 \sum_{i,j,k=1}^3\int_B A_{i,\xi_j}^{\e,\delta} (x, \nabla_\e u^\e) X_j^\e v_k X_i^\e ( (\delta+|\nabla_\e u^\e|^2)^{\frac{\beta}{2}} v_k )   \eta^2  \d\mathcal{L}
\\= -2 \sum_{i,j,k=1}^3 \int_B  \eta A_{i,\xi_j}^{\e,\delta} (x, \nabla_\e u^\e) X_j^\e v_k   X_i^\e \eta  (\delta+|\nabla_\e u^\e|^2)^{\frac{\beta}{2}} v_k   \d\mathcal{L}- \mathcal A - \mathcal B - \mathcal C,
\end{multline}
where
\begin{multline*}\mathcal A= \int_B \sum_{i=1}^3   A_{i,\xi_2}^{\e,\delta}(x,\nabla_\e u^\e) X_3 u^\e   X_i^\e(\eta^2  (\delta+|\nabla_\e u^\e|^2)^{\frac{\beta}{2}} v_1 )    \d\mathcal{L} 
\\-\int_B \sum_{i=1}^3   A_{i,\xi_1}^{\e,\delta}(x,\nabla_\e u^\e) X_3 u^\e   X_i^\e(\eta^2  (\delta+|\nabla_\e u^\e|^2)^{\frac{\beta}{2}} v_2 )    \d\mathcal{L},
\end{multline*}
\begin{multline*}
\mathcal B= \int_B A_2^{\e,\delta} (x,\nabla_\e u^\e)X_3 (\eta^2  (\delta+|\nabla_\e u^\e|^2)^{\frac{\beta}{2}} v_1 )  \d\mathcal{L}
\\ -
\int_B A_1^{\e,\delta} (x,\nabla_\e u^\e)X_3 (\eta^2  (\delta+|\nabla_\e u^\e|^2)^{\frac{\beta}{2}} v_2 )  \d\mathcal{L},
\end{multline*}
and 
\begin{multline*}
\mathcal C= \sum_{i=1}^3 \int_B \big[ A^{\e,\delta}_{i,x_1}(x,\nabla_\e u^\e) -\frac{x_2}{2}A^{\e,\delta}_{i,x_3}(x,\nabla_\e u^\e)  \big]  X_i^\e (\eta^2  (\delta+|\nabla_\e u^\e|^2)^{\frac{\beta}{2}} v_2 )\d\mathcal{L}
\\+ \sum_{i=1}^3\int_B  \big[ A^{\e,\delta}_{i,x_2}(x,\nabla_\e u^\e) +\frac{x_1}{2}A^{\e,\delta}_{i,x_3}(x,\nabla_\e u^\e)  \big]  X_i^\e (\eta^2  (\delta+|\nabla_\e u^\e|^2)^{\frac{\beta}{2}} v_2 )\d\mathcal{L}
\\+\e\sum_{i=1}^3 \int_B A^{\e,\delta}_{i,x_3}(x,\nabla_\e u^\e) X_i^\e (\eta^2  (\delta+|\nabla_\e u^\e|^2)^{\frac{\beta}{2}} v_3 )    \d\mathcal{L}.
\end{multline*}

In view of the structure conditions \eqref{zhong-structure} one has
that in the left hand side of \eqref{1.21} the following inequalities hold
\begin{multline*}
\int_B (\delta+|\nabla_\e u^\e|^2)^{\frac{Q-2+\beta}{2}} \sum_{i,j=1}^{3} |X_i^\e X_j^\e u^\e|^2  \eta^2  \d\mathcal{L} \\ \le C\sum_{i,j,k=1}^3\int_B A_{i,\xi_j}^{\e,\delta} (x, \nabla_\e u^\e) X_j^\e v_k X_i^\e ( (\delta+|\nabla_\e u^\e|^2)^{\frac{\beta}{2}} v_k )   \eta^2  \d\mathcal{L}
\end{multline*}
and for all $\tau>0$,
\begin{multline*}
\sum_{i,j,k=1}^3 \int_B  \eta A_{i,\xi_j}^{\e,\delta} (x, \nabla_\e u^\e) X_j^\e v_k   X_i^\e \eta  (\delta+|\nabla_\e u^\e|^2)^{\frac{\beta}{2}} v_k   \d\mathcal{L} \\
\le \int_B (\delta+|\nabla_\e u^\e|^2)^{\frac{Q-2+\beta}{2}} \sum_{i,j=1}^{3} |X_i^\e X_j^\e u^\e|^2  \eta |\nabla_\e \eta| |\nabla_\e u^\e|  \d\mathcal{L} 
\\
\le \tau \int_B (\delta+|\nabla_\e u^\e|^2)^{\frac{Q-2+\beta}{2}} \sum_{i,j=1}^{3} |X_i^\e X_j^\e u^\e|^2  \eta^2  \d\mathcal{L} + \tau^{-1} \int_B (\delta+|\nabla_\e u^\e|^2)^{\frac{Q+\beta}{2}} |\nabla_\e \eta|^2  \d\mathcal{L}.
\end{multline*}

Next we estimate $\mathcal A, \mathcal B$ and $\mathcal C$. Using the structure conditions \eqref{zhong-structure} we obtain
\begin{multline*} |\mathcal A|  \le C\int_B   (\delta+|\nabla_\e u^\e|^2)^{\frac{Q-1+\beta}{2}}  |X_3u| \eta |\nabla_\e \eta| \d\mathcal{L} \\+ C(\beta+1) \int_B   (\delta+|\nabla_\e u^\e|^2)^{\frac{Q-2+\beta}{2}} |X_3 u|   \sum_{i,j=1}^{3} |X_i^\e X_j^\e u^\e|^2  \eta^2 \d\mathcal{L}.
\end{multline*}
For $\mathcal B$ we integrate by parts and obtain
\begin{multline*}
\mathcal B=  2\int_B A_2^{\e,\delta} (x,\nabla_\e u^\e) \eta X_3 \eta (\delta+|\nabla_\e u^\e|^2)^{\frac{\beta}{2}} v_1   \d\mathcal{L} +\beta \int_B A_2^{\e,\delta} (x,\nabla_\e u^\e)\eta^2  (\delta+|\nabla_\e u^\e|^2)^{\frac{\beta-2}{2}}  X_3 X_k^\e u^\e X_k^\e u^\e v_1  \d\mathcal{L}
\\+\int_B A_2^{\e,\delta} (x,\nabla_\e u^\e) \eta^2  (\delta+|\nabla_\e u^\e|^2)^{\frac{\beta}{2}}X_3 v_1   \d\mathcal{L}
 -2 \int_B A_1^{\e,\delta} (x,\nabla_\e u^\e)\eta X_3 \eta  (\delta+|\nabla_\e u^\e|^2)^{\frac{\beta}{2}} v_2  \d\mathcal{L} \\-\beta \int_B A_1^{\e,\delta} (x,\nabla_\e u^\e) \eta^2  (\delta+|\nabla_\e u^\e|^2)^{\frac{\beta-2}{2}} X_3 X_k^\e u^\e X_k^\e u^\e v_2 )  \d\mathcal{L}- \int_B A_1^{\e,\delta} (x,\nabla_\e u^\e) \eta^2  (\delta+|\nabla_\e u^\e|^2)^{\frac{\beta}{2}} X_3 v_2  \d\mathcal{L}
 \\=2\int_B A_2^{\e,\delta} (x,\nabla_\e u^\e) \eta X_3 \eta (\delta+|\nabla_\e u^\e|^2)^{\frac{\beta}{2}} v_1   \d\mathcal{L} -\beta \int_B X_k^\e \bigg(A_2^{\e,\delta} (x,\nabla_\e u^\e)\eta^2  (\delta+|\nabla_\e u^\e|^2)^{\frac{\beta-2}{2}}   X_k^\e u^\e v_1\bigg) X_3  u^\e  \d\mathcal{L}
\\-\int_B X_1^{\e}\bigg(A_2^{\e,\delta} (x,\nabla_\e u^\e) \eta^2  (\delta+|\nabla_\e u^\e|^2)^{\frac{\beta}{2}}\bigg) X_3 u^\e   \d\mathcal{L}
 -2 \int_B A_1^{\e,\delta} (x,\nabla_\e u^\e)\eta X_3 \eta  (\delta+|\nabla_\e u^\e|^2)^{\frac{\beta}{2}} v_2  \d\mathcal{L} \\+\beta \int_B X_k^\e \bigg(A_1^{\e,\delta} (x,\nabla_\e u^\e) \eta^2  (\delta+|\nabla_\e u^\e|^2)^{\frac{\beta-2}{2}}  X_k^\e u^\e v_2 \bigg) X_3  u^\e \d\mathcal{L} +\int_B X_2^\e \bigg(A_1^{\e,\delta} (x,\nabla_\e u^\e) \eta^2  (\delta+|\nabla_\e u^\e|^2)^{\frac{\beta}{2}} \bigg)X_3 u^\e \d\mathcal{L}.
\end{multline*}
The structure conditions \eqref{zhong-structure} then yield
\begin{multline*}
|\mathcal B| \le C(\beta+1)^2 \int_B  (\delta+|\nabla_\e u^\e|^2)^{\frac{Q-2+\beta}{2}}  |X_3 u^\e|  \sum_{i,j=1}^{3} |X_i^\e X_j^\e u^\e|  \eta^2 \d\mathcal{L}\\ +C(\beta+1)
\int_B (\delta+|\nabla_\e u^\e|^2)^{\frac{Q-1+\beta}{2}}  |X_3 u^\e|  \eta |\nabla_\e \eta|  \d\mathcal{L}
\\+C\int_B (\delta+|\nabla_\e u^\e|^2)^{\frac{Q+\beta}{2}}  \eta |X_3  \eta|  \d\mathcal{L}.
\end{multline*}
Regarding $\mathcal C$, the structure conditions immediately imply
\begin{multline*}
|\mathcal C|\le C\int_B (\delta+|\nabla_\e u^\e|^2)^{\frac{Q+\beta}{2}}  \eta |\nabla_\e  \eta|  \d\mathcal{L} + C(\beta+1) \int_B  (\delta+|\nabla_\e u^\e|^2)^{\frac{Q-1+\beta}{2}} \sum_{i,j=1}^{3} |X_i^\e X_j^\e u^\e|  \eta^2 \d\mathcal{L}.
\end{multline*}
The proof  of the lemma now follows immediately from the previous inequalities and from Young inequality.
 \end{proof}

The next step provides a crucial reverse H\"older-type inequality.
\begin{lemma}\label{zhong-l3}
For every $\beta \ge 2$ and $\eta\in C^{\infty}_0(B)$ one has 
\begin{multline*}
\int_B (\delta+|\nabla_\e u^\e|^2)^{\frac{Q-2}{2}} |X_3^\e u^\e|^\beta \sum_{i,j=1}^{3}  |X_i^\e X_j^\e u^\e|^2  \eta^{\beta+2}  \d\mathcal{L}  
\\  \le C \  (\beta+1)^2 ||\nabla_\e \eta||_{L^\infty(B)}^2 \bigg( \e^2\int_B
 (\delta+|\nabla_\e u^\e|^2)^{\frac{Q}{2}} |X_3^\e u^\e|^{\beta-2} \sum_{i,j=1}^{3}  |X_i^\e X_j^\e u^\e|^2 ( \eta^{\beta+2} +\eta^\beta)   \d\mathcal{L}
   \\+  \e^\beta\int_B (\delta+|\nabla_\e u^\e|^2)^{\frac{Q+\beta}{2}} \eta^{\beta}  d\mathcal{L} \bigg).
\end{multline*}
\end{lemma}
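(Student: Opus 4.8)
The plan is to adapt, essentially verbatim, the reverse Hölder-type step of Zhong's scheme \cite{Zhong}, the only genuinely new feature being the presence of the $x$-dependent lower-order terms of the regularized operator \eqref{zhong-e-eq}, which are controlled by the structure bound \eqref{zhong-structure2}. As in Lemma~\ref{zhong-l1} and Lemma~\ref{zhong-l2}, everything is obtained by testing the system \eqref{zhong-d-eq1}--\eqref{zhong-d-eq3} satisfied by the horizontal derivatives $v_i=X_i^\e u^\e$ against a suitable test function; the point here is the choice of weight, which carries the power $|v_3|^{\beta}$ of the ``bad'' (vertical) derivative $v_3=X_3^\e u^\e$.

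\emph{Choice of test function and the principal term.} For $k=1,2,3$ I would multiply the $k$-th equation among \eqref{zhong-d-eq1}--\eqref{zhong-d-eq3} by $\phi_k:=\eta^{\beta+2}\,|v_3|^{\beta}\,v_k$, integrate over $B$, and sum. After an integration by parts the principal part is $-\int_B\sum_{i,j,k}A_{i,\xi_j}^{\e,\delta}(x,\nabla_\e u^\e)\,X_j^\e v_k\,X_i^\e\phi_k\,\d\mathcal L$; isolating the summand in which both derivatives fall on $v_k$ and invoking the ellipticity lower bound in \eqref{zhong-structure} produces a term bounded below by $\lambda\int_B(\delta+|\nabla_\e u^\e|^2)^{\frac{Q-2}{2}}|v_3|^\beta\sum_{i,j}|X_i^\e X_j^\e u^\e|^2\eta^{\beta+2}\,\d\mathcal L$, i.e. the left-hand side of the lemma (using that, in $\mathbb H^1$, $\sum_{i,j}|X_i^\e v_j|^2$ is comparable to $\sum_{i,j}|X_i^\e X_j^\e u^\e|^2$).

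\emph{The error terms.} Every remaining term carries either a derivative of the cutoff $\eta$; a derivative of the weight $|v_3|^\beta$, producing the factor $\beta|v_3|^{\beta-1}X_i^\e v_3$ — this is why $\beta\ge2$ is imposed, so that $|v_3|^{\beta-2}$ appears with a non-negative power on the right; a first-order factor $v_k$ with $|v_k|\le(\delta+|\nabla_\e u^\e|^2)^{1/2}$; or one of the explicit coefficients $A_{i,x_l}^{\e,\delta}$, bounded via \eqref{zhong-structure2}, the terms descending from \eqref{zhong-d-eq3} moreover carrying the explicit prefactor $\e$. Each is treated by Cauchy--Schwarz followed by Young's inequality with a small parameter $\tau$: one portion is absorbed into the left-hand side, and the remainder is of the form of one of the two terms on the right. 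The subtle point — exactly as in \cite{Zhong} — is that the cross terms arising when the derivative of $|v_3|^\beta$ meets the system \eqref{zhong-d-eq1}--\eqref{zhong-d-eq3} must be regrouped, using the commutator identity $[X_1,X_2]=X_3$ (equivalently the antisymmetry built into \eqref{Heis:campi}) together with an integration by parts in the $X_3$-direction, so that the contributions which are not a priori controlled uniformly in $\e$ get re-expressed with two extra powers of $\e$; it is this manipulation that generates the factors $\e^2$ and $\e^\beta$ on the right and makes the whole estimate stable as $\e\to0$.

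\emph{Main obstacle.} The delicate part is precisely the bookkeeping of the powers of $\e$ in the $X_3$-terms, which — as recorded in the Remark following \eqref{zhong-d-eq3} — are unbounded in the $g_\e$-metric; keeping track of them through all the integrations by parts is the technical core, and is inherited directly from Zhong's argument. The $x$-dependence, by contrast, contributes only terms already handled by \eqref{zhong-structure2} and does not interfere with the $\e$-uniformity, so its inclusion is routine.
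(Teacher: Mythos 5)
Your plan follows the paper's proof essentially step for step: differentiate the regularized equation along each horizontal direction, test with $\phi=\eta^{\beta+2}|X_3^\e u^\e|^{\beta}X_h^\e u^\e$, extract the principal term from the ellipticity bound in \eqref{zhong-structure}, and absorb the errors by Young's inequality, with the $x$-dependent terms controlled by \eqref{zhong-structure2} exactly as you say. The one place where your description is too vague to be executed as written is the source of the factors $\e^2$ and $\e^\beta$: in the paper these do not come from ``an integration by parts in the $X_3$-direction'' but from two concrete ingredients used in tandem. First, the pointwise commutator bound $|X_3^\e u^\e|\le \e\sum_{i,j=1}^{3}|X_i^\e X_j^\e u^\e|$ (since $X_3^\e u^\e=\e[X_1^\e,X_2^\e]u^\e$), which converts each surplus power of $|v_3|$ into a power of $\e$ times a horizontal second derivative; this is what turns the $|v_3|^{\beta+2}$- and $|v_3|^{\beta}$-weighted remainders into the $\e^2|v_3|^{\beta-2}\sum|X_i^\e X_j^\e u^\e|^2$ terms of the statement. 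Second, the term carrying the full gradient $\nabla_\e X_3^\e u^\e$ (which appears with a dangerous prefactor $\e^{-1}$ after rewriting $X_3=\e^{-1}X_3^\e$) must be controlled by Lemma \ref{zhong-l1}, whose right-hand side itself gains a factor $\e^2$; without invoking that lemma this term cannot be absorbed. Finally, the $\e^\beta$ in the last term arises from a Young inequality with exponents $\beta/(\beta-2)$ and $\beta/2$ applied to $(\delta+|\nabla_\e u^\e|^2)^{\frac{Q+2}{2}}|X_3^\e u^\e|^{\beta-2}$, which is also where the restriction $\beta\ge 2$ is genuinely used. With these three points supplied, your outline coincides with the paper's argument.
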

Note  that dividing by $\e^{\beta}$ and letting $\e\to 0$ one recovers Zhong's estimate.
\begin{proof} Differentiating \eqref{zhong-e-eq} along $X_1^\e$, recalling that $[X_1^\e,X_2^\e]=X_3$,  and multiplying by a test function $\phi \in C^{\infty}_0(B)$ yields
\begin{equation}\label{zhong-3.6}
\int_B X_1^\e A_i^{\e, \delta}(x,\nabla_\e u^\e) X_i^\e \phi  \d\mathcal{L} = \int_B X_3 A_2^{\e,\delta}(x,\nabla_\e u^\e) \phi  \d\mathcal{L}.
\end{equation}
Next set $\phi=\eta^{\beta+2} |X_3^\e u^\e|^\beta X_1^\e u^\e$ in the previous identity to obtain in the left-hand side
\begin{multline*}
\int_B X_1^\e A_i^{\e, \delta}(x,\nabla_\e u^\e) X_i^\e \phi  \d\mathcal{L} =
\int_B A_{i,\xi_j}^{\e,\delta}(x,\nabla_\e u^\e)  X_1^\e X_j^\e u^\e X_1^\e X_i^\e u^\e\eta^{\beta+2} |X_3^\e u^\e|^\beta  \d\mathcal{L}
\\ -\int_B   X_1^\e A_2^{\e,\delta}(x,\nabla_\e u^\e) 
 X_3u^\e \eta^{\beta+2} |X_3^\e u^\e|^\beta  \d\mathcal{L}  \\ + \beta \int_B X_1^\e A_i^{\e, \delta}(x,\nabla_\e u^\e) X_i^\e X_3^\e u^\e |X_3^\e u^\e|^{\beta-2}  X_3^\e u^\e  X_1^\e u^\e \eta^{\beta+2}  \d\mathcal{L} \\ + (\beta+2) \int_B X_1^\e A_i^{\e, \delta}(x,\nabla_\e u^\e) X_i^\e \eta  |X_3^\e u^\e|^{\beta} X_1^\e u^\e \eta^{\beta+1}  \d\mathcal{L}.
\end{multline*}
Substituting in \eqref{zhong-3.6} and using the structure conditions \eqref{zhong-structure} one obtains
\begin{multline}\label{zhong-1.22}
\int_B (\delta+|\nabla_\e u^\e|^2)^{\frac{Q-2}{2}} |X_3^\e u^\e|^\beta \sum_{j=1}^{3}  |X_1^\e X_j^\e u^\e|^2  \eta^{\beta+2}  \d\mathcal{L}  
\\ \le \int_B   X_1^\e A_2^{\e,\delta}(x,\nabla_\e u^\e) 
X_3u^\e  \eta^{\beta+2} |X_3^\e u^\e|^\beta  \d\mathcal{L}  
\\ + \beta \int_B |X_1^\e A_i^{\e, \delta}(x,\nabla_\e u^\e) X_i^\e X_3^\e u^\e |X_3^\e u^\e|^{\beta-1} X_1^\e u^\e \eta^{\beta+2}|  \d\mathcal{L} \\ + (\beta+2) \int_B |X_1^\e A_i^{\e, \delta}(x,\nabla_\e u^\e) X_i^\e \eta  |X_3^\e u^\e|^{\beta} X_1^\e u^\e \eta^{\beta+1} |  \d\mathcal{L}
\\+ \int_B |X_3 A_2^{\e,\delta}(x,\nabla_\e u^\e) \eta^{\beta+2} |X_3^\e u^\e|^\beta X_1^\e u^\e | \d\mathcal{L}
\end{multline}
\begin{multline*}
\le 
 \sum_{h,k=1}^3 \bigg|\int_B   X_h^\e A_k^\e(x,\nabla_\e u^\e) 
X_3u^\e  \eta^{\beta+2} |X_3^\e u^\e|^\beta  \d\mathcal{L}  \bigg|
\\ + \beta \int_B |\nabla_\e A_i^{\e, \delta}(x,\nabla_\e u^\e) ||\nabla_\e X_3^\e u^\e | |X_3^\e u^\e|^{\beta-1} |\nabla_\e  u^\e| \eta^{\beta+2}  \d\mathcal{L} \\ + (\beta+2) \int_B |\nabla_\e A_i^{\e, \delta}(x,\nabla_\e u^\e)| |\nabla_\e \eta |  X_3^\e u^\e|^{\beta} |\nabla_\e  u^\e| \eta^{\beta+1}  \d\mathcal{L}
\\+ \int_B\sum_{j=1}^2  |X_3 A_j^{\e,\delta} (x,\nabla_\e u^\e) | \eta^{\beta+2} |X_3^\e u^\e|^\beta |\nabla_\e u^\e | \d\mathcal{L}
= I_1+I_2+I_3+I_4.
\end{multline*}
In a similar fashion, differentiating  \eqref{zhong-e-eq} along $X_2^\e$ and $X_3^\e$, and using the test function 
$\phi=\eta^{\beta+2} |X_3^\e u^\e|^\beta X_h^\e u^\e$ with $h=2,3$, one arrives at a similar estimate for $X_h^\e X_j^\e u^\e$ in the left-hand side. The combination of such estimate and \eqref{zhong-1.22} yields
\begin{equation*}
\int_B (\delta+|\nabla_\e u^\e|^2)^{\frac{Q-2}{2}} |X_3^\e u^\e|^\beta \sum_{i,j=1}^{3}  |X_i^\e X_j^\e u^\e|^2  \eta^{\beta+2}  \d\mathcal{L}   \le I_1+I_2+I_3+I_4.
\end{equation*}
Next, for any $\tau>0$,  we estimate each single component $|I_k|$ in the following way
\begin{multline}\label{mustbelikethis}
|I_h|\le \tau \int_B (\delta+|\nabla_\e u^\e|^2)^{\frac{Q-2}{2}} |X_3^\e u^\e|^\beta \sum_{i,j=1}^{3}  |X_i^\e X_j^\e u^\e|^2  \eta^{\beta+2}  \d\mathcal{L}  \\+ \e^2 \frac{C(\beta+1)^2 ||\nabla_\e \eta||_{L^\infty(B)}^2}{\tau}
\int_B
 (\delta+|\nabla_\e u^\e|^2)^{\frac{Q}{2}} |X_3^\e u^\e|^{\beta-2} \sum_{i,j=1}^{3}  |X_i^\e X_j^\e u^\e|^2 ( \eta^{\beta+2} +\eta^\beta)   \d\mathcal{L}
   \\+ \tau^{-1}\e^2 \int_B (\delta+|\nabla_\e u^\e|^2)^{\frac{Q+\beta}{2}} \eta^{\beta}  d\mathcal{L} ,
\end{multline}
from which the conclusion will follow immediately. We begin by looking at $I_1$. 
%
\begin{itemize} \item {\bf Estimate of $I_1$.} Proceeding as in \cite{Zhong} we integrate by parts to obtain
\begin{multline}\label{I}  \int_B   X_h^\e A_k^{\e,\delta} (x,\nabla_\e u^\e) 
X_3u^\e  \eta^{\beta+2} |X_3^\e u^\e|^\beta  \d\mathcal{L}  = - \int_B   A_k^{\e,\delta} (x,\nabla_\e u^\e) 
X_h^\e \bigg( X_3u^\e  \eta^{\beta+2} |X_3^\e u^\e|^\beta\bigg)  \d\mathcal{L}  \\
\\
= -\e^{-1} (\beta+1)  \int_B   A_k^{\e,\delta} (x,\nabla_\e u^\e)\eta^{\beta+2} |X_3^\e u^\e|^\beta X_3^\e X_h^\e u^\e  \d\mathcal{L} 
\\-(\beta+2) \int_B A_k^{\e,\delta} (x,\nabla_\e u^\e)\eta^{\beta+1} X_h^\e \eta |X_3^\e u^\e|^\beta X_3u^\e  \d\mathcal{L}= \mathcal I + \mathcal {II}.
\end{multline}
\begin{itemize} \item{\bf Estimate of  $\mathcal I$.} Using Young inequality one has
\begin{multline*}
\left| \e^{-1} (\beta+1)  \int_B   A_k^{\e,\delta} (x,\nabla_\e u^\e)\eta^{\beta+2} |X_3^\e u^\e|^\beta X_3^\e X_h^\e u^\e  \d\mathcal{L}\right| \\
\le  \e^{-1} (\beta+1) \int_B (\delta+|\nabla_\e u^\e|^2)^{\frac{Q-1}{2}} |X_3^\e u^\e|^\beta |\nabla_\e X_3^\e u^\e|  \eta^{\beta+2}  \d\mathcal{L} \\ 
\le  \tau ||\nabla_\e \eta||_{L^\infty(B) }^{-2}  \e^{-2} (\beta+1) \int_B (\delta+|\nabla_\e u^\e|^2)^{\frac{Q-2}{2}} |X_3^\e u^\e|^\beta |\nabla_\e X_3^\e u^\e|^2  \eta^{\beta+4}  \d\mathcal{L} \\
+ \frac{  (\beta+1)||\nabla_\e \eta||_{L^\infty(B)}^2 }{\tau} \int_B (\delta+|\nabla_\e u^\e|^2)^{\frac{Q}{2}} |X_3^\e u^\e|^\beta  \eta^{\beta}  \d\mathcal{L} =\mathcal A  + \mathcal B.
 \end{multline*}
Next, we invoke Lemma \ref{zhong-l1} to estimate the first integral $\mathcal A$ as 
\begin{multline*}
\int_B (\delta+|\nabla_\e u^\e|^2)^{\frac{Q-2}{2}} |X_3^\e u^\e|^\beta |\nabla_\e X_3^\e u^\e|^2  \eta^{\beta+4}  \d\mathcal{L} \\
 \le \bigg(\frac{4\Lambda}{\lambda (\beta+6)} +2\Lambda
\bigg) \int_B (\delta+|\nabla_\e u^\e|^2)^{\frac{Q-2}{2}}\eta^{\beta+2} |\nabla_\e \eta|^2 |X_3^\e u^\e|^{\beta +2} \d\mathcal{L}
\\+ 2 \e^2\Lambda \bigg(1+\frac{2}{\lambda (\beta+3)^2}\bigg) \int_B  (\delta+|\nabla_\e u^\e|^2)^{\frac{Q}{2}}  |X_3^\e u^\e|^\beta \eta^\beta \d\mathcal{L}
\\ \le  ||\nabla_\e \eta||_{L^\infty(B)}^2 \bigg(\frac{4\Lambda}{\lambda (\beta+6)} +2\Lambda
\bigg) \int_B (\delta+|\nabla_\e u^\e|^2)^{\frac{Q-2}{2}}\eta^{\beta+2}  |X_3^\e u^\e|^{\beta}  |X_3^\e u^\e|^{2} \d\mathcal{L}
\\+ 2 \e^2\Lambda \bigg(1+\frac{2}{\lambda (\beta+3)^2}\bigg) \int_B  (\delta+|\nabla_\e u^\e|^2)^{\frac{Q}{2}}  |X_3^\e u^\e|^{\beta-2}  |X_3^\e u^\e|^{2} \eta^\beta \d\mathcal{L}
\end{multline*}
(using the fact that $ |X_3^\e u^\e| \le \e \sum_{i,j=1}^3 |X_i^\e X_j^\e u^\e|$ one concludes)
\begin{multline*} \le\e^2  ||\nabla_\e \eta||_{L^\infty(B)}^2 \bigg(\frac{4\Lambda}{\lambda (\beta+6)} +2\Lambda
\bigg) \int_B (\delta+|\nabla_\e u^\e|^2)^{\frac{Q-2}{2}}\eta^{\beta+2}  |X_3^\e u^\e|^{\beta}  \sum_{i,j=1}^3 |X_i^\e X_j^\e u^\e|^{2} \d\mathcal{L}
\\+ 2 \e^4\Lambda \bigg(1+\frac{2}{\lambda (\beta+3)^2}\bigg) \int_B  (\delta+|\nabla_\e u^\e|^2)^{\frac{Q}{2}}  |X_3^\e u^\e|^{\beta-2}   \sum_{i,j=1}^3 |X_i^\e X_j^\e u^\e|^{2} \eta^\beta \d\mathcal{L}.
\end{multline*}
To estimate $\mathcal B$  we simply observe that 
\begin{multline*}
|\mathcal B|\le \frac{  \e^2 (\beta+1)||\nabla_\e \eta||_{L^\infty(B)}^2 }{\tau} \int_B (\delta+|\nabla_\e u^\e|^2)^{\frac{Q}{2}} |X_3^\e u^\e|^{\beta-2}  \eta^{\beta}  \sum_{i,j=1}^3 |X_i^\e X_j^\e u^\e|^{2}  \d\mathcal{L}.
\end{multline*}

 In conclusion we have proved 
 \begin{multline*}
| \mathcal I | \le  \tau  (\beta+1) \Bigg[
  \bigg(\frac{4\Lambda}{\lambda (\beta+6)} +2\Lambda
\bigg) \int_B (\delta+|\nabla_\e u^\e|^2)^{\frac{Q-2}{2}}\eta^{\beta+2}  |X_3^\e u^\e|^{\beta}  \sum_{i,j=1}^3 |X_i^\e X_j^\e u^\e|^{2} \d\mathcal{L}
\\+ 2  ||\nabla_\e \eta||_{L^\infty(B)}^{-2} \e^2\Lambda \bigg(1+\frac{2}{\lambda (\beta+3)^2}\bigg) \int_B  (\delta+|\nabla_\e u^\e|^2)^{\frac{Q}{2}}  |X_3^\e u^\e|^{\beta-2}   \sum_{i,j=1}^3 |X_i^\e X_j^\e u^\e|^{2} \eta^2 \d\mathcal{L}\Bigg] 
\\+\frac{  \e^2 (\beta+1)||\nabla_\e \eta||_{L^\infty(B)}^2 }{\tau} \int_B (\delta+|\nabla_\e u^\e|^2)^{\frac{Q}{2}} |X_3^\e u^\e|^{\beta-2}  \eta^{\beta}  \sum_{i,j=1}^3 |X_i^\e X_j^\e u^\e|^{2}  \d\mathcal{L}.
 \end{multline*}

 \item{\bf Estimate of  $\mathcal {II}$.} Observe that, in view of Young's inequality, one has
 \begin{multline*}
 |\mathcal {II}| \le  \tau  \int_B (\delta+|\nabla_\e u^\e|^2)^{\frac{Q-2}{2}} \eta^{\beta+2} |\nabla_\e \eta|  |X_3^\e u^\e|^\beta |X_3u^\e|^2  \d\mathcal{L}  \\+ \frac{(\beta+2)^2}{\tau} \int_B (\delta+|\nabla_\e u^\e|^2)^{\frac{Q}{2}} \eta^{\beta} |\nabla_\e \eta|^2  |X_3^\e u^\e|^\beta  d\mathcal{L}
 \\\le   \tau  \int_B (\delta+|\nabla_\e u^\e|^2)^{\frac{Q-2}{2}} \eta^{\beta+2} |\nabla_\e \eta|  |X_3^\e u^\e|^\beta  \sum_{i,j=1}^{3}  |X_i^\e X_j^\e u^\e|^2  \d\mathcal{L}  \\+ \e^2 \frac{(\beta+2)^2)||\nabla_\e \eta||_{L^\infty(B)}^2}{\tau} \int_B (\delta+|\nabla_\e u^\e|^2)^{\frac{Q}{2}} \eta^{\beta}  |X_3^\e u^\e|^{\beta-2}  \sum_{i,j=1}^{3}  |X_i^\e X_j^\e u^\e|^2  d\mathcal{L}.
 \end{multline*}
\end{itemize}
This concludes the estimate of $I_1$, as in \eqref{mustbelikethis}.

 \item {\bf Estimate of $I_2$.} To estimate $I_2$ we will note that in view of the structure conditions \eqref{zhong-structure} there exists a constant $C$ depending on $B$ (essentially $\max_B |x_i|$) such that
 \begin{multline*}
 |\int_B |\nabla_\e A_i^{\e, \delta}(x,\nabla_\e u^\e) ||\nabla_\e X_3^\e u^\e | |X_3^\e u^\e|^{\beta-1} |\nabla_\e  u^\e| \eta^{\beta+2}  \d\mathcal{L}| \\ \le \int_B (\delta+|\nabla_\e u^\e|^2)^{\frac{Q-2}{2}} \sum_{i,j=1}^{3}  |X_i^\e X_j^\e u^\e| |\nabla_\e u^\e | ||\nabla_\e X_3^\e u^\e | |X_3^\e u^\e|^{\beta-1}\eta^{\beta+2}  d\mathcal{L} \\+ C \int_B(\delta+|\nabla_\e u^\e|^2)^{\frac{Q-1}{2} } |\nabla_\e u^\e | |\nabla_\e X_3^\e u^\e | |X_3^\e u^\e|^{\beta-1}\eta^{\beta+2}  d\mathcal{L} \\
 \le \int_B (\delta+|\nabla_\e u^\e|^2)^{\frac{Q-1}{2}} \sum_{i,j=1}^{3}  |X_i^\e X_j^\e u^\e| ||\nabla_\e X_3^\e u^\e | |X_3^\e u^\e|^{\beta-1}\eta^{\beta+2}  d\mathcal{L} \\+ C \int_B(\delta+|\nabla_\e u^\e|^2)^{\frac{Q}{2} }|\nabla_\e X_3^\e u^\e | |X_3^\e u^\e|^{\beta-1}\eta^{\beta+2}  d\mathcal{L}.
 \end{multline*}
 Note that the second integral occurs only because of the dependence of $A_i$ on the space variable $x$.
 The first integral is estimated exactly as in \cite{Zhong}, by means of Young's inequality and Lemma \ref{zhong-l1}.  In fact one has
 \begin{multline*}
  \int_B (\delta+|\nabla_\e u^\e|^2)^{\frac{Q-1}{2}} \sum_{i,j=1}^{3}  |X_i^\e X_j^\e u^\e| ||\nabla_\e X_3^\e u^\e | |X_3^\e u^\e|^{\beta-1}\eta^{\beta+2}  d\mathcal{L} \\  \le 
\e^{-2}  || \nabla_\e \eta||_{L^\infty}^{-2}  \tau   \int_B (\delta+|\nabla_\e u^\e|^2)^{\frac{Q-2}{2}}   |\nabla_\e X_3^\e u^\e |^2 |X_3^\e u^\e|^{\beta}\eta^{\beta+4}  d\mathcal{L} \\
  +C \e^2\beta^2||\nabla_\e \eta||_{L^\infty}^{2} \tau^{-1}   \int_B (\delta+|\nabla_\e u^\e|^2)^{\frac{Q}{2}} \sum_{i,j=1}^{3}  |X_i^\e X_j^\e u^\e|^2  |X_3^\e u^\e|^{\beta-2}\eta^{\beta}  d\mathcal{L} 
\\
  \le
 C \e^{-2} || \nabla_\e \eta||_{L^\infty}^{-2}  \tau  (\beta+2)^4 \bigg(\frac{2\Lambda}{\lambda (\beta+1)} +2\Lambda
\bigg) \int_B (\delta+|\nabla_\e u^\e|^2)^{\frac{Q-2}{2}} \eta^{\beta+2} |\nabla_\e \eta|^2 |X_3^\e u^\e|^{\beta +2} \d\mathcal{L}
\\+ 2 C\beta^2||\nabla_\e \eta||_{L^\infty}^{-2} \tau  \Lambda \bigg(1+\frac{1}{\lambda (\beta+1)^2}\bigg) \int_B  (\delta+|\nabla_\e u^\e|^2)^{\frac{Q}{2}}  |X_3^\e u^\e|^\beta \eta^{\beta+4} \d\mathcal{L}
\\
+C\e^{2}\beta^2||\nabla_\e \eta||_{L^\infty}^{2} \tau^{-1}   \int_B (\delta+|\nabla_\e u^\e|^2)^{\frac{Q}{2}} \sum_{i,j=1}^{3}  |X_i^\e X_j^\e u^\e|^2  |X_3^\e u^\e|^{\beta-2}\eta^{\beta}  d\mathcal{L} .
 \end{multline*}
 Estimate \eqref{mustbelikethis} then follows once one assumes (without loss of generalization) that $||\nabla_\e \eta||_{L^\infty}\ge 1$  and using the fact that $ |X_3^\e u^\e| \le \e \sum_{i,j=1}^3 |X_i^\e X_j^\e u^\e|$.
 
 For the second integral we first use Young inequality and obtain
 \begin{multline*}
  \int_B(\delta+|\nabla_\e u^\e|^2)^{\frac{Q}{2} }|\nabla_\e X_3^\e u^\e | |X_3^\e u^\e|^{\beta-1}\eta^{\beta+2}  d\mathcal{L} \\ \le   \tau \e^{-2}  \int_B (\delta+|\nabla_\e u^\e|^2)^{\frac{Q-2}{2}}  |\nabla_\e X_3^\e u^\e |^2 |X_3^\e u^\e|^{\beta}\eta^{\beta+4}  d\mathcal{L} \\
  + \e^2\tau^{-1}   \int_B (\delta+|\nabla_\e u^\e|^2)^{\frac{Q+2}{2}}  |X_3^\e u^\e|^{\beta-2}\eta^{\beta}  d\mathcal{L} .
 \end{multline*}
 Invoking Lemma \ref{zhong-l1} and Young inequality one then has
 \begin{multline}\label{ericclapton}
  \int_B(\delta+|\nabla_\e u^\e|^2)^{\frac{Q}{2} }|\nabla_\e X_3^\e u^\e | |X_3^\e u^\e|^{\beta-1}\eta^{\beta+2}  d\mathcal{L} \\ \le 
  \tau \e^{-2} \bigg[ \bigg(\frac{2\Lambda}{\lambda (\beta+1)} +2\Lambda
\bigg) \int_B (\delta+|\nabla_\e u^\e|^2)^{\frac{Q-2}{2}} |\nabla_\e \eta|^2 |v_3|^{\beta +2} \d\mathcal{L}
\\+ 2 \e^2\Lambda \bigg(1+\frac{1}{\lambda (\beta+1)^2}\bigg) \int_B  (\delta+|\nabla_\e u^\e|^2)^{\frac{Q}{2}}  |v_3|^\beta \eta^2 \d\mathcal{L}
  \bigg]
  \\+ \tau^{-1}\e^2 \int_B (\delta+|\nabla_\e u^\e|^2)^{\frac{Q+2}{2}}  |X_3^\e u^\e|^{\beta-2}\eta^{\beta}  d\mathcal{L} 
  \end{multline}
   \begin{multline*}
 \le 
  \tau \e^{-2} \bigg[ \bigg(\frac{2\Lambda}{\lambda (\beta+1)} +2\Lambda
\bigg) \int_B (\delta+|\nabla_\e u^\e|^2)^{\frac{Q-2}{2}} |\nabla_\e \eta|^2 |v_3|^{\beta +2} \d\mathcal{L}
\\+ 2 \e^2\Lambda \bigg(1+\frac{1}{\lambda (\beta+1)^2}\bigg) \int_B  (\delta+|\nabla_\e u^\e|^2)^{\frac{Q}{2}}  |v_3|^\beta \eta^2 \d\mathcal{L}
  \bigg]
  \\+ \tau^{-1}  \frac{\beta-2}{\beta} \int_B (\delta+|\nabla_\e u^\e|^2)^{\frac{Q}{2}}  |X_3^\e u^\e|^{\beta}\eta^{\beta}  d\mathcal{L} 
  \\+ \tau^{-1} \frac{2}{\beta} \e^\beta \int_B (\delta+|\nabla_\e u^\e|^2)^{\frac{Q+\beta}{2}} \eta^{\beta}  d\mathcal{L} .
 \end{multline*}
 From the latter, estimate \eqref{mustbelikethis} follows once one recalls that  $ |X_3^\e u^\e| \le \e \sum_{i,j=1}^3 |X_i^\e X_j^\e u^\e|$.

%
%
  \item {\bf Estimate of $I_3$.} Using the structure conditions \eqref{zhong-structure} one has
\begin{multline*}  
 (\beta+2) \int_B |\nabla_\e A_i^{\e, \delta}(x,\nabla_\e u^\e)| |\nabla_\e \eta |  X_3^\e u^\e|^{\beta} |\nabla_\e  u^\e| \eta^{\beta+1}  \d\mathcal{L}
\\ \le
(\beta+2) \int_B (\delta+|\nabla_\e u^\e|^2)^{\frac{Q-2}{2} }  \sum_{i,j=1}^{3}  |X_i^\e X_j^\e u^\e| 
 |  X_3^\e u^\e|^{\beta} |\nabla_\e  u^\e| \eta^{\beta+1}  |\nabla_\e \eta | \d\mathcal{L} \\+
C (\beta+2) \int_B (\delta+|\nabla_\e u^\e|^2)^{\frac{Q-1}{2}}  |  X_3^\e u^\e|^{\beta} |\nabla_\e  u^\e| \eta^{\beta+1}   |\nabla_\e \eta |\d\mathcal{L} 
\end{multline*}
The second integrand in the right hand side is estimated as in \eqref{ericclapton}. To estimate the first integral we use Young inequality to obtain
\begin{multline*}
   \int_B (\delta+|\nabla_\e u^\e|^2)^{\frac{Q-2}{2} }  \sum_{i,j=1}^{3}  |X_i^\e X_j^\e u^\e| 
 |  X_3^\e u^\e|^{\beta} |\nabla_\e  u^\e| \eta^{\beta+1}  |\nabla_\e \eta | \d\mathcal{L} \\
  \le \tau  \int_B (\delta+|\nabla_\e u^\e|^2)^{\frac{Q-2}{2} }  \sum_{i,j=1}^{3}  |X_i^\e X_j^\e u^\e| ^2
 |  X_3^\e u^\e|^{\beta}  \eta^{\beta+2}   \d\mathcal{L} \\
 +C\tau^{-1} \int_B (\delta+|\nabla_\e u^\e|^2)^{\frac{Q}{2} }
 |  X_3^\e u^\e|^{\beta} \eta^{\beta}  |\nabla_\e \eta |^2 \d\mathcal{L} 
\end{multline*}
and consequently invoke $ |X_3^\e u^\e| \le \e \sum_{i,j=1}^3 |X_i^\e X_j^\e u^\e|$ to conclude that \eqref{mustbelikethis} holds.

   \item {\bf Estimate of $I_4$.} The structure conditions \eqref{zhong-structure} yield
 \begin{multline*}  
 \int_B\sum_{j=1}^2  |X_3 A_j^\e(x,\nabla_\e u^\e) | \eta^{\beta+2} |X_3^\e u^\e|^\beta |\nabla_\e u^\e | \d\mathcal{L} \\ \le
(\beta+2) \int_B (\delta+|\nabla_\e u^\e|^2)^{\frac{Q-2}{2} }  |\nabla_\e  X_3^\e u^\e| 
 |  X_3^\e u^\e|^{\beta} |\nabla_\e  u^\e| \eta^{\beta+2}  \d\mathcal{L} \\+
C  \int_B (\delta+|\nabla_\e u^\e|^2)^{\frac{Q-1}{2}}   |  X_3^\e u^\e|^{\beta} |\nabla_\e  u^\e| \eta^{\beta+2}  \d\mathcal{L},
\end{multline*}
which are estimated as for \eqref{I} and using $ |X_3^\e u^\e| \le \e \sum_{i,j=1}^3 |X_i^\e X_j^\e u^\e|$.
\qedhere
\end{itemize}
\end{proof}

The argument in the previous proof can be adapted to the case $\beta=0$ to obtain
\begin{corollary}\label{zhong-c3} There exists a constant $C>0$ depending only on $\lambda, \Lambda, Q$ such that 
for every $\eta\in C^{\infty}_0(B)$ with $0\le \eta\le 1$ one has 
\begin{multline*}
\int_B (\delta+|\nabla_\e u^\e|^2)^{\frac{Q-2}{2}}  \sum_{i,j=1}^{3}  |X_i^\e X_j^\e u^\e|^2  \eta^{2}  \d\mathcal{L}  
\\  \le C \  (1+||\nabla_\e \eta||_{L^\infty(B)}^2 +|| X_3 \eta||_{L^\infty(B)})\int_{\supp(\eta)} (\delta+|\nabla_\e u^\e|^2)^{\frac{Q}{2}}   d\mathcal{L} \bigg).
\end{multline*}
\end{corollary}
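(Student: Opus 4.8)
The plan is to mimic, with $\beta=0$, the proof of Lemma~\ref{zhong-l3}. First I would differentiate the regularized equation $\sum_i X_i^\e A_i^{\e,\delta}(x,\nabla_\e u^\e)=0$ along each of $X_1^\e,X_2^\e,X_3^\e$; as in \eqref{zhong-d-eq1}--\eqref{zhong-d-eq3} this produces, besides the ``good'' second order part $\sum_{i,j}X_i^\e(A_{i,\xi_j}^{\e,\delta}X_j^\e v_h)$, the commutator contributions coming from $[X_1^\e,X_2^\e]=X_3=\e^{-1}X_3^\e$ (note $[X_1^\e,X_3^\e]=[X_2^\e,X_3^\e]=0$ in the Heisenberg structure) together with the explicit $x$-derivative terms $\partial_{x_j}A_i^{\e,\delta}$. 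Testing each differentiated equation with $\phi=\eta^2 X_h^\e u^\e$, summing over $h=1,2,3$, and invoking the lower bound in \eqref{zhong-structure} on the left, one sees that $\int_B(\delta+|\nabla_\e u^\e|^2)^{\frac{Q-2}{2}}\sum_{i,j}|X_i^\e X_j^\e u^\e|^2\eta^2\,\d\mathcal L$ is controlled by the sum of the terms generated on the right.

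Next I would estimate those right-hand side terms one at a time, exactly in the spirit of the estimates of $I_1,\dots,I_4$ in the proof of Lemma~\ref{zhong-l3}, but now with no weight $|X_3^\e u^\e|^\beta$. The terms carrying a derivative of $\eta$ on the test function are handled by Young's inequality with a small parameter $\tau$, reabsorbing $\tau\int_B(\delta+|\nabla_\e u^\e|^2)^{\frac{Q-2}{2}}\sum|X_i^\e X_j^\e u^\e|^2\eta^2$ into the left side and leaving a contribution $\lesssim\tau^{-1}\|\nabla_\e\eta\|_{L^\infty(B)}^2\int_{\supp(\eta)}(\delta+|\nabla_\e u^\e|^2)^{\frac Q2}$. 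The terms produced by the commutator $\e^{-1}X_3^\e$ are the delicate ones: after integrating by parts to move the $X_3$ off the coefficient, one copy of $\eta$ becomes $X_3\eta$, yielding a contribution of size $\|X_3\eta\|_{L^\infty(B)}\int_{\supp(\eta)}(\delta+|\nabla_\e u^\e|^2)^{\frac{Q-1}{2}}|\nabla_\e u^\e|\lesssim\|X_3\eta\|_{L^\infty(B)}\int_{\supp(\eta)}(\delta+|\nabla_\e u^\e|^2)^{\frac Q2}$, while the remaining commutator pieces carry a factor $\e^{-1}$ (or $X_3 u^\e=\e^{-1}X_3^\e u^\e$) which is compensated using $|X_3^\e u^\e|\le\e\sum_{i,j}|X_i^\e X_j^\e u^\e|$ on solutions of \eqref{zhong-e-eq}, together with Lemma~\ref{zhong-l1} applied with $\beta=0$ and Young's inequality, so that they too reabsorb into the left side. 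Finally the explicit $x$-dependence is controlled by \eqref{zhong-structure2}, namely $|\partial_{x_j}A_i^{\e,\delta}|\le\Lambda(\delta+|\nabla_\e u^\e|^2)^{\frac{Q-1}{2}}$; after pairing with $X_i^\e(\eta^2 X_h^\e u^\e)$ and using Young's inequality these contribute $\tau\int_B(\delta+|\nabla_\e u^\e|^2)^{\frac{Q-2}{2}}\sum|X_i^\e X_j^\e u^\e|^2\eta^2+C\tau^{-1}\int_{\supp(\eta)}(\delta+|\nabla_\e u^\e|^2)^{\frac Q2}$, which is where the additive constant $1$ in the statement comes from. Choosing $\tau$ small depending only on $\lambda,\Lambda,Q$ absorbs all the left-side-type terms and gives the asserted inequality.

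The main obstacle, as in Zhong's argument, is keeping every constant uniform in $\e$ and $\delta$: since $[X_1^\e,X_2^\e]=\e^{-1}X_3^\e$, each occurrence of this commutator must be matched against a quantity carrying a compensating power of $\e$, and this is precisely where the pointwise bound $|X_3^\e u^\e|\le\e\sum_{i,j}|X_i^\e X_j^\e u^\e|$ and Lemma~\ref{zhong-l1} are indispensable; bookkeeping of these powers of $\e$ is the crux. The only genuinely new feature compared with \cite{Zhong} is the presence of the first-order-in-$x$ terms $\partial_{x_j}A_i^{\e,\delta}$, but by \eqref{zhong-structure2} these are one power of the gradient lower than the principal term, hence absorbed without difficulty.
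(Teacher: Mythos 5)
Your proposal is correct and follows essentially the same route as the paper: differentiate the regularized equation along the $X_h^\e$, test with $\eta^2 X_h^\e u^\e$, use the coercivity in \eqref{zhong-structure} on the left, and absorb the commutator and $x$-dependent terms via Young's inequality, Lemma~\ref{zhong-l1} with $\beta=0$, and the pointwise bound $|X_3 u^\e|\le\sum_{i,j}|X_i^\e X_j^\e u^\e|$, with the $\|X_3\eta\|_{L^\infty}$ contribution arising exactly as you describe from integrating $X_3$ by parts off the coefficient. No gaps.
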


\begin{proof}
As with the previous proof we substitute $\phi=v_1 \eta^2$ into \eqref{zhong-3.6} to obtain
\begin{multline*}
\int_B A_{i\xi_j}^{\e,\delta}(x,\nabla_\e u^\e) X_1^\e X_j^\e u^\e X_1^\e X_i^\e u^\e \eta^2 \d\mathcal{L}  - \int_B X_1^\e A_2^{\e,\delta}(x,\nabla_\e u^\e) X_3 u^\e \eta^2  \d\mathcal{L} \\ + 2\int_B A_{i\xi_j}^{\e,\delta}(x,\nabla_\e u^\e) X_1^\e X_j^\e u^\e   v_1 \eta X_i^\e  \eta\d\mathcal{L} \\ = -\int_B  A_2^{\e,\delta}(x,\nabla_\e u^\e)X_3 (v_1 \eta^2)  \d\mathcal{L}.
\end{multline*}
Repeating this argument for $v_k$ with  $k=1,2,3$ and using the structure conditions \eqref{zhong-structure} yields
\begin{multline*}
\lambda \int_B (\delta+|\nabla_\e u^\e|^2)^{\frac{Q-2}{2}} \sum_{i,j=1}^{3}  |X_i^\e X_j^\e u^\e|^2  \eta^{2}  \d\mathcal{L}  
  \le -2 \int_B A_{i\xi_j}^{\e,\delta}(x,\nabla_\e u^\e) X_k^\e X_j^\e u^\e  v_k \eta X_i^\e \eta \d\mathcal{L}
  \\
  -\int_B  (A_2^{\e,\delta}(x,\nabla_\e u^\e) X_1^\e X_3 u^\e  - A_1^{\e,\delta} (x,\nabla_\e u^\e) X_2^\e X_3 u^\e)\eta^2  \d\mathcal{L} 
   \\
  -\int_B  (A_2^{\e,\delta}(x,\nabla_\e u^\e) X_1^\e \eta  - A_1^{\e,\delta} (x,\nabla_\e u^\e) X_2^\e \eta )\eta X_3 u^{\e,\delta}   \d\mathcal{L} \\-\int_B  A_2^{\e,\delta}(x,\nabla_\e u^\e)X_3 (v_1 \eta^2) - A_1^{\e,\delta} (x,\nabla_\e u^\e)X_3 (v_2 \eta^2)   \d\mathcal{L} = I_1+I_2+I_3+I_4.
\end{multline*}
Next we show that, modulo a constant $C$ as in the statement,  every term $I_l$ on the right hand side can be estimated by the expression
$$ \tau  \int_B (\delta+|\nabla_\e u^\e|^2)^{\frac{Q-2}{2}} \sum_{i,j=1}^{3}  |X_i^\e X_j^\e u^\e|^2  \eta^{2}  \d\mathcal{L}   +  \tau^{-1}  (1+||\nabla_\e \eta||_{L^\infty(B)}^2 +|| X_3 \eta||_{L^\infty(B)}) \int_{\supp(\eta)} (\delta+|\nabla_\e u^\e|^2)^{\frac{Q}{2}} \d\mathcal{L} 
$$
for some $1>>\tau>0$ arbitrarily small, from which the conclusion is immediate.   The estimate for $I_1$ follows immediately from Young inequality.  To estimate $I_2$ we invoke the structure conditions \eqref{zhong-structure}, then apply Young's inequality, Lemma \ref{zhong-l1}, and the observation $|X_3 u^\e|\le \sum_{i,j=1}^3 | X_i^\e X_j^\e u^\e|$ to  deduce
\begin{multline*}
I_2 \le \tau  \int_B (\delta+|\nabla_\e u^\e|^2)^{\frac{Q-1}{2}} |X_3 \nabla_\e u^\e| \eta^2    \d\mathcal{L}
\\ \le \tau  \int_B (\delta+|\nabla_\e u^\e|^2)^{\frac{Q-2}{2}} |X_3 \nabla_\e u^\e|^2 \eta^2 \d\mathcal{L} +C\tau^{-1} \int_{\supp(\eta)} (\delta+|\nabla_\e u^\e|^2)^{\frac{Q}{2}} \d\mathcal{L} 
\\
\le \tau \sup_B|\nabla_\e \eta|^2  \int_B (\delta+|\nabla_\e u^\e|^2)^{\frac{Q-2}{2}} |X_3 u^\e|^2 \eta^2 \d\mathcal{L} +C(\tau+\tau^{-1})\int_{\supp(\eta)} (\delta+|\nabla_\e u^\e|^2)^{\frac{Q}{2}} \d\mathcal{L} 
\\
\le \tau  \int_B (\delta+|\nabla_\e u^\e|^2)^{\frac{Q-2}{2}}\sum_{i,j=1}^3 | X_i^\e X_j^\e u^\e|^2 \eta^2 \d\mathcal{L} +C(\tau+\tau^{-1}) \int_{\supp(\eta)} (\delta+|\nabla_\e u^\e|^2)^{\frac{Q}{2}} \d\mathcal{L} 
\end{multline*}
The estimates on $I_3$ and $I_4$ proceed in a similar fashion, using Young's inequality, Lemma~\ref{zhong-l1}, and the observation $|X_3 u^\e|\le \sum_{i,j=1}^3 | X_i^\e X_j^\e u^\e|$.
\end{proof}

Note that the previous result immediately implies part (2) of Proposition \ref{zhong-prop}.

\bigskip

The following corollary is a straightforward consequence of 
Lemma \ref{zhong-l3} and the 
Young inequality applyed to the right hand side of inequality of the lemma.

\begin{corollary}\label{zhong-l3.1}
For every $\beta \ge 2$ and $\eta\in C^{\infty}_0(B)$ with $0\le \eta \le 1$, one has 
\begin{multline*}\int_B (\delta+|\nabla_\e u^\e|^2)^{\frac{Q-2}{2}} |X_3^\e u^\e|^\beta \sum_{i,j=1}^{3}  |X_i^\e X_j^\e u^\e|^2  \eta^{\beta+2}  \d\mathcal{L}  
\\  \le  \e^\beta C^\beta(\beta+1)^{4}  ||\nabla_\e \eta||_{L^\infty(B)}^\beta
\bigg(
 \int_B
 (\delta+|\nabla_\e u^\e|^2)^{\frac{Q-2+\beta}{2}} \sum_{i,j=1}^{3}  |X_i^\e X_j^\e u^\e|^2 \eta^\beta  \d\mathcal{L}
 \\+ \int_B (\delta+|\nabla_\e u^\e|^2)^{\frac{Q+\beta}{2}} \eta^{\beta}  d\mathcal{L} \bigg).
\end{multline*}
\end{corollary}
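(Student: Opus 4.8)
The plan is to derive the inequality directly from Lemma~\ref{zhong-l3}, reworking the right-hand side of that lemma with Young's inequality together with the elementary pointwise bounds already used throughout this section. Write $W:=\delta+|\nabla_\e u^\e|^2$ and $A:=\sum_{i,j=1}^3|X_i^\e X_j^\e u^\e|^2$ for brevity. Lemma~\ref{zhong-l3} bounds the left-hand side of the corollary by
\[
C(\beta+1)^2\|\nabla_\e\eta\|_{L^\infty(B)}^2\left(\e^2\int_B W^{\frac{Q}{2}}|X_3^\e u^\e|^{\beta-2}A\,(\eta^{\beta+2}+\eta^\beta)\,\d\mathcal L+\e^\beta\int_B W^{\frac{Q+\beta}{2}}\eta^\beta\,\d\mathcal L\right),
\]
and since $0\le\eta\le1$ we may replace $\eta^{\beta+2}+\eta^\beta$ by $2\eta^\beta$, so that only the first integral needs further work.

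To rework that integral I would use two facts, both already exploited in the proofs of Lemmata~\ref{zhong-l1}--\ref{zhong-l3}: first, since $[X_1^\e,X_2^\e]=X_3$ one has $|X_3^\e u^\e|\le\e\sum_{i,j=1}^3|X_i^\e X_j^\e u^\e|\le 3\e\,A^{1/2}$; second, trivially $|X_3^\e u^\e|\le|\nabla_\e u^\e|\le W^{1/2}$. Feeding the first bound into the $\beta-2$ extra powers of $|X_3^\e u^\e|$ upgrades the prefactor $\e^2$ to $\e^\beta$ at the expense of a numerical factor $3^{\beta-2}$; an application of Young's inequality --- splitting $W^{Q/2}$ into suitable powers of $W^{\frac{Q-2+\beta}{2}}$ and $W^{\frac{Q+\beta}{2}}$ and using $|X_3^\e u^\e|\le W^{1/2}$ to absorb the surplus powers of $|X_3^\e u^\e|$ --- then rewrites the integrand as a combination of the two model densities $W^{\frac{Q-2+\beta}{2}}A$ and $W^{\frac{Q+\beta}{2}}$ that appear on the right-hand side of the corollary.

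It remains to collect constants: multiplying $\e^2\cdot\e^{\beta-2}=\e^\beta$, absorbing the $(\beta+1)$-factors produced by Young's inequality into $(\beta+1)^4$ and the numerical factors into $C^\beta$, and using the normalisation $\|\nabla_\e\eta\|_{L^\infty(B)}\ge1$ (adopted without loss of generality earlier in this section), so that $\|\nabla_\e\eta\|^2\,\|\nabla_\e\eta\|^{\beta-2}=\|\nabla_\e\eta\|^\beta$, one reaches the asserted bound with constant $\e^\beta C^\beta(\beta+1)^4\|\nabla_\e\eta\|_{L^\infty(B)}^\beta$. I expect the only delicate point to be precisely this bookkeeping: one must check that the constants generated by Young's inequality and by the power-mean estimate for $\sum_{i,j}|X_i^\e X_j^\e u^\e|$ stay controlled by $C^\beta(\beta+1)^4$ uniformly in $\beta\ge2$, since it is this explicit dependence on $\beta$ --- rather than any finer analysis --- that the subsequent Moser iteration in Zhong's scheme consumes. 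Everything is manifestly stable as $\e\to0$, since only the structure conditions \eqref{zhong-structure}--\eqref{zhong-structure2} and the above $\e$-linear commutator bound enter.
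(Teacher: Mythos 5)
Your reduction of $\eta^{\beta+2}+\eta^\beta$ to $2\eta^\beta$ is fine, but the central step --- dominating the first integral on the right-hand side of Lemma~\ref{zhong-l3} by the two model densities of the corollary --- does not close as you describe, and the obstruction is structural rather than a matter of bookkeeping. Write $W=\delta+|\nabla_\e u^\e|^2$ and $A=\sum_{i,j}|X_i^\e X_j^\e u^\e|^2$. If you feed $|X_3^\e u^\e|\le C\e A^{1/2}$ into all $\beta-2$ spare powers of $|X_3^\e u^\e|$ (the only way to harvest the factor $\e^{\beta-2}$ you need), the integrand becomes $\e^{\beta}C^{\beta}W^{Q/2}A^{\beta/2}\eta^{\beta}$, and no Young splitting between $W^{\frac{Q-2+\beta}{2}}A$ and $W^{\frac{Q+\beta}{2}}$ can recover this: the exponent of $A$ in any convex combination of these two densities is at most $1$, whereas you need $\beta/2>1$. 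The corresponding pointwise inequality $W^{Q/2}A^{\beta/2}\le C(\beta)\big(W^{\frac{Q-2+\beta}{2}}A+W^{\frac{Q+\beta}{2}}\big)$ is simply false (take $W$ bounded and $A\to\infty$). Conversely, sending the spare powers into $W^{1/2}$ keeps the exponent of $A$ equal to $1$ but then yields only the prefactor $\e^2$, which is strictly weaker than the claimed $\e^{\beta}$. No intermediate distribution of the $\beta-2$ powers achieves both requirements, so your two pointwise bounds plus Young between the target densities cannot prove the statement.

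The missing idea is absorption into the left-hand side. One writes pointwise
\[
W^{\frac{Q}{2}}|X_3^\e u^\e|^{\beta-2}A=\Big(W^{\frac{Q-2}{2}}|X_3^\e u^\e|^{\beta}A\Big)^{\frac{\beta-2}{\beta}}\Big(W^{\frac{Q+\beta-2}{2}}A\Big)^{\frac{2}{\beta}},
\]
applies Young's inequality with a free parameter $\tau$, and chooses $\tau\sim\big(C(\beta+1)^2\|\nabla_\e\eta\|_{L^\infty(B)}^2\,\e^2\big)^{-1}$ so that the contribution of the first factor is absorbed into the left-hand side of Lemma~\ref{zhong-l3}; the conjugate power $\tau^{-\frac{\beta-2}{2}}\sim\big(C(\beta+1)^2\|\nabla_\e\eta\|_{L^\infty(B)}^2\,\e^2\big)^{\frac{\beta-2}{2}}$ is precisely what upgrades the prefactor from $\e^2$ to $\e^{\beta}$ and supplies the factor $\|\nabla_\e\eta\|_{L^\infty(B)}^{\beta}$; your device of padding with $\|\nabla_\e\eta\|\ge 1$ is then unnecessary. (Carried out literally this gives $(\beta+1)^{\beta}$ rather than the stated $(\beta+1)^4$, which is immaterial for the later applications because the corollary is only ever used after raising to the power $2/(\beta+2)$, but the factors $\e^{\beta}$ and $\|\nabla_\e\eta\|^{\beta}$ genuinely require the absorption.)
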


\begin{theorem}[Caccioppoli Inequality, \cite{Zhong}]  \label{bigbadcaccioppoli} For every $\beta \ge 2$ and $\eta\in C^{\infty}_0(B)$ with $0\le \eta \le 1$, one has
\begin{multline*}
\int_B (\delta+|\nabla_\e u^\e|^2)^{\frac{Q-2+\beta}{2}} \sum_{i,j=1}^{3} |X_i^\e X_j^\e u^\e|^2  \eta^2  \d\mathcal{L} \\
\le C (\beta+1)^{8} (||\nabla_\e \eta||_{L^\infty(B)}+ ||\eta X_3 \eta||_{L^\infty(B)})\int_{\supp(\eta)}  (\delta+|\nabla_\e u^\e|^2)^{\frac{Q+\beta}{2}}  \d\mathcal{L}
\\+ C \int_B  \eta^2 (\delta+|\nabla_\e u^\e|^2)^{\frac{Q+\beta+1}{2}}  \d\mathcal{L}.
\end{multline*}
\end{theorem}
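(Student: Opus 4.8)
The plan is to prove the Caccioppoli inequality of Theorem~\ref{bigbadcaccioppoli} by combining the second-order Caccioppoli estimate of Lemma~\ref{zhong-l2} with the reverse-H\"older-type bound of Corollary~\ref{zhong-l3.1}, exactly as in Zhong's argument \cite{Zhong}, but tracking the extra $\eps$-dependent terms produced by the Riemannian approximation. The starting point is Lemma~\ref{zhong-l2}, which controls
$\int_B (\delta+|\nabla_\e u^\e|^2)^{\frac{Q-2+\beta}{2}} \sum_{i,j} |X_i^\e X_j^\e u^\e|^2 \eta^2$
by three terms: a ``bad'' term $C(\beta+1)^4\int_B (\delta+|\nabla_\e u^\e|^2)^{\frac{Q-2+\beta}{2}} |X_3 u^\e|^2\eta^2$, a term with the full gradient raised to the power $Q+\beta$, and the extra first-order term $\int_B\eta^2(\delta+|\nabla_\e u^\e|^2)^{\frac{Q+\beta+1}{2}}$ that originates from the $x$-dependence of the coefficients. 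The only genuine obstacle is the bad term, since it contains the ``degenerate direction'' $X_3 u^\e$ but to a power that does not close the estimate on its own.

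The key step is to absorb the bad term using Corollary~\ref{zhong-l3.1}. Since $|X_3^\e u^\e| = \eps |X_3 u^\e|$, and more crucially $|X_3^\e u^\e|\le \eps\sum_{i,j}|X_i^\e X_j^\e u^\e|$, one has
$\int_B (\delta+|\nabla_\e u^\e|^2)^{\frac{Q-2+\beta}{2}} |X_3 u^\e|^2\eta^2
= \eps^{-\beta}\int_B (\delta+|\nabla_\e u^\e|^2)^{\frac{Q-2+\beta}{2}} |X_3 u^\e|^{2}|X_3^\e u^\e|^{\beta-2}\cdot |X_3^\e u^\e|^{2-\beta}\cdots$; more directly, I would write the bad term as $\eps^{-\beta}\int_B (\delta+|\nabla_\e u^\e|^2)^{\frac{Q-2}{2}}|X_3^\e u^\e|^{\beta}\sum_{i,j}|X_i^\e X_j^\e u^\e|^2\eta^{\beta+2}$ up to harmless powers of $\eta$ (using $0\le\eta\le1$ to replace $\eta^2$ by $\eta^{\beta+2}$ and, in the other direction, estimating $|X_3 u^\e|^2\le \eps^{-\beta}|X_3^\e u^\e|^\beta\sum|X_i^\e X_j^\e u^\e|^2\cdot(\text{bounded})$ — the precise bookkeeping is routine). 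Then Corollary~\ref{zhong-l3.1} bounds exactly this quantity by
$\eps^\beta C^\beta(\beta+1)^4\|\nabla_\e\eta\|_\infty^\beta$ times $\big(\int_B(\delta+|\nabla_\e u^\e|^2)^{\frac{Q-2+\beta}{2}}\sum|X_i^\e X_j^\e u^\e|^2\eta^\beta + \int_B(\delta+|\nabla_\e u^\e|^2)^{\frac{Q+\beta}{2}}\eta^\beta\big)$. The two factors of $\eps^{-\beta}$ and $\eps^{\beta}$ cancel, so the bad term is controlled, uniformly in $\eps$ and $\delta$, by $C(\beta+1)^8\|\nabla_\e\eta\|_\infty^{\beta}$ times the same second-derivative integral (now with $\eta^\beta\le\eta^2$ absorbed after a further application of $0\le\eta\le1$, or rather one keeps $\eta^\beta$ and closes on a slightly smaller ball / uses the standard iteration) plus a $(Q+\beta)$-power gradient term.

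Putting these together, the second-derivative integral appears on both sides, with the coefficient on the right being $C(\beta+1)^8(\|\nabla_\e\eta\|_\infty+\|\eta X_3\eta\|_\infty)\cdot(\text{factor})$; the standard absorption argument (choosing the cutoff appropriately, or applying the iteration lemma of \cite[Lemma~6.1]{MR2604962}-type as in \cite{Zhong}) lets one move it to the left at the cost of replacing the exponent $\frac{Q-2+\beta}{2}$ by $\frac{Q+\beta}{2}$ in the supported integral, which is precisely the first term on the right-hand side of Theorem~\ref{bigbadcaccioppoli}. The remaining two contributions — the $(Q+\beta)$-power term from Lemma~\ref{zhong-l2} and from Corollary~\ref{zhong-l3.1}, together with the first-order $x$-dependence term $\int_B\eta^2(\delta+|\nabla_\e u^\e|^2)^{\frac{Q+\beta+1}{2}}$ — are already in the desired form and are simply carried along. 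I would remark explicitly that every constant depends only on $\lambda,\Lambda,Q$ (and on $\max_B|x_i|$, which is harmless on a fixed ball), and that no barrier functions or explicit solutions are used, so the estimate is stable as $\eps\to0$ and $\delta\to0$; this uniformity, rather than any single inequality, is the real content. I expect the main obstacle to be purely organizational: keeping the powers of $\eps$, $\eta$, $(\beta+1)$, and $(\delta+|\nabla_\e u^\e|^2)$ consistent through the absorption so that the $\eps$-powers cancel and the $(\beta+1)^8$ bound emerges, since Zhong's original scheme was calibrated to $\eps=0$ and the approximation reintroduces these factors at several places (Lemmas~\ref{zhong-l1}, \ref{zhong-l3}, and the estimates $|I_2|,|I_3|,|I_4|$ in the proof of Lemma~\ref{zhong-l3}).
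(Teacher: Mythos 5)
Your overall strategy --- feed the ``bad term'' of Lemma~\ref{zhong-l2} into Corollary~\ref{zhong-l3.1}, let the $\e^{-\beta}$ cancel against the $\e^{\beta}$ in that corollary, and then absorb the second-derivative integral --- is exactly the paper's. But the step you dismiss as ``routine bookkeeping'' is where the content lies, and as you describe it, it fails. You propose to rewrite $\int_B(\delta+|\nabla_\e u^\e|^2)^{\frac{Q-2+\beta}{2}}|X_3u^\e|^2\eta^2\,\d\mathcal{L}$ directly (pointwise, up to powers of $\eta$) as $\e^{-\beta}\int_B(\delta+|\nabla_\e u^\e|^2)^{\frac{Q-2}{2}}|X_3^\e u^\e|^\beta\sum_{i,j}|X_i^\e X_j^\e u^\e|^2\eta^{\beta+2}\,\d\mathcal{L}$. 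Since $\e^{-\beta}|X_3^\e u^\e|^\beta=|X_3u^\e|^\beta$, this amounts to the pointwise inequality $(\delta+|\nabla_\e u^\e|^2)^{\beta/2}\,|X_3u^\e|^{2-\beta}\lesssim\sum_{i,j}|X_i^\e X_j^\e u^\e|^2$, which is false for $\beta>2$ wherever $X_3u^\e$ is small while $|\nabla_\e u^\e|$ is not; moreover $0\le\eta\le1$ gives $\eta^{\beta+2}\le\eta^2$, the wrong direction for the upper bound you need. The correct step, which is the paper's, is H\"older's inequality with exponents $\tfrac{\beta+2}{2}$ and $\tfrac{\beta+2}{\beta}$:
\[
\int_B(\delta+|\nabla_\e u^\e|^2)^{\frac{Q-2+\beta}{2}}|X_3u^\e|^2\eta^2\,\d\mathcal{L}\le\Big(\int_B(\delta+|\nabla_\e u^\e|^2)^{\frac{Q-2}{2}}|X_3u^\e|^{\beta+2}\eta^{\beta+2}\,\d\mathcal{L}\Big)^{\frac{2}{\beta+2}}\Big(\int_{\supp(\eta)}(\delta+|\nabla_\e u^\e|^2)^{\frac{Q+\beta}{2}}\,\d\mathcal{L}\Big)^{\frac{\beta}{\beta+2}},
\]
and only inside the first factor is the pointwise bound $|X_3u^\e|^{\beta+2}\le\e^{-\beta}|X_3^\e u^\e|^\beta\big(\sum_{i,j}|X_i^\e X_j^\e u^\e|\big)^2$ legitimate, so that Corollary~\ref{zhong-l3.1} applies.

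This H\"older step is not cosmetic: it is what makes your final absorption possible. After Corollary~\ref{zhong-l3.1} the second-derivative integral reappears with the coefficient $C^\beta(\beta+1)^4\|\nabla_\e\eta\|_{L^\infty(B)}^\beta$, which is large, not small; if it entered linearly, as in your sketch, it could not be moved to the left-hand side, and neither a smaller ball nor an iteration lemma rescues a linear term with a large coefficient. In the paper it enters raised to the power $\tfrac{2}{\beta+2}<1$, so Young's inequality $a^{2/(\beta+2)}b^{\beta/(\beta+2)}\le\tau a+C_\tau b$ assigns it an arbitrarily small coefficient $\tau$ and converts the large constant into the stated factor $C(\beta+1)^8(\|\nabla_\e\eta\|_{L^\infty(B)}+\|\eta X_3\eta\|_{L^\infty(B)})$ in front of the $(Q+\beta)$-energy, while the $x$-dependence term $\int_B\eta^2(\delta+|\nabla_\e u^\e|^2)^{\frac{Q+\beta+1}{2}}\,\d\mathcal{L}$ is carried along as you say.
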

\begin{proof} 
We use H\"older inequality and Corollary \ref{zhong-l3.1} to obtain
\begin{multline*}
 \int_B (\delta+|\nabla_\e u^\e|^2)^{\frac{Q-2+\beta}{2}} |X_3 u^\e|^2 \eta^2  \d\mathcal{L} \\
 \le  \bigg(
 \int_B (\delta+|\nabla_\e u^\e|^2)^{\frac{Q-2}{2}} |X_3 u|^{\beta+2} \eta^{\beta+2} \d\mathcal{L}\bigg)^{\frac{2}{\beta+2}}\bigg(
\int_{\supp(\eta)}  (\delta+|\nabla_\e u^\e|^2)^{\frac{Q+\beta}{2}}  \d\mathcal{L}\bigg)^{\frac{\beta}{\beta+2}}
\\
\le  \bigg(
\e^{-\beta}  \int_B  (\delta+|\nabla_\e u^\e|^2)^{\frac{Q-2}{2}} |X_3^\e u|^{\beta}  \sum_{i,j=1}^{3} |X_i^\e X_j^\e u^\e|^2 \eta^{\beta+2} \d\mathcal{L}\bigg)^{\frac{2}{\beta+2}}\bigg(
\int_{\supp(\eta)}  (\delta+|\nabla_\e u^\e|^2)^{\frac{Q+\beta}{2}}  \d\mathcal{L}\bigg)^{\frac{\beta}{\beta+2}}
\\
\le
  \Bigg[ C^\beta(\beta+1)^{4}  ||\nabla_\e \eta||_{L^\infty(B)}^\beta
\bigg(
 \int_B
 (\delta+|\nabla_\e u^\e|^2)^{\frac{Q-2+\beta}{2}} \sum_{i,j=1}^{3}  |X_i^\e X_j^\e u^\e|^2 \eta^\beta  \d\mathcal{L}
 \\+ \int_B (\delta+|\nabla_\e u^\e|^2)^{\frac{Q+\beta}{2}} \eta^{\beta}  d\mathcal{L} \bigg)
 \Bigg]^{\frac{2}{\beta+2}} \bigg(
\int_{\supp(\eta)}  (\delta+|\nabla_\e u^\e|^2)^{\frac{Q+\beta}{2}}  \d\mathcal{L}\bigg)^{\frac{\beta}{\beta+2}}.
\end{multline*}
Recalling Lemma \ref{zhong-l2} the previous estimate then yields
\begin{multline*}
\int_B (\delta+|\nabla_\e u^\e|^2)^{\frac{Q-2+\beta}{2}} \sum_{i,j=1}^{3} |X_i^\e X_j^\e u^\e|^2  \eta^2  \d\mathcal{L} \le C (\beta+1)^4 \int_B (\delta+|\nabla_\e u^\e|^2)^{\frac{Q-2+\beta}{2}} |X_3 u^\e|^2 \eta^2  \d\mathcal{L} \\+
C \int_B (\eta^2+|\nabla_\e \eta|^2) (\delta+|\nabla_\e u^\e|^2)^{\frac{Q+\beta}{2}}  \d\mathcal{L}
+ C \int_B  \eta^2 (\delta+|\nabla_\e u^\e|^2)^{\frac{Q+\beta+1}{2}}  \d\mathcal{L}
\\
\le 
C (\beta+1)^4 \Bigg[ C^\beta(\beta+1)^{4}  ||\nabla_\e \eta||_{L^\infty(B)}^\beta
\bigg(
 \int_B
 (\delta+|\nabla_\e u^\e|^2)^{\frac{Q-2+\beta}{2}} \sum_{i,j=1}^{3}  |X_i^\e X_j^\e u^\e|^2 \eta^\beta  \d\mathcal{L}
 \\+ \int_B (\delta+|\nabla_\e u^\e|^2)^{\frac{Q+\beta}{2}} \eta^{\beta}  d\mathcal{L} \bigg)
 \Bigg]^{\frac{2}{\beta+2}} \bigg(
\int_{\supp(\eta)}  (\delta+|\nabla_\e u^\e|^2)^{\frac{Q+\beta}{2}}  \d\mathcal{L}\bigg)^{\frac{\beta}{\beta+2}}
+ C \int_B  \eta^2 (\delta+|\nabla_\e u^\e|^2)^{\frac{Q+\beta+1}{2}}  \d\mathcal{L}.
\end{multline*}
The conclusion follows immediately from the latter and from Young inequality.
\end{proof}

\bigskip

\bigskip

\begin{lemma}
 Let  $u^\e\in W^{1,Q}_{\e, loc}(B)\cap C^{\infty}(B)$ be the unique  solution of \eqref{zhong-diri}. For every $\beta \ge 2$  set $w=(\delta+|\nabla_\e u^\e|^2)^{\frac{Q+\beta}{4}}$.
 If  $\eta\in C^{\infty}_0(B)$ with $0\le \eta \le 1$,  and $\kappa=Q/(Q-2)$, then one has
\begin{multline*}
\bigg(\int_B w^{2\kappa}  \eta^2  \d\mathcal{L} \bigg)^{\frac{1}{\kappa}}
\le C (\beta+1)^{8} (||\nabla_\e \eta||_{L^\infty(B)}+ ||\eta X_3 \eta||_{L^\infty(B)})\int_{\supp(\eta)}  w^2  \d\mathcal{L},
\end{multline*}
where $C>0$ is a constant depending only on $Q$.
\end{lemma}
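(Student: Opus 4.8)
The statement is a Caccioppoli-type inequality promoted to a reverse H\"older inequality via Sobolev embedding; the plan is to combine Theorem~\ref{bigbadcaccioppoli} (the "big bad Caccioppoli inequality") with the subelliptic Sobolev inequality on the Heisenberg group $\mathbb H^1$ (equipped with the $g_\e$ structure), taking care that the constants are stable as $\e\to0$. First I would fix $\beta\ge2$, set $w:=(\delta+|\nabla_\e u^\e|^2)^{\frac{Q+\beta}{4}}$, and observe that the left-hand side of the desired inequality is controlled by the subelliptic Sobolev inequality: for a function $v$ compactly supported in $B$ one has $\|v\|_{L^{2\kappa}}\le C\|\nabla_\e v\|_{L^2}$ with $\kappa=Q/(Q-2)$, and this constant is uniform in $\e$ by the stability of the Poincar\'e inequality and of the doubling constant of $(\mathbb H^1,g_\e)$ (cited via \cite{CCR}, as already used in the text preceding Proposition~\ref{zhong-prop}). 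Applying this to $v=w\eta$ gives
$$
\Big(\int_B (w\eta)^{2\kappa}\,\d\mathcal{L}\Big)^{1/\kappa}\le C\int_B |\nabla_\e(w\eta)|^2\,\d\mathcal{L}\le C\int_B |\nabla_\e w|^2\eta^2\,\d\mathcal{L}+C\int_B w^2|\nabla_\e\eta|^2\,\d\mathcal{L}.
$$

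The second term on the right is already of the desired form. For the first term I would compute $|\nabla_\e w|^2$: since $w=(\delta+|\nabla_\e u^\e|^2)^{\frac{Q+\beta}{4}}$, the chain rule yields
$$
|\nabla_\e w|^2\le C(Q+\beta)^2(\delta+|\nabla_\e u^\e|^2)^{\frac{Q+\beta}{2}-1}\,|\nabla_\e u^\e|^2\,\sum_{i,j}|X_i^\e X_j^\e u^\e|^2\le C(\beta+1)^2(\delta+|\nabla_\e u^\e|^2)^{\frac{Q+\beta-2}{2}}\sum_{i,j}|X_i^\e X_j^\e u^\e|^2,
$$
so that $\int_B|\nabla_\e w|^2\eta^2\,\d\mathcal{L}$ is bounded, up to $C(\beta+1)^2$, by exactly the quantity estimated in Theorem~\ref{bigbadcaccioppoli}. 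Feeding in that theorem gives a bound by
$$
C(\beta+1)^{10}\big(\|\nabla_\e\eta\|_{L^\infty(B)}+\|\eta X_3\eta\|_{L^\infty(B)}\big)\int_{\supp(\eta)}(\delta+|\nabla_\e u^\e|^2)^{\frac{Q+\beta}{2}}\,\d\mathcal{L}
$$
plus the extra term $C\int_B\eta^2(\delta+|\nabla_\e u^\e|^2)^{\frac{Q+\beta+1}{2}}\,\d\mathcal{L}$. Recognizing that $(\delta+|\nabla_\e u^\e|^2)^{\frac{Q+\beta}{2}}=w^2$ and absorbing the power-of-$\beta$ factors (the final constant is allowed to depend on $\beta$ through the statement's $C(\beta+1)^8$, though one should track that the exponent matches — if the bookkeeping gives $(\beta+1)^{10}$ one reconciles by noting $\|\nabla_\e\eta\|_{L^\infty}$ can be taken $\ge1$ and re-examining the exponent count in Theorem~\ref{bigbadcaccioppoli}), one arrives at the claimed inequality.

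\textbf{Main obstacle.} The delicate points are two. First, the lower-order term $\int_B\eta^2(\delta+|\nabla_\e u^\e|^2)^{\frac{Q+\beta+1}{2}}\,\d\mathcal{L}$ appearing in Theorem~\ref{bigbadcaccioppoli} has a higher power of $(\delta+|\nabla_\e u^\e|^2)$ than $w^2$; to absorb it into $\int_{\supp\eta}w^2\,\d\mathcal{L}$ one must already know an a priori $L^\infty$ bound on $|\nabla_\e u^\e|$ on $\supp(\eta)$ — which is precisely the uniform Lipschitz estimate (Theorem~\ref{lipschitzestimate}) being bootstrapped, so the logical order of the iteration must be respected, or the term carried along and dealt with at the Moser iteration stage. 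Second, and most importantly, every constant must be shown independent of $\e$ and $\delta$: this rests on the $\e$-uniform structure conditions \eqref{zhong-structure}--\eqref{zhong-structure2}, the $\e$-stable Sobolev/Poincar\'e inequalities on $(\mathbb H^1,g_\e)$, and the fact that the crucial inequality $|X_3^\e u^\e|\le\e\sum_{i,j}|X_i^\e X_j^\e u^\e|$ (which compensates the apparently-unbounded $X_3$ terms) was used consistently in deriving Theorem~\ref{bigbadcaccioppoli}. Assembling these is routine given the preceding lemmata, so the proof is short; the burden has already been discharged in Theorem~\ref{bigbadcaccioppoli}.
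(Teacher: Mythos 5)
Your overall architecture --- the $\e$-uniformly-constanted subelliptic Sobolev inequality applied to $v=\eta w$, followed by the chain rule $|\nabla_\e w|^2\le C(\beta+1)^2(\delta+|\nabla_\e u^\e|^2)^{\frac{Q-2+\beta}{2}}\sum_{i,j}|X_i^\e X_j^\e u^\e|^2$ and Theorem~\ref{bigbadcaccioppoli} --- is exactly the paper's route, and the $(\beta+1)$-exponent bookkeeping you flag is harmless for the subsequent Moser iteration. The genuine gap is in the term you yourself single out as delicate, namely
\[
C\int_B\eta^2(\delta+|\nabla_\e u^\e|^2)^{\frac{Q+\beta+1}{2}}\,\d\mathcal{L}
=C\int_B w^{2+\frac{2}{Q+\beta}}\,\eta^2\,\d\mathcal{L}.
\]
Neither of your two proposed resolutions closes it. Invoking an $L^\infty$ bound on $|\nabla_\e u^\e|$ is circular: this lemma is precisely the reverse H\"older input to the Moser iteration that \emph{proves} Theorem~\ref{lipschitzestimate}, so the Lipschitz bound is not yet available. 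And ``carrying the term along to the iteration stage'' does not prove the lemma as stated, which is the form the iteration machinery (e.g.\ \cite[Theorem 8.18]{MR1814364}, \cite[Lemma 3.38]{HKM}) consumes.

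The paper's resolution uses only the a priori $L^Q$ energy bound $\|\nabla_\e u^\e\|_{L^Q}\le M$ (recorded just before Proposition~\ref{zhong-prop}) together with the freedom to take $|B|$ small. Write $w^{2+\frac{2}{Q+\beta}}\eta^2=w^{\frac{2}{Q+\beta}}\cdot(w\eta)^2$ and apply H\"older with exponents $Q$ and $\tfrac{Q}{Q-1}$. Since $w^{\frac{2Q}{Q+\beta}}=(\delta+|\nabla_\e u^\e|^2)^{\frac{Q}{2}}$, the first factor is $\bigl(\int_B(\delta+|\nabla_\e u^\e|^2)^{\frac{Q}{2}}\bigr)^{1/Q}$, bounded independently of $\beta,\e,\delta$; the second factor $\bigl(\int_B|w\eta|^{\frac{2Q}{Q-1}}\bigr)^{\frac{Q-1}{Q}}$ is, by H\"older again on the finite-measure set $B$ (using $\tfrac{2Q}{Q-1}<2\kappa$), at most $\bigl(\int_B|w\eta|^{2\kappa}\bigr)^{1/\kappa}$ times a positive power of $|B|$. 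Choosing $|B|$ small enough, this multiple of $\bigl(\int_B|w\eta|^{2\kappa}\bigr)^{1/\kappa}$ is then absorbed into the left-hand side of the Sobolev inequality, which yields the stated estimate without any recourse to the gradient's sup norm. Without some version of this absorption step your argument does not establish the lemma.
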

\begin{proof}
It is well known that the Sobolev constant depends only on the constants in the Poincare' inequality and in the doubling inequality, both of which are stable in this Riemannian approximation scheme (see \cite{CCR}). Applying Sobolev inequality   yields
\begin{multline*}
\bigg(\int_B  |\eta w|^{2\kappa}  \d\mathcal{L}\bigg)^{\frac{1}{\kappa}} \le
C \int_B|\nabla_\e (\eta w)|^2 \eta^2  \d\mathcal{L} 
\le C ||\nabla_\e \eta||_{L^\infty(B) } \int_B w^2   \d\mathcal{L} + C\int_B |\nabla_\e w|^2 \eta^2  \d\mathcal{L},
\end{multline*}
for $\kappa=Q/(Q-2)$ and some constant $C$ depending only on $Q$.  Invoking Theorem \ref{bigbadcaccioppoli} we arrive at 
\begin{multline}\label{blahblah}
\bigg(\int_B  |\eta w|^{2\kappa}  \d\mathcal{L}\bigg)^{\frac{1}{\kappa}} \le
 C ||\nabla_\e \eta||_{L^\infty(B) } \int_B w^2   \d\mathcal{L} 
 \\ + C(\beta+1)^{8} (||\nabla_\e \eta||_{L^\infty(B)}+ ||\eta X_3 \eta||_{L^\infty(B)})\int_{\supp(\eta)}  (\delta+|\nabla_\e u^\e|^2)^{\frac{Q+\beta}{2}}  \d\mathcal{L}
\\+ C \int_B  \eta^2 (\delta+|\nabla_\e u^\e|^2)^{\frac{Q+\beta+1}{2}}  \d\mathcal{L}
\\\le C'  (\beta+1)^{8} (||\nabla_\e \eta||_{L^\infty(B)}+ ||\eta X_3 \eta||_{L^\infty(B)})\int_{\supp(\eta)}  w^2  \d\mathcal{L} + C' \int_B w^{2+\frac{2}{Q+\beta}}  \eta^2 \d\mathcal{L}.
\end{multline}
Using H\"older inequality one has  
\begin{multline*}
 \int_B w^{2+\frac{2}{Q+\beta}}  \eta^2 \d\mathcal{L} \le \bigg(\int_B w^{\frac{2}{Q+\beta} Q} \d\mathcal{L} \bigg)^{\frac{1}{Q}} \bigg( \int_B |w \eta|^{\frac{2Q}{Q-1}} \d\mathcal{L}
 \bigg)^{\frac{Q-1}{Q}}
  \\  \le  \bigg(
 \int_B  \eta^2 (\delta+|\nabla_\e u^\e|^2)^{\frac{Q}{2}}  \d\mathcal{L}
  \bigg)^{\frac{1}{Q}}
 \bigg(\int_B  |\eta w|^{2\kappa}  \d\mathcal{L}\bigg)^{\frac{1}{\kappa}} |B|^{\frac{1}{Q-1}}.
\end{multline*}
Since we can choose $|B|<1$, we can bring the first term on the left  hand side of \eqref{blahblah} to conclude the proof of the lemma.
\end{proof}

The proof of Theorem \ref{lipschitzestimate} now follows in a standard fashion, as described in \cite{Zhong},  from the Moser iteration scheme (see for instance \cite[Theorem 8.18]{MR1814364}) and from \cite[Lemma 3.38]{HKM}. Note that the constant involved in such iteration are stable as $\e\to 0$ (see \cite{CCR}).

\bigskip

\subsubsection{Uniform $C^{1,\al}_\e$ regularity} Throughout this section we will implicitly use the uniform (in $\e$) local Lipschitz regularity of solutions of \eqref{zhong-diri} and set for every $B(x_0,2 r_0)\subset B$, $k\in \R$, $l=1,2,3$, and $0<r<r_0/4<1$,
\begin{multline*}
\mu^\e(r)={\rm osc}_{B(x_0,r)} |\nabla_\e u^\e|; A_{l,k,r}^-=\{x\in B(x_0,r) \text{ such that } X_l^\e u^\e<k \} \\  \ \text{ and } A_{l,k,r}^+=\{x\in B(x_0,r) \text{ such that } X_l^\e u^\e>k \}.\end{multline*}

The proof of Proposition \ref{zhong-prop} and in particular of the $C^{1,\alpha}$ estimate
in part (1)
follows immediately from the following theorem, which is the main result of the section:
\begin{theorem}\label{finale}
 Let  $u^\e\in W^{1,Q}_{\e, {\rm loc}}(B)\cap C^{\infty}(B)$ be the unique  solution of \eqref{zhong-diri}.
 There exists a constant $s>0$ depending only on $Q,\lambda, \Lambda, r_0$ such that 
 $$\mu(r)\le (1-2^{-s}) \mu(4r) + 2^s(\delta+\mu(r_0)^2)^{\frac{Q}{2}} \bigg(\frac{r}{r_0}\bigg)^{\frac{1}{Q}},$$
 for all $0<r<r_0/8$.
\end{theorem}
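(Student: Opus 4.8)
The statement is the classical oscillation-decay estimate for the gradient of solutions of the regularized equation \eqref{zhong-e-eq}, and the plan is to follow Zhong's scheme \cite{Zhong}, keeping careful track of the $\e$-dependence so that all constants are stable as $\e\to 0$. The starting point is the uniform Lipschitz bound from Theorem~\ref{lipschitzestimate}, so that $|\nabla_\e u^\e|$ is controlled on $B$ by a constant independent of $\e$; in particular $\mu^\e(r_0)$ is finite and the quantities $\mu^\e(r)$ are monotone in $r$. Fix a ball $B(x_0,2r_0)\subset B$. For each $l=1,2,3$ the function $v_l=X_l^\e u^\e$ is a weak (sub/super)solution, up to the lower-order ``bad'' terms involving $X_3$ and the $x$-dependence of $A_i^{\e,\delta}$, of a uniformly elliptic equation in non-divergence-ready form, by \eqref{zhong-d-eq1}--\eqref{zhong-d-eq3} and the structure conditions \eqref{zhong-structure}--\eqref{zhong-structure2}. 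The idea is to run a De Giorgi / density argument on the level sets $A_{l,k,r}^\pm$: either the gradient is already close to a constant in $B(x_0,r)$ (so the oscillation has decayed), or on a definite proportion of $B(x_0,4r)$ one of the $v_l$ stays away from its extremum, and then a Caccioppoli-type inequality plus a measure-theoretic lemma (De Giorgi's isoperimetric inequality, in the doubling setting; cf.\ \cite[Lemma 3.38]{HKM}) forces a fixed fractional decay of the oscillation when passing from radius $4r$ to $r$.

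\textbf{Key steps, in order.} First I would record, for each $l$ and each level $k$, the Caccioppoli inequalities for the truncations $(v_l-k)^\pm$, obtained by testing the differentiated equations \eqref{zhong-d-eq1}--\eqref{zhong-d-eq3} with $\eta^2(v_l-k)^\pm$; here one uses \eqref{zhong-structure}, Young's inequality, and the fundamental observation $|X_3^\e u^\e|\le \e\sum_{i,j}|X_i^\e X_j^\e u^\e|$ to absorb the dangerous $X_3$-terms, exactly as in Lemmata \ref{zhong-l1}--\ref{zhong-l3}, Corollary~\ref{zhong-c3} and Theorem~\ref{bigbadcaccioppoli}. The extra integrals that appear only because of the $x$-dependence of $A_i^{\e,\delta}$ are of lower order by \eqref{zhong-structure2} and are absorbed after using the Lipschitz bound on $|\nabla_\e u^\e|$ and the smallness of $|B|$. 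Second, from these Caccioppoli inequalities one derives, via Sobolev embedding (with constants stable under the Riemannian approximation, \cite{CCR}), a decay of the measure $|A_{l,k,r}^\pm|$ as $k$ moves toward the essential extremum: this is the De Giorgi iteration, producing an $L^\infty$ bound on $(v_l-k)^\pm$ in a smaller ball in terms of its $L^2$ norm in a larger ball plus the perturbative terms. Third, one uses the logarithmic/density lemma: if $|A_{l,k_0,4r}^\pm|$ is comparable to $|B(x_0,4r)|$ for $k_0$ the midpoint level, then iterating gives that $v_l$ is bounded away from that extremum by a fixed fraction of $\mu^\e(4r)$ throughout $B(x_0,r)$. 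Combining the three gradient components and summing the contributions of $\delta$ and of the $x$-dependence (which is why the term $(\delta+\mu(r_0)^2)^{Q/2}(r/r_0)^{1/Q}$ appears on the right) yields exactly the asserted inequality
$$\mu(r)\le (1-2^{-s})\mu(4r)+2^s(\delta+\mu(r_0)^2)^{\frac{Q}{2}}\Big(\frac{r}{r_0}\Big)^{\frac{1}{Q}}$$
for $0<r<r_0/8$, with $s=s(Q,\lambda,\Lambda,r_0)$. An elementary iteration lemma then converts this into the uniform $C^{1,\alpha}_\e$ bound of Proposition~\ref{zhong-prop}(1), but that conversion is a separate (standard) step and not part of the present statement.

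\textbf{Main obstacle.} The crux, and the only place where our setting genuinely departs from \cite{Zhong}, is ensuring that \emph{every} constant produced along the way — in the Caccioppoli inequalities, in the Sobolev and Poincar\'e inequalities, in the doubling constant, and in the De Giorgi iteration — is independent of $\e$ and of $\delta$, and that the new $x$-dependent error terms coming from \eqref{zhong-structure2} can always be bounded by $(\delta+|\nabla_\e u^\e|^2)^{(Q+\beta+1)/2}$ and absorbed without deteriorating the constants. The stability of the functional inequalities for $(\mathbb H^n,g_\e)$ is exactly what \cite{CCR} provides, and the absorption of the $x$-terms relies systematically on $|X_3^\e u^\e|\le \e\sum_{i,j}|X_i^\e X_j^\e u^\e|$ together with the a~priori Lipschitz bound; once these two mechanisms are in place, the rest of Zhong's argument transfers essentially verbatim, which is why we only indicate the modifications rather than reproduce the full iteration.
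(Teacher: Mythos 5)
Your proposal is correct and follows essentially the same route as the paper: uniform Lipschitz bounds, Caccioppoli inequalities on the level sets $A^\pm_{l,k,r}$ for the truncated gradient components obtained by testing the differentiated equations, absorption of the $X_3$-terms via $|X_3^\e u^\e|\le\e\sum_{i,j}|X_i^\e X_j^\e u^\e|$, reduction to De Giorgi classes through the density dichotomy of DiBenedetto type, and the standard oscillation decay for such classes, all with constants stable in $\e$ and $\delta$. The only place where your sketch understates the work is the control of the mixed term $\int(\delta+|\nabla_\e u^\e|^2)^{\frac{Q-2}{2}}|\nabla_\e X_3 u^\e|\,\omega_l\,\eta^2$ in the level-set Caccioppoli inequality, which in the paper requires the separate iterative bootstrap of Lemma~\ref{step-1-00} on the quantities $G_\beta$ and $F_\beta$ rather than a direct application of Lemmata~\ref{zhong-l1}--\ref{zhong-l3}.
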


Our first step in the proof of this theorem  consists in establishing a Caccioppoli inequality, in Proposition \ref{caccio-sub} for second order derivatives on super level sets $A_{l,k,r}^+$. This result will imply that the gradient $\nabla_\e u^\e$ is in a De Giorgi-type class and then Theorem \ref{finale} will follow from well known results in the literature.

We begin with some preliminary lemmata. We indicate by $|A|$ the Lebesque measure $\mathcal{L}(A)$ of a set $A$.

\begin{lemma}\label{caccio-sub-0}  Let  $u^\e\in W^{1,Q}_{\e, {\rm loc}}(B)\cap C^{\infty}(B)$ be the unique  solution of \eqref{zhong-diri}. For any $q\ge 4$ there exists a positive constant $C$ depending only on $q,\lambda, \Lambda$ such that for all $k\in \R$, $l=1,2,3$ and $0<r'<r<r_0/2$, $\eta\in C^{\infty}_0(B(x_0,r))$ such that $\eta=1$ on $B(x_0,r')$  one has
\begin{multline}\label{z4.3-00}
\int_{A^+_{l,k,r'}} (\delta+|\nabla_\e u^\e|^2)^{\frac{Q-2}{2}} |\nabla_\e \omega_l|^2 \eta^2 \d\mathcal{L}
\\ \le \int_{A^+_{l,k,r}} (\delta+|\nabla_\e u^\e|^2)^{\frac{Q-2}{2}} |\omega_l|^2 |\nabla_\e \eta|^2\d\mathcal{L}
+C(\delta+\mu(r_0)^2)^{\frac{Q}{2}} |A_{l,k,r}^+|^{1-\frac{2}{q}} + I_3
\end{multline}
where we have set $\omega_l=(X_l^\e u^\e-k)^+$ and 
\begin{equation}\label{int3}
I_3= \int_{B(x_0,r)} (\delta+|\nabla_\e u^\e|^2)^{\frac{Q-2}{2}} |\nabla_\e X_3 u^\e| |\omega_1|  \eta^2  \d\mathcal{L}.
\end{equation}
\end{lemma}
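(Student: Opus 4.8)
The statement is a Caccioppoli-type inequality for the functions $\omega_l = (X_l^\e u^\e - k)^+$ on super-level sets, obtained by testing the differentiated equations \eqref{zhong-d-eq1}--\eqref{zhong-d-eq3}. The natural test function is $\phi = \omega_l \eta^2$ (for $l=1,2,3$), which vanishes outside $A^+_{l,k,r}$ and equals $\omega_l$ where $\eta = 1$; note that $X_j^\e \omega_l = X_j^\e X_l^\e u^\e$ on the set $\{X_l^\e u^\e > k\}$ and $\omega_l$ is Lipschitz, so this is an admissible test function after the usual mollification. First I would substitute $\phi = \omega_l \eta^2$ into the weak form of the equation obtained by differentiating \eqref{zhong-e-eq} along $X_l^\e$, moving the ellipticity term $\sum_{i,j} A^{\e,\delta}_{i,\xi_j}(x,\nabla_\e u^\e)\, X_j^\e \omega_l\, X_i^\e \omega_l\, \eta^2$ to the left-hand side and bounding it below by $\lambda (\delta + |\nabla_\e u^\e|^2)^{(Q-2)/2}|\nabla_\e \omega_l|^2 \eta^2$ using the lower bound in \eqref{zhong-structure}. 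This produces the leading term on the left of \eqref{z4.3-00}.

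The right-hand side splits into several groups of error terms: (a) the cross term $2\sum A^{\e,\delta}_{i,\xi_j} X_j^\e \omega_l \cdot \omega_l\, \eta\, X_i^\e \eta$, which by the upper bound in \eqref{zhong-structure} and Young's inequality is absorbed — half into the left side and half into the term $\int_{A^+_{l,k,r}} (\delta+|\nabla_\e u^\e|^2)^{(Q-2)/2} |\omega_l|^2 |\nabla_\e \eta|^2\,\d\mathcal L$; (b) the terms coming from the commutator contributions (those containing $X_3$ acting on $u^\e$ or $A_j^{\e,\delta}$ and the mixed $A_{i,\xi_2}^{\e,\delta} X_3 u^\e$, $A_{i,\xi_1}^{\e,\delta} X_3 u^\e$ pieces in \eqref{zhong-d-eq1}--\eqref{zhong-d-eq2}), which after using $|X_3^\e u^\e| \le \e \sum_{i,j}|X_i^\e X_j^\e u^\e|$ and integration by parts in $X_3$ produce the term $I_3$ in \eqref{int3} plus quantities controlled by $(\delta+\mu(r_0)^2)$-weighted integrals; (c) the terms from the explicit $x$-dependence of $A_i^{\e,\delta}$, i.e. those involving $\partial_{x_j} A_i^{\e,\delta}$, which by \eqref{zhong-structure2} are bounded by $\Lambda(\delta+|\nabla_\e u^\e|^2)^{(Q-1)/2}$ times lower-order quantities; since $|\nabla_\e u^\e| \le \mu(r_0)$ plus a constant on $B(x_0, r_0)$ by the (already established, uniform-in-$\e$) Lipschitz bound, all such non-gradient terms are controlled by $(\delta + \mu(r_0)^2)^{Q/2}$ times the measure $|A^+_{l,k,r}|$ raised to a power $1 - 2/q$ obtained from Hölder's inequality with exponent $q/2$ (using that $\eta$ is supported in $B(x_0,r)$, that the integrand is supported in $A^+_{l,k,r}$, and that any fixed power of $(\delta + |\nabla_\e u^\e|^2)$ is bounded). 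Collecting (a)--(c) gives exactly \eqref{z4.3-00} with the constant $C$ depending only on $q, \lambda, \Lambda$ via the structure constants and the exponents in Young's inequality.

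The main obstacle, as flagged in the remark after \eqref{zhong-d-eq3}, is that the $X_3$-terms are not bounded in the $g_\e$-metric as $\e \to 0$: a factor $\e^{-1}$ appears whenever one writes $X_3 = \e^{-1} X_3^\e$. The resolution — following Zhong's scheme and the treatment in Lemma \ref{zhong-l3} — is to never estimate $X_3$-derivatives by brute force but always to integrate by parts in $X_3$ (moving it onto a smooth cutoff or onto another factor) and to use the key inequality $|X_3^\e u^\e| \le \e \sum_{i,j}|X_i^\e X_j^\e u^\e|$ to trade the bad $\e^{-1}$ against the good $\e$ from $X_3 = \e X_3^\e \cdot \e^{-1}$... more precisely, to exploit that the Lemma \ref{zhong-l1} bound on $\int (\delta+|\nabla_\e u^\e|^2)^{(Q-2)/2}|\nabla_\e v_3|^2\eta^2$ carries an explicit $\e^2$ in front of the worst term, so the dangerous contributions reorganize into either $I_3$ (which is deliberately left on the right-hand side, to be handled in a subsequent lemma) or into the $(\delta+\mu(r_0)^2)^{Q/2}|A^+_{l,k,r}|^{1-2/q}$ term. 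Once this bookkeeping is done carefully, choosing the Young's-inequality parameter $\tau$ small enough to absorb the gradient-squared terms into the left side, the inequality follows; the only genuinely delicate point is to verify that every constant produced is independent of $\e$ and $\delta$, which is guaranteed because \eqref{zhong-structure} and \eqref{zhong-structure2} hold with $\lambda, \Lambda$ uniform in these parameters.
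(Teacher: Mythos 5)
Your skeleton is the paper's: test the weak form of the equation differentiated along $X_l^\e$ with $\phi=\omega_l\eta^2$, bound the ellipticity term from below via \eqref{zhong-structure} to get the left-hand side, absorb the $\nabla_\e\eta$ cross term by Young's inequality, keep $I_3$ untouched, and control the $x$-dependence terms through \eqref{zhong-structure2}. Your groups (a)--(c) correspond to the paper's terms $I_1,\dots,I_4$.

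There is, however, a genuine gap in how you dispose of the term $\int_B(\delta+|\nabla_\e u^\e|^2)^{\frac{Q-2}{2}}|X_3 u^\e|^2\eta^2\,\d\mathcal{L}$ (the paper's $I_2$), which comes from the pieces $A^{\e,\delta}_{i,\xi_2}(x,\nabla_\e u^\e)\,X_3u^\e\,X_i^\e(\omega_1\eta^2)$ and $X_3A_2^{\e,\delta}$ in \eqref{zhong-d-eq1}. You claim the exponent $1-2/q$ arises from a Hölder step together with the observation that ``any fixed power of $(\delta+|\nabla_\e u^\e|^2)$ is bounded'' by the uniform Lipschitz estimate. That reasoning fails here: the Lipschitz estimate controls $|\nabla_\e u^\e|$, hence only gives $|X_3u^\e|=\e^{-1}|X_3^\e u^\e|\le\e^{-1}\sup|\nabla_\e u^\e|$, so $|X_3u^\e|^2$ is \emph{not} bounded uniformly in $\e$ and cannot be folded into $(\delta+\mu(r_0)^2)^{Q/2}$. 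This single term is where the lemma actually earns its stated form: after Hölder with exponents $q/(q-2)$ and $q/2$ one is left with $\bigl(\int(\delta+|\nabla_\e u^\e|^2)^{\frac{Q-2}{2}}|X_3u^\e|^q\eta^2\,\d\mathcal{L}\bigr)^{2/q}$, and this $q$-th moment of the vertical derivative must be bounded by invoking Corollary \ref{zhong-l3.1} together with Theorem \ref{bigbadcaccioppoli}, in which the factors $\e^\beta$ and the pointwise inequality $|X_3^\e u^\e|\le\e\sum_{i,j}|X_i^\e X_j^\e u^\e|$ conspire to cancel the dangerous powers of $\e^{-1}$. That chain is what produces both the exponent $1-2/q$ and the need for $q\ge4$; your appeal to Lemma \ref{zhong-l1} alone does not reach it, since that lemma controls $\nabla_\e X_3^\e u^\e$ rather than high moments of $X_3u^\e$ itself. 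Without citing (or reproving) those weighted reverse-Hölder estimates at this step, the proof is incomplete.
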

\begin{proof} As above, we study the case $l=1$, since $l=2,3$ is similar.
Select a  cut-off function $\eta\in C^{\infty}_0(B(x_0,r))$ such that $\eta=1$ on $B(x_0,r')$ and $|\nabla_\e \eta|\le M(r-r')^{-1}$, for some $M>0$ independent of $\e$. Substitute $\phi=\eta^2 \omega_1$ in the weak form of \eqref{zhong-d-eq1} to obtain
\begin{multline*}
\int_B A^{\e, \delta}_{i,\xi_j}(x, \nabla_\e u^\e) X_j^\e X_1^\e u^\e X_i^\e \omega_1 \eta^2  \d\mathcal{L} = 
-2 \int_B A^{\e, \delta}_{i,\xi_j}(x, \nabla_\e u^\e) X_j^\e X_1^\e u^\e X_i^\e \eta \eta \omega_1  \d\mathcal{L}
\\
- \int_B A^{\e,\delta}_{i,\xi_2}(x, \nabla_\e u^\e) X_3 u^\e X_i^\e (\omega_1 \eta^2 ) \d\mathcal{L}
\\
+ \int_B X_3 A_i^{\e, \delta} (x, \nabla_\e u^\e)  \eta^2 \omega_1 d\mathcal{L}
\\
- \int_B  \bigg(A^{\e,\delta}_{i,x_1}(x,\nabla_\e u^\e) -\frac{x_2}{2}A^{\e,\delta}_{i,x_3}(x,\nabla_\e u^\e)  \bigg) X_i^\e (\eta^2 \omega_1)  d\mathcal{L}.
\end{multline*}
Using Young inequality and the structure conditions \eqref{zhong-structure} one easily obtains the estimate
\begin{multline*}
\int_B (\delta+|\nabla_\e u^\e|^2)^{\frac{Q-2}{2}} |\nabla_\e \omega_1|^2 \eta^2 \d\mathcal{L}
\le C \int_B  (\delta+|\nabla_\e u^\e|^2)^{\frac{Q-2}{2}} |\nabla_\e \eta|^2 \omega_1^2  \d\mathcal{L}
\\
+C \int_B (\delta+|\nabla_\e u^\e|^2)^{\frac{Q-2}{2}} |X_3 u^\e|^2 \eta^2   \d\mathcal{L}
+ C \int_B (\delta+|\nabla_\e u^\e|^2)^{\frac{Q-2}{2}} |\nabla_\e X_3 u^\e| |\omega_1| \eta^2   \d\mathcal{L}
\\ + C\int_B (\delta+|\nabla_\e u^\e|^2)^{\frac{Q-1}{2}} \bigg( \omega_1 \eta^2+2 \omega_1 \eta |\nabla_\e \eta|+ \eta^2 |\nabla_\e \omega_1| \bigg) \d\mathcal{L}
\le I_1+I_2+I_3+I_4.
\end{multline*}

The terms $I_1$ and $I_3$ are already in the form needed for \eqref{z4.3}. To estimate $I_4$  we observe that for every $\tau>0$ one can estimate

\begin{multline*}
I_4 \le \tau \int_B (\delta+|\nabla_\e u^\e|^2)^{\frac{Q-2}{2}} |\nabla_\e \omega_1|^2 \eta^2 \d\mathcal{L}+\\
C\tau^{-1} \int_{A^+_{1,k,r}} (\delta+|\nabla_\e u^\e|^2)^{\frac{Q}{2}}  \d\mathcal{L}
+ C \int_B  (\delta+|\nabla_\e u^\e|^2)^{\frac{Q-2}{2}} |\nabla_\e \eta|^2 \omega_1^2  \d\mathcal{L},
\end{multline*}
thus leading to the correct left hand side for  \eqref{z4.3}. To estimate $I_2$ we argue as in \cite{Zhong} and invoke Theorem \ref{bigbadcaccioppoli} and Corollary \ref{zhong-l3.1} to show
\begin{multline*}
I_2 \le \bigg(  \int_{A^+_{1,k,r}} (\delta+|\nabla_\e u^\e|^2)^{\frac{Q-2}{2}}  \d\mathcal{L} \bigg)^{1-\frac{2}{q}} \bigg(  \int_B (\delta+|\nabla_\e u^\e|^2)^{\frac{Q-2}{2}} |X_3 u^\e|^q \eta^2   \d\mathcal{L}\bigg)^{\frac{2}{q}} 
\\ \le (\delta+\mu(r_0)^2)^{\frac{Q-2}{2}\frac{q-2}{q} }|A_{1,k,r}^+|^{1-\frac{2}{q}} 
 \bigg(  \int_{B(x_0,r_0/2)} (\delta+|\nabla_\e u^\e|^2)^{\frac{Q-2}{2}} |X_3 u^\e|^{q-2} |X_i^\e X_j^\e u^\e|^2  \d\mathcal{L}\bigg)^{\frac{2}{q}} 
 \\ \le (\delta+\mu(r_0)^2)^{\frac{Q-2}{2}\frac{q-2}{q}}|A_{1,k,r}^+|^{1-\frac{2}{q}} 
 \Bigg[   C^{q-2}(q-1)^{4}  r_0^{2-q}
\bigg(
 \int_{B(x_0,\frac{2}{3}r_0)} 
 (\delta+|\nabla_\e u^\e|^2)^{\frac{Q-4+q}{2}} \sum_{i,j=1}^{3}  |X_i^\e X_j^\e u^\e|^2  d\mathcal{L}
 \\+ \int_{B(x_0,\frac{2}{3} r_0)}  (\delta+|\nabla_\e u^\e|^2)^{\frac{Q+q-2}{2}}   d\mathcal{L} \bigg)
\Bigg]^{\frac{2}{q}} 
\\
\le C^q (q-1)^{12}  r_0^{-q} (\delta+\mu(r_0)^2)^{\frac{Q-2}{2}\frac{q-2}{q}}|A_{1,k,r}^+|^{1-\frac{2}{q}} 
 \Bigg[ 
\int_{B(x_0,r_0)}   (\delta+|\nabla_\e u^\e|^2)^{\frac{Q+q-2}{2}}  \d\mathcal{L}
\\+  \int_{B(x_0,r_0)}   \eta^2 (\delta+|\nabla_\e u^\e|^2)^{\frac{Q+q-3}{2}}  \d\mathcal{L}
+ \int_{B(x_0,r_0)}   (\delta+|\nabla_\e u^\e|^2)^{\frac{Q+q-2}{2}}   d\mathcal{L} 
\Bigg]^{\frac{2}{q}} 
\\
\le   C^q (q-1)^{12} r_0^{-q} (\delta+\mu(r_0)^2)^{\frac{Q}{2}} |A_{1,k,r}^+|^{1-\frac{2}{q}}.
\end{multline*}
%
\end{proof}

In order to obtain from the previous lemma a Cacciopoli inequality 
we only need to obtain an estimate of $I_3$. 
The proof of the previous lemma yields the following 
\begin{corollary}\label{I_3toG_0}
 In the hypothesis and notation of the previous lemma, one has  that for any $q\ge 4$ there exists a positive constant $C$ depending only on $q,\lambda, \Lambda$ such that for all $k\in \R$, $l=1,2,3$ and $\eta\in C^\infty_0(B(x_0,r))$, 
\begin{equation}
I_3 \le C (\delta+\mu(r_0)^2)^{\frac{Q-2}{4}} |A_{l,k,r}^+|^{\frac{1}{2}} G_0^{\frac{1}{2}}
\end{equation}
where 
$$G_0=  \int  (\delta+|\nabla_\e u^\e|^2)^{\frac{Q-2}{2}}  \omega_l^2 |\nabla_\e v_3 |^2  \eta^2 \d\mathcal{L}.$$
\end{corollary}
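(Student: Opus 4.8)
\textbf{Proof proposal for Corollary \ref{I_3toG_0}.}

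The plan is to isolate the term $I_3$ from \eqref{int3} and estimate it by Cauchy--Schwarz, distributing the weight $(\delta+|\nabla_\e u^\e|^2)^{(Q-2)/2}$ symmetrically between the two factors. First I would write
$$
I_3 = \int_{B(x_0,r)} (\delta+|\nabla_\e u^\e|^2)^{\frac{Q-2}{2}} |\nabla_\e X_3 u^\e|\, |\omega_1|\, \eta^2 \,\d\mathcal{L}
= \int_{B(x_0,r)} \Big[(\delta+|\nabla_\e u^\e|^2)^{\frac{Q-2}{4}}\eta\Big]\cdot\Big[(\delta+|\nabla_\e u^\e|^2)^{\frac{Q-2}{4}} |\nabla_\e v_3|\, |\omega_1|\, \eta\Big]\d\mathcal{L},
$$
where $v_3 = X_3^\e u^\e$ as before, so $\nabla_\e X_3 u^\e = \nabla_\e v_3$ up to the notational identification. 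Applying the Cauchy--Schwarz inequality to this product gives
$$
I_3 \le \Big(\int_{B(x_0,r)} (\delta+|\nabla_\e u^\e|^2)^{\frac{Q-2}{2}}\eta^2 \,\d\mathcal{L}\Big)^{\frac12} \Big(\int_{B(x_0,r)} (\delta+|\nabla_\e u^\e|^2)^{\frac{Q-2}{2}} |\nabla_\e v_3|^2 \,\omega_1^2\, \eta^2 \,\d\mathcal{L}\Big)^{\frac12}.
$$
The second factor is precisely $G_0^{1/2}$.

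Next I would bound the first factor. Since $\omega_1 = (X_1^\e u^\e - k)^+$ is supported in $A_{1,k,r}^+$ and $\eta$ vanishes outside $B(x_0,r)$, the only region that actually contributes to $I_3$ — hence to the first Cauchy--Schwarz factor after we restrict the integrand to the support of $\omega_1\eta$ — is $A_{1,k,r}^+$. (One can either insert the indicator of $\{\omega_1>0\}$ before splitting, or simply note the integrand of $I_3$ is zero off $A_{1,k,r}^+$ and redo the splitting on that set.) On that set, by the uniform Lipschitz bound from Theorem \ref{lipschitzestimate} we have $|\nabla_\e u^\e| \le \mu(r_0) =: \mu(r_0)$ on $B(x_0,r_0/2) \supset B(x_0,r)$, so $(\delta + |\nabla_\e u^\e|^2)^{(Q-2)/2} \le (\delta+\mu(r_0)^2)^{(Q-2)/2}$ pointwise there, and $0\le \eta\le 1$. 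Therefore
$$
\int_{A_{1,k,r}^+} (\delta+|\nabla_\e u^\e|^2)^{\frac{Q-2}{2}}\eta^2 \,\d\mathcal{L} \le (\delta+\mu(r_0)^2)^{\frac{Q-2}{2}} |A_{1,k,r}^+|,
$$
whose square root is $(\delta+\mu(r_0)^2)^{(Q-2)/4} |A_{1,k,r}^+|^{1/2}$. Combining with the previous display yields $I_3 \le C (\delta+\mu(r_0)^2)^{(Q-2)/4} |A_{1,k,r}^+|^{1/2} G_0^{1/2}$ with an absolute constant $C$ (here one can even take $C=1$; the dependence on $q,\lambda,\Lambda$ is stated only for uniformity with the companion estimates). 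The cases $l=2,3$ are handled identically, replacing $v_1$ by $v_l$ in the definition of $\omega_l$ and $G_0$; this is why the statement is phrased for general $l$.

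The only genuinely delicate point is making sure the weight $(\delta+|\nabla_\e u^\e|^2)^{(Q-2)/2}$ is controlled by the \emph{oscillation} quantity $\mu(r_0)^2$ rather than just by $|\nabla_\e u^\e|^2$ on all of $B(x_0,r_0)$: this is exactly where the uniform-in-$\e$ Lipschitz estimate (Theorem \ref{lipschitzestimate}, applied on $B(x_0,r_0)$ with $r_0 < r_0$ as in the running hypotheses of the section) is essential, and it is what guarantees the final constant $C$ does not blow up as $\e\to 0$ or $\delta\to 0$. Everything else is a one-line Cauchy--Schwarz split, so there is no real obstacle beyond bookkeeping; the corollary is, as stated, an immediate consequence of the computation already carried out inside the proof of Lemma \ref{caccio-sub-0}, where the term $I_3$ was left untouched precisely so that it could be recycled here.
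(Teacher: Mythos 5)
Your argument is correct and is essentially the paper's own proof: a single Cauchy--Schwarz step splitting the weight $(\delta+|\nabla_\e u^\e|^2)^{(Q-2)/2}$ symmetrically, restricting the first factor to $A_{l,k,r}^+$ via the support of $\omega_l$, and bounding it there by the uniform gradient estimate of Theorem \ref{lipschitzestimate}. The only (shared) notational looseness is writing $|\nabla_\e u^\e|\le\mu(r_0)$ although $\mu$ is defined as an oscillation; the paper makes the same identification, so nothing further is needed.
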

\begin{proof}
From H\"older inequality one has,
 \begin{multline}
 \int_{B(x_0,r)} (\delta+|\nabla_\e u^\e|^2)^{\frac{Q-2}{2}} |\nabla_\e X_3 u^\e| |\omega_1| \eta^2 \d\mathcal{L}
\\
 \le  C (\delta+\mu(r_0)^2)^{\frac{Q-2}{4}} |A_{l,k,r}^+|^{\frac{1}{2}}\bigg(\int_{B(x_0,r)}   (\delta+|\nabla_\e u^\e|^2)^{\frac{Q-2}{2}}  \omega_1^2 |\nabla_\e X_3 u^\e |^2  \eta^2 \d\mathcal{L}  \bigg)^{\frac{1}{2}}
 \end{multline}
\end{proof}

%


\begin{lemma}\label{step-1-00}
In the hypothesis and notations of  Lemma \ref{I_3toG_0},  for every $m\in \N$, $m\ge 1$ one has that there exists a constant $C$ depending on $m,  Q, \lambda, \Lambda$, such that
 \begin{equation}\label{z4.3-step2-000}
 G_0
 \le
  C 
  \sum_{h=0}^{m}
   K^{2-\frac{1}{2^{m+h}}} (\delta + \mu^2(r_0))^{ 1 + \frac{Q}{2^{m+h+2}}} 
 \end{equation}
where
$$K=\bigg(  \int_{B(x_0,r)} (\delta+|\nabla_\e u^\e|^2)^{\frac{Q-2}{2}}  \omega_l^2  (\eta^2+  |\nabla_\e \eta|^2)  \d\mathcal{L} + \int_{B(x_0,r)} (\delta+|\nabla_\e u^\e|^2)^{\frac{Q-2}{2}} |\nabla_\e \omega_l|^2 \eta^2  \d\mathcal{L}
\bigg)^{\frac{1}{2}}.$$
 \end{lemma}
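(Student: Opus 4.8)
The plan is to iterate the reverse‑Hölder‑type estimate from Corollary~\ref{zhong-l3.1} together with the Caccioppoli inequality of Theorem~\ref{bigbadcaccioppoli} in order to absorb the factor $|\nabla_\e X_3 u^\e|$ appearing in $G_0$, trading each occurrence of $|X_3^\e u^\e|$ for a power of $\e$ and for a higher power of $(\delta+|\nabla_\e u^\e|^2)^{1/2}$, until after $m$ steps everything is controlled by integrals of $(\delta+|\nabla_\e u^\e|^2)$ to powers that are bounded (uniformly in $\e$) by $\mu(r_0)^2$ and $\delta$, which is what $K$ and the exponents $1+Q/2^{m+h+2}$ encode. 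First I would observe that by definition $G_0=\int (\delta+|\nabla_\e u^\e|^2)^{\frac{Q-2}{2}} \omega_l^2 |\nabla_\e v_3|^2 \eta^2 \, \d\mathcal{L}$, where $v_3=X_3^\e u^\e$; since $\omega_l=(X_l^\e u^\e-k)^+$ is bounded by $\mu(r_0)$ on the support of $\eta$ and $|X_3^\e u^\e|\le \e \sum_{i,j}|X_i^\e X_j^\e u^\e|$, one can insert a factor of $|X_3^\e u^\e|^\beta$ at the cost of a power of $\e^{-\beta}$ and reduce to bounding $\int (\delta+|\nabla_\e u^\e|^2)^{\frac{Q-2}{2}}|X_3^\e u^\e|^\beta \sum_{i,j}|X_i^\e X_j^\e u^\e|^2 \eta^{\beta+2}$ — which is precisely the left‑hand side of Corollary~\ref{zhong-l3.1}.

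Next I would run the induction. For the base case $m=1$: apply Corollary~\ref{zhong-l3.1} with a suitable $\beta\ge 2$ to get $G_0$ (after reabsorbing the $\e$ powers) bounded by a constant times $\e^\beta C^\beta(\beta+1)^4 \|\nabla_\e\eta\|_\infty^\beta$ times the sum of $\int (\delta+|\nabla_\e u^\e|^2)^{\frac{Q-2+\beta}{2}}\sum|X_i^\e X_j^\e u^\e|^2\eta^\beta$ and $\int (\delta+|\nabla_\e u^\e|^2)^{\frac{Q+\beta}{2}}\eta^\beta$; then feed the first of these two integrals into Theorem~\ref{bigbadcaccioppoli} (Caccioppoli with parameter $\beta$), which bounds it by $(\beta+1)^8(\|\nabla_\e\eta\|_\infty+\|\eta X_3\eta\|_\infty)\int_{\supp\eta}(\delta+|\nabla_\e u^\e|^2)^{\frac{Q+\beta}{2}}$ plus a lower‑order term $\int\eta^2(\delta+|\nabla_\e u^\e|^2)^{\frac{Q+\beta+1}{2}}$. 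Using $(\delta+|\nabla_\e u^\e|^2)^{1/2}\le (\delta+\mu(r_0)^2)^{1/2}$ on $\supp\eta$ and comparing with the definition of $K$, this yields an estimate of the shape $G_0 \le C\, K^{2-c}(\delta+\mu(r_0)^2)^{1+c'}$ for appropriate small $c,c'$; choosing $\beta$ so that the exponent matches $2-\frac{1}{2^{m+h}}$ and $1+\frac{Q}{2^{m+h+2}}$ for $h=0,1$ gives the case $m=1$. For the inductive step, the one lower‑order term $\int\eta^2(\delta+|\nabla_\e u^\e|^2)^{\frac{Q+\beta+1}{2}}$ is again handled by Hölder's inequality, splitting off one power of $(\delta+|\nabla_\e u^\e|^2)^{1/2}$ (bounded by $(\delta+\mu(r_0)^2)^{1/2}$) and reapplying Theorem~\ref{bigbadcaccioppoli} with a fractionally larger parameter; iterating this $m$ times produces the geometric sum $\sum_{h=0}^m$ with the stated exponents, the exponents halving at each stage because each Hölder splitting replaces a power $\beta$ by something like $\beta+\tfrac12$ while the interpolation between $K^2$ and the volume term redistributes a dyadically shrinking fraction of the total degree onto $(\delta+\mu^2(r_0))$.

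The main obstacle I expect is bookkeeping the exponents so that the dyadic fractions $\frac{1}{2^{m+h}}$ and $\frac{Q}{2^{m+h+2}}$ come out exactly as stated: one has to be careful that each application of Hölder's inequality with the pair of exponents $\left(\tfrac{q}{q-2},\tfrac{q}{2}\right)$ (or the analogous pair adapted to the current stage) contributes the right halving, that the constants $C^{q}(q-1)^{12}$-type factors stay finite for the finitely many steps used (they do, since $m$ is fixed), and — crucially for the whole Riemannian‑approximation scheme — that \emph{every} constant is independent of $\e$ and $\delta$. The latter is guaranteed because Corollary~\ref{zhong-l3.1}, Theorem~\ref{bigbadcaccioppoli}, and Lemma~\ref{zhong-l1} all have $\e,\delta$‑independent constants (the Sobolev/Poincaré/doubling constants being stable by \cite{CCR}), so no new uniformity issue arises; the only genuine work is the algebra of exponents, which I would carry out by induction on $m$ exactly as in \cite{Zhong}.
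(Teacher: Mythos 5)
Your argument has a genuine gap at the very first step, and it is fatal to the stated conclusion. You propose to bound $\omega_l\le \mu(r_0)$ and then discard the weight $\omega_l^2$ so as to reduce $G_0$ to the left-hand side of Corollary~\ref{zhong-l3.1}. But the whole content of the lemma is that $G_0$ is controlled by powers of $K$, and $K$ is a \emph{weighted} integral carrying the factor $\omega_l^2=((X_l^\e u^\e-k)^+)^2$, which vanishes off the super-level set $A^+_{l,k,r}$. Once you throw the weight away, every quantity produced by Corollary~\ref{zhong-l3.1} and Theorem~\ref{bigbadcaccioppoli} is an unweighted integral of $(\delta+|\nabla_\e u^\e|^2)^{(Q+\beta)/2}$ over $\supp\eta$; such integrals cannot be dominated by any power of $K$ (a weighted integral supported where $X_l^\e u^\e>k$ does not control an unweighted one from above). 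The best your route can deliver is the crude a priori bound $G_0\le C(\delta+\mu^2(r_0))^{(Q+2)/2}|B(x_0,r_0)|$ — which is estimate \eqref{hula-1} in the paper, not the statement of the lemma — and that bound is useless downstream, since in Proposition~\ref{caccio-sub} the smallness must come from $|A^+_{l,k,r}|^{1-2/q}$, which enters only through $K$. The claim ``comparing with the definition of $K$, this yields $G_0\le CK^{2-c}(\delta+\mu(r_0)^2)^{1+c'}$'' has no mechanism behind it. (A secondary problem: the inequality $|X_3^\e u^\e|\le\e\sum_{i,j}|X_i^\e X_j^\e u^\e|$ goes the wrong way for ``inserting'' a factor $|X_3^\e u^\e|^\beta$ into $G_0$; you have no lower bound on $|X_3^\e u^\e|$.)

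The missing idea is that one must keep $\omega_l^2$ inside the estimates throughout the iteration. The paper does this by testing the differentiated equation \eqref{zhong-d-eq3} for $v_3$ against $\phi=\eta^2\,\omega_l^2\,|v_3|^\beta v_3$. Together with Young's inequality and the structure conditions this produces a self-referential recursion among the \emph{weighted} quantities
$G_\beta=\int(\delta+|\nabla_\e u^\e|^2)^{\frac{Q-2}{2}}\omega_l^2|\nabla_\e v_3|^2|v_3|^\beta\eta^2\,\d\mathcal{L}$ and
$F_\beta=\int(\delta+|\nabla_\e u^\e|^2)^{\frac{Q}{2}}\omega_l^2|v_3|^\beta\eta^2\,\d\mathcal{L}$,
of the form $G_\beta\le CK\big(G_{2\beta+2}^{1/2}+F_{2\beta+2}^{1/2}+(\delta+\mu(r_0)^2)^{1/2}F_{2\beta}^{1/2}\big)$ and $F_\beta\le CK(\delta+\mu(r_0)^2)^{1/2}F_{2\beta}^{1/2}$. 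Each iteration doubles $\beta$ (it is the square roots in the recursion, not a Hölder exponent pair, that produce the dyadic fractions $2^{-(m+h)}$ in the exponents), and each step contributes one more factor of $K$. Only the terminal terms $G_{\beta_{m+2}}$ and $F_{2^{m+1}}$, which appear raised to the harmless power $2^{-(m+1)}$, are estimated by the crude bound \eqref{hula-1}; that is the only place where $\omega_l$ is discarded, and the loss is then a vanishingly small fraction of the total degree. Your proposal never generates this recursion, so the iteration you describe cannot produce the claimed right-hand side.
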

\begin{proof} 
 In the following we will denote by $C$ a series of positive constants  depending only on $m, Q,\lambda, \Lambda$.
We study the case $l=1$, since $l=2$ is similar and $l=3$ is slightly easier. 

The bound \eqref{z4.3-step2-000}  follows from a bootstrap argument, whose main step is the subject of the following estimates.

For $\beta\ge 0$  and for any  cut-off function $\eta\in C^{\infty}_0(B(x_0,r))$,
let  $$G_\beta=  \int_{B(x_0,r)}  (\delta+|\nabla_\e u^\e|^2)^{\frac{Q-2}{2}}  \omega_l^2 |\nabla_\e v_3 |^2 |v_3|^\beta  \eta^2 \d\mathcal{L},$$
$$F_\beta=  \int_{B(x_0,r)}(\delta+|\nabla_\e u^\e|^2)^{\frac{Q}{2}} |v_3|^{\beta}  |\omega_l|^2  \eta^2 \d\mathcal{L},$$
where we recall that $\omega_l=(X_l^\e u^\e-k)^+$, for $l=1,2,3$. 

We claim that there exists a constant $C>0$, depending only on $ Q,\lambda, \Lambda$  such that \begin{equation}\label{caccio-sub-3-00}
G_{\beta} \le  \Bigg\{\begin{array}{ll}   C K \bigg( G_{2\beta+2}^{\frac{1}{2}} + F_{2\beta+2}^{\frac{1}{2}} + (\delta+\mu(r_0)^2)^\frac{1}{2}  F_{2\beta}^\frac{1}{2} \bigg), & \text{ if }\beta>0\\
C K \bigg( G_{2}^{\frac{1}{2}} + F_{2}^{\frac{1}{2}} + (\delta+\mu(r_0)^2)^{ 1+\frac{Q\sigma}{4}}  K ^{1-\sigma} \bigg),& \text{ if }\beta=0 \text{ and for any }\sigma\in [0,2), \end{array}
\end{equation}
and 
\begin{equation}\label{caccio-sub-3-01}
F_{\beta} \le \Bigg\{\begin{array}{ll}  C K (\delta+\mu(r_0)^2)^\frac{1}{2} F_{2\beta}^\frac{1}{2} & \text{ if }\beta>0,\\ C (\delta+ \mu(r_0)^2) K^{2} & \text{ if }\beta=0, \end{array}
\end{equation}
In particular, for every $\beta>0$ and $m\ge 2$, it will follow that one has  
\begin{equation}\label{stima di F}
F_\beta \le (CK)^{2(1-\frac{1}{2^m})}  (\delta+ \mu(r_0)^2)^{1-\frac{1}{2^m}} F_{2^m\beta}^{\frac{1}{2^{m}}}.
\end{equation}
Estimate
\eqref{caccio-sub-3-01} follows directly from H\"older inequality and from the gradient bounds in Theorem \ref{lipschitzestimate},
\begin{multline} F_\beta =  \int_{B(x_0,r)}(\delta+|\nabla_\e u^\e|^2)^{\frac{Q}{2}} |v_3|^{\beta}  |\omega_l|^2  \eta^2  \d\mathcal{L} \\
\le \bigg(   \int_{B(x_0,r)} (\delta+|\nabla_\e u^\e|^2)^{\frac{Q-2}{2}}  |\omega_l|^2 \eta^2   \d\mathcal{L} \bigg)^\frac{1}{2} 
\bigg(  \ \int_{B(x_0,r)}(\delta+|\nabla_\e u^\e|^2)^{\frac{Q  +2}{2}} |v_3|^{2\beta}  |\omega_l|^2    \eta^2 \d\mathcal{L}\bigg)^\frac{1}{2}
\\
\le C K (\delta+\mu(r_0)^2)^\frac{1}{2}  F_{2\beta}^\frac{1}{2} \end{multline}

To prove \eqref{caccio-sub-3-00} 
substitute $\phi=\eta^2 \omega_l^2 |v_3|^\beta v_3$ in the weak form of \eqref{zhong-d-eq3} to obtain
\begin{multline}\label{caccio-sub-1}
(\beta+1)\int_B A_{i \xi_j}^{\e,\delta} (x,\nabla_\e u^\e) X_j^\e v_3 X_i^\e v_3 \omega_l^2 |v_3|^\beta \eta^2\d\mathcal{L} \le   \int_B |A^{\e,\delta}_{i \xi_j}(x,\nabla_\e u^\e) | |X_j^\e v_3|   |X_i^\e [ \eta^2 \omega_l^2]  |  |v_3|^{\beta+1}  \d\mathcal{L}  \\ +
\e \int_B  | A^{\e,\delta}_{i, x_3} (x,\nabla_\e u^\e) X_i ^\e \bigg[ \eta^2 \omega_l^2 |v_3|^\beta v_3\bigg]  |\d\mathcal{L} = A+ B.
\end{multline}
The first term on the left hand side is estimated via Young's inequality \begin{equation*}
A\le C K \bigg( \int_B(\delta+|\nabla_\e u^\e|^2)^{\frac{Q-2}{2}} |v_3|^{2\beta+2} |\nabla_\e v_3|^2 |\omega_1|^2 \eta^2   \d\mathcal{L}
 \bigg)^{\frac{1}{2}}
\end{equation*}
For the second term we note that 
\begin{multline}
B\le  C\e\int_B (\delta+|\nabla_\e u^\e|^2)^{\frac{Q-1}{2}}  \bigg| \nabla_\e \bigg[ \eta^2 \omega_l^2 |v_3|^\beta v_3\bigg]  \bigg| |\d\mathcal{L}  \\
 \le C\e\int_B (\delta+|\nabla_\e u^\e|^2)^{\frac{Q-1}{2}} |v_3|^{\beta+1} |\omega_1|^2  \eta  |\nabla_\e \eta|  \d\mathcal{L} 
\\
+C\e\int_B (\delta+|\nabla_\e u^\e|^2)^{\frac{Q-1}{2}} |v_3|^{\beta} |\nabla_\e v_3| |\omega_1|^2 \eta^2   \d\mathcal{L} 
\\
+ C\e\int_B (\delta+|\nabla_\e u^\e|^2)^{\frac{Q-1}{2}} |v_3|^{\beta+1} |\nabla_\e \omega_1| |\omega_1| \eta^2   \d\mathcal{L}  =T_1+T_2+T_3.
\end{multline}
For any $\bar \e >0$,  Young inequality and \eqref{caccio-sub-3-01} yield the  estimate 
\begin{multline}\label{bringback}
T_2 \le \bar \e \int_B  (\delta+|\nabla_\e u^\e|^2)^{\frac{Q-2}{2}} |v_3|^{\beta} |\nabla_\e v_3|^2 |\omega_1|^2 \eta^2   \d\mathcal{L}  \\+ C_{\bar \e} 
 \int_B (\delta+|\nabla_\e u^\e|^2)^{\frac{Q}{2}} |v_3|^{\beta}  |\omega_1|^2 \eta^2   \d\mathcal{L} \\  \le  \bar \e \int_B  (\delta+|\nabla_\e u^\e|^2)^{\frac{Q-2}{2}} |v_3|^{\beta} |\nabla_\e v_3|^2 |\omega_1|^2 \eta^2   \d\mathcal{L}  \\+ C_{\bar \e} K (\delta+\mu(r_0)^2)^\frac{1}{2}  F_{2\beta}^\frac{1}{2}.
\end{multline}
The other two terms are estimated through H\"older inequality as
$$T_1+T_3\le K \Bigg( \int_B (\delta+|\nabla_\e u^\e|^2)^{\frac{Q}{2}} |v_3|^{2\beta+2}  |\omega_1|^2 \eta^2   \d\mathcal{L}\Bigg)^{\frac{1}{2}} .$$
%
In view of the structure conditions \eqref{zhong-structure}, of  \eqref{caccio-sub-1}, and of the estimates above for $A$ and $B$ one has
\begin{multline*}
\int_B  (\delta+|\nabla_\e u^\e|^2)^{\frac{Q-2}{2}}  \omega_1^2 |\nabla_\e v_3 |^2 |v_3|^\beta  \eta^2\d\mathcal{L}
\le K  \bigg( \int_B(\delta+|\nabla_\e u^\e|^2)^{\frac{Q-2}{2}} |v_3|^{2\beta+2} |\nabla_\e v_3|^2 |\omega_1|^2 \eta^2   \d\mathcal{L}
 \bigg)^{\frac{1}{2}} \\ 
 +  K \Bigg( \int_B (\delta+|\nabla_\e u^\e|^2)^{\frac{Q}{2}} |v_3|^{2\beta+2}  |\omega_1|^2 \eta^2   \d\mathcal{L}\Bigg)^{\frac{1}{2}} \\   + C_{\bar \e} K (\delta+\mu(r_0)^2)^\frac{1}{2}  F_{2\beta}^\frac{1}{2} + \bar \e \int_B  (\delta+|\nabla_\e u^\e|^2)^{\frac{Q-2}{2}} |v_3|^{\beta} |\nabla_\e v_3|^2 |\omega_1|^2 \eta^2   \d\mathcal{L} .
\end{multline*}
Bringing the last term on the right hand side over to the left hand side one obtains \eqref{caccio-sub-3-00} in the case $\beta>0$.
For the case $\beta=0$, the estimate on $T_2$ above can be improved. We let $\sigma\in [0,2)$ and observe that
\begin{multline}
T_2 \le \bar \e \int_B  (\delta+|\nabla_\e u^\e|^2)^{\frac{Q-2}{2}}  |\nabla_\e v_3|^2 |\omega_1|^2 \eta^2   \d\mathcal{L}  \\+ C_{\bar \e} 
 \int_B (\delta+|\nabla_\e u^\e|^2)^{\frac{Q}{2}}  |\omega_1|^2 \eta^2   \d\mathcal{L} \\ 
 \le \bar \e G_0+ C_{\bar \e}   (\delta+\mu(r_0)^2)
 \int_B (\delta+|\nabla_\e u^\e|^2)^{\frac{Q-2}{2}}  |\omega_1|^2 \eta^2   \d\mathcal{L}
 \\
  \le  \bar \e G_0+ C_{\bar \e} K^{2(1-\frac{\sigma}{2})}  (\delta+\mu(r_0)^2)
 \bigg(  \int_B (\delta+|\nabla_\e u^\e|^2)^{\frac{Q-2}{2}}  |\omega_1|^2 \eta^2   \d\mathcal{L}\bigg)^{\frac{\sigma}{2}}\\
  \le \bar \e G_0+ C_{\bar \e} K^{2(1-\frac{\sigma}{2})}  (\delta+\mu(r_0)^2)^{1+\frac{Q\sigma}{4}}.
\end{multline}

The latter concludes the proof of  the estimates \eqref{caccio-sub-3-00} and \eqref{caccio-sub-3-01}. At this point 
we can proceed with  the description of the  bootstrap argument needed to prove the bound on $G_0$.

In view of Lemma \ref{zhong-l1}, Corollary \ref{zhong-c3}, Corollary \ref{zhong-l3.1} and Theorem \ref{bigbadcaccioppoli} one has the following

\begin{equation}\label{hula-1} G_{\beta} \le C (\delta+\mu(r_0)^2)^{\frac{Q+\beta+2}{2}}|B(x_0,r_0)| \quad   \text{ and } \quad \quad
 F_{\beta} \le C (\delta+\mu(r_0)^2)^{\frac{Q+\beta+2}{2}}|B(x_0,r_0)|. \end{equation}

Combining \eqref{hula-1} with  \eqref{caccio-sub-3-00}
and \eqref{caccio-sub-3-01}
 yields for all $\beta> 0$ and $m\ge 1$, 
 \begin{multline}
 G_\beta \le CKG_{2\beta+2}^\frac{1}{2} + (CK)^{2-\frac{1}{2^m}} (\delta+\mu^2(r_0))^{\frac{1}{2}(1-\frac{1}{2^m})} F_{2^m(2\beta+2)}^{\frac{1}{2^{m+1}}}  \\
 + 
 (CK)^{2-\frac{1}{2^m}} (\delta+\mu^2(r_0))^{\frac{1}{2}+\frac{1}{2}(1-\frac{1}{2^m})} F_{2^m(2\beta)}^{\frac{1}{2^{m+1}}} 
 \\ \le  CKG_{2\beta+2}^\frac{1}{2} + (CK)^{2-\frac{1}{2^m}} (\delta+\mu^2(r_0))^{\frac{\beta+2}{2}+ \frac{1}{2^{m+1}} \frac{Q}{2} } |B(x_0,r_0)|^{\frac{1}{2^{m+1}}} \\
 \end{multline}

Iterating the latter $m$ times and 
setting $\beta_m = 2^m-2$ one obtains
$$
G_{\beta_2} \leq C \bigg[ K^{2(1-\frac{1}{2^m})} G^{\frac{1}{2^m}}_{\beta_{m+2}}+
 \sum_{h=1}^{m}
   K^{2(1-\frac{1}{2^{m+h}})} (\delta + \mu^2(r_0))^{2( 1 + \frac{Q}{2^{m+h+2}})}\bigg].$$

 From the latter, \eqref{stima di F}, and keeping in mind the starting point \eqref{caccio-sub-3-00}  corresponding to $\beta=0$,  one concludes that for any $\sigma\in [0,2)$,
 \begin{multline}
 G_0 \le C\bigg[  K^{2(1-\frac{1}{2^{m+1} })} G^{\frac{1}{2^{m+1}}}_{\beta_{m+2}} +  \sum_{h=1}^{m}
   K^{1-\frac{1}{2^{m+h}}} (\delta + \mu^2(r_0))^{ 1 + \frac{Q}{2^{m+h+2}}}  + F_2^\frac{1}{2}  +K^2  (\delta+\mu^2(r_0))\bigg].\\
\le   C\bigg[  K^{2(1-\frac{1}{2^{m+1} })} G^{\frac{1}{2^{m+1}}}_{\beta_{m+2}} +  \sum_{h=1}^{m}
   K^{2-\frac{1}{2^{m+h}}} (\delta + \mu^2(r_0))^{ 1 + \frac{Q}{2^{m+h+2}}} 
   \\
    + K^{1-\frac{1}{2^m}} 
(\delta+\mu^2(r_0))^{\frac{1}{2}-\frac{1}{2^{m+1}}}
 F_{2^{m+1}}^\frac{1}{2^{m+1}}  +K^{2-\sigma}  (\delta+\mu^2(r_0))^{1+\frac{Q\sigma}{4}} \bigg].
 \end{multline}
 Applying \eqref{hula-1} to the latter and letting $\sigma = \frac{1}{2^m}$, yields the estimate
 \begin{multline}\label{one more estimate on G_0}
 G_0\le C\bigg[
 K^{2-\frac{1}{2^m}} (\delta+\mu^2(r_0))^{1+\frac{Q+2}{2^{m+2}}} +
 \sum_{h=1}^{m}
   K^{2-\frac{1}{2^{m+h}}} (\delta + \mu^2(r_0))^{ 1 + \frac{Q}{2^{m+h+1}}}  \\
 +
 K^{2-\frac{1}{2^m}} (\delta+\mu^2(r_0))^{ \frac{1}{2}-\frac{1}{2^{m+1}} + \frac{Q+2^{m+1}+2}{2^{m+2}}} + K^{2-\frac{1}{2^m}}  (\delta+\mu^2(r_0))^{1+\frac{Q}{2^{m+2}}}.
 \bigg],
 \end{multline}
concluding the proof.
\end{proof}


\begin{proposition}[Caccioppoli inequality on super-level sets]\label{caccio-sub}  Let  $u^\e\in W^{1,Q}_{\e, {\rm loc}}(B)\cap C^{\infty}(B)$ be the unique  solution of \eqref{zhong-diri}. For any $q\ge 4$ there exists a positive constant $C$ depending only on $q,\lambda, \Lambda$ such that for all $k\in \R$, $l=1,2,3$ and $0<r'<r<r_0/2$ one has
\begin{multline}\label{z4.3}
\int_{A^+_{l,k,r'}} (\delta+|\nabla_\e u^\e|^2)^{\frac{Q-2}{2}} |\nabla_\e \omega_l|^2 \eta^2\d\mathcal{L}
\le C \int_{A^+_{l,k,r}} (\delta+|\nabla_\e u^\e|^2)^{\frac{Q-2}{2}} |\omega_l|^2 |\nabla_\e \eta|^2\d\mathcal{L}
\\ +C(\delta+\mu(r_0)^2)^{\frac{Q}{2}} |A_{l,k,r}^+|^{1-\frac{2}{q}},
\end{multline}
where we have set $\omega_l=(X_l^\e u^\e-k)^+$. 
\end{proposition}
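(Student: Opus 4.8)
The goal is to reach the Caccioppoli inequality \eqref{z4.3} from the preliminary Lemma~\ref{caccio-sub-0}, where the only remaining term to control is the integral $I_3$ in \eqref{int3}. First I would invoke Corollary~\ref{I_3toG_0}, which already reduces $I_3$ to the quantity $G_0^{1/2}$ up to the harmless prefactor $(\delta+\mu(r_0)^2)^{(Q-2)/4}|A^+_{l,k,r}|^{1/2}$. So the whole proof boils down to bounding $G_0$ by (a power of) $K$ times powers of $(\delta+\mu(r_0)^2)$, which is precisely the content of Lemma~\ref{step-1-00}. The strategy is therefore: apply Corollary~\ref{I_3toG_0}, then feed in the estimate \eqref{z4.3-step2-000} from Lemma~\ref{step-1-00}, and finally use Young's inequality to absorb the resulting $K$-term (which contains $\int_{B(x_0,r)}(\delta+|\nabla_\e u^\e|^2)^{(Q-2)/2}|\nabla_\e\omega_l|^2\eta^2\,\d\mathcal L$, i.e.\ part of the left-hand side of \eqref{z4.3}) back into the left-hand side.

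\textbf{Key steps in order.} (1) Start from \eqref{z4.3-00}: everything is already in the desired form except $I_3$. (2) Apply Corollary~\ref{I_3toG_0} to write $I_3\le C(\delta+\mu(r_0)^2)^{(Q-2)/4}|A_{l,k,r}^+|^{1/2}G_0^{1/2}$. (3) Choose $m$ large (say so that $Q/2^{m+2}$ is as small as one likes) and apply Lemma~\ref{step-1-00}: $G_0\le C\sum_{h=0}^m K^{2-2^{-(m+h)}}(\delta+\mu^2(r_0))^{1+Q2^{-(m+h+2)}}$; taking square roots, $G_0^{1/2}\le C\sum_h K^{1-2^{-(m+h+1)}}(\delta+\mu^2(r_0))^{1/2+Q2^{-(m+h+3)}}$. (4) Insert this into step (2); each resulting term has the shape $C(\delta+\mu(r_0)^2)^{\text{(something}\ge (Q-2)/4)}|A_{l,k,r}^+|^{1/2}K^{1-\epsilon_h}$ with $\epsilon_h\in(0,1)$ depending on $m,h,Q$. (5) Recall that $K^2$ equals $\int_{A^+}(\delta+|\nabla_\e u^\e|^2)^{(Q-2)/2}\omega_l^2(\eta^2+|\nabla_\e\eta|^2)\,\d\mathcal L+\int(\delta+|\nabla_\e u^\e|^2)^{(Q-2)/2}|\nabla_\e\omega_l|^2\eta^2\,\d\mathcal L$, so $K$ is controlled by the RHS of \eqref{z4.3} plus a small multiple of its LHS. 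Apply Young's inequality with exponents dual to $1-\epsilon_h$: split off a $\tau$-fraction of $K^2$ (hence of $\int|\nabla_\e\omega_l|^2\eta^2$) to be absorbed, and keep $C_\tau$ times the admissible terms. Using the gradient bound $(\delta+|\nabla_\e u^\e|^2)\le C(\delta+\mu(r_0)^2)$ from Theorem~\ref{lipschitzestimate} to control the $\eta^2$-part of $K^2$ on $A^+$ by $(\delta+\mu(r_0)^2)^{Q/2}|A^+_{l,k,r}|$, one checks all surviving exponents of $(\delta+\mu(r_0)^2)$ are at least $Q/2$ and all powers of $|A^+_{l,k,r}|$ are at least $1-2/q$ (after possibly enlarging $q$ or shrinking $|B|<1$, using $|A^+_{l,k,r}|\le|B|$). (6) Absorb the $\tau$-term into the LHS of \eqref{z4.3}; choose $q\ge 4$ at the end so that the exponent $1-2/q$ dominates all the exponents of $|A^+_{l,k,r}|$ produced. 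This yields \eqref{z4.3}.

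\textbf{Main obstacle.} The delicate point is bookkeeping the exponents: one must verify that every term produced by the chain Corollary~\ref{I_3toG_0} $\to$ Lemma~\ref{step-1-00} $\to$ Young's inequality carries at least the power $(\delta+\mu(r_0)^2)^{Q/2}$ and at least $|A^+_{l,k,r}|^{1-2/q}$, so that it fits the right-hand side of \eqref{z4.3}. Because $G_0^{1/2}$ comes as a finite sum of terms with \emph{different} $K$-powers $K^{1-\epsilon_h}$, one must apply Young's inequality term by term with $h$-dependent exponents, and then argue that the finitely many leftover powers of $|A^+_{l,k,r}|$ (all strictly between $0$ and $1$) can be uniformly bounded below by $1-2/q$ after enlarging $q$; this is exactly the role of the freedom in choosing $q$ in the statement. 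Apart from this exponent-chasing the argument is routine, following \cite{Zhong} essentially verbatim, the only new feature being the extra lower-order terms arising from the $x$-dependence of $A_i^{\e,\delta}$, which were already absorbed in Lemma~\ref{caccio-sub-0} and Lemma~\ref{step-1-00}.
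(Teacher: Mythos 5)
Your proposal is correct and follows essentially the same route as the paper: Lemma~\ref{caccio-sub-0} plus Corollary~\ref{I_3toG_0} plus Lemma~\ref{step-1-00}, followed by Young's inequality term by term and a choice of $m$ large enough that the leftover exponents of $|A^+_{l,k,r}|$ dominate $1-2/q$ (your remark about ``enlarging $q$'' is harmless, since for $|A^+_{l,k,r}|<1$ the inequality for a larger exponent implies it for a smaller one). The only cosmetic difference is that the paper first substitutes $K\le(\mathcal A+I_3)^{1/2}$ and then absorbs the sublinear powers of $I_3$ into itself, whereas you absorb the $\tau K^2$-fraction (hence the $|\nabla_\e\omega_l|^2\eta^2$ integral) directly into the left-hand side; the two absorptions are equivalent.
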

\begin{proof} As above, we study the case $l=1$, since $l=2,3$ is similar. We will also call $\mathcal A$ the right hand side of 
\eqref{z4.3}. In view of  \eqref{z4.3-00} we only need to show $I_3\le \mathcal A$. 
From Lemma \ref{caccio-sub-0} one has
$$K\leq (\mathcal A + I_3)^{\frac{1}{2}}.$$
In view of 
 \eqref{z4.3-step2-000} and Corollary \ref{I_3toG_0} one obtains
 \begin{multline}\label{forse}
I_3
\le 
C (\delta+\mu(r_0)^2)^{\frac{Q-2}{4}} |A_{l,k,r}^+|^{\frac{1}{2}}\bigg( 
  \sum_{h=0}^{m}
   K^{2-\frac{1}{2^{m+h}}} (\delta + \mu^2(r_0))^{ 1 + \frac{Q}{2^{m+h+2}}} 
 \bigg)^\frac{1}{2}
\\
\le 
C
 (\delta+\mu(r_0)^2)^{\frac{Q}{4} }    |A_{l,k,r}^+|^{\frac{1}{2}}  
 \sum_{h=0}^{m}   \bigg(  {\mathcal A}^{\frac{1}{2}} +
 I_3^{\frac{1}{2}} 
\Bigg)^{1-\frac{1}{2^{m+h+1}}}(\delta+\mu^2(r_0))^{ \frac{Q}{2^{m+h+3}}},
\end{multline}

Next we observe that in view of Young inequality, for every $h=1,...,m$
\begin{multline}
C 
  (\delta+\mu(r_0)^2)^{\frac{Q}{4} }    |A_{l,k,r}^+|^{\frac{1}{2}} ({ \mathcal A^{\frac{1}{2}-\frac{1}{2^{m+h+2}}}
+   I_3^{\frac{1}{2}  -\frac{1}{2^{m+h+2}}}}\,\,\,)
(\delta+\mu^2(r_0))^{ \frac{Q}{2^{m+h+3}}} \\
\le \frac{1}{2} I_3 + \frac{1}{2}\mathcal A + C \bigg( 
  (\delta+\mu(r_0)^2)^{\frac{Q}{4} }    |A_{l,k,r}^+|^{\frac{1}{2}} (\delta+\mu^2(r_0))^{ \frac{Q}{2^{m+h+3}}}
\bigg)^{  \frac{2^{m+h+2}}{ 1+2^{m+h+1}}}
\\
\le \frac{1}{2} I_3 
+ \frac{1}{2}\mathcal A +C
(\delta+|\nabla_\e u^\e|^2)^{ \frac{Q}{2}}  |A_{l,k,r}^+|^{\frac{1}{2} (\frac{2^{m+h+2}}{ 1+2^{m+h+1}})}
\end{multline}

To complete the proof of \eqref{z4.3} we 
choose $m$ sufficiently large so that
$$ 1-\frac{2}{q} \le  \frac{1}{2} (\frac{2^{m+h+2}}{ 1+2^{m+h+1}}).$$
\end{proof}

A similar argument yields the corresponding result for sub-level sets:
\begin{corollary}  Let  $u^\e\in W^{1,Q}_{\e, {\rm loc}}(B)\cap C^{\infty}(B)$ be the unique  solution of \eqref{zhong-diri}. For any $q\ge 4$ there exists a positive constant $C$ depending only on $q,\lambda, \Lambda$ such that for all $k\in \R$, $l=1,2,3$ and $0<r'<r<r_0/2$ one has
\begin{multline}\label{z4.3-bis}
\int_{A^-_{l,k,r'}} (\delta+|\nabla_\e u^\e|^2)^{\frac{Q-2}{2}} |\nabla_\e \omega_l|^2 \d\mathcal{L}
\le C(r-r')^{-2} \int_{A^-_{l,k,r}} (\delta+|\nabla_\e u^\e|^2)^{\frac{Q-2}{2}} |\omega_l|^2 \d\mathcal{L}
\\ +C(\delta+\mu(r_0)^2)^{\frac{Q}{2}} |A_{l,k,r}^-|^{1-\frac{2}{q}},
\end{multline}
where we have set $\omega_l=(X_l^\e u^\e-k)^- $. 
\end{corollary}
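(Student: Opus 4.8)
The plan is to repeat the proof of Proposition~\ref{caccio-sub} almost verbatim, doing the book-keeping with $\omega_l=(X_l^\e u^\e-k)^-$ in place of $(X_l^\e u^\e-k)^+$. I would fix $l$ (writing out $l=1$ explicitly, the cases $l=2,3$ being analogous and $l=3$ slightly easier) and choose a cut-off $\eta\in C^\infty_0(B(x_0,r))$ with $\eta\equiv1$ on $B(x_0,r')$ and $|\nabla_\e\eta|\le M(r-r')^{-1}$. Since $\omega_1$ is Lipschitz with $X_i^\e\omega_1=-(X_i^\e X_1^\e u^\e)\,\mathbf{1}_{\{X_1^\e u^\e<k\}}$ a.e., I would test the differentiated equation \eqref{zhong-d-eq1} against $\phi=\mp\eta^2\omega_1\in W^{1,2}_{\e,0}(B(x_0,r))$, the sign chosen so that the principal term $\sum_{i,j}A^{\e,\delta}_{i,\xi_j}(x,\nabla_\e u^\e)X_j^\e\omega_1\,X_i^\e\omega_1$ is coercive on $\{X_1^\e u^\e<k\}$. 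Using Young's inequality together with the structure conditions \eqref{zhong-structure}--\eqref{zhong-structure2} exactly as in Lemma~\ref{caccio-sub-0} then produces the sub-level analogue of \eqref{z4.3-00}, in which every right-hand-side integral is supported in $A^-_{1,k,r}$ and the only term not yet of the shape required by \eqref{z4.3-bis} is again
$$I_3=\int_{B(x_0,r)}(\delta+|\nabla_\e u^\e|^2)^{\frac{Q-2}{2}}\,|\nabla_\e X_3 u^\e|\,|\omega_1|\,\eta^2\,\d\mathcal{L}.$$

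To control $I_3$ I would reuse, verbatim, Corollary~\ref{I_3toG_0} and Lemma~\ref{step-1-00}: both depend only on the structure conditions, on the uniform gradient bound of Theorem~\ref{lipschitzestimate}, and on the Caccioppoli-type estimates of Lemma~\ref{zhong-l1}, Corollary~\ref{zhong-c3}, Corollary~\ref{zhong-l3.1} and Theorem~\ref{bigbadcaccioppoli}, none of which sees the sign of $\omega_l$ --- they only use that $\omega_l$ is supported in $A^-_{l,k,r}$ and is bounded by $\mu(r_0)$. Writing $\mathcal A$ for the right-hand side of \eqref{z4.3-bis}, these two results give first $I_3\le C(\delta+\mu(r_0)^2)^{\frac{Q-2}{4}}|A^-_{1,k,r}|^{1/2}G_0^{1/2}$ and then a bound for $G_0$ by finitely many powers of $K=(\mathcal A+I_3)^{1/2}$, exactly as in \eqref{z4.3-step2-000}. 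A term-by-term Young-inequality absorption over $h=0,\dots,m$, performed precisely as at the end of the proof of Proposition~\ref{caccio-sub}, then moves the $I_3$-factors to the left at the cost of a term $C(\delta+\mu(r_0)^2)^{Q/2}|A^-_{1,k,r}|^{\theta_{m,h}}$ with $\theta_{m,h}=\tfrac12\cdot\tfrac{2^{m+h+2}}{1+2^{m+h+1}}$; choosing $m$ so large that $\theta_{m,h}\ge1-\tfrac2q$ for all $h=0,\dots,m$ and using $|A^-_{1,k,r}|\le|B(x_0,r_0/2)|\le1$ yields $I_3\le\mathcal A$, hence \eqref{z4.3-bis}. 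Finally the $|\nabla_\e\eta|^2$ that appears inside $\mathcal A$ is replaced by $(r-r')^{-2}$ using $|\nabla_\e\eta|\le M(r-r')^{-1}$.

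The only point demanding a little care --- and I do not expect it to be a genuine obstacle --- is checking that the signs in the tested differentiated equation combine correctly once $(\cdot)^+$ is replaced by $(\cdot)^-$, so that the principal term stays coercive and the lower-order contributions (including the $x$-dependent pieces $A^{\e,\delta}_{i,x_1}-\tfrac{x_2}{2}A^{\e,\delta}_{i,x_3}$, which are bounded by \eqref{zhong-structure2}) are all absorbed. Since this is the same computation already carried out in Lemma~\ref{caccio-sub-0} and Proposition~\ref{caccio-sub}, only with that replacement, the argument is indeed ``a similar argument'' and requires no new idea beyond those already used above.
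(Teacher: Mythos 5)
Your proposal is correct and is exactly the route the paper intends: the paper offers no written proof of this corollary beyond the phrase ``a similar argument yields the corresponding result for sub-level sets,'' and your write-up is precisely that adaptation, with the sign of the test function $\phi=\mp\eta^2\omega_1$ handled correctly so the principal term stays coercive and with the observation that Corollary~\ref{I_3toG_0}, Lemma~\ref{step-1-00} and the final absorption only see $\omega_l$ through $|\omega_l|$, $\omega_l^2$ and its support. No further comment is needed.
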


From this point on, the rest of the argument does not rely on the function $u_\e$ being a solution of the  equation anymore but only on the Caccioppoli inequality above. The proof of Theorem \ref{finale}  is very similar to the Euclidean case as developed in \cite{LU}, and \cite{DiBenedetto}. It ultimately relies on the properties of De Giorgi classes in the general setting of metric spaces, as developed in \cite{KN1} and \cite{KMMP}. We recall that a function $f\in W^{1,2}_{\rm H}(B(x_0,r_0)\cap L^\infty(B(x_0,r_0)$ is in the De Giorgi class $DG^+(\chi, q, \gamma)$ if there exists constants $\chi, q, \gamma>0$ such that for every $0<r'<r<r_0/4<1/2$ and $k\in \R$  one has 
\begin{equation}\label{DG}
\int_{B(x_0,r')} |\nabla_\e w|^2 \d\mathcal{L}
 \le \gamma (r-r')^{-2} \int_{B(x_0,r)} w^2 \d\mathcal{L}
 +\chi |\{x\in B(x_0,r) \text{ such that }   w>0\}|^{1-\frac{2}{q}},
\end{equation}
where $\omega=(f-k)^+$. A function $f\in W^{1,2}_{\rm H}(B(x_0,r_0)\cap L^\infty(B(x_0,r_0)$ is in the De Giorgi class $DG^-(\chi, q, \gamma)$ if \eqref{DG} holds for  $\omega=(f-k)^- $. We set $DG(\chi, q, \gamma)=DG^+(\chi, q, \gamma)\cap DG^-(\chi, q, \gamma)$. It is well known, see for instance \cite{KMMP} and references therein, that functions in $DG$ satisfy a scale invariant Harnack inequality and the  following  oscillation bounds: {\it  If $f\in DG(\chi, q, \gamma)$ then there exists $s=s(q, \gamma, Q, r_0)>0$ such that}
\begin{equation*}
{\rm osc}_{B(x_0,r/2)} f \le (1-2^{-s}) {\rm osc}_{B(x_0,r)} f +\chi r^{1-\frac{Q}{q}}.
\end{equation*}
From the latter, the H\"older continuity follows immediately assuming $q$ is large enough. We need to show that \eqref{z4.3} and \eqref{z4.3-bis} imply $X_l^\e u^\e\in DG(\chi, q, \gamma)$. To do this we need to prove a result analogue to \cite[Proposition 4.1]{DiBenedetto}:
\begin{lemma}
 In the notation established above, there exists $\tau>0$ depending on $Q,\lambda,\Lambda,r_0$ such that if for at least one $k=1,2,3$,
$$|\bigg\{ x\in B(x,r) \text{ such that  } X_k^\e u^\e <\tfrac{1}{8}{\rm osc}_{B(x,2r)} 
|\nabla_\e u^\e| \bigg\} | \le \tau r^Q ,$$
then
$$ \sup_{B(x,\frac{r}{2})} X_k^\e u^\e \ge \frac{{\rm osc}_{B(x,2r) } 
|\nabla_\e u^\e|  }{100}.
$$
Analogously, if for at least one $k=1,2,3$,
$$|\bigg\{ x\in B(x,r) \text{ such that  } X_k^\e u^\e >-\frac{1}{8}{\rm osc}_{B(x,2r)} 
|\nabla_\e u^\e| \bigg\} | \le \tau r^Q,$$
then
$$ \sup_{B(x,\frac{r}{2})} X_k^\e u^\e \le -\frac{{\rm osc}_{B(x,2r) } 
|\nabla_\e u^\e|  }{100}.
$$
\end{lemma}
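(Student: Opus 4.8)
The statement is a "measure-to-sup" propagation of the type appearing in De~Giorgi's iteration scheme (cf.\ \cite[Proposition~4.1]{DiBenedetto}), and I would prove it by a scaled De~Giorgi iteration based on the Caccioppoli inequality \eqref{z4.3} on super-level sets that was just established in Proposition~\ref{caccio-sub}. The two assertions (for super-level sets and for sub-level sets) are entirely symmetric: one reduces to the other by replacing $u^\e$ by $-u^\e$ (which swaps the roles of $X_l^\e u^\e$ and $-X_l^\e u^\e$ and of $A^+_{l,k,r}$ and $A^-_{l,k,r}$, and uses \eqref{z4.3-bis} instead of \eqref{z4.3}), so I would prove only the first claim.

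The first step is to fix the value $k=1,2,3$ for which the hypothesis holds and to abbreviate $\mu:=\operatorname{osc}_{B(x,2r)}|\nabla_\e u^\e|$ and $v:=X_k^\e u^\e$. Assuming the conclusion fails, $\sup_{B(x,r/2)}v<\tfrac{\mu}{100}$, I set up the standard nested family of radii $r_j=\tfrac{r}{2}+\tfrac{r}{2^{j+1}}$, cutoff functions $\eta_j\in C^\infty_0(B(x,r_j))$ with $\eta_j\equiv1$ on $B(x,r_{j+1})$ and $|\nabla_\e\eta_j|\le C2^j/r$, and levels $k_j=\tfrac{\mu}{8}-\tfrac{\mu}{8\cdot2^j}$ increasing to $\tfrac{\mu}{8}$; here one uses that the bound $\tfrac{\mu}{8}<\tfrac\mu{100}$ would be vacuous, so in fact one works below $\tfrac{\mu}{8}$, i.e.\ with sub-level sets of $v$ relative to $\tfrac{\mu}{8}$, applying the sub-level-set Caccioppoli inequality \eqref{z4.3-bis} to $\omega=(v-k_j)^-$. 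Writing $A_j=\{x\in B(x,r_j):v<k_j\}$ and using that on $A_j$ one has $(\delta+|\nabla_\e u^\e|^2)^{(Q-2)/2}$ bounded above and below by constants depending only on $Q$ and $\mu(r_0)$ (by the uniform Lipschitz bound of Theorem~\ref{lipschitzestimate}), the inequality \eqref{z4.3-bis} degenerates to a genuine linear Caccioppoli inequality for $\omega$. Then Sobolev's inequality on $(\mathbb H^n,g_\e)$ — whose constant is stable in $\e$ by \cite{CCR} — together with Hölder's inequality and the choice $q>Q$ produces the recursive inequality $Y_{j+1}\le C\,b^{\,j}\,Y_j^{1+\gamma}$ for $Y_j:=|A_j|/r^Q$, with $C,b,\gamma>0$ depending only on $Q,\lambda,\Lambda,r_0$.

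By the standard fast-geometric-convergence lemma, there is $\tau=\tau(Q,\lambda,\Lambda,r_0)>0$ such that $Y_0\le\tau$ forces $Y_j\to0$, i.e.\ $|\{x\in B(x,r/2):v<\tfrac{\mu}{8}\}|=0$, hence $v\ge\tfrac{\mu}{8}$ a.e.\ on $B(x,r/2)$, and since $v$ is continuous (even smooth, for $\delta>0$, by Theorem~\ref{lipschitzestimate} and the smoothness of $u^\e$), this gives $\sup_{B(x,r/2)}v\ge\tfrac{\mu}{8}>\tfrac{\mu}{100}$, contradicting the assumed failure of the conclusion. The hypothesis as stated is $|\{v<\tfrac18\operatorname{osc}\}|\le\tau r^Q$, which is exactly $Y_0\le\tau$, so the argument closes. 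The main technical point — and the reason this is not merely a citation of \cite{DiBenedetto} — is that all constants entering the iteration (the Sobolev constant, the doubling constant, the lower/upper ellipticity bounds after freezing on the level set, and the Lipschitz bound on $|\nabla_\e u^\e|$) must be uniform in the approximation parameters $\e>0$ and $\delta>0$; this uniformity is supplied respectively by \cite{CCR}, by the structure conditions \eqref{zhong-structure}--\eqref{zhong-structure2}, and by Theorem~\ref{lipschitzestimate}, and I expect verifying that each constant genuinely does not blow up as $\e,\delta\to0$ to be the only delicate part of the write-up.
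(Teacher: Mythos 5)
Your overall strategy --- a De Giorgi iteration on the level sets of $X_k^\e u^\e$ driven by the Caccioppoli inequalities \eqref{z4.3} and \eqref{z4.3-bis} --- is exactly the route of the references the paper cites (it gives no proof of its own beyond pointing to \cite[Proposition~4.1]{DiBenedetto} and \cite[Lemma~4.4]{Zhong}). But the central technical step of your sketch does not work as written. Theorem~\ref{lipschitzestimate} gives only an \emph{upper} bound on $|\nabla_\e u^\e|$, hence only an upper bound on the weight $(\delta+|\nabla_\e u^\e|^2)^{\frac{Q-2}{2}}$. Since $Q=2n+2>2$, this weight degenerates to $\delta^{\frac{Q-2}{2}}$ wherever $\nabla_\e u^\e$ vanishes, and no lower bound uniform in $\delta$ is available on the sub-level sets $A_j=\{X_k^\e u^\e<k_j\}$: producing a pointwise lower bound on $|\nabla_\e u^\e|$ is precisely what this lemma is for, so assuming one here is circular. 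Consequently \eqref{z4.3-bis} does not ``degenerate to a genuine linear Caccioppoli inequality''; converting the weighted inequality into an unweighted De Giorgi-class inequality is the actual content of \cite[Proposition~4.1]{DiBenedetto}, where one keeps the levels in $[\mu/100,\mu/8]$ with $\mu=\operatorname{osc}_{B(x,2r)}|\nabla_\e u^\e|$, splits the integrals according to whether $|\nabla_\e u^\e|$ exceeds a fixed fraction of $\mu$, and absorbs the degenerate region using the structure conditions \eqref{zhong-structure} and the smallness of the bad set. This step cannot be replaced by an appeal to the Lipschitz bound, and it is the one delicate point your write-up misses (the uniformity in $\e,\delta$ of the Sobolev and doubling constants, which you flag as the delicate point, is comparatively routine and is indeed covered by \cite{CCR}).

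Two further, more mechanical problems. Your level sequence runs the wrong way: for an iteration on sub-level sets you need $k_j$ \emph{decreasing} from $k_0=\mu/8$ towards some $k_\infty\in[\mu/100,\mu/8)$, so that $A_{j+1}\subset A_j$ and Chebyshev gives $|A_{j+1}|\,(k_j-k_{j+1})^{2\kappa}\le\int\bigl((X_k^\e u^\e-k_j)^-\eta_j\bigr)^{2\kappa}$; with your increasing levels $k_j=\tfrac{\mu}{8}-\tfrac{\mu}{8\cdot 2^{j}}$ the truncation $(X_k^\e u^\e-k_j)^-$ may vanish on $A_{j+1}$ and the recursion $Y_{j+1}\le C b^{j}Y_j^{1+\gamma}$ does not follow. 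Moreover the iteration can only yield $X_k^\e u^\e\ge k_\infty$ on $B(x,r/2)$ for a limit level strictly below $\mu/8$ (this is why the conclusion carries the factor $1/100$), so your claimed output $|\{x\in B(x,r/2):X_k^\e u^\e<\mu/8\}|=0$ is stronger than what the argument produces; the contradiction framing is also superfluous, since the iteration proves the lower bound directly. Finally, note that the first assertion with ``$\sup$'' is trivially true for small $\tau$ (a set of measure at most $\tau r^Q<|B(x,r/2)|$ cannot cover $B(x,r/2)$); what is needed for the equivalence $(\delta+\mu(2r)^2)^{\frac{Q-2}{2}}\approx(\delta+|\nabla_\e u^\e|^2)^{\frac{Q-2}{2}}$ on $B(x_0,r)$, and what your iteration is implicitly aiming at, is the pointwise bound $\inf_{B(x,r/2)}X_k^\e u^\e\ge\mu/100$.
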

This result is proved exactly as in \cite[Proposition 4.1]{DiBenedetto} (see also \cite[Lemma 4.4]{Zhong}) and it yields essentially the equivalence 
$$(\delta+  \mu(2r)^2)^{\frac{Q-2}{2}} \approx (\delta+|\nabla_\e u^\e|^2)^{\frac{Q-2}{2}},
$$
for all $x\in B(x_0,r)$, when $|\nabla_\e u^\e|$ is small with respect to ${\rm osc}_{B(x,2r)} 
|\nabla_\e u^\e| $. This equivalence, together with  \eqref{z4.3} and \eqref{z4.3-bis} implies $X_l^\e u^\e\in DG(\chi, q, \gamma)$, thus concluding the proof of the H\"older regularity of the gradient in Theorem \ref{finale}. 
\qed

 \bibliography{general_bibliography_donotmodify}
\bibliographystyle{amsalpha}

\end{document}